\title{Hitchin grafting representations II: Dynamics}
\date{\today}
\author{Pierre-Louis Blayac, Ursula Hamenstädt, Théo Marty, Andrea Egidio Monti}
\begin{document}

	\maketitle

\begin{abstract}
The Hitchin component of the character variety of representations of a surface group $\pi_1(S)$ into $\mathrm{PSL}_d(\mathbb R)$ for some $d\geq 3$ can be equipped with a pressure metric whose restriction to the Fuchsian locus equals the Weil-Petersson metric up to a constant factor. We show that if the genus of $S$ is at least $3$, then the Fuchsian locus contains quasi-convex subsets of infinite diameter for the Weil--Petersson metric whose diameter for the path metric of the pressure metric is finite. This is established through showing that biinfinite paths of bending deformations have controlled bounded length. 
     \end{abstract}

 { 
    \hypersetup{linkcolor=black}
    \renewcommand{\baselinestretch}{0.85}
    \normalsize
\setcounter{tocdepth}{2}
    \tableofcontents
    \renewcommand{\baselinestretch}{1.0}
    \normalsize
 }

\section*{Introduction}
\addcontentsline{toc}{section}{Introduction}

The \emph{Teichm\"uller space} ${\mathcal T}(S)$ of a closed oriented surface $S$ 
of genus $g\geq 2$ is the space of \emph{marked} hyperbolic structures on $S$. Equivalently, it can be described as a distinguished component of the space of conjugacy classes of homomorphisms $\pi_1(S)\to \PSL_2(\mathbb{R})$, with target the group $\PSL_2(\mathbb{R})$ of orientation preserving isometries of the hyperbolic plane. It was discovered by Hitchin that 
an analog of the Teichm\"uller space also exists for conjugacy classes of 
representations of $\pi_1(S)$ into simple split real Lie groups of higher rank.

The so-called \emph{Hitchin component} ${\rm Hit}(S)$ for the target group $\PSL_d(\mathbb{R})$ $(d\geq 3)$
is the component of the \emph{character variety} containing conjugacy classes of discrete representations 
which factor through an irreducible embedding $\PSL_2(\mathbb{R})\to \PSL_d(\mathbb{R})$. 
Hitchin~\cite{Hitchin} showed that the Hitchin component is homeomorphic to $\R^m$ for some explicit 
$m>0$, and later Labourie~\cite{Lab06} and Fock--Goncharov~\cite{FG06} independently proved that all representations in the Hitchin component are faithful with discrete image.

In~\cite{BCLS15}, a ${\rm Mod}(S)$-invariant metric on ${\rm Hit}(S)$ was introduced, the so-called \emph{pressure metric} (see also~\cite{BCLS18,BCS18}). 
Fix a positive linear functional $\alpha_0$ on 
the convex cone of vectors $x=(x_1,\dots,x_d)$ with $x_1\geq \dots\geq x_d$ and $x_1+\dots+x_d=0$, for instance
\begin{equation}\label{eq:ex of alpha0}\alpha_0(x)=(d-1)x_1+(d-3)x_2+\dots + (1-d)x_d.\end{equation}
The starting point is that $\alpha_0$ determines a conjugacy invariant translation length function on $\PSL_d(\R)$ by applying this functional to the logarithm of the absolute values of the eigenvalues.
Hence this yields a length function for the images of $\pi_1(S)$ under a representation in the Hitchin component.

Now let us assume that, after choosing a hyperbolic metric on $S$, this function can be represented by integration of a H\"older continuous positive function over periodic orbits for the \emph{geodesic flow} $\Phi^t$ on the unit tangent bundle $T^1S$ of $S$.
Then one can associate to such a representation the equilibrium state of a positive multiple of the function, chosen to be of vanishing pressure.
Using that the pressure is a strictly convex functional, this construction gives rise to a positive semi-definite 
symmetric bilinear form on the tangent space of the Hitchin component (see \eqref{eq:def pressure metric}).
Although this symmetric bilinear form may not be positive definite everywhere, 
it nevertheless defines a length metric on the component~\cite{Sam24}. 

Using results of~\cite{Sam14,BCLS15}, we verify in Section~\ref{sec:currents} that this assumption of 
representability by Hölder potentials always holds, so that we can talk about the induced pressure metric on ${\rm Hit}(S)$. 
Related statements are contained in~\cite{BPS19}. 
Note that while the pressure metric is defined for any Hitchin component, 
there are several other interesting  
constructions of Riemannian metrics on the Hitchin component for $\PSL_3(\R)$, 
see~\cite{DG96,Li2016,KZ17}. It seems unknown  
whether these metrics coincide with one of the pressure metrics.

As there are many natural choices for the positive linear functional $\alpha_0$ on the Weyl cone, 
there are many natural length functions on the Hitchin component, and hence many natural pressure metrics. 
The linear functional $\alpha_0$ we shall use for the pressure metric 
determines a $\PSL_d(\mathbb{R})$-invariant Finsler metric $\mf F$ on the symmetric space $\X=\PSL_d(\mathbb{R})/\PSO_d(\R)$, see e.g.\ Section 5.1 of~\cite{KL18}, called a \emph{nice} Finsler metric in the sequel.
We refer to Section \ref{sec:lietheoryI} for the precise constraints on the Finsler metrics we shall use,
the metric in Equation~\ref{eq:ex of alpha0} is an example.
The metric $\mf F$ depends on $\alpha_0$, but its geodesics  do not by Lemma 5.10 of~\cite{KL18}.

The  restriction to the \emph{Fuchsian locus} of each of these metrics is a multiple of the \emph{Weil--Petersson metric} on 
the Teichm\"uller space~\cite{Bo88} (relying in an essential way on the article~\cite{wolpert},  see also Corollary 1.6 of~\cite{BCLS15} and~\cite{McM08} for an explicit statement). 
Infinitesimal properties of the pressure metric at the Fuchsian locus were studied in~\cite{LW18}, and also in~\cite{Da19} in the case $d=3$. 
The large scale geometry 
of the Weil--Petersson metric on Teichm\"uller space is quite well understood.
In particular, this metric is incomplete~\cite{wolpert}, and 
its metric completion $\bar{\mc T}(S)$, sometimes called \emph{augmented Teichm\"uller space}, 
is a ${\rm CAT}(0)$ stratified space whose strata are Teichm\"uller spaces of marked  finite area hyperbolic
metrics on surfaces obtained from $S$ by replacing some essential simple closed curves by nodes. 

For $d=3$, Loftin~\cite{Loftin04} defined an augmented Hitchin space (see also~\cite{LZ21}). 
However, it turns out that unlike in the case of Teichm\"uller space, this construction is 
not well related to geometric properties of the pressure metric. 
A first instance of this possibility 
arose in the study 
of large scale properties of pressure metrics for other kinds of moduli spaces: for instance, for hyperbolic structures with boundary~\cite{Xu19}, for  marked metric graphs~\cite{ACR23}, and for quasi-Fuchsian representations~\cite{FHJZ24}.
The following result gives an illustration of this fact in the context of Hitchin representations. 

\begin{maintheorem}\label{loftin}
    Suppose $S$ has genus at least $3$ and consider a pressure metric coming from a nice Finsler 
    metric on the symmetric space $\PSL(3,\R)/\PSO(3)$.
    Let $\Hit_3^{\rm aug}(S)$ be Loftin's augmented Hitchin space and $\overline\Hit_3(S)$ the pressure metric completion.

    Then there exist paths $(x_t)_{t\geq0},(y_t)_{t\geq0}\subset \Hit_3(S)$ converging to two distinct points $x,y\in \Hit_3^{\rm aug}(S)$, but converging to the same point of $\overline\Hit_3(S)$.
\end{maintheorem}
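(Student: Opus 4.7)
Following the hint in the abstract, I would build the two paths as the forward and backward halves of a single biinfinite bending deformation along a separating simple closed curve on $S$.

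\emph{Setup.} Pick a Fuchsian representation $\rho_0:\pi_1(S)\to\PSL_2(\R)\hookrightarrow\PSL_3(\R)$ and a separating simple closed geodesic $c\subset S$ splitting $S$ into two subsurfaces $\Sigma_1,\Sigma_2$ of positive genus; such a curve exists because $g\geq 3$. Writing $\pi_1(S)=\pi_1(\Sigma_1)\ast_{\langle c\rangle}\pi_1(\Sigma_2)$, choose a one-parameter subgroup $(b_t)_{t\in\R}$ in the centralizer of $\rho_0(c)$ in $\PSL_3(\R)$ not contained in the Fuchsian subgroup $\PSL_2(\R)$. Define $\rho_t$ by keeping $\rho_t|_{\pi_1(\Sigma_1)}=\rho_0|_{\pi_1(\Sigma_1)}$ and setting $\rho_t|_{\pi_1(\Sigma_2)}=b_t\,\rho_0|_{\pi_1(\Sigma_2)}\,b_t^{-1}$. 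This gives a biinfinite path $\{\rho_t\}_{t\in\R}\subset\Hit_3(S)$; set $x_t=\rho_t$ and $y_t=\rho_{-t}$ for $t\geq 0$.

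\emph{Same limit in $\overline\Hit_3(S)$.} The central estimate of the paper, announced in the abstract, asserts that a biinfinite path of bending deformations has finite total pressure length. Applied to $\{\rho_t\}_{t\in\R}$, this implies that $(x_t)$ and $(y_t)$ are both Cauchy for the pressure metric, and that the pressure distance $d^{\mathrm{press}}(x_t,y_t)$, controlled by the length of the middle piece of the path, tends to $0$. Hence $(x_t)$ and $(y_t)$ converge to a common point of $\overline\Hit_3(S)$.

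\emph{Distinct limits in $\Hit_3^{\rm aug}(S)$.} On the Loftin side, $\Hit_3^{\rm aug}(S)$ records degenerations along nodal surfaces through pairs (conformal structure, cubic differential) via the Labourie--Loftin parametrization. Along the bending path the cubic differential associated with $\rho_t$ grows without bound with $|t|$, forcing the limit to lie on a boundary stratum in which the curve $c$ has been pinched. The sign of $t$ survives in the asymptotic framing/twist data at the node, so $(x_t)$ and $(y_t)$ exit to different boundary points of $\Hit_3^{\rm aug}(S)$.

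\emph{Main obstacle.} The hard step—and essentially the content of the rest of the paper—is the finite-pressure-length estimate for biinfinite bending paths. It requires a precise analysis of the Hölder potentials representing the length function of $\rho_t$, of their equilibrium states, and of the derivative in $t$ of the resulting pressure form as $t\to\pm\infty$. Once this estimate is available, identifying the limits in Loftin's compactification as genuinely distinct is comparatively soft: it amounts to reading off the sign of the bending parameter from the diverging cubic-differential data at the node.
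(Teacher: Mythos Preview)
There is a genuine gap in your ``Same limit in $\overline\Hit_3(S)$'' step. You argue that $d^{\mathrm{press}}(x_t,y_t)\to 0$ because it is ``controlled by the length of the middle piece of the path''. But $x_t=\rho_t$ and $y_t=\rho_{-t}$, so the piece of the bending path connecting them is $\{\rho_s:-t\leq s\leq t\}$, whose pressure length \emph{increases} to the total length $L$ of the biinfinite path as $t\to\infty$; it does not tend to~$0$. Finite total length (Theorem~\ref{main4}) guarantees that each half-ray is Cauchy, hence that $x_t\to x_\infty$ and $y_t\to y_\infty$ exist in $\overline\Hit_3(S)$, but it gives only $d^{\mathrm{press}}(x_\infty,y_\infty)\leq L$; nothing forces $x_\infty=y_\infty$. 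There is no reason to expect the forward and backward endpoints of a single bending ray to coincide in the completion, and the paper does not claim this.

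The paper's argument is structurally different. It does not compare the two directions of one bending path. Instead it fixes a separating curve giving an \emph{entropy gap} between the two sides $S_1,S_2$ (entropy of $S_1$ strictly larger), takes two nearby Fuchsian metrics $\rho^{s_1},\rho^{s_2}$ that agree on $S_1$ but differ on the low-entropy side $S_2$, and grafts \emph{both} in the \emph{same} direction $t\to+\infty$ along~$\partial S_1$ (with parameter parallel to $\mathrm{diag}(1,-2,1)$, so that the limit in Loftin's space is given by Goldman's bulging). The key input is not Theorem~\ref{main4} but Theorem~\ref{pressurelength2}: under the entropy gap, the pressure length of the transverse path $s\mapsto\rho^s_{tz}$ tends to~$0$ as $t\to\infty$, so the two grafting rays $(\rho^{s_1}_{tz})_t$ and $(\rho^{s_2}_{tz})_t$ are squeezed together and converge to the same point of $\overline\Hit_3(S)$. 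On the Loftin side they converge to distinct points simply because the limiting holonomies on $S_2$ are $\rho^{s_1}|_{\pi_1(S_2)}\not\sim\rho^{s_2}|_{\pi_1(S_2)}$; no delicate ``sign of the cubic differential at the node'' argument is needed.
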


Theorem~\ref{loftin} is obtained as an application of 
a large scale geometric study of the path metric for all Hitchin components which we discuss next. 
%

Consider an essential subsurface $S_0$ of $S$ 
whose connected boundary $\partial S_0$ is an essential simple closed curve.
Choose a hyperbolic metric $X$ on $S$ and a marked point on the geodesic representing $\partial S_0$. 
Let $\ell>0$ be the length of $\partial S_0$ for the metric $X$, and let 
${\mathcal T}(S_0,\ell)$ be the Teichm\"uller space  of marked hyperbolic metrics on $S_0$ with geodesic boundary of fixed length $\ell>0$ and one marked point on $\partial S_0$. 
The choice of $X$ 
determines an embedding ${\mathcal T}(S_0,\ell)\to {\mathcal T}(S)$ 
by associating to a point  $X_0\in {\mathcal T}(S_0,\ell)$ the metric on $S$ obtained by gluing $X_0$ to $X\vert_{S-S_0}$ identifying marked points. 

It is known that this embedding is quasi-isometric for the Weil--Petersson metric on 
${\mathcal T}(S_0,\ell)$ and ${\mathcal T}(S)$. 
Although there does not seem to be an explicit statement available in the literature, this 
is a fairly easy consequence of the fact that augmented Teichm\"uller space 
contains the product of
the Teichm\"uller spaces ${\mathcal T}(S_0)$, ${\mathcal T}(S-S_0)$
of the surfaces $S_0,S-S_0$, with the boundary replaced by cusps, as convex 
subspaces.
Each factor in this product is of infinite diameter, and  
the image of the embedding of ${\mathcal T}(S_0,\ell)$ contains ${\mathcal T}(S_0)\times \{pt\}$ 
in a uniformly bounded neighborhood
\cite{Ya03}. 
%
%
In particular, the image of the embedding ${\mathcal T}(S_0,\ell)\to {\mathcal T}(S)$ 
has infinite diameter for the 
Weil--Petersson metric on~${\mathcal T}(S)$.

The mapping class group ${\rm Mod}(S)$ acts on the Hitchin component by precomposition of marking. 
As this action is isometric for the pressure metric, it  extends to an action on the metric completion of ${\rm Hit}(S)$.
For any essential subsurface $S_0$ of $S$, the mapping class group ${\rm Mod}(S_0)$ of $S_0$ is a subgroup of ${\rm Mod}(S)$.
The following theorem is our first main result.

\begin{maintheorem}\label{thm:shortcut} Let $g\geq 3$ and let $S_0\subset S$ be any essential connected subsurface of genus $g_0\leq  g-2$ with connected boundary.
\begin{enumerate} 
\item  The image of an embedding ${\mathcal T}(S_0,\ell)\to {\mathcal T}(S)$ has finite diameter for the pressure metric. 
\item 
The action of the subgroup ${\rm Mod}(S_0)$ of ${\rm Mod}(S)$ on the metric completion of ${\rm Hit}(S)$ with respect to the pressure metric has a global fixed point.
\end{enumerate}
\end{maintheorem}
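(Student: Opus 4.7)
The plan is to connect any two points of the embedded copy of $\mc T(S_0,\ell)$ by paths in $\Hit(S)$ of uniformly bounded pressure length, using bending deformations along the boundary curve $\gamma=\partial S_0$ as announced in the abstract; the second assertion then follows by extracting a fixed point in the pressure completion from a family of such paths.

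\textbf{Setup and main estimate.} The curve $\gamma$ realizes $\pi_1(S)$ as an amalgamated product or HNN extension over $\langle \gamma\rangle$. Given a representation $\rho$ in the embedded $\mc T(S_0,\ell)$, any element $a$ of the centralizer $Z_{\PSL_d(\R)}(\rho(\gamma))$ yields a Hitchin representation $\rho^a$ by conjugating the $S\setminus S_0$-factor by $a$. Varying $a$ along a one-parameter subgroup $\{a_t\}_{t\in\R}$ of this centralizer, transverse to the direction of $\rho(\gamma)$ itself, produces a biinfinite bending path $t\mapsto \rho^{a_t}$ leaving the Fuchsian locus. The technical heart of the argument, and the main obstacle, is to establish that each such bending path has finite pressure length, uniformly in the starting point over compact subsets of $\mc T(S_0,\ell)$. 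I would express the pressure form applied to the bending tangent vector as a covariance of H\"older potentials, and exploit the heuristic that the bending direction is supported near $\gamma$ while the equilibrium state of the nice Finsler length function is asymptotically supported on closed geodesics that avoid $\gamma$: as $|t|\to\infty$, translation lengths of conjugacy classes crossing $\gamma$ grow, so their Gibbs weights decay exponentially, making the pressure form integrable in $t$.

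\textbf{Proof of (1).} Given $X_0, X_0'\in \mc T(S_0,\ell)$ with images $\rho_0,\rho_0'$ in $\Hit(S)$, form the bending paths $t\mapsto \rho_0^{a_t}$ and $t\mapsto (\rho_0')^{a_t}$; by the main estimate each has pressure length bounded by a uniform constant $C$. For large $t$, I would connect $\rho_0^{a_t}$ to $(\rho_0')^{a_t}$ by deforming the underlying $S_0$-structure from $X_0$ to $X_0'$ while keeping the bending parameter $a_t$ fixed. The same mass-escape argument shows that the pressure length of this connecting path tends to $0$ as $|t|\to\infty$, since the equilibrium mass carried by conjugacy classes that enter $S_0$ vanishes in the limit. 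Concatenating the three paths yields a bound $d_{\mathrm{press}}(X_0,X_0')\le 2C+o(1)$ independent of $X_0,X_0'$.

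\textbf{Proof of (2).} The subgroup $\Mod(S_0)\le\Mod(S)$ preserves the embedded $\mc T(S_0,\ell)$ and commutes with any bending along $\gamma$ (up to a uniform action on the centralizer). By the main estimate the path $t\mapsto \rho_0^{a_t}$ has finite total pressure length, so it converges as $t\to\infty$ to a point $\rho_\infty$ in the pressure completion. The slice estimate in the proof of (1) shows that $\rho_\infty$ is independent of the starting point $X_0\in \mc T(S_0,\ell)$. Since $\Mod(S_0)$ acts on $\mc T(S_0,\ell)$ by permuting the $X_0$, and does so by isometries for the pressure metric on $\Hit(S)$, the limit $\rho_\infty$ is a global fixed point for $\Mod(S_0)$ in the pressure completion.
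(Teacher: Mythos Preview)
Your overall architecture --- bend along $\gamma$, connect at large bending parameter, unbend --- matches the paper's strategy, and your heuristic for the finite length of the bending path (Gibbs weights of $\gamma$-crossing classes decay exponentially) is exactly what the paper makes precise. But there is a genuine gap in the ``connecting'' step, and it is precisely where the hypothesis $g_0\le g-2$ is used.

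You assert that for large bending parameter ``the equilibrium mass carried by conjugacy classes that enter $S_0$ vanishes in the limit''. This is \emph{not} automatic. What the mass-escape argument actually gives (Proposition~\ref{escapeofmass2}) is that as the cylinder height goes to infinity, the equilibrium state concentrates on the invariant set $K=K_{S_0}\cup K_{S\setminus S_0}$ of geodesics that never cross $\gamma$, and converges to a measure of maximal entropy there. By Lemma~\ref{entropysplit}, that limit is supported on whichever component has \emph{larger} topological entropy. So the mass escapes from $S_0$ only if $\delta(S\setminus S_0)>\delta(S_0)$ along the entire connecting path --- and nothing in your setup guarantees this. Indeed, as you vary the hyperbolic structure on $S_0$ over all of $\mathcal T(S_0,\ell)$, its entropy can approach $1$ (whenever $g_0\ge 2$), while the entropy of the fixed complement is some number strictly below $1$. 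In that regime the equilibrium state concentrates on $S_0$, the derivative bound from Corollary~\ref{cor:estimate derivative length on shearings} gives nothing, and the connecting path need not shrink.

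The paper repairs this with an additional pair of steps before and after yours: first \emph{pinch} a pants decomposition of the complement $S\setminus S_0$ so that its entropy exceeds a threshold $h<1$ chosen to dominate the entropies along the connecting path (this uses that $S\setminus S_0$ has genus $\ge 2$, so it contains an interior pair of pants disjoint from $\gamma$ whose boundary can be shrunk --- see Proposition~\ref{surfacewithboundary}\ref{surfacewithboundary-to-one}); this pinching path has Weil--Petersson length bounded only in terms of $\sigma$ by Wolpert's estimate (Fact~\ref{wolpert}). Only then does the paper graft, and now Theorem~\ref{pressurelength2} applies to the connecting path because the entropy gap has been manufactured. The full path is thus pinch--graft--connect--ungraft--unpinch rather than your three-step version. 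The same issue infects your proof of (2): the limit $\rho_\infty$ exists (the bending path does have finite length regardless), but its independence from the starting point in $\mathcal T(S_0,\ell)$ again needs the connecting paths to shrink, hence needs the entropy gap; the paper accordingly first chooses the metric on the complement to have entropy above a threshold computed from a fixed generating set of Dehn twists, and only then grafts.
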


As in the case of Teichm\"uller space, metric completions and partial 
compactifications of ${\rm Hit}(S)$ can be studied through embeddings 
into the space $\mc{PC}(S)$ of \emph{projective 
geodesic currents} on $S$. 
Namely,  the length function $f_\rho$ on $T^1S$ defined by a 
representation $\rho\in {\rm Hit}(S)$ determines 
the projective geodesic current $\Theta(\rho)$
defined by the 
equilibrium state of a multiple of $f_\rho$.
In this way one obtains 
a continuous mapping $\Theta$ of ${\rm Hit}(S)$ 
into the space of projective geodesic currents on $S$. 
Its restriction to the Fuchsian locus coincides with the 
standard embedding of Teichm\"uller space~\cite{Bo88} which 
associates to a hyperbolic metric its 
projective Liouville current.

Since $S$ is compact, 
the space of projective geodesic currents on  $S$, equipped with
the weak$^*$-topology, is a compact space.
To establish Theorems~\ref{loftin} and~\ref{thm:shortcut}, we shall make use the following result, 
interesting in its own right, on the behavior of $\Theta$ along the degenerating sequences of Hitchin 
representations obtained by bending a fixed Fuchsian representation.

Note that   
by Corollary 1.4 of~\cite{PS17}, 
for each nice Finsler metric on $\X$
the entropy of a Hitchin representation is 
defined and is maximized only on the Fuchsian locus.

\begin{maintheorem}\label{main1}
Let $S_0\subset S$ be a proper connected essential subsurface such that no component of $S_1=S-S_0$ is a pair of pants, 
and let $h$ be any hyperbolic metric on $S_0$ so that the boundary
of $S_0$ is geodesic. 
Then there exists a sequence $\rho_i$ of Hitchin representations with the
following properties.
\begin{enumerate}
\item The projective currents $\Theta(\rho_i)$ converge weakly to 
the projective current of maximal entropy for the geodesic flow 
on $(S_0,h)$.
\item The entropies of the representations $\rho_i$ converge to the 
entropy of the geodesic flow on $(S_0,h)$.
\end{enumerate}
\end{maintheorem}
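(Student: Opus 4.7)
The plan is to realize $\rho_i$ as Hitchin representations obtained via increasingly large bending deformations along $\partial S_0$ of a Fuchsian representation that restricts to $h$ on $S_0$. The hypothesis that no component of $S_1 = S \setminus S_0$ is a pair of pants guarantees the existence of a hyperbolic metric on $S$ whose restriction to $S_0$ coincides with $h$; let $\rho_0 : \pi_1(S) \to \PSL_2(\R) \hookrightarrow \PSL_d(\R)$ be the composition of the corresponding Fuchsian representation with the irreducible embedding. Along each component $c$ of $\partial S_0$, the group $\pi_1(S)$ splits as an amalgamated product or HNN extension with edge group $\langle c\rangle$. The centralizer of $\rho_0(c)$ in $\PSL_d(\R)$ contains a maximal torus, from which one picks elements $g_{c,i}$ diverging along a regular direction as $i\to\infty$. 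Interspersing these elements into the graph-of-groups normal form of $\pi_1(S)$ yields bent representations $\rho_i$ which, by the construction of the companion paper, remain in the Hitchin component for all $i$.

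For $\gamma\in\pi_1(S)$ whose normal form lies in $\pi_1(S_0)$, one has $\rho_i(\gamma)=\rho_0(\gamma)$, so $\ell_{\rho_i}(\gamma)$ is a fixed positive multiple of $\ell_h(\gamma)$, independent of $i$. If instead the cyclically reduced normal form of $\gamma$ mixes syllables from both sides of a boundary curve, then $\rho_i(\gamma)$ interleaves powers of the diverging $g_{c,i}$, and a Cartan-projection argument in $\PSL_d(\R)$ shows that $\ell_{\rho_i}(\gamma)\to\infty$ with $i$ at a rate governed by the geometric intersection number $i(\gamma,\partial S_0)$. Let $f_i$ be a H\"older potential on $T^1S$ whose periods equal $\ell_{\rho_i}$, as provided by Section~\ref{sec:currents}. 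On biinfinite geodesics staying in $S_0$ in both forward and backward time, $f_i$ agrees with a fixed multiple of the potential representing $\ell_h$; on trajectories crossing $\partial S_0$, $f_i$ diverges to infinity with $i$.

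The projective current $\Theta(\rho_i)$ is proportional to the equilibrium state $\mu_i$ of $f_i$ with vanishing pressure. The divergence of $f_i$ off $T^1S_0$ forces the $\mu_i$ to concentrate on the non-wandering set of the geodesic flow inside $T^1 S_0$, namely the lift of the convex core of $(S_0,h)$. Standard continuity properties of equilibrium states in the Bowen--Ruelle--Sambarino thermodynamic formalism identify the weak limit with the Bowen--Margulis measure of the convex cocompact group $\pi_1(S_0)$, which is precisely the projective current of maximal entropy for the geodesic flow on $(S_0,h)$. For the entropies, the variational principle identifies $h(\rho_i)$ with the measure-theoretic entropy of $\mu_i$, and continuity of the entropy functional for these weak limits (available for equilibrium states of H\"older potentials by the theory of~\cite{BCLS15} and~\cite{Sam14}) yields convergence to the topological entropy of the geodesic flow on $(S_0,h)$.

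The main technical obstacle is verifying that $\rho_i$ stays in the Hitchin component for arbitrarily large bending: this is not a perturbative statement, since the $\rho_i$ leave every compact neighborhood of $\rho_0$, and it is precisely where both the non-pants hypothesis on components of $S_1$ and the constructions of the companion paper enter essentially. A secondary difficulty is to promote the pointwise behavior of $f_i$ (uniform convergence on compact subsets of $T^1S$ away from the boundary lift) to weak convergence of the equilibrium states $\mu_i$, which requires Ruelle-operator and large-deviation machinery from the thermodynamic formalism rather than elementary convergence arguments.
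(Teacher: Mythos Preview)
Your proposal has a genuine gap: grafting only along $\partial S_0$ does not force the equilibrium measures to concentrate on $T^1S_0$. The divergence of $f_i$ occurs on geodesics \emph{crossing} $\partial S_0$, so any weak limit of the $\mu_i$ is supported on $K=K_0\cup K_1$, where $K_j$ is the invariant set of geodesics entirely contained in $S_j$. By Proposition~\ref{escapeofmass2} (and Lemma~\ref{entropysplit}) the limit is a measure of maximal entropy for $\Phi^t|_K$, hence supported on the $K_j$ with the \emph{largest} entropy. Nothing in your construction ensures that $h_{\rm top}(\Phi^t|_{K_0})>h_{\rm top}(\Phi^t|_{K_1})$; if the hyperbolic metric you put on $S_1$ gives it entropy larger than that of $(S_0,h)$, the limiting current lives on $S_1$, not $S_0$, and both conclusions fail.

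This is exactly where the non-pants hypothesis is used in the paper, and you have misplaced it. It is not needed to extend $h$ to $S$ (any bordered hyperbolic metric extends), nor to keep the $\rho_i$ in the Hitchin component (bending deformations always do, for any parameter). Rather, the paper grafts along a full pants decomposition $\gamma^*$ of $S_1$ containing $\partial S_0$. Because no component of $S_1$ is itself a pair of pants, every pair of pants in $S_1\setminus\gamma^*$ has at least one boundary curve in $\gamma^*\setminus\partial S_0$, whose length is \emph{not} prescribed by $h$ and may be chosen arbitrarily large; by Proposition~\ref{lem:entropy-pants-convergence} this drives the entropy of each such pair of pants below that of $(S_0,h)$. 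Grafting along all of $\gamma^*$ then makes $S_0$ the unique component of maximal entropy, and Proposition~\ref{escapeofmass2} gives both convergence of currents and of entropies. Your outline can be repaired by inserting this entropy-gap step and grafting along the larger multicurve.
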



There exists a natural embedding of the Teichm\"uller space 
${\mathcal T}(S_0,\ell)$ into the space of geodesic currents for $S_0$, and 
the pressure metric on this space of currents is defined. Theorem~\ref{main1} 
implies that the metric 
completion of the Hitchin component contains a subspace which is naturally isometric to
${\mathcal T}(S_0,\ell)$ 
equipped with the pressure metric (which does not coincide with the Weil--Petersson metric, see~\cite{Xu19}).
It then also contains a subspace which is isometric to the space of marked metric 
graphs equipped with the Weil--Petersson metric~\cite{Xu19}, as this space is contained in the metric
completion of ${\mc T}(S_0,\ell)$ equipped with the pressure metric.

By work of Bonahon (Corollary 16 of~\cite{Bo88}), 
the restriction of the map $\Theta$ to ${\mathcal T}(S)$ is an embedding into the space of 
projective geodesic currents $\mc{PC}(S)$, and the boundary of the resulting compactification $\overline{\Theta(\mc{T}(S))}-\Theta(\mc T(S))$ is precisely the space $\mc{PML}(S)$ of projective measured geodesic laminations, that is, currents with 
vanishing self-intersection.
Theorem~\ref{main1} implies that $\overline{\Theta({\rm Hit}(S))}-\Theta({\rm Hit}(S))$ 
is bigger than $\mc{PML}(S)$, since the projective current of maximal entropy for $S_0$ is not a measured geodesic lamination.
Section 1.3 of~\cite{BIPP21} contains related results.
That the map $\Theta$ is an embedding for some choices of length functions, different from ours, 
is due to 
Bridgeman, Canary, Labourie and Sambarino (Theorem 1.2 of~\cite{BCLS18}).

\begin{question}
For $n\geq 3$, is $\Theta({\rm Hit}(S))$ dense in the space of 
projective geodesic currents?
\end{question}

As the map which associates to a 
H\"older continuous positive length function $f$ on $T^1S$ 
the entropy of the normalized Gibbs current of $f$ is continuous we obtain
the following 

\begin{maincorollary}\label{maincorollary}
For any number $a\in [0,1)$ there exists a sequence of degenerating 
Hitchin representations whose
entropy converges to $a$.
\end{maincorollary}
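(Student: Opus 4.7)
The plan is to apply Theorem~\ref{main1}(2) to a well-chosen family of subsurfaces with hyperbolic metric, and then to extract a diagonal sequence.

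Given a proper connected essential subsurface $S_0 \subset S$ such that $S\Setminus S_0$ contains no pair of pants component, and a hyperbolic metric $h$ on $S_0$ with totally geodesic boundary, the topological entropy $\delta(S_0,h)$ of the geodesic flow on the non-wandering set of $T^1(S_0,h)$ coincides with the critical exponent of the associated convex cocompact Fuchsian group; it lies in $(0,1)$, and depends continuously on $h$ via the continuity of the Gibbs entropy of a H\"older potential recalled just before the corollary. The first step of the proof is then to show that, as $(S_0,h)$ varies subject to the constraints of Theorem~\ref{main1}, the numbers $\delta(S_0,h)$ are dense in $[0,1)$. Pinching $\partial S_0$ gives a cusp degeneration whose limiting geodesic flow has entropy $1$, so lower semicontinuity of Gibbs entropy forces $\delta(S_0,h)\to 1$. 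At the other extreme, taking $S_0$ of simple Schottky type (for instance a pair of pants with $S\Setminus S_0$ containing no pants component, which exists whenever $g\geq 3$, or a once-holed torus for $g\geq 2$) and letting all boundary/translation lengths grow, a direct Poincar\'e series estimate yields $\delta(S_0,h)\leq C/\mathrm{sys}(S_0,h)\to 0$. Combined with continuity of $\delta$, the image is dense in $[0,1)$.

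For each $a\in[0,1)$, fix a sequence $(S_0^{(n)},h_n)$ with $\delta(S_0^{(n)},h_n)\to a$, and apply Theorem~\ref{main1}(2) to each such pair to obtain a sequence $(\rho_{n,i})_i\subset\Hit(S)$ whose entropies converge, as $i\to\infty$, to $\delta(S_0^{(n)},h_n)$. The companion statement Theorem~\ref{main1}(1) gives $\Theta(\rho_{n,i})\to\mu_n$ weakly, where $\mu_n$ is the projective current of maximal entropy of $(S_0^{(n)},h_n)$; since $\mu_n$ is supported on the proper subsurface $S_0^{(n)}$ it does not belong to $\Theta(K)$ for any compact $K\subset\Hit(S)$, so each sequence $(\rho_{n,i})_i$ is degenerating.

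A diagonal extraction concludes: choose $i_n$ so large that the entropy of $\rho_{n,i_n}$ is within $1/n$ of $\delta(S_0^{(n)},h_n)$ and that $\rho_{n,i_n}$ lies outside the $n$-th set of a fixed compact exhaustion of $\Hit(S)$. The resulting sequence $(\rho_{n,i_n})_n$ is a degenerating sequence whose entropy converges to $a$. The main obstacle I foresee is the density statement in the second paragraph: the asymptotics $\delta(S_0,h)\to 1$ under pinching and $\delta(S_0,h)\to 0$ under large systole are classical but delicate, and I would need to cite precise references, in particular from the Sullivan--Patterson theory of critical exponents of Kleinian groups and from the Bowen--Ruelle thermodynamic formalism for the continuity in $h$.
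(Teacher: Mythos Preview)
Your approach is correct and matches the paper's: the corollary follows from Theorem~\ref{main1}(2) once one knows that the entropies $\delta(S_0,h)$ fill $(0,1)$, and the appendix (\thref{surfacewithboundary} and \thref{lem:entropy-pants-convergence}) supplies exactly the asymptotics you flag as the main obstacle. One simplification: since entropy is continuous on the connected Teichm\"uller space of a fixed pair of pants $S_0$ (which can be embedded with $S\Setminus S_0$ pants-free for every $g\geq 2$, not only $g\geq 3$, by letting two of its boundary circles be the two sides of one non-separating curve), the intermediate value theorem makes the diagonal extraction unnecessary except at $a=0$.
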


The case $a=0$ is due to Zhang~\cite{Zh15} and was reworked in~\cite{SWZ20},
using mainly algebraic methods. 
Our proof is entirely geometric. 
For $d=3$ and in the context of real projective structures on surfaces, 
Corollary~\ref{maincorollary} is independently due to Nie~\cite{Nie15}.
In this context, the article~\cite{FK16} also contains related results, embarking from 
the same deformations we use, but with a different geometric interpretation.

Theorems~\ref{loftin},~\ref{thm:shortcut} and~\ref{main1} rest on a geometric
understanding of specific paths in ${\rm Hit}(S)$  which 
was established in \cite{BHM25}.
These paths are so-called \emph{grafting deformations}, also called 
\emph{bending deformations} or \emph{bulging deformations}.
They are defined as follows.

For $d\geq 2$, the (unique up to conjugation) 
$d$-dimensional irreducible representation of $\PSL_2(\mathbb{R})$ defines an 
embedding $\PSL_2(\mathbb{R})\to \PSL_d(\mathbb{R})$ whose image stabilizes
a totally geodesic subspace $\bH^2\subset \X=\PSL_d(\mathbb{R})/\PSO(d)$
where $\X$ is equipped with the symmetric metric. 
Up to scaling, $\bH^2$ is isometric to the hyperbolic plane. Its tangent bundle 
$T\bH^2$ consists of regular tangent vectors in $T\X$.
In particular, for $d\geq 3$, every
geodesic line $\gamma\subset \bH^2$ is contained in a unique maximal flat $F$ of 
dimension $d-1$.
This flat intersects $\bH^2$ orthogonally along $\gamma$.

Let now $\Gamma$ be the fundamental group of a closed oriented surface $S$ of genus at least $2$.
Let $\gamma\in\Gamma$ be defined by a separating simple closed curve on $S$.
This curve defines a one edge graph of groups decomposition $\Gamma=\Gamma_1*_C\Gamma_2$ where $C$ is the infinite cyclic group generated by $\gamma$.
Let $\rho$ be a discrete and faithful representation of $\Gamma$ into $\PSL_2(\mathbb{R})\subset \PSL_d(\mathbb{R})$.
Let $\alpha \in \PSL_d(\mathbb{R})$ be an element in the centralizer of $\rho(C)$ but not contained in 
the one-parameter subgroup containing $\rho(C)$.
Partial conjugation of $\rho$ by $\alpha$ then defines a new representation, obtained from the Fuchsian representation $\rho$ by 
\emph{Hitchin grafting at $\gamma$ with $\alpha$}.
More precisely, this new representation coincides with $\rho$ on $\Gamma_1$, but maps any $\beta\in \Gamma_2$ to $\alpha \rho(\beta) \alpha^{-1}$.
More generally, if $t\to \alpha(t)$ is a one-parameter subgroup of the centralizer of $\rho(C)$ 
not containing $\rho(C)$ 
then we obtain in this fashion a path in ${\rm Hit}(S)$ which we call a \emph{Hitchin grafting path}. 
Such paths are also well defined if $\gamma$ is non-separating and determines a decomposition of 
$\pi_1(S)$ as an HNN-extension. 
We show

\begin{maintheorem}\label{main4} 
    Hitchin grafting paths have finite length for the pressure metric. 
\end{maintheorem}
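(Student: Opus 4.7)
The plan is to bound the pressure length $\int_0^\infty \|\dot\rho_t\|_P \, dt$ by establishing an integrable decay rate for $\|\dot\rho_t\|_P$ as $t\to\infty$. I would first invoke the variance characterization of the pressure metric set up in Section~\ref{sec:currents}: if $f_t$ denotes the positive Hölder potential on $T^1S$ representing the length cocycle of $\rho_t$, and if $m_t$ is the equilibrium state of the appropriately normalized potential, then
$$\|\dot\rho_t\|_P^2 \; \asymp \; \mathrm{Var}_{m_t}(\partial_t f_t),$$
the normalizing constants (entropy, $\int f_t\, dm_t$) remaining bounded along a grafting path by \cite{PS17} and Theorem~\ref{main1}. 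Thus it suffices to bound this variance by an integrable function of $t$.

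Next I would give a pointwise bound on $\partial_t f_t$. Because $\rho_t$ differs from $\rho_0$ only by the insertion of $\alpha(t)^{\pm 1}$ at each crossing of $\gamma$ in a reduced expression, differentiating translation length of a closed geodesic $c$ yields $\partial_t \ell_{\rho_t}(c) = O(i(c,\gamma))$, uniformly in $t$, with a constant depending only on the generator $X$ of the one-parameter subgroup. Passing to the Hölder reparametrization, $\partial_t f_t$ is a uniformly $L^\infty$-bounded Hölder function on $T^1S$, supported on the set $A$ of tangent vectors whose orbit crosses $\gamma$ within bounded time. This yields
$$\mathrm{Var}_{m_t}(\partial_t f_t) \; \leq \; \|\partial_t f_t\|_\infty^2 \cdot m_t(A) \; \leq \; K\, m_t(A).$$

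The decisive step is then to show that $m_t(A)$ decays to $0$ at an integrable rate. Theorem~\ref{main1} asserts that $m_t$ concentrates on the measure of maximal entropy of the geodesic flow on $(S_0,h)$, whose support avoids $\gamma$, hence avoids $A$. A quantitative refinement of this, via a transfer-operator analysis for the degenerating family $f_t$ in the spirit of the geometric estimates of~\cite{BHM25}, should give exponential decay $m_t(A)\le C e^{-\lambda t}$. Combining the three steps yields $\|\dot\rho_t\|_P \leq C' e^{-\lambda t/2}$, which is integrable on $[0,\infty)$.

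The main obstacle is precisely this quantitative measure decay: Theorem~\ref{main1} only delivers weak-$*$ convergence of $\Theta(\rho_t)$, and upgrading this to a uniform exponential rate for $m_t(A)$ requires a spectral-gap analysis of the transfer operators along the degenerating family. The argument does have some slack, however: any polynomial decay $m_t(A) \lesssim t^{-2-\varepsilon}$ would already suffice for finite length, and even the qualitative concentration statement of Theorem~\ref{main1} can be upgraded, using the geometric description of grafting from~\cite{BHM25} (exponential thinning of closed orbits crossing $\gamma$ as the bending parameter grows), into \emph{some} integrable rate. The heart of the proof is thus to extract such a rate from the asymptotic geometry of Hitchin grafting paths.
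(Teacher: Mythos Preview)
Your overall architecture matches the paper's: reduce the pressure norm to a pairing between derivatives of $f_t$ and the equilibrium state $\nu_t$, bound $\partial_t\ell_{\rho_t}(c)$ by $O(\iota(c,\gamma))$ (this is Proposition~\ref{cor:firstandsecond}), and establish exponential decay of the equilibrium state's mass near $\gamma$ (this is Proposition~\ref{boundaryestimate2}). You have correctly identified the decisive step as the quantitative decay of $m_t$ near $\gamma$. However, two points in your execution diverge from the paper and one of them is a genuine gap.

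The bound $\mathrm{Var}_{m_t}(\partial_t f_t)\le \|\partial_t f_t\|_\infty^2\, m_t(A)$ is not justified. Your claim that $\partial_t f_t$ can be taken supported on a neighbourhood $A$ of $\gamma$ is imprecise: what one controls are the \emph{periods} $\int_c\partial_t f_t=O(\iota(c,\gamma))$, not pointwise values of any particular H\"older representative, and even granting support on $A$, the pressure form is an \emph{asymptotic} variance (involving correlations at all times), which is not in general bounded by $\|g\|_\infty^2\,m_t(\mathrm{supp}\,g)$. The paper avoids variance entirely: it bounds $\tfrac{d^2}{ds^2}{\bf J}(\rho_t,\rho_{t+s})|_{s=0}$ in terms of the first and second $s$-derivatives of ${\bf I}(\rho_t,\rho_{t+s})$ (Proposition~\ref{prop:general bound on pressure lengths}, via the identity $\delta'/\delta=-\tfrac{d}{ds}{\bf I}|_{s=0}$), and these derivatives are $\int\partial_s f_{t+s}\,d\nu_t$ and $\int\partial_s^2 f_{t+s}\,d\nu_t$. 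The crucial input, which your sketch omits, is that \emph{both} first and second derivatives of length are bounded by $C\,\iota(c,\gamma)$ --- this is the full content of Proposition~\ref{cor:firstandsecond}, proved in Section~\ref{sec:ehresmann} by an explicit computation with totally positive matrices. Approximating $\nu_t$ by periodic-orbit measures then gives $|\tfrac{d^k}{ds^k}{\bf I}|\le C\,\iota(\nu_t,\gamma)$ for $k=1,2$ without any support hypothesis. Finally, the exponential decay of $\iota(\nu_t,\gamma)$ is obtained not through transfer operators or a spectral gap but via direct Patterson--Sullivan estimates: a uniform mass bound on the conformal densities (Proposition~\ref{prop:upper bound on PS mass}) combined with a Busemann-function computation along admissible paths crossing the grafted flat cylinder (Proposition~\ref{boundaryestimate2}), where the quasi-ruled property of admissible paths from~\cite{BHM25} provides the needed geometric control.
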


Theorem~\ref{pressurelength} contains a more precise version of this result. Note that 
the grafting paths we consider correspond in Teichm\"uller space 
to shearing (or twisting) paths along a simple 
geodesic. These paths are contained in the thick part of Teichm\"uller space and have
infinite Weil-Petersson length, but any two points on the path can be connected by
a Weil-Petersson geodesic whose length is bounded from above by a constant only depending on an 
upper bound for the length of $\gamma$.

Hitchin grafting paths 
are also well defined if the grafting is performed at a simple geodesic multicurve with more 
than one component and if the starting representation is not contained in the Fuchsian locus. 
It seems likely that our argument can be extended to show finite pressure length for such paths
as well, however we do not carry out such an extension. In view of 
the work~\cite{BD17}, it may be possible to extend this analysis to an even larger class of 
naturally defined paths in ${\rm Hit}(S)$.
This raises the following 

\begin{question}
Is the diameter of ${\rm Hit}(S)$ with respect to the pressure metric finite?
\end{question}

The answer to this question is yes in a different context, namely 
for the pressure metric on quasi-Fuchsian space
\cite{FHJZ24}. 
Note that any two points in ${\rm Hit}(S)$ can be connected by finitely many grafting paths
\cite{AZ23}, however not starting from points in the Fuchsian locus, and  
it is unclear whether the number of such paths needed has a uniform upper bound.

The proof of Theorem~\ref{main4} rests on the main results of the companion article \cite{BHM25} which 
gives a geometric interpretation of the concept of positivity of Hitchin representations 
due to Fock and Goncharov \cite{FG06}. 

\paragraph{\bf Organization of the article and structure of the proof}

The first three sections are introductory and mainly used to collect results from the literature,
especially from the first part \cite{BHM25} of this work. 
The results in  Section~\ref{sec:currents} 
    can mostly be found in the literature, 
    although not always in the form we need.  
    We establish that the nice Finsler metrics defined in Section~\ref{sec:lietheoryI} indeed define a 
    pressure metric for the Hitchin component. 

Section \ref{sec:intersection} contains a first instance on the interplay between geometry and
dynamics of Hitchin representations. 
 We show that there are sequences of representations in the Hitchin component
    whose normalized intersection with any Fuchsian representation tend to infinity. Here the 
    normalized intersection number is the entropy normalized intersection number in the space of currents.

    The remaining part of the article is devoted to the study of the pressure metric. 
    In Section~\ref{sec:ehresmann} we use the geometric interpretation of positivity established in \cite{BHM25}
    to give precise norm bounds for first and second derivatives of the 
    Finsler length of a conjugacy class in $\pi_1(S)$ restricted to two specific classes of paths in ${\rm Hit}(S)$.

    Sections~\ref{sec:convergence-of-currents} and Sections~\ref{sec:pressurelength} contain the main dynamical
    results of this article. We use the geometric information on Hitchin grafting representations obtained in 
    \cite{BHM25} and the results of Section \ref{sec:ehresmann} to analyze the geodesic currents defined by such representations. This leads to 
    the proof of Theorem~\ref{main1} and Theorem~\ref{main4}. The proofs of Theorem \ref{loftin} and 
    Theorem~\ref{thm:shortcut} is contained in 
    Section~\ref{sec:distortion}.
    
The appendix collects information on the entropy of the geodesic flow on compact hyperbolic surfaces 
with boundary which we were unable to find in the literature in the form we need.

{\bf Acknowledgement:} This project started as a working seminar in fall 2021, during the pandemic,
held in person at the Max Planck Institute for Mathematics in Bonn.
We thank the MPI for the hospitality and financial support, and we thank
Gianluca Faraco, Elia Fioravanti, Frieder J\"ackel, Yannick Krifka, Laura Monk and Yongquan Zhang for 
many enjoyable discussions and good company.
P.-L.B. is grateful to Dick Canary, Fanny Kassel, Charles Reid, and Ralph Spatzier for helpful discussions, and 
U.H. and P.-L.B. thank Andr\'es Sambarino for 
helpful discussions.
All authors thank Beatrice Pozzetti for pointing out an error in an earlier version of this article.

\section{Lie groups and symmetric spaces}\label{sec:lietheoryI}

This section collects some basic facts on Lie groups and symmetric spaces and introduces conventions and notations used lated on.

Consider the unique (up to conjugacy) irreducible representation $\tau:\PSL_2(\R)\to G=\PSL_d(\R)$, which can be described as follows.
A matrix $M=\smallpmatrix{a}{b}{c}{e}\in\SL_2(\R)$ acts on the algebra $\R[X,Y]$ of polynomials in two variables by $M\cdot X=aX+cY$ and $M\cdot Y=bX+eY$. 
This action preserves the $d$-dimensional linear subspace $\R^h_{d-1}[X,Y]$ of degree $d-1$ homogeneous polynomials, which we identify with $\R^d$.

This representation is regular, in the sense that it maps diagonalisable  2-by-2 matrices with distinct real eigenvalues to diagonalisable $d$-by-$d$ matrices with distinct real eigenvalues.
In fact, using a suitable basis of $\R^h_{d-1}[X,Y]$, the representation $\tau$ maps
\begin{itemize}
    \item the group of diagonal 2-by-2 matrices with positive diagonal entries into the abelian subgroup $A\subset \PSL_d(\R)$ of diagonal $d$-by-$d$ matrices with positive diagonal entries;
    \item $\PSO(2)$ into $K=\PSO(d)\subset \PSL_d(\R)$;
    \item the subgroup $T$ of triangular 2-by-2 matrices with positive diagonal entries into the subgroup $P\subset\PSL_d(\R)$ of triangular $d$-by-$d$ matrices with positive diagonal entries;
    \item and 2-by-2 matrices with positive entries to totally positive $d$-by-$d$ matrices (namely whose minors are all positive).
\end{itemize}

As a consequence, $\tau$ induces
\begin{itemize}
    \item an isometric embedding the hyperbolic plane $\H^2=\PSL_2(\R)/\PSO(2)$ into the symmetric space $\X=G/K$, which is endowed with a nonpositively curved $G$-invariant Riemannian metric;
    \item an embedding of the boundary at infinity $\partial\H^2=\PSL_2(\R)/T$ into the flag variety $\mc F=G/P$, which can be seen as the space of full flags, i.e. sequences 
    \[\xi=(\xi_1\subset \xi_2\subset \cdots \subset \xi_{d}=\mathbb{R}^d)\]
where $\xi_i$ is a linear subspace of $\mathbb{R}^d$ of dimension $i$ for each $i\leq d$.
\end{itemize}

We also fix a basepoint $\basepoint=K\in \X=G/K$, whose stabiliser is $K$.
The subspace $A\cdot\basepoint$ is a totally geodesic embedded Euclidean subspace of $\X$ of maximal dimension. 
This flat identifies with the Cartan subspace $\mathfrak{a}$, which is the linear space of diagonal $(d,d)$-matrices with vanishing trace, through the map $v\in\mf a\mapsto \exp(v)\cdot \basepoint$.
The maximal Euclidean subspaces, called maximal flats, are the translates of $A\cdot \basepoint$ under some $g\in G$.

The stabiliser in $K$ of $\mf a$ is finite and acts by permuting the diagonal entries; the quotient by the subgroup acting trivially on $\mf a$ is the Weyl group, denoted by $\weyl$.
This action is generated by the swaps of two diagonal entries, which act on $\mf a$ by reflections along hypersurfaces called walls.
The open Weyl cone $\mathfrak{a}^+\subset\mf a$ is a natural fundamental domain for this action: it is the open cone of diagonal matrices whose entries $(\lambda_1,\dots,\lambda_d)$ fulfill $\lambda_1> \lambda_2> \cdots > \lambda_d$.

Putting $A^+=\exp(\mf a^+)$, the $K$-orbit of every point $y=g\basepoint \in\X$ intersects the closed Weyl cone $\overline{A^+}\basepoint$ at exactly one point $\exp(u)\basepoint$, and we write $u=\kappa(g)$ and call it the Cartan projection of $g$.
Similarly, the $G$-orbit of any vector $v\in TX$ intersects $\overline{\mf a^+}$ (seen as a subspace of $T_\basepoint\X$) in 
precisely one point $\kappa(v)$ called the \emph{Cartan projection} of $v$.

Being nonpositively curved, $\X$ has a visual boundary $\partial_\infty\X$ on which acts $G$.
The $G$-orbit of every point of $\partial_\infty\X$ intersects exactly once the visual boundary $\partial_\infty (\overline{A^+}\basepoint)$ of our preferred Weyl cone; in other words the $G$-translates of $\partial_\infty (\overline{A^+}\basepoint)$, called the Weyl Chambers (at infinity), cover $\partial_\infty\X$.
The stabiliser of $\partial_\infty (\overline{A^+}\basepoint)$ is $P$, so the space of Weyl Chambers identifies with the flag variety $\mc F=G/P$.

Two flags $\xi=(\xi_1,\dots,\xi_d)$ and $\eta=(\eta_1,\dots,\eta_d)$ are transverse if $\xi_i$ and $\eta_{d-i}$ are in direct sum for every $i$.
This is equivalent to the existence of a maximal flat $F(\xi,\eta)$ and two opposite Weyl Cones in it whose boundaries at infinity are $\xi$ and $\eta$.

The \emph{Jordan projection} $\lambda(g)\in \overline{\mf a^+}$ of $g\in G$ is the diagonal matrix whose diagonal entries are the moduli of the eigenvalues of $g$ in descending order.
The element $g\in G$ is called \emph{loxodromic} if $\lambda(g)$ is contained in the interior $\mf a^+$ of  $\overline{\mf a^+}$, which is equivalent to saying that $g$ has an attracting/repelling fixed pair of transverse flags $(g^-,g^+)$. 
Then $g$ acts as a translation on the flat $F(g^-,g^+)$ with direction prescribed by its Jordan projection.

\medskip\noindent 
  {\bf A Finsler metric coming from a linear functional on $\mathfrak a$}\label{finslergeo}

\begin{notation}\label{nota:alpha}
   We fix a linear functional $\alpha_0$ on $\mf a$ which is positive on 
   $\overline{\mf a^+}$ and such that $\alpha_0(gv)< \alpha_0(v)$ for all 
   $v\in {\mf a^+}$ and $g\in\weyl$.

   We assume that $\alpha_0$ is symmetric in the sense that if $g$ is the transformation in 
   the Weyl group that maps $\mf a^+$ to its opposite $-\mf a^+$ then $\alpha_0(gv)=-\alpha_0(v)$ for any $v\in \mf a$.
%
\end{notation}

An example of a linear functional satisfying the above conditions is given in Equation~\ref{eq:ex of alpha0}.

For any vector $v\in T\X$ we set
\begin{equation}\label{eq:mfF}\mf F(v)=\alpha_0(\kappa(v))\end{equation}
where as before, $\kappa(v)\in \overline{\mf a^+}$ is the Cartan projection of $v$.

\begin{proposition}
[{Lemmas 5.9-10 of~\cite{KL18}}]\thlabel{ex:admissible finsler}
The following hold.
\begin{enumerate}
\item 
$\mf F$ defines a $G$-invariant Finsler metric on $\X$.
\item The unparameterized Riemannian geodesics of $\X$ are also geodesics for $\mf F$.
\item 
The translation length for $\mf F$ of any element $g\in G$ acting on $\X$ is given by $\ell^{\mf F}(g):=\alpha_0(\lambda(g))$ 
where $\lambda(g)\in\overline{\mf a^+}$ is the Jordan projection.
\end{enumerate}
\end{proposition}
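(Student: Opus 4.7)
I would prove the three assertions in order, using only the hypotheses on $\alpha_0$ and standard facts about the Cartan projection $\kappa$.

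\emph{Part (1).} $G$-invariance of $\mf F$ is automatic from the $G$-equivariance of $\kappa$, so I only need to check that $\mf F$ restricts to a norm on $T_\basepoint\X$; by $K$-invariance it is enough to check this on $\mf a$. The key identity I would establish is
\begin{equation*}
\mf F(v)\;=\;\alpha_0(\kappa(v))\;=\;\max_{g\in\weyl}\alpha_0(gv)\qquad(v\in\mf a).
\end{equation*}
Indeed, for $v\in\mf a^+$ this is the hypothesis that nontrivial Weyl elements strictly decrease $\alpha_0$; it extends to $\overline{\mf a^+}$ by continuity, and hence to all of $\mf a$ since every chamber is a Weyl translate of $\mf a^+$. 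Writing $\mf F\vert_\mf a$ as a max of finitely many linear functionals yields convexity and positive homogeneity for free, positivity of $\alpha_0$ on $\overline{\mf a^+}\setminus\{0\}$ gives definiteness, and the symmetry hypothesis on the longest Weyl element yields reversibility $\mf F(-v)=\mf F(v)$.

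\emph{Part (2).} Let $\gamma(t)=g\exp(tv)\basepoint$ with $v\in\overline{\mf a^+}$ be a Riemannian geodesic. Since the tangent vectors $\dot\gamma(t)$ all have Cartan projection $v$, the Finsler length of $\gamma\vert_{[0,T]}$ equals $T\alpha_0(v)$. To show this realises the Finsler distance, I would construct a Busemann-type function $b:\X\to\R$ associated to the ideal endpoint $\gamma(+\infty)\in\partial_\infty\X$ that is $\mf F$-Lipschitz and grows at rate $\alpha_0(v)$ along $\gamma$. The crucial pointwise inequality $\alpha_0(u)\leq \mf F(u)$ for every $u\in T\X$ follows immediately from the identity in Part~(1). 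Integrating $b$ along an arbitrary competitor path from $\gamma(0)$ to $\gamma(T)$ then yields the lower bound on $\mf F$-length which matches $T\alpha_0(v)$, so $\gamma$ is $\mf F$-minimising.

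\emph{Part (3).} If $g\in G$ is loxodromic with Jordan projection $\lambda(g)\in\mf a^+$, then $g$ preserves the flat $F(g^-,g^+)$ and acts on it by a Euclidean translation of Cartan type $\lambda(g)$. The orbit $\{g^n\basepoint'\}$ of any $\basepoint'\in F(g^-,g^+)$ thus lies on a Riemannian geodesic, and Part~(2) shows that the $\mf F$-displacement of $\basepoint'$ under $g$ equals $\alpha_0(\lambda(g))$. Since $\X$ is $\mr{CAT}(0)$ and $g$ is semisimple, this displacement coincides with the translation length, yielding $\ell^{\mf F}(g)=\alpha_0(\lambda(g))$. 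For non-loxodromic $g$ I would conclude by density of loxodromic elements together with continuity of $\lambda$ and of the $\mf F$-translation length.

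\emph{Main obstacle.} The only genuinely nontrivial step is the construction of the Busemann-type function $b$ in Part~(2): the pointwise inequality $\alpha_0\leq \mf F$ is essentially free, but assembling it into a globally defined, $\mf F$-Lipschitz function of the correct slope—that is, coherently extending $\alpha_0$ across all flats sharing the ideal direction of $\gamma$—is the content of Lemma~5.9 of~\cite{KL18} and is where the real work lies.
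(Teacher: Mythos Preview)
The paper does not give its own proof of this proposition: it is stated as a direct citation of Lemmas~5.9--10 of \cite{KL18}, with no argument supplied. Your sketch is a faithful outline of how those lemmas are actually proved there---in particular the representation $\mf F\vert_{\mf a}=\max_{g\in\weyl}\alpha_0(g\,\cdot\,)$ for Part~(1) and the calibrating Busemann function for Part~(2) are exactly the mechanisms used in \cite{KL18}.

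One small correction in Part~(3): your appeal to $\X$ being $\mathrm{CAT}(0)$ to identify displacement with translation length is a statement about the \emph{Riemannian} metric, not the Finsler one, and $d^{\mf F}$ is not $\mathrm{CAT}(0)$. A cleaner way (which also handles the non-loxodromic case without invoking density or continuity of translation length) is to use the stable formula
\[
\ell^{\mf F}(g)=\lim_{n\to\infty}\tfrac1n\,d^{\mf F}(\basepoint,g^n\basepoint)=\lim_{n\to\infty}\tfrac1n\,\alpha_0\big(\kappa(g^n)\big)=\alpha_0(\lambda(g)),
\]
using that $d^{\mf F}(\basepoint,h\basepoint)=\alpha_0(\kappa(h))$ from Part~(2) and the standard fact $\tfrac1n\kappa(g^n)\to\lambda(g)$. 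This avoids both the $\mathrm{CAT}(0)$ issue and the delicate semi-continuity of infimum-displacement under limits.
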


In the sequel we always normalize the functional $\alpha_0$ in such a way that 
the embedding $\H^2\to \X$ which is isometric for the symmetric metric also is
isometric for the Finsler metric $\mf F$.

Finsler geodesics  between two distinct points in $\X$ are in general not unique. Indeed, for 
$x,y\in \X$, the \emph{diamond} defined by 
\[D(x,y)=\{z\mid d^{\mf F}(x,z)+d^{\mf F}(z,y)=d^{\mf F}(x,y)\}\]
is the set of all points on a geodesic connecting $x$ to $y$ (see §5.1.3 of \cite{KL18} or §3 of \cite{BHM25}). 
The diamond is contained in any maximal flat $F$ containing $x,y$, where it is a compact convex polytope.
It is the intersection of a Weyl Cone centered at $x$ and a Weyl Cone centered at $y$, opposite to each other.

\medskip\noindent 
  {\bf Busemann functions and Gromov product}

The Busemann functions, or horofunctions, are generalizations of distance functions on $\X$: they record relative distances to a point at infinity.
The \emph{Busemann function} associated to our choice of Finsler metric is given by 
\begin{equation}\label{busemann}
b^{\mf F}_\xi(x,y)=\lim_{n\to\infty}d^{\mf F}(x,z_n)-d^{\mf F}(y,z_n)\in\R.
\end{equation}
where $(z_n)_n\subset\X$ converges to a point of the visual boundary in the interior of $\xi$.


The \emph{Gromov product} between two transverse flags $\xi,\eta\in\mc F$ computed at the basepoint $x\in \X$ is defined as
\begin{equation}\label{gromovproduct}
\langle\xi|\eta\rangle_x=\lim_{n\to\infty}
\left( d^{\mf F}(y_n,x)+d^{\mf F}(x,z_n)-d^{\mf F}(y_n,z_n) \right) \in\R_{\geq 0} 
\end{equation}
where $(y_n)_n,(z_n)_n\subset\X$ are sequences converging to  points of the visual boundary in the interior of $\xi$ and $\eta$ respectively.
Note that we used an unusual convention by not including the factor $\tfrac12$.
This will make the computations a bit easier to read. 

If $x$ is contained in the flat connecting $\eta$ to $\xi$ then $\langle\xi|\eta\rangle_p=0$, which leads to
\begin{equation*}
\langle\xi|\eta\rangle_x=b^{\mf F}_\xi(x,p)+b^{\mf F}_\eta(x,p)
\end{equation*}
We refer to~\cite{KLPMorse} for more information on this construction, in particular on the existence of the limit
in the formula (\ref{gromovproduct}).

\section{Equilibrium states, Hitchin representations and pressure metrics}\label{sec:currents}

In this section we introduce the main structures and tools for this article. 
It is subdivided into
three subsections. In the first subsection we introduce geodesic currents for closed surfaces and the 
intersection form. The second subsection contains 
an account of Hitchin representations and length functions defined by Finsler norms. 
We show, using~\cite{BCLS15}, that such length functions can be used to construct a pressure metric on the Hitchin component.
The third subsection contains a summary of 
the main properties of Patterson--Sullivan theory we shall use later on. 

Throughout, $S$ denotes a closed surface of genus $g\geq 2$, equipped with a fixed choice of a 
hyperbolic metric. Thus the universal covering $\tilde S$ of $S$ can naturally be identified with 
the hyperbolic plane $\H^2$.

\subsection{Geodesic currents, length and intersection}\label{sec:geodescicurrents}

A \emph{geodesic current} for $S$ is a non-trivial $\pi_1(S)$-invariant 
Radon measure on the space of oriented geodesics
$\deinf\H^2\times \deinf\H^2-\Delta$ of the hyperbolic plane
$\mathbb{H}^2$ (here $\Delta$ is the diagonal).
Two such currents are projectively equivalent if
they are constant multiples of each other.
An equivalence class for this equivalence relation is a
\emph{projective geodesic current}. The space
${\mathcal C}(S)$ of geodesic currents for $S$ is equipped with the
weak$^*$-topology which descends to a topology on the
space $\mathcal{P}\mathcal {C}(S)$ of projective geodesic currents.
A \emph{(projective) measured geodesic lamination}
is a (projective) geodesic current
whose support consists of pairwise disjoint simple geodesics.
The space $\mathcal{PML}$  
of projective measured geodesic laminations is a closed
subset of $\mathcal{PC}(S)$.

Each hyperbolic metric on $S$ determines a geodesic
current, the \emph{Liouville current} of the metric.
The following is due to Bonahon~\cite{Bo88}.

\begin{theorem}[Bonahon]
Associating to a hyperbolic metric on $S$ its projective Liouville current
defines an embedding of the Teichm\"uller space into
${\mathcal P\mathcal C}(S)$, and its complement in its closure is the space of projective measured geodesic laminations.
\end{theorem}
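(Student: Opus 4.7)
The plan is to (i) construct the Liouville current $L_m$ attached to a hyperbolic metric $m$ and record its key intersection formula, (ii) show the resulting map $\mc T(S)\to \mc{PC}(S)$ is a topological embedding, and (iii) identify the accumulation set of the image with the space of projective measured geodesic laminations. Starting with (i), a hyperbolic metric $m$ together with the marking identifies $\wt S$ with $\H^2$; in cross-ratio coordinates on $\deinf\H^2\times\deinf\H^2 - \Delta$ the $\Isom(\H^2)$-invariant measure $\frac{d\xi\, d\eta}{(\xi-\eta)^2}$ is $\pi_1(S)$-invariant and hence defines a geodesic current $L_m$. A flow-box computation yields the fundamental property $i(L_m,\gamma)=\ell_m(\gamma)$ on simple closed curves $\gamma$; bilinear extension together with continuity of the intersection form then gives $i(L_m,L_m)=\pi\cdot|\chi(S)|$, a topological constant independent of $m$. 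In particular $L_m$ is never a measured lamination, so the image is automatically disjoint from $\mc{PML}(S)$.

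For (ii), continuity of $m\mapsto [L_m]$ is immediate from continuity of length functions on $\mc T(S)$ together with the weak$^*$-density of closed conjugacy-class currents in $\mc C(S)$. Injectivity uses the marked length spectrum rigidity of closed hyperbolic surfaces (Fricke): if $[L_m]=[L_{m'}]$ the two marked length spectra are proportional, and constancy of the self-intersection $i(L_m,L_m)$ forces the proportionality factor to be $1$, so $m=m'$. That the induced map is a homeomorphism onto its image then reduces to properness, which in turn follows from the boundary analysis below.

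For (iii), suppose $m_n$ leaves every compact subset of $\mc T(S)$ and, after passing to a subsequence, $c_n L_{m_n}\to \mu$ weakly for scalars $c_n\to 0$. Continuity of the intersection form on bounded subsets of $\mc C(S)$ gives $i(\mu,\mu)=\lim_n c_n^2\, i(L_{m_n},L_{m_n}) = 0$, and Bonahon's characterization of measured geodesic laminations as the currents of vanishing self-intersection forces $[\mu]\in\mc{PML}(S)$. Conversely, every class in $\mc{PML}(S)$ arises as a limit: for a weighted simple closed curve $a\gamma$, one pinches $\gamma$ along a sequence of metrics with $\ell_{m_n}(\gamma)\to 0$ and checks, using Fenchel--Nielsen coordinates, that $L_{m_n}/\ell_{m_n}(\gamma)$ converges to $a\,\delta_\gamma$; density of weighted simple closed curves in $\mc{PML}(S)$ finishes the argument.

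The main obstacle is the foundational input that underlies essentially every step: the pairing $i\colon \mc C(S)\times\mc C(S)\to \R_{\geq 0}$ is continuous on bounded sets, and the measured laminations are exactly the currents with $i(\mu,\mu)=0$. Both statements require a delicate analysis of transverse pairs of geodesics via flow boxes and constitute the technical heart of Bonahon's argument; given them, the embedding property and the identification of the closure follow by routine compactness and rigidity.
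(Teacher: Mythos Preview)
The paper does not give its own proof of this theorem: it is stated as a result of Bonahon with a citation to~\cite{Bo88}, followed only by the remark that ``the idea behind this theorem rests on the existence of an intersection form'' and a list of the two key properties of $\iota$ (continuity, and $\iota(\lambda,\alpha)=\ell_\rho(\alpha)$). Your sketch is precisely Bonahon's original argument, so in that sense you are aligned with what the paper invokes.

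A couple of points in your write-up deserve tightening. First, in step~(iii) you assert $c_n\to 0$ without justification; this follows because some closed curve $\gamma$ has $\ell_{m_n}(\gamma)\to\infty$ when $m_n$ leaves compacta, and $c_n\,\iota(L_{m_n},\gamma)=c_n\ell_{m_n}(\gamma)$ must stay bounded. Second, your ``conversely'' argument is imprecise: writing $L_{m_n}/\ell_{m_n}(\gamma)\to a\,\delta_\gamma$ with an unexplained weight $a$ is not quite the correct normalization, and pinching a single curve does not obviously produce an arbitrary weighted simple closed curve. The cleaner route (Bonahon's) is to observe that $\mc{PML}(S)$ is closed in $\mc{PC}(S)$, that the image of $\mc T(S)$ is disjoint from it, and that the closure of the image is compact and ${\rm Mod}(S)$-invariant; minimality of $\mc{PML}(S)$ as a closed ${\rm Mod}(S)$-invariant subset of $\mc{PC}(S)$ then forces the entire $\mc{PML}(S)$ to lie in the boundary. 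Your overall structure is correct, but these two steps as written would not pass as a proof.
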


The idea behind this theorem rests on the existence of an 
\emph{intersection form}
\[\iota:{\mathcal C}(S)\times {\mathcal C}(S)\to [0,\infty)\]
which extends the geometric intersection number between two 
closed curves on $S$. The form $\iota$ has 
the following properties (see Chapter 8 of~\cite{Mar16}).
\begin{enumerate}
    \item $\iota$ is continuous for the weak$^*$-topology.
    \item If $\lambda$ is the  Liouville current of a hyperbolic metric $\rho$ on $S$, and 
    if $\alpha\subset S$ is any closed geodesic, then 
    \begin{equation}\label{iota}\iota(\lambda,\alpha)=\ell_\rho(\alpha)\end{equation} the $\rho$-length of $\alpha$.
\end{enumerate}

The intersection $\iota(\lambda_1,\lambda_2)$
of two Liouville currents $\lambda_1,\lambda_2$ 
of two hyperbolic metrics on~$S$  has another interpretation
which is important for us. 
Namely, the choice of a hyperbolic metric $h_1$ on $S$ determines the 
\emph{geodesic flow} $\Phi^t$ on the unit 
tangent bundle $T^1S$ of $S$ and a H\"older structure on $T^1S$. 
Given these data, any geodesic current $\mu$ for $S$ extends
to a $\Phi^t$-invariant finite Borel measure $\hat \mu$ on~$T^1S$. Thus given 
a H\"older continuous positive function $f:T^1S\to (0,\infty)$, the 
integral 
\begin{equation}\label{I}\int f d\hat \mu={\bf I}(\mu,f)\end{equation}
is defined. By invariance, this integral only depends on the \emph{cohomology class}
of $f$.
This means that if $f^\prime$ 
is another H\"older function such that
$\int_\gamma f^\prime =\int_\gamma f$ for every periodic orbit~$\gamma$ for 
$\Phi^t$ then $\int f^\prime d\hat \mu=\int f d\hat \mu$ and hence
${\bf I}(\mu,f^\prime)={\bf I}(\mu,f)$.

As a consequence, the pairing ${\bf I}(\mu,f)$ is a pairing between cohomology
classes of (positive) H\"older functions on $T^1S$ and geodesic currents without having 
to make reference to the geodesic flow $\Phi^t$ which depends on the background metric.
Furthermore, the pairing is continuous, where as before, ${\mathcal C}(S)$ is 
equipped with the weak$^*$-topology, and the space of H\"older cohomology classes
is equipped with the quotient topology obtained from the space of 
H\"older functions on $T^1S$ for a fixed reference metric.
We refer to~\cite{Ha99} for more details. 

Assume now that  $f_2$ is a H\"older function which integrates over each 
periodic orbit $\gamma$ for $\Phi^t$ to the length of the free homotopy class of 
$\gamma$ for another hyperbolic metric $h_2$. If $\lambda_1,\lambda_2$ are 
the Liouville currents of $h_1,h_2$, then we have
\[\iota(\lambda_1,\lambda_2)={\bf I}(\lambda_1,f_2).\]

A H\"older continuous positive function $f$ on $T^1S$ can be used to 
reparameterize the flow $\Phi^t$. This reparameterization is defined 
by 
\[\Phi_f^t(v)=\Phi^{\sigma(v,t)}(v)\]
where $\int_0^{\sigma(v,t)}f(\Phi^sv)ds=t.$
For the reparameterized flow, the function $f$ is cohomologous to the 
constant function $1$. 
This is equivalent to stating that the period
of a periodic orbit $\gamma$ for the flow $\Phi_f^t$ equals the integral of 
$f$ over the corresponding orbit for $\Phi^t$.
The identity $(T^1S,\Phi^t)\to (T^1S,\Phi_f^t)$ is an \emph{order preserving
orbit equivalence} between the flows~$\Phi^t,\Phi^t_f$.

Denote by $h_\mu$ the entropy of a $\Phi^t$-invariant Borel probability
measure $\mu$ on $T^1S$. For a positive H\"older function $f$ 
let $\delta(f)>0$ be such that ${\rm pr}(-\delta(f)f)=0$ where 
\[{\rm pr}(u)=\sup_\mu \left( h_\mu+\int ud\mu\right)\]
and $\mu$ runs through all $\Phi^t$-invariant Borel probability
measures on $T^1S$. Then 
\[ h_\mu-\delta(f) \int f d\mu\leq 0\] for all $\mu$. 
A measure $\mu$ is
called a \emph{Gibbs equilibrium state} for $f$ if the equality in this inequality holds.
Using the fact that an order preserving orbit equivalence
between two flows induces an isomorphism between the flow invariant 
probability measures and a formula relating entropies due to Abramov~\cite{abramov}, existence and uniqueness of an equilibrium state for 
the continuous function $\delta(f)f$ is equivalent to existence and uniqueness
of a measure of maximal entropy for the geodesic flow $\Phi_f^t$ on 
$T^1S$, which is well known for H\"older functions (see~\cite{HK95} for more details). 
The constant
$\delta(f)$ then equals the topological entropy of $\Phi_f^t$. 

Let $\mu_{f}$ be the scalar multiple of the unique Gibbs equilibrium state for $f$ such that $\int fd\mu_{f}=1$ (so it is not a necessarily a probability measure).
Then $\mu_{f}$ can be obtained as a limit
\begin{equation}\label{eq:gibbs as limit}\mu_f=\lim_{R\to \infty} \frac{1}{\myhash  N_f(R)}
\sum_{\ell_f(\gamma)\leq R} \frac{\mc D_{\gamma}}{\ell_f(\gamma)}\end{equation}
where $\ell_f(\gamma)=\int_\gamma f$ is the period of $\gamma$ for $\Phi^t_f$, where 
$\mc D_{\gamma}$ is the $\Phi^t$-invariant measure on the periodic orbit $\gamma$ whose total mass
is the $\Phi^{t}$-period of $\gamma$, 
and where $N_f(R)=\{\gamma\mid \ell_f(\gamma)\leq R\}$. 
Thus by continuity of the pairing ${\bf I}$, for any (positive) H\"older function $u$ we have
\begin{equation}\label{intersectionformula}
{\bf I}(\mu_f,u)=\int ud\mu_{f} = \lim_{R\to \infty}\frac{1}{\myhash  N_f(R)} \sum_{\ell_f(\gamma) \leq R}\frac{\ell_u(\gamma)}{\ell_f(\gamma)}.
\end{equation}

Following~\cite{BCLS15}, we also define the \emph{normalized intersection number} by 
\[{\bf J}(f,u)=\frac{h(u)}{h(f)}{\bf I}(\mu_f,u)\]
where $h(u)=\lim_{R\to \infty} \frac{1}{R} \log \myhash  N_{u}(R)$.

\subsection{Hitchin representations}\label{sec:hitchinrep}

In this section we introduce Hitchin representations and summarize those of their properties which 
are important later on. Our main goal is to show that the $G$-invariant Finsler metric
$\mf F$ defined in (\ref{eq:mfF}) induces a pressure metric on the Hitchin component. 

The \emph{Hitchin component} ${\rm Hit}(S)$ 
for conjugacy classes of representations 
$\pi_1(S)\to \PSL_d(\mathbb{R})$ is the connected component of the set of conjugacy classes of representations which 
factor through an irreducible representation $\PSL_2(\mathbb{R})\to \PSL_d(\mathbb{R})$.
In the sequel we always work with explicit representations rather than with conjugacy classes. 

An important property possessed by Hitchin representations is the Anosov property first introduced by Labourie~\cite{Lab06}, which plays a central role in~\cite{BCLS15} in the definition of the pressure metric.
There are many different versions of the Anosov property, and many 
equivalent characterisations of the Anosov property, see for example~\cite{Lab06,GW12,KLP17,GGKW,BPS19,KP22}, and Theorem~4.37 of~\cite{SurveyFanny} for more details and history.

\begin{definition}\label{projectiveanosov}
A representation
$\rho:\pi_1(S)\to \PSL_d(\mathbb{R})$ is \emph{projective Anosov}
if there exist $\rho$-equivariant H\"older continuous maps 
$\xi:\deinf\tilde S\to \mathbb{R}P^{d-1}$, $\theta:\deinf\tilde S\to (\mathbb{R}P^{d-1})^*$ (where $(\mathbb{R}P^{d-1})^*$ is the dual projective space) such that
\begin{enumerate}
    \item if $x,y$ are distinct points in $\deinf\tilde S$, then $\xi(x)+ \ker\theta(y)=\mathbb{R}^d$, and
    \item if $\gamma_n\in\pi_1(S)$ is a sequence so that for some basepoint $\basepoint\in \tilde S=\H^2$, the sequence 
    $\gamma_n\basepoint$ 
    converges to $x\in\partial_\infty\H^2$, and $\gamma_n^{-1}\basepoint\to y\in \partial_\infty \H^2$, then 
    we have $\rho(\gamma_n)p\to \xi(x)$ for any $p\in\R P^{d-1}-\ker\theta(y)$ and $\rho(\gamma_n^{-1})q\to \theta(y)$ for any $q\in (\R P^{d-1})^*$ such that $\xi(x)\not\in \ker q$. 
\end{enumerate}
\end{definition}

\begin{remark}\label{consolidatedef}
In the references given for the characterisations of the Anosov property, the limit map is only required to be continuous, and then the H\"older regularity is derived as a consequence of the other conditions, see for instance Theorem~6.58 of~\cite{KLP17}.
\end{remark}

The following is due to Labourie~\cite{Lab06} and Fock--Goncharov~\cite{FG06}.

\begin{theorem}[Labourie, Fock--Goncharov]
Every representation in the Hitchin component is projective Anosov.
\end{theorem}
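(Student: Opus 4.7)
The plan is to combine three ingredients: (a) verify the projective Anosov property at a Fuchsian base point, (b) observe that the property is open in the space of all representations $\pi_1(S)\to\PSL_d(\R)$, and (c) produce the equivariant boundary maps $\xi,\theta$ for every class in $\mr{Hit}(S)$. Since $\mr{Hit}(S)$ is by definition the connected component containing the Fuchsian representations, (a) and (b) combined with (c) yield the statement; alternatively one may replace (c) by closedness of the projective Anosov condition inside $\mr{Hit}(S)$ and then construct the maps in the limit via compactness of limit-configurations.

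Step (a) is a computation. For $\rho=\tau\circ\rho_0$ with $\rho_0:\pi_1(S)\to\PSL_2(\R)$ discrete and faithful, the Veronese-type embedding $\deinf\H^2\hookrightarrow\mc F$ induced by $\tau$ in Section \ref{sec:lietheoryI} is $\rho_0$-equivariant after identifying $\deinf\tilde S\cong\deinf\H^2$, and projecting the resulting flag curve to $\R P^{d-1}$ and its dual gives $\xi$ and $\theta$. Transversality $\xi(x)+\ker\theta(y)=\R^d$ for $x\ne y$ is immediate from the explicit form of $\tau$, while the required North--South dynamics in $\R P^{d-1}$ follow from those of hyperbolic elements of $\PSL_2(\R)$ acting on $\deinf\H^2$, together with the fact, recalled in Section \ref{sec:lietheoryI}, that $\tau$ sends such elements to matrices with $d$ distinct positive eigenvalues in decreasing order. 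Step (b) is by now standard: I would invoke the Bochi--Potrie--Sambarino characterization of projective Anosov in terms of a uniform exponential gap between the top two singular values of $\rho(\gamma)$ along geodesic words $\gamma$ in a fixed generating set of $\pi_1(S)$; such a gap condition is manifestly open in $\rho$. Alternatively, one may appeal to the structural stability results of Labourie or of Kapovich--Leeb--Porti.

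The main obstacle is step (c), and this is where the two historical proofs diverge. Labourie's route uses non-abelian Hodge theory: each class in $\mr{Hit}(S)$ is represented by a polystable Higgs bundle sitting in a distinguished Hitchin section, and the harmonic metric attached to it produces an equivariant hyperconvex Frenet curve $\deinf\tilde S\to\mc F$; its projections to $\R P^{d-1}$ and to the dual projective space give $\xi,\theta$, with transversality, H\"older regularity, and the convergence property (2) of Definition \ref{projectiveanosov} all extracted from hyperconvexity. Fock--Goncharov's route is combinatorial: they parameterize $\mr{Hit}(S)$ by totally positive coordinates attached to an ideal triangulation of $S$, and positivity of these coordinates forces the equivariant limit map $\deinf\tilde S\to\mc F$ to take values in positive configurations of flags, from which transversality and the contraction of property (2) can be read off from positivity of elementary monodromies. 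Either construction, spliced together with (a) and (b), completes the argument; the deep content of the theorem is precisely the existence of this equivariant curve, and I expect that to be the true technical bottleneck of any proof.
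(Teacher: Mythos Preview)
The paper does not prove this theorem: it is stated as a known result and attributed to Labourie \cite{Lab06} and Fock--Goncharov \cite{FG06} without argument. So there is no proof in the paper to compare your proposal against; your sketch is instead a summary of the original literature, which is reasonable given the theorem's depth.

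That said, the logical structure of your outline is somewhat redundant. If step (c) really produces, for every $\rho\in\mr{Hit}(S)$, equivariant maps $\xi,\theta$ satisfying both conditions of Definition~\ref{projectiveanosov}, then (c) alone \emph{is} the theorem and (a)+(b) are superfluous. Conversely, an open-and-closed argument would need both openness (your (b)) and closedness of the projective Anosov locus inside $\mr{Hit}(S)$; you mention closedness only in passing, but it is the harder half and is essentially where the work of (c) reappears. A cleaner dichotomy: either argue open-and-closed within the connected component (which requires a nontrivial closedness/properness statement), or construct the limit curve directly for each $\rho$ as in (c).

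One historical correction: Labourie's original argument is not primarily via non-abelian Hodge theory. He works directly with the flat $\R^d$-bundle over $T^1S$ and shows, by a dynamical contraction estimate along the geodesic flow, that it carries a dominated splitting into line bundles; the hyperconvex Frenet curve and the Anosov property are then read off from this splitting. Higgs bundles enter Hitchin's parameterization of the component, not Labourie's proof of Anosov-ness. Your description of the Fock--Goncharov route via total positivity is closer to the mark, though their paper establishes discreteness and faithfulness; the explicit link from positivity to the Anosov property was articulated somewhat later.
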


As in~\cite{BCLS15}, let $F$ be the total space of the bundle over 
\[(\mathbb{R}P^{d-1})^{(2)}=\mathbb{R}P^{d-1}\times (\mathbb{R}P^{d-1})^*
- \{(U,V)\mid U\subset {\rm ker}(V)\}\]
whose fiber at a point $(U,V)$ is the space
\[M(U,V)=\{(u,v)\mid u\in U,v\in V,\langle v\mid u\rangle =1\}/\sim\]
where $\langle v \mid u\rangle$ is the natural pairing between a vector 
and a covector and 
$(u,v)\sim (-u,-v)$. Note that $u$ determines $v$ so that 
$F$ is an $\mathbb{R}$-bundle. 

The bundle $F$ is equipped with a natural $\mathbb{R}$-action, given by
\[\Phi_F^t(U,V,(u,v))=(U,V,(e^tu,e^{-t}v)).\]
Given a projective Anosov representation 
$\rho:\pi_1(S)\to\PSL_d(\mathbb{R})$ and $\xi,\theta$ the associated limit maps, 
we consider the pullback bundle 
\[F_\rho=(\xi,\theta)^*F\to\deinf\wt S\times\deinf\wt S-\Delta\]
by the map $\deinf\wt S\times\deinf\wt S-\Delta\xrightarrow[]{(\xi,\theta)}(\R P^{d-1})^{(2)}$, which inherits an $\mathbb{R}$-action 
from the action of $\Phi^t_F$. The actions $\pi_1(S)\acts_\rho\R^d$ and $\pi_1(S)\acts\deinf\wt S\times\deinf\wt S-\Delta$ 
extend to an action on~$F_\rho$. If we let 
\[U_\rho S=\pi_1(S)\backslash F_\rho \] 
then the $\mathbb{R}$-action on $F_\rho$ descends to a flow $\Phi_\rho^t$ on 
$U_\rho S$ which is called the \emph{spectral radius flow} of the representation
(see p.1118 of~\cite{BCLS15}).

The following statement combines Propositions 4.1, 4.2 and 6.2 of~\cite{BCLS15}.
It is valid for any analytic family of 
projective Anosov representations.

\begin{proposition}\label{orbitequ}
\begin{enumerate}
    \item 
For every representation $\rho$ in the Hitchin component 
there exists a H\"older continuous order preserving orbit equivalence
$\Psi_\rho:(T^1S,\Phi^t)\to (U_\rho S,\Phi^t_\rho)$. Any 
primitive element $\gamma\in \pi_1(S)$ has period $\log \Lambda(\rho)(\gamma)$
where $\Lambda(\rho)(\gamma)$ is the spectral radius of 
$\rho(\gamma)\in\PSL_d(\mathbb{R})$.
\item If $D$ is the unit disk and if 
${\rho_u}$ $(u\in D)$ is a real analytic family of Hitchin representations, 
then up to decreasing the size of $D$, there exists a real analytic
family $\{f_{\rho_u}:T^1S\to \mathbb{R}\}_{u\in D}$ of positive H\"older functions
such that the reparameterization of $T^1S$ by $f_{\rho_u}$ is H\"older conjugate to $U_{\rho_u}$
for all $u\in D$.
\end{enumerate}
\end{proposition}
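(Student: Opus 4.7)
The plan is to use that both $T^1\tilde S$ and $F_\rho$ are naturally principal $\mathbb{R}$-bundles over the space of oriented geodesics $\partial_\infty\tilde S\times\partial_\infty\tilde S-\Delta$. For $T^1\tilde S$ this is the well-known parametrization of a unit vector by the ideal endpoints of its geodesic together with a signed arclength, with the $\mathbb{R}$-action being translation. For $F_\rho$, the fiber of $F$ over a transverse pair $(U,V)\in(\mathbb{R}P^{d-1})^{(2)}$ is a principal homogeneous $\mathbb{R}$-space under $t\cdot(u,v)=(e^tu,e^{-t}v)$, so pulling back by the H\"older continuous map $(\xi,\theta)$ produces a H\"older $\mathbb{R}$-bundle over $\partial_\infty\tilde S\times\partial_\infty\tilde S-\Delta$. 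Since the base is contractible, this bundle trivializes and the limit-map regularity yields a H\"older section.

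For part (1), fix such a H\"older trivialization of $F_\rho$ together with the standard one of $T^1\tilde S$; this realizes both spaces as $(\partial_\infty\tilde S\times\partial_\infty\tilde S-\Delta)\times \mathbb{R}$, on which $\pi_1(S)$ acts through H\"older cocycles extending the natural action on the first two factors. The identity of the underlying space descends to the quotients as a H\"older orbit equivalence $\Psi_\rho$, preserving the direction of the flow because both $\mathbb{R}$-actions are positive on fibers. For the period formula, a primitive $\gamma\in\pi_1(S)$ preserves the fiber of $F_\rho$ over $(\gamma^-,\gamma^+)$: in coordinates, $u$ spans $\xi(\gamma^+)$ and is an eigenvector of $\rho(\gamma)$ with eigenvalue $\Lambda(\rho)(\gamma)$, and the normalization $\langle v\mid u\rangle=1$ together with $\pi_1(S)$-equivariance of $\theta$ forces $\rho(\gamma)^{-T}v=\Lambda(\rho)(\gamma)^{-1}v$. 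Hence $\rho(\gamma)\cdot(u,v)=\Phi_F^{\log\Lambda(\rho)(\gamma)}(u,v)$, giving the claimed period.

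For part (2), the orbit equivalence $\Psi_\rho$ determines a positive H\"older function $f_\rho$ on $T^1S$ as the infinitesimal change of time $\partial_s\tau_\rho$, where $\tau_\rho$ is the fiber coordinate of $\Psi_\rho$ expressed in the two product trivializations. This function may be written in closed form in terms of the H\"older limit maps $(\xi_\rho,\theta_\rho)$ and a Busemann-type cocycle on the base. For a real analytic family $\{\rho_u\}_{u\in D}$, openness of the projective Anosov condition allows us to assume every $\rho_u$ is projective Anosov on a (possibly smaller) disk. Real analytic dependence of the limit maps $(\xi_{\rho_u},\theta_{\rho_u})$ on $u$ in a H\"older norm then follows from the classical contraction argument, realizing them as the unique attracting fixed point of a real analytic family of contractions on a suitable Banach space of equivariant H\"older maps, and invoking the implicit function theorem. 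The function $f_{\rho_u}$ inherits real analytic dependence on $u$.

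The main obstacle is this last functional-analytic step: one must identify a Banach space of H\"older equivariant boundary maps on which the representations act analytically and contractively, in a norm strong enough to control the H\"older regularity of the reparameterization function $f_{\rho_u}$. The remaining ingredients reduce to bookkeeping with the two trivializations and a standard computation of eigenvalues.
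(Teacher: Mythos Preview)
The paper does not give its own proof of this proposition: it simply records that the statement combines Propositions 4.1, 4.2 and 6.2 of \cite{BCLS15}. Your sketch is essentially the strategy carried out in that reference, so in that sense you have recovered the paper's ``proof''.

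One imprecision worth flagging in part (1): after trivializing both $T^1\tilde S$ and $F_\rho$ as $(\partial_\infty\tilde S\times\partial_\infty\tilde S-\Delta)\times\mathbb{R}$, the two $\pi_1(S)$-actions on the $\mathbb{R}$-factor differ by a H\"older cocycle, so the literal identity map is \emph{not} $\pi_1(S)$-equivariant and does not descend. What descends is the map which is the identity on the base and shifts the fiber by the integrated cocycle; equivalently, one chooses a $\pi_1(S)$-equivariant H\"older section of one bundle and transports it. This is exactly what produces the reparameterization function $f_\rho$ you invoke in part (2), so the correction is cosmetic, but as written the sentence ``the identity of the underlying space descends'' is not quite right.

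For part (2), you correctly isolate the genuine content: real analytic dependence of the limit maps in a H\"older norm, obtained by realizing them as fixed points of an analytic family of contractions and applying the implicit function theorem. This is precisely Proposition 6.2 of \cite{BCLS15}, and you are right that this functional-analytic step is where all the work lies; your sketch does not carry it out, but neither does the paper.
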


As a consequence, the spectral radius length defines a \emph{pressure metric}
on ${\rm Hit}(S)$ as follows.
For any smooth deformation $\rho_t$ of a representation $\rho=\rho_0$,
put 
\begin{equation}\label{pressuremetric1}
 \Vert \rho^\prime(0)\Vert^2=\frac{d^2}{dt^2}\vert_{t=0}{\bf J}(f_{\rho(0)},f_{\rho(t)})    
\end{equation} 
(Theorem 1.3 of~\cite{BCLS15}) where 
$f_{\rho(s)}$ is the H\"older function constructed from $\rho(s)$ as in Proposition~\ref{orbitequ}.
That this construction defines indeed a (mildly degenerate) Riemanian metric on 
the Hitchin component which determines a distance function was established in~\cite{BCLS15}.
It is based on the fact that projective Anosov representations are dominated in the sense of 
\cite{BPS19}. We refer to~\cite{BCLS15},~\cite{BPS19} and~\cite{Sam24} for more precise information.

The pressure metric we are interested in is a more geometric version of the metric  \eqref{pressuremetric1}. 
To define this metric we need to review some additional properties of representations in ${\rm Hit}(S)$. 
Let as before $\mc F$ be the variety of full flags in $\mathbb{R}^d$.

\begin{definition}[\cite{GGKW,KLP17}]\label{def:anosov}
    A representation $\rho:\pi_1(S)\to G$ is \emph{Borel Anosov} if the following holds true.
   \begin{enumerate}
   \item    
   There exists 
    a (unique) equivariant H\"older embedding 
    $\partial_\infty \rho:\partial\H^2\to\mc F$ such that 
    $\partial_\infty \rho(\xi)\pitchfork \partial_\infty \rho(\eta)$ for all $\xi\neq\eta\in\partial\H^2$. 
   \item 
   For any diverging sequence $(\gamma_n)_n\subset\pi_1(S)$ such that $\gamma_n\to\xi\in\partial\H^2$ and $\gamma_n^{-1}\to\eta$, we have $\rho(\gamma_n)\zeta\to \partial_\infty \rho(\eta)$ for any $\zeta\in\mc F$ transverse to $\partial_\infty \rho(\xi)$.
\end{enumerate}
\end{definition}

By the groundbreaking work of Labourie and Fock--Goncharov, we have

\begin{theorem}[\cite{Lab06,FG06}]\label{hitanosov}
    All Hitchin representations $\pi_1(S)\to\PSL_d(\R)$ are Borel Anosov.
\end{theorem}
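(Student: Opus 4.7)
The plan is to combine the triviality of the theorem on the Fuchsian locus with Labourie's construction of a Frenet equivariant boundary curve for every Hitchin representation. On the Fuchsian locus the statement is immediate: for $\rho_0 = \tau \circ \rho_{\mr{Fuch}}$ with $\rho_{\mr{Fuch}}: \pi_1(S) \to \PSL_2(\R)$ discrete and faithful, the required boundary map $\partial_\infty \rho_0: \partial \H^2 \to \mc F$ is the osculating flag curve of the Veronese embedding $\R P^1 \to \R P^{d-1}$. Transversality of $\partial_\infty \rho_0(\xi)$ and $\partial_\infty \rho_0(\eta)$ for distinct $\xi, \eta$ follows from the explicit form of the Veronese, and the contraction property (2) follows from the north--south dynamics of $\rho_{\mr{Fuch}}$ on $\partial \H^2$ combined with the regularity properties of $\tau$ recalled at the beginning of Section~\ref{sec:lietheoryI}.

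For a general $\rho \in \mr{Hit}(S)$, I would invoke Labourie's Frenet curve construction. The non-abelian Hodge correspondence produces, after fixing a Riemann surface structure on $S$, a $\rho$-equivariant harmonic map $f_\rho: \tilde S \to \X$, and the specific form of the Hitchin section (a holomorphic family of Higgs bundles parametrised by holomorphic differentials on $S$) allows one to construct an equivariant Hölder continuous boundary curve $\partial_\infty \rho: \partial \H^2 \to \mc F$ satisfying the \emph{Frenet} (hyperconvex) property: for distinct $\xi_1, \ldots, \xi_k \in \partial \H^2$ and positive integers $n_i$ with $\sum_i n_i = d$, the subspaces $\partial_\infty \rho(\xi_i)^{(n_i)}$ are in direct sum. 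Specialising to $k=2$ with $n_1 + n_2 = d$ yields condition (1) of Definition~\ref{def:anosov}.

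To deduce condition (2), I would argue as follows. The Frenet property together with discreteness and faithfulness implies that every non-trivial $\rho(\gamma)$ is loxodromic with Jordan projection in the open Weyl chamber $\mf a^+$, and for a divergent sequence $\gamma_n$ the Cartan projections $\kappa(\rho(\gamma_n))$ escape to infinity while remaining uniformly far from the walls. Given $\gamma_n \to \xi$ and $\gamma_n^{-1} \to \eta$ in $\partial \H^2$, equivariance and continuity of $\partial_\infty \rho$ force the attracting and repelling fixed flags of $\rho(\gamma_n)$ to converge to the appropriate limit flags determined by $\partial_\infty \rho(\xi)$ and $\partial_\infty \rho(\eta)$. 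A standard north--south dynamics argument on the flag variety, exploiting the escape of Cartan projections into the interior of $\mf a^+$, then yields the required convergence for any $\zeta$ transverse to the repelling flag.

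The main obstacle is the construction of the Frenet curve, which is the genuine content of the theorem. Labourie's proof goes through Higgs bundles, harmonic maps, and detailed analysis of the Hitchin section, while Fock--Goncharov's alternative uses positive coordinates on the moduli space of framed local systems. Both approaches require substantial preparation, and in practice one cites the theorem as a black box.
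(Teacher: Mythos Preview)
The paper does not give its own proof of this statement: it is stated with attribution to \cite{Lab06,FG06} and used as a black box, exactly as you anticipate in your final paragraph. There is therefore nothing to compare your argument against.

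Your sketch is a fair summary of the content behind the citation, though one historical correction: Labourie's original proof in \cite{Lab06} does not go through Higgs bundles and harmonic maps as you suggest, but rather through a direct dynamical argument (he constructs an Anosov splitting for a natural flow on an associated flat bundle, and deduces the Frenet curve from that). The Higgs bundle description of the Hitchin component is due to Hitchin and predates Labourie's work, but Labourie's contribution was precisely to bypass the analytic machinery and establish the Anosov property by dynamical means. Your outline of how the Frenet property yields conditions (1) and (2) of Definition~\ref{def:anosov} is correct in spirit.
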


Our goal is to construct a pressure metric on ${\rm Hit}(S)$ as in
\eqref{pressuremetric1}, but using the fixed length function $\ell^{\mf F}(g)=\alpha_0(\lambda(g))$ of the Finsler norm 
and its associated renormalised intersection form 
$$
{\bf J}(\rho,\rho^\prime) = \frac{h(\rho')}{h(\rho)} \lim_{R\to \infty}\frac{1}{\myhash  N_\rho(R)} \sum_{\ell^{\mf F}(\rho(\gamma)) \leq R}\frac{\ell^{\mf F}(\rho'(\gamma))}{\ell^{\mf F}(\rho(\gamma))},
$$
where 
\[N_\rho(R)=\{[\gamma]\in[\pi_1(S)]:\ell^{\mf F}(\rho(\gamma))\leq R\}\text{ and }h(\rho)=\lim_{R\to\infty}\frac1R\log\myhash N_\rho(R).\]
To this end we have to establish an analog of 
Proposition~\ref{orbitequ} for this new length function. 
We shall reduce this statement to 
Proposition~\ref{orbitequ} using the following classical observation.

Let $\xi=(\xi_1\subset \cdots \subset \xi_{d})$ be a full flag in $\R^d$. 
Then for each $k\leq d-1$ the $k$-th exterior power $\bwedge^k(\xi_k)$ is one-dimensional.
A non-zero element $\omega$ of this vector space defines up to a non-zero multiple 
a non-zero linear functional
$\Psi(\omega):\bwedge^{d-k}(\mathbb{R}^d)\to \mathbb{R}$ as follows. Choose a non-zero element 
$\nu\in \bwedge^d(\mathbb{R}^d)$ and put $\Psi(\omega)(\alpha)= c$ if 
$\omega \wedge \alpha=c \nu$. Note that the kernel of $\Psi(\omega)$ is spanned by all 
decomposable elements of $\bwedge^{d-k}\mathbb{R}^d$ which are not transverse to 
$\xi_k$. 

If $\rho:\pi_1(S)\to G$ is Borel Anosov, then by the definition of 
the transversality relation $\pitchfork$, for any two distinct points 
$\xi\not=\eta\in \partial \H^2$, the $d-k$-th subspace 
$\partial_\infty \rho(\xi)_{d-k}$ of the flag 
$\partial_\infty \rho(\xi)$ defines a line of linear functionals on $\bwedge^{k}(\R^d)$ which do not evaluate
to zero on $\bwedge^k \partial_\infty \rho(\eta)_k$, where 
$\partial_\infty \rho(\eta)_k$ is the $k$-dimensional subspace 
of the flag $\partial_\infty \rho(\eta)$. Thus if $\bwedge^k\rho:\pi_1(S)\to \PSL_{d_k}(\R)$ denotes 
the representation induced by $\rho$ 
 into the full linear group of $\bwedge^k(\R^d)$ 
where $d_k$ denotes the dimension of $\bwedge^k(\R^d)$, then 
as the map $\partial_\infty \rho:\partial_\infty \H^2\to {\mc F}$ is H\"older continuous, the following well-known statement 
holds true.

\begin{lemma}\label{borelyieldsprojective} 
If $\rho:\pi_1(S)\to G$ is Borel Anosov, then for any $k<d$, the induced representation 
$\bwedge^k\rho$ is projective Anosov.
\end{lemma}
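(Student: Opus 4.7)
The plan is to build the limit maps for $\bwedge^k\rho$ directly from the flag map $\partial_\infty\rho$ via the Plücker embedding and the duality sketched just before the lemma, and then check the two projective-Anosov conditions purely algebraically from the corresponding Borel-Anosov properties.

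First I would define, for $x\in\partial_\infty\tilde S$,
\[
\xi^{(k)}(x)=\bwedge^k\bigl(\partial_\infty\rho(x)_k\bigr)\in\mathbb{R}P(\bwedge^k\R^d)=\mathbb{R}P^{d_k-1},
\]
and $\theta^{(k)}(x)\in(\mathbb{R}P^{d_k-1})^*$ as the line spanned by the functional $\Psi(\omega)$ associated to any non-zero $\omega\in\bwedge^{d-k}(\partial_\infty\rho(x)_{d-k})$ (as constructed in the paragraph preceding the lemma). Both maps are $\rho$-equivariant because $\partial_\infty\rho$ is, and they are Hölder continuous because the Plücker-type map $\mc F\to\mathbb{R}P(\bwedge^k\R^d)$ sending a flag to its $k$-th exterior power (and dually its $(d-k)$-th) is a smooth algebraic morphism, so it preserves the Hölder regularity of $\partial_\infty\rho$.

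Next, I would verify the transversality condition (1) of Definition~\ref{projectiveanosov}. Fix distinct $x\neq y\in\partial_\infty\tilde S$. By Borel Anosov transversality we have $\partial_\infty\rho(x)_k\oplus\partial_\infty\rho(y)_{d-k}=\R^d$. By the very definition of $\Psi$ recalled before the lemma, $\ker\theta^{(k)}(y)$ is spanned by those decomposable $k$-vectors whose underlying $k$-plane meets $\partial_\infty\rho(y)_{d-k}$ non-trivially; since $\partial_\infty\rho(x)_k$ is transverse to $\partial_\infty\rho(y)_{d-k}$, the line $\xi^{(k)}(x)$ is not contained in $\ker\theta^{(k)}(y)$, and hence $\xi^{(k)}(x)+\ker\theta^{(k)}(y)=\bwedge^k\R^d$, as required.

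Finally, I would check the dynamical convergence (2). Suppose $\gamma_n\in\pi_1(S)$ satisfies $\gamma_n\basepoint\to x$ and $\gamma_n^{-1}\basepoint\to y$ in $\partial_\infty\H^2$. By the dynamical clause of Definition~\ref{def:anosov}, for every flag $\zeta\in\mc F$ transverse to $\partial_\infty\rho(y)$ (respectively $\partial_\infty\rho(x)$), we have $\rho(\gamma_n)\zeta\to\partial_\infty\rho(x)$ (respectively $\rho(\gamma_n^{-1})\zeta\to\partial_\infty\rho(y)$). Applying the smooth map that takes a flag to its $k$-th exterior power, any $p\in\mathbb{R}P^{d_k-1}-\ker\theta^{(k)}(y)$ can be written — or uniformly approximated — in terms of a flag transverse to $\partial_\infty\rho(y)$, so that $\bwedge^k\rho(\gamma_n)\,p\to\xi^{(k)}(x)$. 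An analogous argument with the $(d-k)$-th subspaces and the dual Plücker map yields $\bwedge^k\rho(\gamma_n^{-1})q\to\theta^{(k)}(y)$ for any $q\in(\mathbb{R}P^{d_k-1})^*$ not vanishing on $\xi^{(k)}(x)$.

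The main subtlety I expect is the last step: one has to argue that convergence of flags in $\mc F$ really transfers to convergence of arbitrary points of $\mathbb{R}P^{d_k-1}$ outside the critical hyperplane, not merely of those coming from Plücker-decomposable vectors. This is handled by the contraction property on transverse subspaces guaranteed by Borel Anosov dynamics (see for instance Theorem~6.58 of~\cite{KLP17} cited in Remark~\ref{consolidatedef}): the $\rho(\gamma_n)$-action on $\R^d$ has singular value gaps blowing up across the splitting dual to the flag pair $(\partial_\infty\rho(x),\partial_\infty\rho(y))$, which induces the required singular value gap for $\bwedge^k\rho(\gamma_n)$ across the splitting dual to $(\xi^{(k)}(x),\theta^{(k)}(y))$, from which the pointwise convergence on the complement of $\ker\theta^{(k)}(y)$ is immediate.
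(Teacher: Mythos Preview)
Your proposal is correct and follows the same approach as the paper: the paper does not give a formal proof but treats the statement as well-known, deriving it from the discussion immediately preceding the lemma (construct the limit maps via the Pl\"ucker embedding of the $k$-th and $(d-k)$-th subspaces of the flag, use flag transversality to get condition~(1), and H\"older continuity of $\partial_\infty\rho$ plus smoothness of the Pl\"ucker map to get regularity). Your treatment of the dynamical clause via singular value gaps is the standard way to fill in what the paper leaves implicit; your earlier attempt to approximate an arbitrary $p\in\mathbb{R}P^{d_k-1}$ by decomposable vectors coming from flags is indeed insufficient on its own, but you correctly identify this and resolve it with the Cartan projection argument.
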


\begin{remark}
It follows from the above discussion that in fact, $\rho$ is Borel Anosov if and only 
if for each $k\leq d-1$ the induced representation on $\bwedge^k(\R^d)$ is projective Anosov.
We refer to Section 4 of~\cite{BPS19} for more details on this relation. 
\end{remark}

Thus we can apply Proposition~\ref{orbitequ} to each representation $\bwedge^k\rho$. 
Recall from Section~\ref{sec:lietheoryI} the definition of the Jordan projection $\lambda$. 
As implicitly stated in 
\cite{BPS19}, we obtain the regularity statement on Finsler length functions needed 
to define a pressure metric.

\begin{proposition}\label{hoelderfunction}
    For every Borel Anosov representation $\rho_0:\pi_1(S)\to \PSL_d(\R)$, 
    there exists an open neighborhood $U$ of $\rho_0$ made of Borel Anosov representations
    and a real analytic family $\{f_\rho:T^1S\to \mf a\}_{\rho\in U}$ of H\"older functions, valued in $\mf a^+$, 
    such that for any~$\gamma\in \pi_1(S)$, we have
     $$\lambda(\rho(\gamma))=\int f_\rho d\gamma.$$
\end{proposition}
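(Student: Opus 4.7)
The strategy is to reduce the statement to Proposition~\ref{orbitequ} via the exterior power construction. By Lemma~\ref{borelyieldsprojective}, for each $k\in\{1,\ldots,d-1\}$ the induced representation $\bwedge^k\rho_0$ is projective Anosov. Since projective Anosov is an open condition on real analytic families of representations, we may choose a common neighborhood $U$ of $\rho_0$ on which each $\bwedge^k\rho$ is projective Anosov; shrinking $U$ further, Proposition~\ref{orbitequ}(2) yields $d-1$ real analytic families $\{g^\rho_k\}_{\rho\in U}$ of positive Hölder functions $g^\rho_k:T^1S\to\R$ such that for every primitive $\gamma\in\pi_1(S)$,
$$\int g^\rho_k\, d\gamma \;=\; \log\Lambda(\bwedge^k\rho(\gamma)) \;=\; \lambda_1(\rho(\gamma))+\cdots+\lambda_k(\rho(\gamma)),$$
where the second identity uses that the spectral radius of $\bwedge^k g$ is the product of the $k$ largest eigenvalue moduli of $g\in\PSL_d(\R)$, and $(\lambda_1,\ldots,\lambda_d)=\lambda(\rho(\gamma))$.

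Setting $g^\rho_0\equiv 0$ and $g^\rho_d\equiv 0$ (the latter is cohomologically valid because $\rho$ is $\PSL_d(\R)$-valued, so that $\lambda_1+\cdots+\lambda_d=0$), define $f_\rho:T^1S\to\mf a$ coordinate-wise by $f_\rho^{(k)}:=g^\rho_k-g^\rho_{k-1}$ for $k=1,\ldots,d$. Then by construction $\int f_\rho\, d\gamma=\lambda(\rho(\gamma))\in\overline{\mf a^+}$ for every $\gamma\in\pi_1(S)$; the extension from primitive to arbitrary elements is immediate since $\rho(\gamma)$ is loxodromic by the Borel Anosov property, and real analyticity of the family $\{f_\rho\}_{\rho\in U}$ is inherited from that of the $g^\rho_k$.

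The main remaining difficulty is to arrange that $f_\rho$ takes pointwise values in the open Weyl chamber $\mf a^+$, i.e.\ that $f_\rho^{(1)}(v)>\cdots>f_\rho^{(d)}(v)$ for every $v\in T^1S$; the integration formula alone does not give this, because each $g^\rho_k$ is only determined up to a Hölder coboundary. To fix this one builds $f_\rho$ geometrically from the Borel Anosov structure. Given $v\in T^1\tilde S$ with endpoints $v^\pm\in\partial_\infty\tilde S$, the pair of transverse flags $(\partial_\infty\rho(v^+),\partial_\infty\rho(v^-))$ spans a unique maximal flat $F_v\subset\X$ and distinguishes the translation direction along it; projecting a unit flow step into the Cartan subspace $\mf a$ yields a vector in $\mf a^+$ (the strict regularity reflects genuine transversality of the limit flags, or equivalently the full dominated splitting of the flat bundle $\tilde S\times_\rho\R^d$ into $d$ line subbundles with strictly ordered expansion rates along the geodesic flow). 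This assignment descends to a Hölder map $T^1S\to\mf a^+$ whose integral over periodic orbits recovers $\lambda(\rho(\cdot))$, so it differs from the preliminary $f_\rho$ only by a coordinate-wise Hölder coboundary; replacing the preliminary $f_\rho$ by this geometric lift finishes the construction. Real analyticity in $\rho$ is inherited from the real analytic dependence of the limit flag map $\partial_\infty\rho$, as established in Section~4 of~\cite{BPS19}. The key obstacle throughout is this last step, where the pointwise strict regularity (and not only the periodic integration formula) must be upgraded from the purely dynamical orbit-equivalence output of Proposition~\ref{orbitequ}.
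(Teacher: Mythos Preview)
Your reduction to Proposition~\ref{orbitequ} via exterior powers, and the assembly of the preliminary $f_\rho$ from the differences $g^\rho_k-g^\rho_{k-1}$, match the paper's argument exactly. The gap is in your final paragraph, where you upgrade to pointwise $\mf a^+$-valuedness.

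The geometric construction you sketch is not well-defined as written. The phrase ``projecting a unit flow step into the Cartan subspace'' presupposes an equivariant section $T^1\tilde S\to\X$ (or a choice of basepoint in each flat $F_v$) that you have not specified; the pair of flags determines $F_v$ and an orientation, but not a point on it. If instead you mean the instantaneous expansion rates of the line subbundles $L_1,\dots,L_d$ of the flat bundle with respect to some metric, then the pointwise ordering $f^{(1)}(v)>\cdots>f^{(d)}(v)$ requires an \emph{adapted} metric for the dominated splitting. Such metrics exist, but the standard constructions (long-time averaging) do not obviously produce a family varying real-analytically in $\rho$. Your citation of~\cite{BPS19} gives analyticity of the limit map and hence of the line bundles, but not of an adapted metric on them, so the analyticity claim for your geometric $f_\rho$ is unjustified.

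The paper resolves this with a different and cleaner trick. At the single point $\rho_0$ it invokes Sambarino's reparametrization result (Theorem~3.2 of~\cite{Sam14}) to produce an $\mf a^+$-valued H\"older function $f'_{\rho_0}$ with the correct periods. Since $f'_{\rho_0}$ and $f_{\rho_0}$ agree on all periodic orbits, Liv\v{s}ic gives a transfer function $F$ with $f'_{\rho_0}=f_{\rho_0}+\tfrac{d}{dt}\big|_{t=0}F\circ\Phi^t$. The point is then to set $f'_\rho:=f_\rho+\tfrac{d}{dt}\big|_{t=0}F\circ\Phi^t$ for \emph{every} $\rho\in U$, with the \emph{same} $\rho$-independent $F$. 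Analyticity in $\rho$ is inherited directly from the $f_\rho$, and since $\mf a^+$ is open and $f'_{\rho_0}$ takes values in it, so does $f'_\rho$ after shrinking $U$. This sidesteps entirely the need to build an $\mf a^+$-valued representative that varies analytically in $\rho$ from scratch.
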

\begin{proof}
    Proposition~\ref{orbitequ} implies that there exists an open neighborhood $U$ of $\rho_0$ and real analytic families $\{g^k_\rho:T^1S\to \R\}_{\rho\in U}$ of H\"older functions such that for any $\rho\in U$, each exterior product $\bwedge^k\rho$ is projective Anosov,
    and for any $\gamma\in\pi_1(S)$, the logarithm $\log \Lambda(\bwedge^k\rho(\gamma))$ of the spectral radius
    of $\bwedge^k(\rho(\gamma))$ equals 
    $$
    \log \Lambda(\bwedge^k\rho(\gamma))=\int g^k_\rho d\gamma.
    $$

    Then 
    we can consider the following H\"older function
    $$
    f_\rho = (g^1_\rho,\ g^2_\rho-g^1_\rho,\ g^3_\rho-g^2_\rho,\dots,\ 
    g^d_\rho-g^{d-1}_\rho)\in \mf a.
    $$
    By Proposition~\ref{orbitequ}, the function
    $f_\rho$ depends analytically on $\rho$. Moreover, for any $\gamma\in \pi_1(S)$, we have
     $$\lambda(\rho(\gamma))=\int f_\rho d\gamma.$$

     It is not clear, however, that $f_\rho$ is valued in the open Weyl chamber $\mf a^+$.
     Let us solve this issue by first replacing $f_{\rho_0}$ by an $f'_{\rho_0}$ valued in $\mf a^+$, using work of Sambarino, and then extend $f'_{\rho_0}$ to a small neighborhood of representations $\rho$, using a theorem of Liv\v{s}ic.
     


     We apply Sambarino's reparametrization result to the lengths functions $\alpha_k\circ\lambda\circ\rho_0(\gamma)$ where $\alpha_k(v_1,\dots,v_d)=v_k-v_{k+1}$, see Theorem~3.2 of~\cite{Sam14} (Sambarino proved in pages 481-483 that we can apply this theorem to our setting).
     This gives us positive H\"older functions $u^k:T^1S\to\R$ such that for any $\gamma\in \pi_1(S)$, we have
     $$\alpha_k\circ\lambda\circ\rho_0(\gamma)=\int u^k d\gamma.$$
     Let $f'_{\rho_0}:T^1S\to\mf a$ be  such that $\alpha_k\circ f'_{\rho_0}(v)=u^k(v)>0$ for all $v\in T^1S$ and $1\leq k\leq d-1$.
     Then $f'_{\rho_0}$ is valued in the interior of $\mf a^+$ by definition, it is H\"older, and $\lambda(\rho_0(\gamma))=\int f'_{\rho_0} d\gamma$ for any $\gamma$.

     For any periodic orbit $\gamma$ in $T^1S$ 
     we have $\int f_{\rho_0} d\gamma=\int f'_{\rho_0} d\gamma$, so by Theorem~1 of~\cite{Livsic} $f'_{\rho_0}$ and $f_{\rho_0}$ are cohomologous, in the sense that there exists $F:T^1S\to\mf a$ differentiable in the direction of the geodesic flow $\Phi^t$  
     such that $f'_{\rho_0}=f_{\rho_0}+\frac d{dt}_{|t=0}F\circ\Phi^t$. Put 
     \[f'_{\rho}
     =f_\rho+\frac d{dt}_{|t=0}F\circ\Phi^t\] for any $\rho\in U$, so that 
     $\lambda(\rho(\gamma))=\int f'_\rho d\gamma$ for any $\gamma$.
     This yields an analytic family of H\"older functions which take values 
     in $\mf a^+$ for all $\rho$ contained in a sufficiently small neighborhood $U'\subset U$ of $\rho_0$. 
     This is what we wanted to show.
\end{proof}

Consider now a $C^2$--path of representations  $(\rho_t)_t$ in 
the neighborhood $U$ constructed in Proposition~\ref{hoelderfunction} with initial value $\rho_0$.
Let $\mu_0$ be the equilibrium state on $T^1S$ associated to $f_{\rho_0}$ introduced 
in Subsection~\ref{sec:geodescicurrents}, 
normalized so that $\int f_{\rho_0}d\mu_0=1$.
Denote by $h(t)$ the entropy associated to $f_{\rho_t}$.
Following~\cite{BCLS15} we set
\begin{equation}\label{eq:def pressure metric}
    \left\Vert\frac {d}{dt}_{|t=0}\rho_t\right\Vert^2 = 
    \frac{1}{h(0)} \int \frac {d^2}{dt^2}_{|t=0}(h(t)\cdot \alpha_0\circ f_{\rho_t}) d\mu_0.
\end{equation}
It follows from Proposition~\ref{hoelderfunction} and~\cite{BCLS15} that this is well defined and is
indeed the square norm for a (perhaps degenerate) Riemannian metric on ${\rm Hit}(S)$ which is a variant
of the simple root length metric (\ref{pressuremetric1}) considered in~\cite{BCLS15}.
We call this metric the \emph{Finsler pressure metric} on ${\rm Hit}(S)$.

\subsection{Patterson--Sullivan theory}\label{sec:meas-conv}

\paragraph{Patterson--Sullivan theory for hyperbolic metrics.}

    Patterson~\cite{Patterson76} and Sullivan~\cite{sullivan79} introduced a construction of measures on $\deinf\H^2$ which allows 
    to obtain the 
    entropy maximizing invariant probability measure of the geodesic flow on a compact hyperbolic surface as a product measure. This construction has been generalised in various settings. We recall some important facts about their theory and the generalization to the case of interest for us. 

    Let as before $S$ be a closed surface of genus $g\geq 2$ and let 
    $\Gamma=\rho(\pi_1(S))\subset\PSL_2(\R)$ be a Fuchsian representation, determined by the choice of a hyperbolic metric on $S$.
    For $\xi\in\deinf\H^2$ and $x,y\in\H^2$, we denote by $b_\xi(x,y)$ the 
    Busemann function of $(x,y)$ based at~$\xi$, defined as in (\ref{busemann}).
    Up to multiplication by a constant, there exists a unique family of finite measures $(\nu^x)_{x\in\H^2}$ which all define the same measure class, 
    and which satisfy the following. For all $x,y\in\H^2$ and $\xi\in\deinf\H^2$,
    \begin{equation}\label{eq:busemann-relation}
        \frac{\de\nu^x}{\de\nu^y}(\xi)=e^{b_\xi(x,y)}.
    \end{equation}
    The measures $\nu^y$ can be obtained as a limit of measure of the form 
    \begin{equation}\label{patterson1}
    \frac{1}{c_s}\sum_{g\in\Gamma}e^{sd(y,g\cdot x)}\delta_{g\cdot x}\end{equation}
    with $s$ converging from above toward $1$ (which equals the \emph{critical exponent} of $\Gamma$), and the constant $c_s$ is chosen
    so that for $y=x$, the measures in (\ref{patterson1}) are probability measures. 

    From the measure class $\nu^x$ we define a measure on $\deinf\H^2\times\deinf\H^2\setminus\Delta$ invariant by the action of $\Gamma$. 
    Recall from (\ref{gromovproduct}) the definition of 
    the Gromov product $\langle\xi|\eta\rangle_x$ of $(\xi,\eta)$ based at $x$. It can be computed by 
    \begin{equation}\label{eq:Gromov-product1}
      \langle\xi|\eta\rangle_x=b_\xi(x,z)+b_\eta(x,z)
    \end{equation}
    for any $z$ on the geodesic with endpoints $\xi$ and $\eta$. 
    The value does not depend on the choice of $z$.  
    Then define the measure $\hat \nu$ on $\deinf\H^2\times\deinf\H^2\setminus\Delta$ by 
    \begin{equation}\label{eq:invariant-current}
        d\hat \nu(\xi,\eta)=e^{\langle\xi|\eta\rangle_x}\cdot d\nu^x(\xi)\times d\nu^x(\eta)
    \end{equation}
    The measure $\hat \nu$ is invariant under the action $\Gamma\acts\deinf\H^2\times\deinf\H^2\setminus\Delta$, is finite on compact
    sets and does not depend on $x$~\cite{sullivan79}. 

    The unit tangent bundle $T^1S$ of the surface $S=\Gamma \backslash \H^2$ is endowed with a geodesic flow $\Phi^t$.
    It is Anosov, so it admits a unique entropy maximizing invariant probability measure. This measure lifts to 
    a $\Gamma$-invariant $\Phi^t$-invariant Radon measure on $T^1\mathbb{H}^2$ which disintegrates to the measure $\hat\nu$.
    Namely, $\deinf\H^2\times\deinf\H^2\setminus\Delta$ is just the set of oriented geodesics in $\H^2$, 
    and $d\hat \nu\times dt$ defines a $\Phi^t$-invariant $\Gamma$-invariant Radon measure on $T^1\H^2$, where 
    $dt$ is the one-dimensional Lebesgue measure on flow lines. This measure projects 
    to a finite Borel measure on $T^1S$ in the Lebesgue measure class, which can be scaled to be a probability measure. 

    \paragraph{Patterson--Sullivan theory for Hitchin representations} 

    Patterson Sullivan theory was generalized to many different geometric settings. In the setting of 
    Finsler metrics on higher rank symmetric space and Hitchin representations, such a generalization is due to Kapovich and Dey 
   ~\cite{DK22} (the results are valid for all Anosov representations). 
    Namely, given a Hitchin representation $\rho:\pi_1(S)\to \PSL_d(\mathbb{R})$, 
    define a Poincar\'e series
\[P^{\mf F}(\rho,s)(x,y)=\sum_{\psi\in \rho(\pi_1(S))}e^{-s d^{\mf F}(y,\psi x)}\] 
where as before, $d^{\mf F}(y,z)$ is the distance between $x,y$ for the Finsler metric $\mf F$. 
Part (iv) of Theorem A of~\cite{DK22} shows that this series diverges at the 
\emph{critical exponent} $\delta_\rho$. Moreover, it defines 
a family $\mu^x$ 
of Borel measures on the \emph{limit set} $\Lambda\subset {\mathcal F}$ of 
$\rho(\Gamma)$ in the flag variety~${\mathcal F}$, that is, the image of $\partial_\infty \mathbb{H}^2$
under a $\rho$-equivariant H\"older continuous map, 
indexed by the points $x\in \X$. These measures are a \emph{conformal density}, that is,  
they are equivariant under the action of $\rho(\pi_1(S))$ and transform via
\begin{equation}\label{conformaldensity}
\frac{d\mu^y}{d\mu^x}(\xi)=e^{\delta_\rho b^{\mf F}_\xi(x,y)}
\end{equation}
where $b^{\mf F}_\xi$ is the Busemann function for the Finsler metric. 

Conformal densities had been constructed earlier by Sambarino in~\cite{Sam14}, using a different method and work of Ledrappier~\cite{Ledrappier95}.
Sambarino's construction is dynamical and does not use the Finsler metric $d^{\mf F}$.
Here we will need the geometric approach of Dey--Kapovich.

\begin{remark}\label{sysmmetricbad}
As the limit curve of a Hitchin representation is a curve in the flag variety 
rather than the limit set of the representation in the geometric boundary of $\X$,
the above construction can not be carried out for the symmetric metric. Namely, 
as the limit set of the representation in the geometric boundary $\partial_\infty \X$ of $\X$ 
may have points in asymptotic Weyl chambers 
which are not opposite in the Weyl chamber and hence can not be connected by a geodesic, it 
may not be possible to correctly encode translation lengths for the symmetric metric 
by a global H\"older continuous function on $T^1S$.
\end{remark}

\section{Hitchin grafting representations}\label{section-HG}

The Hitchin representations we are interested in are the familiar \emph{bending} or \emph{bulging} deformations
of \emph{Fuchsian} representations, that is, representations  
which factor through the embedding $\tau:\PSL_2(\mathbb{R})\to \PSL_d (\mathbb{R})$. We refer to~\cite{Go86,AZ23,BHM25} for an account
on the bending construction. 
In this section we introduce these representations and
summarize the geometric results from \cite{BHM25} we need.

 \subsection{Grafting}\label{sec:Abstract grafting}

    Consider a closed oriented surface $S$ of genus $g\geq 2$ endowed with a hyperbolic metric.
	A \emph{simple (geodesic) multi-curve} $\gamma^*$ is the union of pairwise disjoint essential 
 mutually not freely homotopic simple closed curves (geodesics) on $S$. 
 We fix moreover an orientation on each component of $\gamma^*$.

 Consider the special direction $u=d\tau\hsl\in \mf a$ given by $\tau$.
 For any $z\in \mf a$ and $\ell>0$, let $\Cyl(\ell,z)\subset \mf a/\ell u$ be the cylinder obtained by quotienting  the strip $\{tu+sz: t\in\R,\ s\in[0,1]\}\subset\mf a$ under the translation by $\ell u$.
 The (Finsler) \emph{height} of such cylinder is defined as 
 \begin{equation}\label{eq:cylinder height}
     \mathrm{height} = \min\{\mf F(tu+z):t\in\R\}.
 \end{equation}
 
 We fix for every $\gamma\in \gamma^*$ a vector $z_\gamma\in \mf a$; the collection $z=(z_\gamma)_{\gamma\in\gamma^*}$ is interpreted as a grafting parameter.
 
	\begin{definition}\label{def:abstractgraf}
         The \emph{abstract grafting} of $S$ along the geodesic multi-curve 
         $\gamma^*$ with grafting parameter $z$  is the surface $S_z$ obtained by cutting $S$ open along
         each of the components $\gamma$ of $\gamma^*$, inserting 
         flat cylinders $C_\gamma=\Cyl(\ell_S(\gamma),z_\gamma)$  and gluing the surface back with the translation by $z_\gamma$.

         If $z_\gamma$ is not parallel to $u$ for any $\gamma\in\gamma^*$, then this grafting comes with a natural homotopy equivalence $\pi_z:S_z\to S$ projecting the flat cylinders onto $\gamma^*$, which allow us to identify $\pi_1(S_z)$ and $\pi_1(S)$.
	\end{definition}

    The abstract grafted surface $S_z$ decomposes into subsurfaces with geodesic boundary which are equipped with 
    a metric of constant curvature. The \emph{hyperbolic part} $S^{\rm hyp}$ is the union of the subsurfaces with a 
    metric of constant curvature $-1$ and can be identified with the union of the components of $S\setminus \gamma^*$.    
    The component $S\setminus S^{\rm hyp}$ is the \emph{cylinder part} and consists of a union of flat cylinders whose
    core curves are freely homotopic to the components of $\gamma^*$.
    
    As the pressure metric for the Hitchin component we are interested in is defined by a Finsler metric on 
    $\X$ 
    using a linear functional 
    $\alpha_0$ (see \eqref{eq:mfF}) rather than the Riemannian one, we also endow $S_z$ with a Finsler metric by equipping each cylinder $C_\gamma$ with the quotient of the non-Euclidean norm $\mf F$ on $\mf a$.
    Observe that in general, for a given $C^1$-structure on $S_z$ as constructed above, 
    this metric is \emph{discontinuous} at the gluing locus between the flat cylinders and the hyperbolic part.
    Additionally the metric on the flat part is sensitive in the direction of $z$, and does not depend only on the height of the grafting (contrarily to the Riemannian metric).
    Nevertheless it induces a well defined path metric on $S_z$.

    Let $G_{\gamma^*}$ be the oriented graph such that each vertex $v\in V$ corresponds to a component $\Sigma_v$ of $\Sigma-\gamma^*$, and each edge $e\in E$ corresponds to an oriented component $\vec \gamma_e$ of $\gamma^*$. 
    Take a discrete and faithful representation $\rho\colon\pi_1(G_{\gamma^*},T)\to \PSL_2(\R)\xrightarrow[]{\tau} \PSL_d(\R)$ 
    which factors through the embedding  
    $\tau:\PSL_2(\mathbb{R})\to \PSL_d(\mathbb{R})$. 
    We use the graphs of groups decomposition of $\pi_1(\Sigma)$ determined by $\gamma^*$
    to perform a bending of the representation in $\PSL_d(\R)$ with parameter $z=(z_\gamma)_{\gamma\in\gamma^*}\in \mf a^{\gamma^*}$.
    This construction can be thought of as
    bending the surface $S$ along the geodesic multicurve $\gamma^*$ in the space of representations into $G$.

    \begin{definition}\label{def:hitchin grafting rep}
        We denote by $\mathrm{Gr}_z^{\gamma^*}\rho\colon\pi_1(G_{\gamma^*},T)\to \PSL_d(\R)$ the representation induced by $\wt\rho_z$, 
        and sometimes just $\rho_z$ if there is only one hyperbolic structure involved. 
        We call it the \emph{Hitchin grafting representation} with data $z$ along $\gamma^*$.
    \end{definition}
    
    Up to conjugation, the representation $\rho_z$ only depends on the grafting parameter $z$.
A \emph{Hichin grafting ray} is a one-parameter family of Hitchin grafting representations $t\to \rho_{tz}$
defined by a ray in $(\mf{a})^k$ where $k$ is the number of components of the multicurve $\gamma^*$ along which the grafting is
performed.

    \subsection{The characteristic surface for Hitchin grafting representations}\label{sec-HG}

    Consider a Fuchsian representation $\rho:\pi_1(S)\to \PSL_2(\R)\to \PSL_d(\R)$ and 
    denote by $S$ the hyperbolic surface defined by this representation.
    Choose some grafting datum $z$ and let 
    $\rho_z$ be the Hithin grafted representation defined by $\rho$ and $z$.
    As this representation is contained 
    in the Hitchin component, it follows from Labourie~\cite{Lab06} and Fock--Goncharov~\cite{FG06} that $\rho_z$ is 
    faithful, with discrete image. 
    In particular, the quotient manifold $\rho_z\backslash \X$ is homotopy equivalent to $S$; 
    in fact $\rho$ induces a natural homotopy class of homotopy equivalences between 
    $\rho_z\backslash \X$ and $S$.

    The following statement is Proposition 2.5 of \cite{BHM25}.

    \begin{proposition}\thlabel{piecewise-isometry}
        Consider a Hitchin grafting representation $\rho_z$ obtained from $\rho$ and with 
        grafting datum $z$.
        Let $S_{z}$ be the  abstract grafting of $S$ from Definition~\ref{def:abstractgraf}, with universal covering $\wt S_{z}$.
        Then there exists a piecewise totally geodesic immersed surface $\wt S^\iota_{z}\subset\X$ and a $\rho_z$-equivariant 
        immersion 
        $\wt Q_z\colon\wt S_{z}\to\wt S_{z}^\iota\subset\X$. 
        
        The map $\wt Q_z$ is a path isometry for the Riemannian (resp.\ Finsler) metric on $\wt S_z$ and the induced path metric on $\wt S^\iota_{z}$ from the Riemannian (resp.\ Finsler) metric on $\X$.
    \end{proposition}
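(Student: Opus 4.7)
The plan is to construct $\wt Q_z$ piece by piece, using the natural decomposition of $\wt S_z$ into lifts of hyperbolic components and flat strips, and then verify the isometry property locally and globalize it. Let $\mc T_{\gamma^*}$ denote the Bass--Serre tree associated to the graph of groups decomposition of $\pi_1(S)$ along $\gamma^*$; its vertices $v$ label lifts $\wt\Sigma_v\subset\wt S$ of components of $S\Setminus\gamma^*$, and its edges $e$ label lifts $\wt\gamma_e$ of boundary components in $\gamma^*$. Pulling back via the homotopy equivalence $\pi_z\colon S_z\to S$, we obtain an analogous decomposition of $\wt S_z$ into hyperbolic pieces $\wt\Sigma_v$ and flat strips $\wt C_{\gamma_e}\cong \wt\gamma_e\times[0,1]$.

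I would define $\wt Q_z$ inductively along $\mc T_{\gamma^*}$. Fix a base vertex $v_0$ and embed $\wt\Sigma_{v_0}$ isometrically into $\bH^2\subset\X$ via the Fuchsian representation $\rho$. Now, given that the map has been defined on $\wt\Sigma_v$ landing in $g_v\cdot\bH^2$, take an edge $e$ from $v$ to a neighbor $w$. The boundary geodesic $\wt\gamma_e$ of $\wt\Sigma_v$ maps onto the axis of a conjugate of $\rho(\gamma_e)$ inside $g_v\cdot\bH^2$, and this axis lies in the unique maximal flat $F_e\subset\X$ through $\wt\gamma_e$ orthogonal to $g_v\cdot\bH^2$. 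Map the strip $\wt C_{\gamma_e}$ into $F_e$ by $(\wt\gamma_e(s),t)\mapsto \exp(tz_{\gamma_e})\cdot\wt\gamma_e(s)$, translating in $F_e$. Finally set $g_w=g_v\exp(z_{\gamma_e})$ and embed $\wt\Sigma_w$ isometrically into $g_w\cdot\bH^2$ so that its boundary along $e$ coincides with the far boundary of the strip's image. Because $\mc T_{\gamma^*}$ is a tree, this inductive procedure is consistent. Equivariance under $\rho_z$ follows from Definition~\ref{def:hitchin grafting rep}: the transition elements $\exp(z_{\gamma_e})$ attached to edges are exactly those prescribed by the partial conjugation rule defining $\rho_z$.

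It then suffices to check that each piece is mapped isometrically for both the Riemannian and Finsler structures. On a hyperbolic piece $\wt\Sigma_v$, the map factors through $\tau\colon\PSL_2(\R)\to\PSL_d(\R)$ and its image lies in a totally geodesic $\bH^2\subset\X$; by our normalization of $\alpha_0$ from Section~\ref{sec:lietheoryI}, this embedding is simultaneously isometric for the symmetric and for the Finsler metric~$\mf F$. On a flat strip $\wt C_{\gamma_e}$, the map is the restriction of the isometric identification $\mf a\to F_e$, so the quotient of $\mf F$ (resp.\ of the Euclidean norm) that defines the metric on $C_{\gamma_e}$ matches the ambient metric on $F_e$ pulled back by $\wt Q_z$.

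The main obstacle I anticipate is upgrading these pointwise isometries on the pieces into a genuine path isometry, given that the Finsler metric on $\wt S_z$ is discontinuous across the gluing loci and the target path metric on $\wt S_z^\iota$ is that induced from $\X$, not simply the sum of the piecewise metrics. The argument should run as follows: any piecewise smooth curve in $\wt S_z$ can be subdivided at the gluing geodesics into segments lying entirely inside individual pieces or strips, each of which is mapped isometrically, and summing the lengths gives the length of the image curve; conversely, any path in $\wt S_z^\iota$ admits, locally through the immersion, a lift to $\wt S_z$ of the same length. The other point needing verification is that $\wt Q_z$ really is an immersion near the gluing geodesics, i.e.\ the two adjacent hyperbolic pieces do not locally fold onto one another: this follows from the standing assumption in Definition~\ref{def:abstractgraf} that each $z_{\gamma_e}$ is not parallel to $u=d\tau\hsl$, which is exactly what forces the image strip in $F_e$ to have positive Finsler height in the sense of~\eqref{eq:cylinder height}, guaranteeing a transverse gluing.
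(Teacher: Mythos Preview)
The paper does not prove this proposition at all: it is simply quoted as Proposition~2.5 of the companion article \cite{BHM25}, so there is no argument here to compare against. Your direct construction along the Bass--Serre tree (inductively placing hyperbolic pieces in translates $g_v\cdot\bH^2$ and flat strips in the orthogonal maximal flats, with transition elements $\exp(z_{\gamma_e})$ matching Definition~\ref{def:hitchin grafting rep}) is exactly the expected approach and is essentially how the construction is carried out in \cite{BHM25}.

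One small remark on your final paragraph: the immersion property at a gluing geodesic between a hyperbolic piece and its adjacent flat strip does not actually rely on the non-parallel assumption. The point is that along the common geodesic $\wt\gamma_e$, the tangent space to $g_v\cdot\bH^2$ contains the direction $d\tau\begin{psmallmatrix}0&1\\1&0\end{psmallmatrix}$ orthogonal (in $\H^2$) to the geodesic, and this vector is never in $\mf a$; hence $\bH^2$ and the flat $F_e$ always meet transversally along $\wt\gamma_e$. Where the non-parallel assumption \emph{is} needed is to ensure that the strip itself is genuinely two-dimensional (so that $\wt Q_z$ restricted to it is an immersion) and, as you say, that when $z_{\gamma_e}$ degenerates to a pure twist the two adjacent copies of $\bH^2$ coincide and the pieces may overlap. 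So your conclusion is correct, but the justification should separate these two issues.
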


 \subsection{Admissible paths}\label{sec:admissible}
    
 An important tool for the geometric investigation of Hitchin grafting representations are 
   \emph{admissible paths} which were introduced in \cite{BHM25}. They are defined as follows
   (compare Section 2.5 of \cite{BHM25}).

    \begin{definition}
     Consider a closed hyperbolic surface $S$, a  multicurve $\gamma^*\subset S$ and a grafting parameter $z$. 
     Then $S_z$ is the abstract grafted surface with hyperbolic part $S^\hyp\subset S_z$ and flat (cylindrical) part $\mc C\subset S_z$.
     An \emph{admissible path} in $S_z$ is a continuous path $c\subset S_z$ such that
     \begin{itemize}
         \item $c$ is geodesic outside $\mu=S^\hyp\cap\mc C$; 
         \item the hyperbolic part $c\cap S^\hyp$ intersects $\gamma^*$ orthogonally;
         \item a component of the flat part $c\cap \mc C$ connects the two distinct
         boundary components of the flat cylinder containing it.
     \end{itemize}
     Similarly one can define \emph{admissible loops}.
    \end{definition}

    Note that if $z$ is trivial then $S_z=S$ and the above definition still makes sense. 
    The flat part $\mc C$ is just $\gamma^*$, and the path is allowed to contain arcs in $\gamma^*$ 
    separating two geodesic arcs which emanate to the two distinct sides of $\gamma^*$ in a 
    tubular neighborhood of $\gamma^*$.

    An \emph{admissible path} in the universal cover $\tilde S_z$ is the lift of an admissible path in $S_z$.
    Any two points of $\tilde S_z$ are connected by a unique admissible path; in other words, any path of $S_z$ is homotopic (with fixed endpoints) to a unique admissible path.
    Similarly, any loop in $S_z$ not homotopic to a component of $\gamma^*$  
    is freely homotopic to a unique admissible loop.

    We define an
    \emph{admissible path} in a characteristic surface of a Hitchin grafting representation to be
    the image of an admissible path in the abstract grafted surface under the natural path isometry.
    A more conceptual notion of admissible paths which is not needed toward our goal can be found in \cite{BHM25}. 

    For a constant $C>0$, a path $\gamma:[0,T]\to X$ in a metric space $(X,d_X)$ is called \emph{$C$-quasi-ruled} if 
    for any $0\leq t\leq s \leq u\leq T$ it holds
    \[d_X(\gamma(t),\gamma(s))+d_X(\gamma(s),\gamma(u))\leq d_X(\gamma(t),\gamma(u))+C.\]
    The following is Proposition 4.10 of \cite{BHM25}.

    \begin{proposition}\label{prop:triangle equality for admissible paths}
    For any $\omega>0$ there exists $C_\omega$ such that  any $(\omega,0)$-admissible path $c$ in $\X$ 
    is Finsler $C_\omega$-quasi-ruled. Moreover, it is at Hausdorff distance at most $C'_\omega$ 
    from some Finsler geodesic, where $C'_\omega$ only depends on $C_\omega$.
\end{proposition}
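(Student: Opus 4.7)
The plan is to analyse the $(\omega,0)$-admissible path $c\subset\X$ piece by piece: on the hyperbolic and flat components it is already Finsler-geodesic, and at each bending locus the admissibility conditions bound the Finsler triangle-inequality defect; iterating gives the quasi-ruled property globally, and a Morse-type stability argument then produces a nearby Finsler geodesic.

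First I would use \thref{piecewise-isometry} to realise $c$ as a path-isometric image of an admissible path in $\wt S_z$. On each hyperbolic component, $c$ lies in an embedded totally geodesic $\H^2\subset\X$ on which $\mf F$ restricts to a multiple of the hyperbolic distance (by our normalisation) and is thus a Finsler geodesic; on each flat cylinder component, $c$ lies in a maximal flat arising from the centraliser of the corresponding Fuchsian element, and is a Euclidean Finsler geodesic. At each interface, the orthogonality of $c\cap S^{\rm hyp}$ with the component $\gamma\subset \gamma^*$, together with the $(\omega,0)$-regularity, forces the incoming and outgoing Finsler unit vectors to lie in a common diamond $D(p,q)$ whose diameter is bounded in terms of $\omega$. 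Since each diamond is a compact convex polytope inside a single flat, this bounds the local defect $d^{\mf F}(x,z)+d^{\mf F}(z,y)-d^{\mf F}(x,y)$ for $x,z,y$ straddling a single bending by a constant depending only on $\omega$.

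The next step is to show that the local estimates do not compound across many bendings. The key point is that after each transition the path re-enters a geodesic piece whose length is bounded below in terms of $\omega$, lying in a fixed flat or a fixed copy of $\H^2$. Comparing successive Cartan projections in a common maximal flat containing two consecutive pieces and using the Riemannian nonpositive curvature of $\X$, together with the fact that unparameterised Finsler and Riemannian geodesics coincide (\thref{ex:admissible finsler}), a telescoping argument shows that the cumulative defect between any three points $c(t),c(s),c(u)$ with $t\leq s\leq u$ remains bounded by a universal constant $C_\omega$. This is the desired quasi-ruled property.

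For the Hausdorff-distance claim, once $C_\omega$-quasi-ruledness is established I would invoke a Morse-type stability lemma: in the CAT(0) space $(\X, d)$ (with the Riemannian metric), any $C$-quasi-ruled path lies within Hausdorff distance $O(C)$ of a unique Riemannian geodesic between its endpoints, which by \thref{ex:admissible finsler} is also a Finsler geodesic, yielding the constant $C'_\omega$ depending only on $C_\omega$. The main obstacle is the global control of the defect through many bendings: the Finsler norm $\mf F$ is not strictly convex and the diamonds in $\X$ (of rank $\geq 2$) can be genuinely higher-dimensional, so a priori the local defects could accumulate; the orthogonality condition built into the definition of admissibility is precisely what forces successive diamonds to sit compatibly in common flats and prevents this compounding.
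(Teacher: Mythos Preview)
The paper does not prove this proposition: it is imported verbatim as Proposition~4.10 of the companion paper \cite{BHM25}, whose methods rest on the geometric interpretation of Fock--Goncharov total positivity (see the discussion preceding Theorem~\ref{thm:quasiisom} and Proposition~\ref{fact:quantitative Pos are Lox}). Your sketch never invokes positivity, and this is precisely where the gap lies.

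Your local analysis is fine: on each hyperbolic or flat piece the path is a Finsler geodesic, and at a single bending the defect is controlled. The unjustified step is the claim that these local defects do not accumulate across many bendings. You assert that a ``telescoping argument'' together with orthogonality ``forces successive diamonds to sit compatibly in common flats,'' but this is exactly the hard content of the result, and orthogonality alone does not deliver it. In rank $\geq 2$ the Finsler norm on $\mf a$ is polyhedral, diamonds are genuinely thick, and a concatenation of Finsler geodesics meeting with controlled angles can in principle wander arbitrarily far from any single Finsler geodesic (think of zig-zags in a flat for the $\ell^1$-norm). What prevents this here is the total positivity of the products $a'_{t_1}a_{s_1}e^{r_1}\cdots a'_{t_k}a_{s_k}e^{r_k}$ encoding the admissible path (compare Section~\ref{sec:proof derivative estimates}): positivity forces each such product to be uniformly loxodromic with eigenvalue gaps growing linearly in the number of pieces, which is what gives the global additive control. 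Without this ingredient your argument is a plausible outline with the key mechanism missing.

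Your second step also contains an error. You invoke a Morse-type lemma to conclude that a $C$-quasi-ruled path in the CAT(0) space $\X$ lies near the unique \emph{Riemannian} geodesic between its endpoints. This is false in higher rank: an L-shaped path in a maximal flat between opposite corners of a square is a Finsler geodesic (hence $0$-quasi-ruled for $d^{\mf F}$) but is at distance proportional to the side-length from the Riemannian diagonal. The conclusion of the proposition is only that the path is close to \emph{some} Finsler geodesic, and the dependence ``$C'_\omega$ only depends on $C_\omega$'' indicates this is a general fact about Finsler-quasi-ruled paths in $\X$, proved via the diamond picture rather than CAT(0) stability.
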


While the above discussion gives a geometric account on admissible paths in the symmetric space $\X$, 
we shall also use admissible paths in the group $G$. 
the following algebraic definition of admissible paths in $\X$.
    The description of these paths uses a 
    basepoint for the action of $G$ which is determined by the Fuchsian representation $\tau$ as well as the following notation.

    \begin{notation}\label{nota:a't}
     We set
     \begin{itemize}
      \item $a_t:=\tau\begin{psmallmatrix}e^t&0\\0&e^{-t}\end{psmallmatrix}\in G$;
      \item $r_\theta:=\tau \begin{psmallmatrix}\cos(\theta/2) & \sin(\theta/2) \\ -\sin(\theta/2) & \cos(\theta/2)\end{psmallmatrix}\in G$;
      \item $a'_t:=r_{\pi/2}\cdot a_t\cdot r_{\pi/2}^{-1}\in G$;
      \item for every $t\in\R\cup\{\infty\}=\partial\H^2$ we write $\xi_t=\partial_\infty \tau(t)$.
     \end{itemize}
    \end{notation}

    The group $G=\PSL_d(\R)$ identifies with one component of the space of $\H^2$-frames $Y$ in $G$; $g\mapsto g\cdot F_o$, 
    where $F_o=(o,v_o,w_o)$ is a fixed $\H^2$-frame, so that $o$ is fixed by $r_\theta$,  and $v_o=\tfrac{d}{dt}_{|t=0}a'_t\cdot o$ and $w_o=\tfrac{d}{dt}_{|t=0}a_t\cdot o$ are tangent to the axes of $a'_t$ and $a_t$, respectively.

    Under this identification, the geodesic flow on the space 
    $Y$ of $\H^2$-frames corresponds to the multiplication on the right by $a'_t$: i.e.\ $\mr{geod}_t(gF_o)=(ga'_t)F_o$.
    On the other hand, the orthogonal sliding flow corresponds to the multiplication on the right by 
    $\exp(z)$: that is,\ $\mr{slide}_z(gF_o)=(g \cdot \exp(z))F_o$ for any $z\in \mf a$.
    This leads us to the following definition of admissible path.

    \begin{definition}\label{def:admissible in G}
        A path $c:[0,T]\to G$ or $c:[0,\infty)\to G$ is said to be of 
        \begin{itemize}
            \item \emph{flat type} if $c(t)=g \cdot \exp(tz)$ for some $g\in G$ and $z\in \mathfrak a$ of norm $1$ for the Finsler metric $\mf F$;
            \item \emph{hyperbolic type} if $c(t)=ga'_t$ for some $g\in G$.
        \end{itemize}

        An \emph{admissible path} of $G$ is a \emph{continuous} (possibly infinite) concatenation of paths of flat and hyperbolic type.
\end{definition}

\subsection{Geometric control: Uniform quasi-isometry}\label{QI-proof}

    Recall that $S$ is a hyperbolic closed surface, let
    $G=\PSL_d(\R)$
    and $\tau:\PSL_2(\R)\to G$ be the usual irreducible representation.
    The following is Theorem 5.1 of \cite{BHM25} which was obtained as a consequence of Fock Goncharov positivity. 

    \begin{theorem}\label{thm:quasiisom}
     For every $\sigma>0$, 
     there exists $C_{\sigma}>0$ such that
     the following holds.
    
     Consider a closed hyperbolic surface $S$, a multicurve $\gamma^*\subset S$ whose components have length at most $\sigma$, and a grafting parameter $z$ such that all cylinder heights of the abstract grafting $S_z$ are bounded 
     from below by 
     some number $L>0$.
     
     Let us endow $\X$ with the $G$-invariant 
      admissible Finsler metric $\mf F$ and $S_z$ with the pullback of this metric under $Q_z$, denoted by 
      $d_{\tilde S_z}^{\mf F}$.
     Then the grafting map $\tilde Q_z:\wt S_z\rightarrow\X$ is an injective quasi-isometric embedding with multiplicative constant $(1+C_\sigma/(L+1))$ and additive constant $C_{\sigma}$; more precisely, for all $x,y\in\tilde S_z$ we have
     $$\left(1+\frac{C_\sigma}{L+1}\right)^{-1}d^{\mf F}_{\tilde S_z}(x,y)-C_{\sigma}
     \leq d^{\mf F}(\tilde Q_z(x),\tilde Q_z(y)) \leq d^{\mf F}_{\tilde S_z}(x,y).$$
     Moreover, the image $\tilde S_z^\iota=\tilde Q_z(\tilde S_z)$ is $C_\sigma$-Finsler-quasiconvex in the sense that for all $x,y\in\tilde Q_z(\tilde S_z)$, there is a Finsler geodesic from $x$ to $y$ at distance at most $C_\sigma$ from $\tilde S_z^\iota$.
    \end{theorem}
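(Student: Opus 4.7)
The upper bound $d^{\mf F}(\tilde Q_z(x),\tilde Q_z(y))\leq d^{\mf F}_{\tilde S_z}(x,y)$ follows immediately from Proposition~\ref{piecewise-isometry}: any path in $\tilde S_z$ maps under $\tilde Q_z$ to a path in $\X$ of the same Finsler length, and in particular a length-minimizing admissible path does. The substance of the theorem therefore lies in the lower bound, together with injectivity and the uniform quasi-convexity of the image.

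For the lower bound, the plan is to join $x$ and $y$ by the unique admissible path $c:[0,T]\to \tilde S_z$, so that $T=d^{\mf F}_{\tilde S_z}(x,y)$ because admissible paths are length-minimizing. The image $\tilde Q_z\circ c$ is then an admissible path in $\X$, and Proposition~\ref{prop:triangle equality for admissible paths} supplies a Finsler $C_\sigma$-quasi-ruled parametrisation. I would partition $c$ at the transition points $0=t_0<t_1<\cdots<t_N=T$ between its flat and hyperbolic pieces; on each subinterval $[t_i,t_{i+1}]$ the composition $\tilde Q_z\circ c$ is an actual Finsler geodesic (a straight segment in a maximal flat, or a segment of the totally geodesic copy of $\H^2$ given by $\tau$), so the Finsler length of $\tilde Q_z\circ c$ equals $\sum_i d^{\mf F}(\tilde Q_zc(t_i),\tilde Q_zc(t_{i+1}))$. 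Iterating the quasi-ruled inequality on consecutive triples then yields
\[
T \;=\; \mr{length}(\tilde Q_z\circ c)\;\leq\; d^{\mf F}(\tilde Q_z(x),\tilde Q_z(y)) + (N-1)\,C_\sigma.
\]
Since every flat piece traverses a cylinder of height at least $L$, the number $N$ of transitions satisfies $N\leq 2T/L+2$, and a short rearrangement produces the desired inequality with multiplicative constant $(1+C_\sigma/(L+1))$ and an additive term of size $O(C_\sigma)$, after enlarging $C_\sigma$ if necessary.

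Injectivity of $\tilde Q_z$ follows from this lower bound whenever the admissible distance between $x\ne y$ exceeds a threshold depending only on $\sigma$ and $L$; for nearer pairs, injectivity is a local property, inherited from the totally geodesic structure on each flat and hyperbolic piece of $\tilde S_z$ provided by Proposition~\ref{piecewise-isometry}. Quasi-convexity of $\tilde S_z^\iota$ is the second conclusion of Proposition~\ref{prop:triangle equality for admissible paths}: the admissible path between any two points of $\tilde Q_z(\tilde S_z)$ lies within Hausdorff distance $C_\sigma$ of some Finsler geodesic, which therefore stays close to $\tilde S_z^\iota$.

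The main obstacle is Proposition~\ref{prop:triangle equality for admissible paths} itself, which is imported from \cite{BHM25} and relies on the Fock--Goncharov positivity of Hitchin limit curves. Given that input, the argument above is essentially a counting argument: the only source of error in the quasi-ruled inequality is the transition points, and the height assumption $L$ forces their density along $c$ to be $O(1/L)$, so that the multiplicative error $(1+C_\sigma/(L+1))$ decays toward $1$ as the cylinders grow tall.
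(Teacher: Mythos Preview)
This theorem is not proved in the present paper: it is quoted as Theorem~5.1 of the companion article \cite{BHM25}, so there is no in-paper proof to compare against. Your outline is the natural strategy suggested by the tools the paper imports --- reduce to the admissible path, apply Proposition~\ref{prop:triangle equality for admissible paths} to make its image in $\X$ quasi-ruled, telescope the quasi-ruled inequality over the breakpoints, and bound the number of breakpoints by $O(T/L)$ via the cylinder-height hypothesis --- and this is indeed how the result is obtained in \cite{BHM25}.

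Two points need correction. First, the assertion that admissible paths are length-minimising in $\tilde S_z$ (so that $T=d^{\mf F}_{\tilde S_z}(x,y)$) is nowhere justified and is in general false: the orthogonality constraint on the hyperbolic pieces at the strip boundaries is not the refraction condition for a geodesic of the intrinsic path metric. Fortunately you only need the trivial inequality $d^{\mf F}_{\tilde S_z}(x,y)\le T$, since your chain $T\le d^{\mf F}(\tilde Q_z(x),\tilde Q_z(y))+(N-1)C_\sigma$ together with $N=O(T/L)$ can be solved for $T$ and then weakened to the desired bound on $d^{\mf F}_{\tilde S_z}$. Second, the injectivity argument is incomplete: the quasi-isometry inequality handles pairs at distance above a threshold, and the immersion property from Proposition~\ref{piecewise-isometry} handles pairs within a single piece, but neither covers two nearby points lying in different pieces. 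For $L>C_\sigma$ there is a clean fix (if $\tilde Q_z(x)=\tilde Q_z(y)$ the quasi-ruled inequality forces every piece of the admissible image to have length at most $C_\sigma$, ruling out any flat piece of length $\ge L$), but for small $L$ a separate argument is genuinely needed, and this is where the positivity input from \cite{BHM25} does work beyond what you have sketched.
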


    There also is the following coarse estimates on length (Theorem 5.2 of \cite{BHM25}).

    \begin{theorem}\label{thm:quasiisom lengths}
        In the setting of Theorem~\ref{thm:quasiisom}, let $(\rho_z)_z$ be the associated family grafted Hitchin representations.
        Then there is $C_\sigma'$ only depending on $\sigma$ such that for any $\gamma\in\pi_1(S)$,
        $$
        \ell^{\mf F}(\rho_z(\gamma)) \geq \frac{L+1}{C'_\sigma}\iota(\gamma,\gamma^*).
        $$
        Moreover, recalling that $z$ is the datum of a vector $z_e\in\mf a$ for each component $e\subset\gamma^*$, then~$C'_\sigma$ may be chosen so that if  $z_e\in \ker(\alpha_0)$ for any $e$ then
        $$
        \ell^{\mf F}(\rho_{z}(\gamma)) \geq  \left(1+\frac{C'_{\sigma}}{L+1}\right)^{-1}\ell_S(\gamma),
        $$
        where $\ell_S(\gamma)$ is the length of $\gamma $ in $S$.
    \end{theorem}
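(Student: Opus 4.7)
The plan is to reduce both inequalities to lower bounds on the Finsler length $\ell^{\mf F}_{\tilde S_z}(\gamma)$ of the admissible axis $\tilde c_\gamma$ of $\gamma$ in $\tilde S_z$, and then to transport these through the quasi-isometric embedding $\tilde Q_z$ provided by Theorem~\ref{thm:quasiisom}. Concretely, picking $x$ on $\tilde c_\gamma$ so that $d^{\mf F}_{\tilde S_z}(x,\gamma^n x)=n\cdot\ell^{\mf F}_{\tilde S_z}(\gamma)$ and applying Theorem~\ref{thm:quasiisom}, one obtains
\[
\ell^{\mf F}(\rho_z(\gamma)) \;=\; \lim_{n\to\infty}\tfrac1n\,d^{\mf F}\bigl(\tilde Q_z(x),\rho_z(\gamma)^n\tilde Q_z(x)\bigr) \;\geq\; \left(1+\tfrac{C_\sigma}{L+1}\right)^{-1}\ell^{\mf F}_{\tilde S_z}(\gamma),
\]
the additive constant from Theorem~\ref{thm:quasiisom} disappearing in the limit. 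So the task reduces to estimating $\ell^{\mf F}_{\tilde S_z}(\gamma)$ from below.

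For this I would decompose $\tilde c_\gamma$ per $\gamma$-period into $\iota(\gamma,\gamma^*)$ flat subarcs (each crossing one lifted cylinder) alternating with the same number of hyperbolic subarcs in $\tilde S^\hyp$. Each flat subarc has Finsler length at least $L$ by the cylinder-height hypothesis. For the hyperbolic contribution, the $\pi_1$-equivariant projection $\tilde S_z\to\tilde S$ that collapses each lifted cylinder onto its core geodesic sends $c_\gamma$ to a loop in $S$ freely homotopic to $\gamma$; by the normalization making $\mf F$ restrict to the hyperbolic metric on $\bH^2\subset\X$, the hyperbolic length of this projected loop equals the hyperbolic-part Finsler length of $c_\gamma$, and is at least $\ell_S(\gamma)$ by minimality of the geodesic in its free homotopy class. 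Summing,
\[
\ell^{\mf F}_{\tilde S_z}(\gamma)\;\geq\; \ell_S(\gamma)+L\cdot\iota(\gamma,\gamma^*).
\]

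For the first inequality, the collar lemma applied to the components of $\gamma^*$ (of $S$-length at most $\sigma$) provides pairwise disjoint embedded collars of uniform positive width $c_\sigma>0$ depending only on $\sigma$, giving $\ell_S(\gamma)\geq c_\sigma\iota(\gamma,\gamma^*)$ and hence $\ell^{\mf F}_{\tilde S_z}(\gamma)\geq(L+c_\sigma)\iota(\gamma,\gamma^*)$. Feeding this into the quasi-isometric bound and choosing $C'_\sigma$ large enough in $C_\sigma$ and $c_\sigma$ to absorb the factor $(L+c_\sigma)(L+1)/(L+1+C_\sigma)$ into $(L+1)/C'_\sigma$ yields the first claim. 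For the second claim, substituting $\ell^{\mf F}_{\tilde S_z}(\gamma)\geq\ell_S(\gamma)$ directly into the quasi-isometric bound gives the desired estimate; the hypothesis $z_e\in\ker(\alpha_0)$ enters here precisely to guarantee that no bending contribution from the cylinders can distort the translation length below this hyperbolic-based lower bound, so that the quasi-isometric loss $(1+C'_\sigma/(L+1))^{-1}$ is the only correction.

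The main technical obstacle will be tracking uniform dependence of $C'_\sigma$ only on $\sigma$ across both inequalities, especially in handling the regime $L\to 0$ where the cylinder contribution vanishes and the collar estimate must carry the full weight of the first inequality. The second inequality additionally requires a careful verification that the $z_e\in\ker(\alpha_0)$ hypothesis has the desired quantitative effect on the Finsler geometry of the flat cylinders, ensuring that the admissible-loop estimate transports faithfully to a translation-length estimate in $\X$ without a further multiplicative loss beyond that coming from Theorem~\ref{thm:quasiisom}.
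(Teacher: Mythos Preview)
The paper does not actually prove this theorem; it is quoted as Theorem~5.2 of the companion paper~\cite{BHM25}. So there is no in-paper proof to compare against, and I evaluate your argument on its own terms.

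Your overall strategy (bound the admissible-loop length in $\tilde S_z$, then transport via Theorem~\ref{thm:quasiisom}) is correct and is the natural one. However, the projection step contains a genuine error. The map $\pi_z:\tilde S_z\to\tilde S$ collapses each flat band onto its core geodesic, so a flat arc of $c_\gamma$ crossing the band from $su+0$ to $su+z_e$ (in the notation of Section~\ref{sec:Abstract grafting}) projects to an arc of length $|s|$ \emph{along} $\hat\gamma^*$, not to a point. Hence the projected loop in $S$ has hyperbolic length equal to (hyperbolic part of $c_\gamma$) $+\sum_i|s_i|$, not just the hyperbolic part. Your displayed bound $\ell^{\mf F}_{\tilde S_z}(\gamma)\geq \ell_S(\gamma)+L\,\iota(\gamma,\gamma^*)$ is therefore not justified, and in fact need not hold without the hypothesis on $z_e$.

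Both conclusions are nevertheless salvageable with small modifications. For the first inequality you do not need $\ell_S(\gamma)$ at all: by the collar lemma each hyperbolic arc of $c_\gamma$ (an orthogeodesic between components of $\tilde\gamma^*$) has length at least a constant $2\omega_\sigma>0$, so directly $\ell^{\mf F}_{\tilde S_z}(\gamma)\geq (L+2\omega_\sigma)\,\iota(\gamma,\gamma^*)$, and you proceed as before. For the second inequality, the hypothesis $z_e\in\ker(\alpha_0)$ is exactly what makes the projection comparison work: since $\mf F(v)\geq|\alpha_0(v)|$ for all $v\in\mf a$ (by Notation~\ref{nota:alpha} and the definition of $\mf F$), one gets $\mf F(s_iu+z_{e_i})\geq|\alpha_0(s_iu+z_{e_i})|=|s_i|$, so each flat arc has Finsler length at least its horizontal projection. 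Then $\ell^{\mf F}_{\tilde S_z}(\gamma)\geq$ (length of projected admissible loop in $S$) $\geq\ell_S(\gamma)$, which is what you need. You gestured at this role of $\ker(\alpha_0)$ in your last paragraph but did not identify the mechanism; this is it.
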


\section{Intersection in the Hitchin component}\label{sec:intersection}

This section contains an application of the main results of \cite{BHM25} to dynamical
properties of Hitchin grafting representations.
Recall from Section~\ref{sec:currents} 
the definition of the intersection number ${\bf I}(f_1,f_2)$ and the 
normalized intersection number ${\bf J}(f_1,f_2)$ for two H\"older continuous 
positive functions 
$f_1,f_2$ on $T^1S$. These numbers only depend on the cohomology classes of $f_1,f_2$. 
Thus by the results in Section~\ref{sec:hitchinrep}, for any 
two Hitchin representations $\rho_1,\rho_2:\pi_1(S)\to \PSL_d(\mathbb{R})$, we obtain intersection numbers
${\bf I}(\rho_1,\rho_2)$ and normalized intersection numbers ${\bf J}(\rho_1,\rho_2)$. 
We show

    \begin{theorem}\thlabel{intersectiontendstoinf}
    There exists a sequence $\rho_i$ of Hitchin representations such that
    ${\bf I}(\nu,\rho_i)\to \infty$ and ${\bf J}(\nu,\rho_i)\to \infty$
    for any Fuchsian representation $\nu$, and this divergence is uniform in $\nu$.
    \end{theorem}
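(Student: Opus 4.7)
The plan is to take $\rho_i$ to be Hitchin grafting representations along a fixed essential multicurve $\gamma^*\subset S$. Concretely, I fix a Fuchsian reference representation $\rho_0\colon\pi_1(S)\to\PSL_2(\R)\hookrightarrow\PSL_d(\R)$, a direction $z_0\in\mf a\setminus\R u$ (so the grafting is nontrivial), and set $\rho_i:=\mathrm{Gr}_{L_i z_0}^{\gamma^*}\rho_0$ with $L_i\to\infty$, so that the cylinder heights of the abstract grafted surface $S_{L_i z_0}$ tend to infinity. The multicurve $\gamma^*$ is chosen so that no component of $S\setminus\gamma^*$ is a pair of pants; this will be needed to keep the entropy $h(\rho_i)$ bounded below.

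The main length input is Theorem~\ref{thm:quasiisom lengths}, which yields the lower bound
\[
\ell^{\mf F}(\rho_i(\gamma))\;\geq\;\frac{L_i+1}{C'_\sigma}\,\iota(\gamma,\gamma^*)
\]
for every $\gamma\in\pi_1(S)$, with $C'_\sigma$ independent of $i$. Feeding this inequality into the equidistribution formula \eqref{intersectionformula} for $\mathbf I(\nu,\rho_i)$, applied with reference Hölder length function $f_\nu$ and test function $\alpha_0\circ f_{\rho_i}$, gives
\[
\mathbf I(\nu,\rho_i)\;\geq\;\frac{L_i+1}{C'_\sigma}\,\lim_{R\to\infty}\frac{1}{\myhash N_\nu(R)}\sum_{\ell^\nu(\gamma)\leq R}\frac{\iota(\gamma,\gamma^*)}{\ell^\nu(\gamma)}.
\]
The right-hand limit is a Bonahon-type intersection $\iota(\bar{\mathcal L}_\nu,\gamma^*)$ of the probability-normalized Liouville current of $\nu$ with $\gamma^*$, equal to $\ell^\nu(\gamma^*)/\iota(\mathcal L_\nu,\mathcal L_\nu)$; since the denominator is a topological invariant of $S$, this limit is a strictly positive number for every Fuchsian $\nu$. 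Letting $i\to\infty$ therefore gives $\mathbf I(\nu,\rho_i)\to\infty$.

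For $\mathbf J$, I use that $h(\nu)=1$ for any Fuchsian $\nu$, so that $\mathbf J(\nu,\rho_i)=h(\rho_i)\,\mathbf I(\nu,\rho_i)$. The topological restriction on the complementary components guarantees that the hyperbolic part of the abstract grafted surface has negative Euler characteristic, and Theorem~\ref{main1} (or the entropy estimates behind it) ensures that $h(\rho_i)$ is bounded below by a positive constant, namely the entropy of the geodesic flow on the hyperbolic part. Combined with the divergence of $\mathbf I$ this gives $\mathbf J(\nu,\rho_i)\to\infty$.

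The hard part is the uniformity in $\nu$: the lower bound produced above degrades when $\ell^\nu(\gamma^*)\to 0$, which is exactly what happens when some component of $\gamma^*$ is pinched in Teichmüller space. To upgrade the pointwise divergence to a uniform statement over the whole Fuchsian locus, I would either replace $\gamma^*$ by a finite collection $\gamma_1^*,\dots,\gamma_k^*$ of multicurves chosen so that for every Fuchsian $\nu$ at least one length $\ell^\nu(\gamma_j^*)$ is bounded below by a Bers-type topological constant, and perform a composite grafting along all of them, or alternatively run a compactness argument in $\mathcal{PC}(S)$ using a filling choice of $\gamma^*$ to secure a uniform lower bound on $\iota(\cdot,\gamma^*)$ over the closure of the projective Liouville locus. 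This uniformity step is where the bulk of the technical work concentrates; the rest is a direct combination of the length bound of \cite{BHM25} with Bonahon's intersection formula.
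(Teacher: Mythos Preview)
Your pointwise argument and the entropy lower bound are fine and match the paper's approach. The gap is precisely where you flag it: uniformity in $\nu$. Neither of your suggested fixes works. A simple multicurve (disjoint simple closed curves) can never be filling, so ``a filling choice of $\gamma^*$'' is not available. And ``composite grafting'' along a collection $\gamma_1^*,\dots,\gamma_k^*$ of multicurves that jointly fill $S$ would require grafting along curves that intersect each other, which is not defined; if they are disjoint you are back to a single multicurve and the same degeneration problem.

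The paper resolves uniformity by a different mechanism that you did not use: the \emph{second} inequality of Theorem~\ref{thm:quasiisom lengths}, valid when the grafting parameter lies in $\ker(\alpha_0)$, gives
\[
\ell^{\mf F}(\rho_L(\gamma))\;\geq\;\Bigl(1+\tfrac{C'_\sigma}{L+1}\Bigr)^{-1}\ell_X(\gamma)
\]
for all $\gamma$, where $X$ is the base hyperbolic metric. Hence $\mathbf I(\nu,\rho_L)\geq(1-\epsilon)\,\mathbf I(\nu,X)$ for large $L$. Since Bonahon's embedding makes $\mathbf I(\nu,X)=\iota(\lambda_\nu,\lambda_X)$ proper on $\mathcal T(S)$, this already forces $\mathbf I(\nu,\rho_L)\geq m$ once $\nu$ lies outside a fixed compact set $B\subset\mathcal T(S)$ (depending on $m$ but not on $L$). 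On the remaining compact set $B$ one has a uniform lower bound $\ell_\nu(\gamma^*)\geq\sigma>0$, and your intersection-number argument (formalised in the paper via Proposition~\ref{typicalIntersection}) then gives $\mathbf I(\nu,\rho_L)\geq m$ for $L$ large enough. The two regimes together yield the uniform statement. So the missing idea is to choose the grafting direction in $\ker(\alpha_0)$ and exploit the resulting length comparison with the base Fuchsian metric to handle the thin part of Teichm\"uller space.
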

 
The Hitchin representations which enter Theorem~\ref{intersectiontendstoinf} are Hitchin grafting 
representations. More precisely, let as before $\gamma$ be a simple closed geodesic on the hyperbolic surface~$S$.
This datum is used to construct for each 
$L>0$ a Hitchin representation $\rho_L$ obtained by Hitchin grafting along $\gamma$ of 
the Fuchsian representation defined by $S$, 
with cylinder height $L$. We do not specify the twisting number of the associated abstract
grafting datum as this does not play a role in our discussion, but we assume that 
$L\to \rho_L$ is a Hitchin grafting ray as introduced in Section~\ref{sec:Abstract grafting}. 

The proof of Theorem~\ref{intersectiontendstoinf} 
rests on statistical information on length averages, introduced in the next definition.
For its formulation, for a Hitchin representation $\rho$ put $R_\rho(T)=R_{\ell_\rho}(T)$ for all $T$,
where as before, $R_{\ell_\rho}(T)=\{\eta\in [\pi_1(S)]\mid \ell_\rho(\eta)\leq T\}$ and $\ell_\rho(\eta)$ is the 
Finsler translation length of $\rho(\eta)$. Moreover, $[\pi_1(S)]$ is the set of conjugacy classes of 
the fundamental group $\pi_1(S)$ of $S$. 
	
	\begin{definition}\thlabel{defFullDensity}
		Let $\rho$ be a Hitchin representation and $A$ a subset of $[\pi_1(S)]$. We say that $A$ is a 
  \emph{full density} set 
  for $\rho$ if 
  \[\liminf_{T\to+\infty}\frac{R_\rho(T)\cap A}{R_\rho(T)}=1.\] 
  If $\mathcal P$ is an assertion on $[\pi_1(S)]$, we say that a \emph{typical geodesic satisfies 
  $\mathcal P$} if the set $\{\gamma\in[\pi_1(S)] \mid  \gamma\text{ satisfies }\mathcal P\}$ is a full density set for $\rho$.
	\end{definition}

The following statement can be thought of as a statistical version of the duality between 
length and intersection for hyperbolic metrics on surfaces. 
Recall from Section~\ref{sec:geodescicurrents} the definition of the intersection form
$\iota:{\mc C}(S)\times {\mc C}(S)\to [0,\infty)$.

	\begin{proposition}\label{typicalIntersection}
 Let $\rho$ be a hyperbolic metric on $S$, and let 
 $\alpha\subset S$ be a closed geodesic. For any $\epsilon>0$, for a typical geodesic $\gamma$, we have 
 \[\left|\iota(\gamma,\alpha) - \frac{1}{-4\pi^2 \chi(S)}\ell_\rho(\gamma)\ell_\rho(\alpha)\right|<\epsilon\ell_\rho(\gamma).\]
	\end{proposition}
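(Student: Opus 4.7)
\medskip\noindent\textbf{Proof plan.} The constant $-4\pi^2\chi(S)=2\pi\,\mathrm{Area}(S)$ (by Gauss--Bonnet) is exactly the total mass of the Liouville measure on $T^1S$. Writing $\mu_L$ for the Liouville current, normalized as in Bonahon so that $\iota(\mu_L,\alpha)=\ell_\rho(\alpha)$ (see \eqref{iota}), the probability Bowen--Margulis current $\bar\mu_L := \mu_L/(-4\pi^2\chi(S))$ satisfies
\[\iota(\bar\mu_L,\alpha)=\frac{\ell_\rho(\alpha)}{-4\pi^2\chi(S)},\]
which is the target constant. The content of the proposition is then that for a typical $\gamma\in R_\rho(T)$, the normalized current $\gamma/\ell_\rho(\gamma)$ pairs with $\alpha$ close to the way $\bar\mu_L$ does. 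Since $\iota(\,\cdot\,,\alpha):\mathcal{C}(S)\to\R$ is weak-$*$ continuous by Bonahon, what I really need is a \emph{typical} version of Bowen's equidistribution theorem for closed hyperbolic geodesics.

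The averaged equidistribution comes for free from \eqref{eq:gibbs as limit} applied to $f\equiv 1$, which identifies $\bar\mu_L$ with the unique measure of maximal entropy of the hyperbolic geodesic flow and yields the weak-$*$ convergence
\[\frac{1}{\#R_\rho(T)}\sum_{\gamma\in R_\rho(T)}\frac{\gamma}{\ell_\rho(\gamma)}\longrightarrow \bar\mu_L.\]
Pairing with $\alpha$ immediately gives that the average of $\iota(\gamma,\alpha)/\ell_\rho(\gamma)$ tends to $\iota(\bar\mu_L,\alpha)$. To upgrade this to a full-density statement I would proceed via a second-moment estimate. First, approximate $\alpha$ by a H\"older function $f_\alpha^\varepsilon:T^1S\to\R_{\geq 0}$ supported in a thin tube around $\alpha$, normalized so that for every closed geodesic $\gamma$
\[\int_\gamma f_\alpha^\varepsilon = \iota(\gamma,\alpha)+O\!\left(\varepsilon\,\iota(\gamma,\alpha)\right).\]
One can build such an $f_\alpha^\varepsilon$ by taking (a smoothing of) the indicator of an $\varepsilon$-neighbourhood of $\alpha$, weighted by $\sin\theta/(2\varepsilon)$ where $\theta$ is the angle with $\alpha$, so that each transverse crossing contributes exactly $1$ up to error $O(\varepsilon)$.

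Second, I would establish the second moment
\[\frac{1}{\#R_\rho(T)}\sum_{\gamma\in R_\rho(T)}\left(\frac{1}{\ell_\rho(\gamma)}\int_\gamma f_\alpha^\varepsilon\right)^{\!2}\longrightarrow\left(\int f_\alpha^\varepsilon\,d\bar\mu_L\right)^{\!2},\]
which reduces to an equidistribution-of-pairs statement for closed geodesics. On the closed hyperbolic surface $(S,\rho)$ this is classical and follows from exponential mixing of the geodesic flow for the Liouville measure (equivalently from Selberg-type trace formulas). Given this, Chebyshev's inequality gives the full-density conclusion for the observable $\gamma\mapsto\tfrac{1}{\ell_\rho(\gamma)}\int_\gamma f_\alpha^\varepsilon$; letting $\varepsilon\to 0$ (with $T$ going to infinity first, along a suitable subsequence) transfers the conclusion to $\iota(\gamma,\alpha)/\ell_\rho(\gamma)$, and multiplying through by $\ell_\rho(\gamma)$ yields the desired inequality.

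The main obstacle I anticipate is keeping the approximation error uniform: one has to choose the mollified tube function $f_\alpha^\varepsilon$ so that the $L^2$-error $\|\int_\gamma f_\alpha^\varepsilon-\iota(\gamma,\alpha)\|_{L^2(R_\rho(T))}$ remains much smaller than the Chebyshev threshold as $T\to\infty$, even though $\iota(\gamma,\alpha)$ can be as large as $O(\ell_\rho(\gamma))$. This requires a careful bookkeeping of crossings at small angle, for which the $\sin\theta$-weighting in the definition of $f_\alpha^\varepsilon$ is essential. Once this uniform approximation is in place, the rest of the argument is routine equidistribution machinery.
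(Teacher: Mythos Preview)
Your outline is essentially correct and would eventually yield the result, but it takes a substantially heavier route than the paper. You replace the intersection pairing by a H\"older observable, invoke a second-moment (or large-deviation) estimate for Birkhoff averages along closed orbits, and then apply Chebyshev. This requires exponential mixing or pair-equidistribution as input, together with the delicate approximation step you flag at the end.

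The paper bypasses all of this with a soft contradiction argument. From the first-moment equidistribution $\mu_T=\frac{1}{\#R_\rho(T)}\sum_{\ell_\rho(\gamma)\leq T}\mathrm{Leb}_\gamma\to\lambda_0$ alone, suppose the set $\mathcal{A}(T)$ of geodesics with $\iota(\gamma,\alpha)\geq(1+\epsilon)\kappa\,\ell_\rho(\gamma)\ell_\rho(\alpha)$ had lim\,sup density $>0$. Pass to a subsequence along which the sub-averages $\nu_T=\frac{1}{\#R_\rho(T)}\sum_{\gamma\in\mathcal{A}(T)}\mathrm{Leb}_\gamma$ converge weakly to a nontrivial $\Phi^t$-invariant measure $\nu$. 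Since $0\leq\nu_T\leq\mu_T$ and $\mu_T\to\lambda_0$, one gets $\nu\leq\lambda_0$, so $\nu\ll\lambda_0$; ergodicity of the Liouville measure forces $\nu=c\lambda_0$ with $c>0$. But then $\iota(\hat\nu,\alpha)/\nu(T^1S)=\kappa\,\ell_\rho(\alpha)$, contradicting the limit of the inequality $\iota(\hat\nu_T,\alpha)/\nu_T(T^1S)\geq(1+\epsilon)\kappa\,\ell_\rho(\alpha)$. The lower-tail set $\mathcal{B}(T)$ is handled symmetrically.

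In short: the paper uses only the equidistribution you already quote plus ergodicity of Liouville, with no mollification, no variance estimate, and no mixing. Your approach would buy a quantitative rate of convergence, but for the statement as written the ergodicity trick is both shorter and avoids the small-angle bookkeeping you correctly identify as the main obstacle.
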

\begin{proof} 
The Borel measures 
\[\mu_T=\frac{1}{\myhash  R_\rho(T)} \sum_{\ell_\rho (\gamma)\leq T} {\rm Leb}_\gamma\]
converge weakly as $T\to \infty$ to the normalized Lebesgue Liouville measure 
$\lambda_0$ on $T^1S$ (see~\cite{Mar04}). 

Let $\lambda\in {\mc C}(S)$ be the (unnormalized) Liouville \emph{current} of $\rho$, the current defined
by the Lebesgue Liouville measure on $T^1S$, and for each $T$ let $\hat \mu_T$ be the current defined
by $\mu_T$. 
Let 
$\alpha$ be a closed geodesic on $S$. As $\iota(\alpha,\lambda)=\ell_\rho(\alpha)$
(see Section~\ref{sec:geodescicurrents}), 
by continuity of the intersection form 
$\iota$ for the weak topology on currents, 
we know that 
\[\iota(\hat \mu_T,\alpha)\underset{T\to\infty}{\longrightarrow} \frac{1}{-4\pi^2\chi(S)}\ell_\rho(\alpha).\] 
Note to this end that the total volume of 
$T^1S$ with respect to the Lebesgue Liouville current equals 
$-4\pi^2 \chi(S)$.

Put $\kappa=\frac{1}{-4\pi^2 \chi(S)}$ and let $\epsilon >0$.
To show that the geodesics $\gamma$ with 
\[\vert \iota(\gamma,\alpha)-\kappa \ell_\rho(\gamma)\ell_\rho(\alpha)\vert <\epsilon \kappa\ell_\rho(\gamma)\]
are typical we argue as follows. For $T>0$ let 
\[{\mathcal A}(T)=\{\gamma\mid \ell_\rho(\gamma)\leq T, \iota(\gamma,\alpha)\geq 
(1+\epsilon)\kappa \ell_\rho(\gamma)\ell_\rho(\alpha)\}.\]
We claim that $\frac{\myhash  {\mathcal A}(T)}{\myhash  R_\rho(T)}\to 0$ $(T\to \infty)$.

To see this assume otherwise. By passing to a subsequence, we may assume that the 
measures $\nu_T=\frac{1}{\myhash  R_\rho(T)}\sum_{\gamma\in {\mathcal A}(T)}{\rm Leb}_\gamma$
converge weakly to a nontrivial $\Phi^t$-invariant 
measure~$\nu$. By construction, the measure $\nu$ is absolutely 
continuous with respect to the Lebesgue Liouville measure $\lambda$. It defines a current 
$\hat \nu$ which satisfies 
\begin{equation}\label{intersectioncontrol}
\iota(\hat \nu , \alpha)/\nu(T^1S)\geq (1+\epsilon) \kappa \ell_\rho(\alpha).\end{equation}
But $\lambda$ is ergodic under the action of $\Phi^t$ and hence as $\nu$ is absolutely continous
with respect to $\lambda$, it is a positive constant multiple of $\lambda$. 
This contradicts the inequality (\ref{intersectioncontrol}) and equation (\ref{iota}). 

In the same way we conclude that 
$\frac{\myhash  {\mathcal B}(T)}{\myhash  R_\rho(T)}\to 0$  as $T\to \infty$ where 
\[{\mathcal B}(T)=\{\gamma\mid \ell_\rho(\gamma)\leq T, \iota(\gamma,\alpha)\leq 
(1-\epsilon)\kappa \ell_\rho(\gamma)\ell_\rho(\alpha)\}.\]
Since $\epsilon >0$ was arbitrary, this shows the proposition.
\end{proof}

	
	Let $X$ be a hyperbolic metric on $S$ and let $c$ be a non-separating simple closed geodesic on $X$ of length 
 $\ell >0$. 
For $L\geq 0$ denote by $\rho_L$ a representation obtained by Hitchin grafting of $X$ on $c$ of height $L$. 
 Our goal is to 
 estimate for a hyperbolic metric $Y$ on $S$ 
 the quantities ${\bf I}(Y,\rho_L)$ and ${\bf J}(Y,\rho_L)$ as $L\to \infty$. 

	
 
\begin{proof}[Proof of Theorem~\ref{intersectiontendstoinf}]
Let $X\in {\mathcal T}(S)$ be the marked hyperbolic 
metric which is the basepoint for the Hitchin grafting ray. 
According to the length control as formulated in Theorem~\ref{thm:quasiisom lengths}, 
for every $\epsilon >0$   there exist $C_\sigma >0$ depending on the hyperbolic length $\sigma$ 
of the simple closed curve $c$ such that we have 
  \begin{equation}\label{quasihitagain}
  \ell_{\rho_L}(\gamma)\geq \max\left\{C_\sigma L\iota(\gamma,c),\frac{L}{L+C_\sigma^{-1}} \ell_X(\gamma)\right\} \end{equation} 
  where we use the notations of Theorem~\ref{thm:quasiisom lengths}, lengths 
  in $\X$ are measured with respect to 
  an admissible Finsler metric, and $\ell_X$ denotes the length for the hyperbolic metric $X$.

Let $m>0$ be a fixed number. Our goal is to find a number $L>0$ so that
\[{\bf J}(Y,\rho_L)\geq m\]
for every $Y\in {\mathcal T}(S)$ where as before, 
${\mathcal T}(S)$ denotes the Teichm\"uller 
space of marked hyperbolic metric on $S$.

By Theorem 12 of~\cite{Bo88}, the map which associates to a marked hyperbolic 
metric on $S$ its Liouville current is a proper topological embedding.
More precisely, for the given number $m>0$, there exists a compact ball $B$ 
about $X$ in ${\mathcal T}(S)$ such that
$\iota(\lambda_X,\lambda_Y)\geq m$ for all marked hyperbolic 
metrics $Y\in {\mathcal T}(S)-B$, where 
$\lambda_X,\lambda_Y$ are the currents 
defined by the normalized Lebesgue Liouville measures. 
Note that this is symmetric in $X,Y$. 
Furthermore, we have $\iota(\lambda_Y,\lambda_X)=
{\bf J}(Y,X)$. We refer to p.152-153 in~\cite{Bo88} for details on these facts.

By the estimate (\ref{quasihitagain}), for any $\epsilon>0$ 
and all sufficiently large $L\geq 0$ depending on $\epsilon$, 
say for all $L\geq L(\epsilon)$, we have
\[\ell_{\rho_L}(\gamma)\geq (1-\epsilon)\ell_X(\gamma).\] 
Thus by possibly increasing 
the ball $B$ we may assume that 
${\bf J}(Y,\rho_L)\geq m$ for all $L\geq L_0$ and all 
$Y\not\in B$.

We are left with showing that by possibly increasing $L_0$, we also 
have ${\bf J}(Y,\rho_L)\geq m$ for all $Y\in B$. However, this 
follows once more from the estimate (\ref{quasihitagain}). Namely, 
let $Y\in B$. 
By Proposition~\ref{typicalIntersection}, we know that there exists a constant 
$\kappa >0$ such that 
\[\iota(\gamma,c)\geq \kappa (1-\epsilon) \ell_Y(\gamma) \ell_Y(c)\] 
for any geodesic $\gamma$ which is typical for $Y$.

On the other hand, by compactness of $B$, there exists a constant $\sigma >0$ such that 
$\ell_Y(c)\geq \sigma$ for every $Y\in B$. Then for 
a geodesic $\gamma$ which is typical 
for $Y$, we have $\ell_Y(\gamma)\leq \frac{1}{\kappa\sigma (1-\epsilon)}\iota(\gamma,c)$.
Thus for $L> m/\kappa\sigma (1-\epsilon) C_\sigma $ it holds 
\[\ell_{\rho_L}(\gamma)/\ell_Y(\gamma) \geq \kappa \sigma (1-\epsilon)C_\sigma L  \geq 
m \]
which is what we wanted to show.
Together with the definition, it shows that ${\bf I}(\nu,\rho_L)\to \infty$ for 
every Fuchsian representation $\nu$.

To show that we also have ${\bf J}(\nu,\rho_L)\to \infty$ for all Fuchsian representations
it suffices to observe that the entropy of $\rho_L$ is bounded from below by a universal positive
constant. To see that this is the case, recall that for each $L$, the restriction of the representation~$\rho_L$ to the free subgroup $\Lambda$ of $\pi_1(S)$ of all based loops which do not cross through 
$c$ does not depend on $L$. In particular, the image of $\Lambda$ under $\rho_L$ stabilizes
a totally geodesic hyperbolic plane in $\X$. As a consequence, for each $L$ the entropy of 
$\rho_L$ is not smaller than the entropy of the geodesic flow on the bordered surface $S-c$, which 
is positive as $S-c$ is a hyperbolic surface with geodesic boundary. Together with the control on 
${\bf I}(\nu,\rho_L)$ established in the beginning of this proof, this implies that
${\bf J}(\nu,\rho_L)\to \infty$ $(L\to \infty)$ for any Fuchsian representation $\nu$.
\end{proof}

\section{Upper bound on the derivatives of length functions}\label{sec:ehresmann}

In this section we show how to control the first and second derivatives
of the Finsler length for paths of
Hitchin grafting representations. The section is divided into four subsections.

\subsection{Derivative bounds for lengths of closed geodesics}\label{sec:derivativebound}

Recall that we have fixed a marked hyperbolic surface $(S,h)$,
that is, a point in the Teichm\"uller space ${\mathcal T}(S)$, 
 and a multicurve $\gamma^*=\gamma^*_{1}\cup\dots\cup\gamma^*_{N}\subset S$.
In this section we study grafted representations of $\pi_1(S)$ into $G=\SL_d(\R)$ via the irreducible representation 
$\tau:\SL_2(\R)\to \SL_d(\R)$.
Recall that a bending parameter $z\in \mf a^N$ above $\gamma^*$ is the datum of 
an element $z_i\in \mf a$ for each component $\gamma^*_{i}$ of $\gamma^*$.
Given such a bending parameter $z$, we can construct a conjugacy class of Hitchin representations 
$[\rho_{h,z}]=[\rho_{z}]:\pi_1(S)\to \SL_d(\R)$. Here
$\rho_{h,0}$ is just the image of $h\in {\mathcal T}(S)$ under the representation $\tau$.

The following proposition is the main technical ingredient 
toward a control of the pressure length of suitably chosen paths in ${\rm Hit}(S)$, namely
estimates on the derivatives of the map $z\mapsto \lambda\circ\rho_{h,z}(\gamma)$ where $\gamma\in\pi_1(S)$, and 
$\lambda:G\to \overline{\mf a^+}$ is the Jordan projection (see Section~\ref{sec:lietheoryI}). 
To this end recall from Section \ref{sec:currents} that for any $z_0\in {\mf a}^N$ and any $\gamma\in \pi_1(S)$, the differential 
$d(\lambda \circ \rho_{z}(\gamma))_{z=z_0}$ of the Jordan projections for the deformations of $\rho_{z_0}$ obtained by 
grafting along $\gamma^*$ is defined and can be thought of as a linear map $\mathfrak{a}^N\to \mathfrak{a}$. 
If we equip $\mf a$ with the norm $\Vert \,\Vert$ 
obtained from the Killing form on ${\mf sl}_d(\mathbb{R})$, then this linear map has an operator norm 
which we denote by $\Vert d(\lambda \circ \rho_{z}(\gamma))\vert_{z=z_0}\Vert$. Note that the hyperbolic 
metric $h$ is left fixed in this construction. 
Similarly, the Hessian $d^2(\lambda \circ \rho_z(\gamma))\vert_{z=z_0}$ 
evaluated on the subspace of ${\rm Hit}(S)$ defined by grafting along $\gamma^*$ 
can be thought of as an 
$\mathfrak{a}$-valued bilinear form on ${\mf a}^N$ which has an operator norm. 


\begin{proposition}\label{cor:firstandsecond}
    For any $\sigma>0$ there exists $C_\sigma>0$ such that for any $h\in\mc T(S)$ giving length $\leq \sigma$ to each component of $\gamma^*$, for any bending parameter $z_0\in\mf a^N$ along $\gamma^*$ and  for any $\gamma\in \pi_1(S)$, we have
    $$
    \left\Vert d(\lambda\circ\rho_{z}(\gamma))\vert_{z=z_0}\right\Vert,\ 
    \left\Vert d^2(\lambda\circ\rho_{z}(\gamma))\vert_{z=z_0}\right\Vert \leq C_\sigma \iota(\gamma,\gamma^*).
    $$
\end{proposition}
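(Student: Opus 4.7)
The plan is to combine the algebraic product structure of $\rho_z(\gamma)$ with the geometric control on translation length provided by \cite{BHM25}. Using the graph-of-groups decomposition of $\pi_1(S)$ along $\gamma^*$, one writes
\[\rho_z(\gamma) = A_0 E_1 A_1 E_2 \cdots E_k A_k,\]
where each $A_j \in \tau(\PSL_2(\R))$ is a Fuchsian holonomy along a subarc of $\gamma$ in $S \setminus \gamma^*$ (depending on $h$ and $\gamma$ but not on $z$), each $E_j = \exp(\pm z_{i_j})$ is a bending factor depending only on the single parameter $z_{i_j} \in \mf a$, and $k = O(\iota(\gamma,\gamma^*))$ with constant depending on $\sigma$. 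The bound on $k$ uses the length bound on the components of $\gamma^*$ together with the collar lemma to control the number of consecutive crossings.

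Since $\mf a$ is abelian, $\frac{d}{dt}\big|_{t=0}\exp(z + tw) = w\exp(z)$, so by the product rule the first derivative of $\rho_z(\gamma)$ in a direction $w \in \mf a^N$ is a sum of $k$ terms, each obtained by inserting $w_{i_j}$ at the $j$-th bending factor. To convert this into a bound on the derivative of the Jordan projection $\lambda$, I would use that $\lambda$ is real analytic at every loxodromic element with pairwise distinct eigenvalues. For Hitchin representations this is always the case by Theorem \ref{hitanosov}, and the eigenvalue gaps of $\rho_z(\gamma)$ grow linearly with $\iota(\gamma, \gamma^*)$ by Theorem \ref{thm:quasiisom lengths}. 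Explicit formulas for $d\lambda$ in terms of eigenprojections then yield the first derivative bound $C_\sigma \iota(\gamma, \gamma^*)$.

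The main obstacle is the second derivative: the naive product rule produces an a priori bound of $O(\iota(\gamma, \gamma^*)^2)$, not the claimed linear bound. To overcome this, I would exploit the geometric interpretation of $\lambda(\rho_z(\gamma))$ as the vector-valued translation of $\rho_z(\gamma)$ along its invariant flat. By Proposition \ref{piecewise-isometry} together with the quasi-geodesicity of admissible paths (Proposition \ref{prop:triangle equality for admissible paths}), this translation vector is, up to a uniformly bounded additive error, a sum over the $O(\iota(\gamma,\gamma^*))$ pieces of the admissible axis of $\gamma$, where each piece contributes an $\mf a$-valued quantity that depends smoothly on $z$ with bounded first and second derivatives. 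The linear Hessian bound then follows from summing these individually bounded contributions, provided one can control the derivatives of the additive error term. The latter is the delicate point and would require analyzing the dependence of the attracting and repelling flags of $\rho_z(\gamma)$ on $z$ via an implicit function theorem argument, using the contraction and expansion properties of the associated Anosov cocycle.
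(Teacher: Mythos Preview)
Your first-derivative sketch is in the right spirit, though note that $k=\iota(\gamma,\gamma^*)$ exactly (one bending factor per crossing), so no collar-lemma counting is needed.

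The genuine gap is in your treatment of the second derivative. You correctly flag that the product rule gives a priori $O(k^2)$ cross terms, but your proposed cure---writing $\lambda(\rho_z(\gamma))$ as a sum of $k$ piecewise contributions plus a bounded additive error coming from quasi-ruledness, then differentiating---does not work as stated. Quasi-ruledness from Proposition~\ref{prop:triangle equality for admissible paths} controls only the \emph{value} of the error, not its derivatives in $z$; the error is built from Gromov products and Busemann functions at the attracting and repelling flags of $\rho_z(\gamma)$, which depend on $z$ through the entire limit map. An implicit-function argument on those flags will give smoothness, but there is no evident mechanism to produce bounds uniform in $z_0$ and in $\gamma$, which is exactly what the proposition demands.

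The paper's argument is algebraic rather than geometric and bypasses this issue entirely. After conjugating cyclically, the $(i,j)$ cross term takes the form $d^2\lambda(A_{ij}\exp(\cdot)B_{ij}\exp(\cdot))\vert_{(0,0)}$, where $A_{ij}$ and $B_{ij}$ are products of $\mf d(i,j)$ and $k-\mf d(i,j)$ factors respectively (with $\mf d$ the cyclic distance). Total positivity (Proposition~\ref{fact:quantitative Pos are Lox}) forces each such factor-product to be loxodromic with spectral gap proportional to its length. The key lemma, proved by approximating a proximal matrix by the rank-one projector onto its dominant eigenline, is that $\Vert d^2\lambda(A\exp(\cdot)B\exp(\cdot))\vert_{(0,0)}\Vert\leq C e^{-\omega}$ whenever $(A,B)$ has spectral gap $\omega$ (Lemmas~\ref{lem:proximal length estimate} and~\ref{lem:lox length estimate}). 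Hence the $(i,j)$ cross term decays like $e^{-\mf d(i,j)\omega'}$, and the double sum $\sum_{i,j}e^{-\mf d(i,j)\omega'}$ is bounded by $Ck$ via a geometric series. This exponential decoupling of distant insertions is the missing idea.
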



The idea for the proof of the proposition is the following.
Given a family of Hitchin grafting representations $(\rho_z)_{z\in\mf a}$ based at a Fuchsian representation $\rho_0$, and $\gamma\in\pi_1(S)$, we decompose $\gamma$ into an admissible path for the hyperbolic structure corresponding to $\rho_0$.
Say the admissible path travels in $S-\gamma^*$ for the time $t_1$, then meets orthogonally some component $\gamma^*_{i_1}$ of $\gamma^*$, then travels along it for some time $s_1$, then departs from it orthogonally and travels for some time $t_2$ in $S-\gamma^*$... etc.
This gives us the following formula for the conjugacy class of the holonomy $\rho_0(\gamma)$:
\begin{equation*}
    \rho_{0}(\gamma) \sim a'_{t_1}a_{s_1}a'_{t_2}\cdots a'_{t_k}a_{s_k},
\end{equation*}
where $k$ is the intersection number of $\gamma$ with $\gamma^*$ and where we use the notations interoduced in Definition \ref{def:admissible in G}.

The grafting operation deforms the above expression in a very explicit way, namely
\begin{equation}\label{producta}
    \rho_{z}(\gamma) \sim a'_{t_1}a_{s_1}  \exp({r_1(z)})a'_{t_2}\cdots a'_{t_k}a_{s_k} \exp({r_k(z)}),
\end{equation}
where $r_n(z)$ is either the $i$-th coordinate $z_{i_n}$ (where $\gamma^*_{i_n}$ is the $n$-th component of $\gamma^*$ crossed by $\gamma$), or $\iota(-z_{i_n})$ where $\iota:\mf a\to\mf a$ is the Cartan involution, depending on whether we cross $\gamma^*_{i_n}$ from left to right or from right to left.
Taking $\mf a$ as the vector space of trace free diagonal matrices, this involution 
permutes the diagonal entries as follows: $\mr{diag}(x_1,\dots,x_d)\mapsto \mr{diag}(x_d,\dots,x_1)$.

To estimate derivatives of the Jordan projection of the above product of $3k$ matrices we simply differentiate 
the expression in the equation (\ref{producta}), which is simply a product of matrices, using the product rule for differentiation. 
This yields the following formula for the first order derivative, where we put $X_n=dr_n(z)\vert_{z=z_0}$. 
\begin{align*}
   d (\lambda \circ \rho_z(\gamma))\vert_{z=z_0} \\
    =  \sum_{n=1}^k 
    d\lambda (a'_{t_1}a_{s_1}\exp( {r_1(z_0)})\cdots a'_{t_n}a_{s_n} 
    &\exp( {r_n(z_0)})X_n\cdots a'_{t_k}a_{s_k}\exp({r_k(z_0)}))\\
 = \sum_{n=1}^k d\lambda(a'_{t_{n+1}}a_{s_{n+1}}\exp({r_{n+1}(z_0)})\cdots  &a'_{t_1}a_{s_1}\exp({r_1(z_0)})\cdots 
 a'_{t_n}a_{s_n}\exp( {r_n(z_0)})X_n)
\end{align*}
where the second equation in this formula uses the fact that the Jordan projection is invariant under conjugation. 

The norm of this differential will be controlled using the fact that the matrix 
$$a'_{t_{n+1}}a_{s_{n+1}}\exp( {r_{n+1}(z_0)})\cdots a'_{t_k}a_{s_k}
\exp({r_k(z_0)})a'_{t_1}a_{s_1}\exp( {r_1(z_0)})\cdots a'_{t_n}a_{s_n}\exp({r_n(z_0)})$$
 is totally positive in a quantitative way. In Section~\ref{totpos implies loxo} we shall establish that it 
is loxodromic in a quantitative way, and in Section~\ref{sec:derivative loxodromic} 
we will prove general estimates for derivatives of the form $d\lambda(g \cdot X)$ when $g$ is loxodromic in a quantitative way and 
$X\in \mf a$. 
These results will be obtained by first estimating $d\lambda_1(g \cdot X )$ 
with $g$ quantitatively proximal (Section~\ref{sec:derivative proximal}) and then applying this to exterior powers of loxodromic elements.
The computations for the second order derivative are more involved but can also worked out using positivity.

Proposition~\ref{cor:firstandsecond} will be useful to bound the pressure length of Hitchin grafting paths in the Hitchin component, 
namely paths of the form $(\rho_{h,tz})=(\rho_{tz})_{t\geq 0}$, using the previous notations, that is, 
$h\in {\mc T}(S)$ is a fixed marked hyperbolic metric and 
$z\in \mf a^N$ is a fixed grafting parameter. 
It will also be used to control the pressure lengths of other kinds of paths, where we fix the grafting parameter $z$ 
and let the hyperbolic metric vary instead.
More precisely, we will deform the hyperbolic metric by \emph{shearing}, 
which is in fact a particular case of bending, and this allows to give a unified treatment for both types of deformations.

Proposition~\ref{cor:firstandsecond} has the following consequence. For its formulation, it is useful to keep in mind that grafting along 
a multicurve $\gamma^*$ commutes with modifying the hyperbolic metric in the complement of $\gamma^*$. 
More precisely, fix a basepoint $x_i$ on each component of $\gamma^*$. If $S_1\subset S$ is a component of the complement of 
$\gamma^*$ then using the basepoint as a marked point, the Teichm\"uller space ${\mc T}(S_1)$ 
of $S_1$ is the space of marked hyperbolic metrics on $S_1$ with geodesic boundary of fixed length and one marked point in 
each boundary component. Each choice of a hyperbolic metric $h$ on $S$ determines an embedding of ${\cal T}(S_1)$ into
${\cal T}(S)$ (where the boundary lengths depend on $h$) by first marking a point on each 
geodesic which defines a boundary component of $S_1$, 
cutting $S$ open along these $h$-geodesics and gluing a marked metric $h_1\in {\cal T}(S_1)$ to $S\setminus S_1$ matching marked
points. By the definition of grafting, this operation commutes with grafting along $\gamma^*$. 

Observe that one way to deform the marked hyperbolic metric on $S_1$ is to shear (or twist) along a closed geodesic 
entirely contained in $S_1$. As twisting is a grafting operation along a grafting parameter contained in the one-dimensional
subspace of $\mf a$ tangent to the image of the representation $\tau$, we obtain as a corollary of Proposition \ref{cor:firstandsecond}
the following result. In its formulation, a constant speed shearing path along a geodesic multicurve $\eta$
is a path of marked hyperbolic metrics which consists in
cutting $S$ open along $\eta$ and gluing back with a rotation whose rotation speed is constant one along the path, where
the speed is the absolute value of the 
derivative of the signed length of the shearing deformation where length is measured with respect to the 
length element of the geodesic multicurve. Note that this makes sense as the length element of a shearing 
multicurve is constant along a shearing path.

\begin{corollary}\label{cor:estimate derivative length on shearings}
Fix a connected component $S_1\subset S-\gamma^*$.
Let $(h_t)_{t\in[0,1]}$ be a smooth path of hyperbolic metrics 
obtained from $h_0$ by constant speed 
shearing along a multicurve $\eta\subset S_1$  
(we allow shearing along $\gamma^*$ and different speed of shearings along the components, including zero speed).

Then there is a constant $C>0$ only only depending on 
an upper bound on the lengths of the multicurve $\gamma^*\cup \eta$ and an upper bound on the shearing speeds along
the components of $\eta$ 
such that for all $z,t,\gamma$ we have
    $$
    \left\Vert\frac{d}{dt}\lambda\circ\rho_{h_t,z}(\gamma)\right\Vert,\ \left\Vert\frac{d^2}{dt^2}\lambda\circ\rho_{h_t,z}(\gamma)\right\Vert \leq C\ell_{h_t}(\gamma\cap S_1),   
    $$
    where $\ell_{h_t}(\gamma\cap S_1)$ is the $h_t$-length of the subarcs of $\gamma$ contained in $S_1$.
\end{corollary}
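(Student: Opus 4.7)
The plan is to reduce the corollary to Proposition~\ref{cor:firstandsecond} by realizing the shearing deformation as a Hitchin grafting deformation of the fixed Fuchsian starting point~$\rho_{h_0}$. The key observation is that shearing the hyperbolic metric~$h_0$ along a simple closed geodesic~$\eta_i$ by signed length~$ts_i$ coincides, after composition with~$\tau$, with Hitchin grafting of~$h_0$ along~$\eta_i$ with bending parameter~$ts_i u \in \mf a$, where $u = d\tau\hsl$ is the Fuchsian direction. Since grafting along disjoint simple closed curves commutes up to conjugation, the two-parameter family~$\rho_{h_t,z}$ is conjugate to a single Hitchin grafting of~$h_0$ along the enlarged multicurve~$\gamma^* \cup \eta$ with an affine path of bending parameters $w(t) = w_0 + tV$, where~$V$ is supported on the components of~$\eta$ with entries prescribed by the (bounded) shearing speeds. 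Note that shearing along~$\eta$ preserves the length of the components of~$\eta$ and of curves disjoint from~$\eta$, so all components of~$\gamma^* \cup \eta$ still have length~$\leq \sigma$ in every~$h_t$, and Proposition~\ref{cor:firstandsecond} applies uniformly along the path.

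Applied to this setup, Proposition~\ref{cor:firstandsecond} gives bounds of the form~$C_\sigma \iota(\gamma, \gamma^* \cup \eta)$ on the operator norms of the differential and Hessian of $w \mapsto \lambda \circ \rho_{h_0,w}(\gamma)$. This alone is too weak, since we want an estimate in terms of~$\ell_{h_t}(\gamma \cap S_1)$, not of the full intersection number. To sharpen it one revisits the formula used in the proof of Proposition~\ref{cor:firstandsecond}: at first order, the derivative of $\lambda \circ \rho_{h_0,w(t)}(\gamma)$ decomposes as a sum indexed by the crossings of~$\gamma$ with~$\gamma^* \cup \eta$, and at the $n$-th crossing the contribution involves the factor~$X_n = dr_n(V)$, which vanishes unless the crossed component lies in the support of~$V$, namely in~$\eta$. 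Consequently the first-order bound upgrades to~$C\Vert V\Vert\,\iota(\gamma,\eta)$. An analogous argument handles the Hessian: cross-terms indexed by pairs of crossings survive only if both crossed components lie in~$\eta$, and the positivity-based cancellations responsible for the linear (rather than quadratic) bound in Proposition~\ref{cor:firstandsecond} are preserved after this restriction, yielding~$\Vert d^2/dt^2 (\lambda \circ \rho_{h_0,w(t)}(\gamma))\Vert \leq C\Vert V\Vert^2 \iota(\gamma,\eta)$.

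The final step converts the intersection number into a length. Every component of~$\eta$ has~$h_t$-length at most~$\sigma$, so by the collar lemma each component admits a collar of width bounded below by a function of~$\sigma$ alone. Each intersection of~$\gamma$ with~$\eta$ traverses such a collar transversely, and the corresponding crossing lies in~$S_1$ (or on its boundary if the component lies on~$\partial S_1$, in which case the half-collar inside~$S_1$ still contributes a definite amount). Summing over crossings gives~$\iota(\gamma,\eta) \leq C'\,\ell_{h_t}(\gamma \cap S_1)$, which combined with the refined derivative bounds yields the desired estimate.

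The main technical obstacle is the Hessian refinement in the second paragraph: whereas the first-order version is essentially immediate from the product-rule expansion sketched after Proposition~\ref{cor:firstandsecond}, the second-order version requires tracking which pairs of crossings actually contribute, and verifying that the positivity-based cancellations which prevent the Hessian bound from becoming quadratic in~$\iota(\gamma, \gamma^* \cup \eta)$ remain effective once one restricts attention to~$V$-directions supported on~$\eta$.
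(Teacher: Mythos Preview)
Your proposal is correct and follows the same architecture as the paper's proof: recognize shearing as grafting in the Fuchsian direction $u = d\tau\hsl$, view $\rho_{h_t,z}$ as a Hitchin grafting of the fixed $h_0$ along the enlarged multicurve $\gamma^*\cup\eta$ with affine bending parameter $w(t)$, apply Proposition~\ref{cor:firstandsecond}, and finish with the collar lemma.

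You are in fact more careful than the paper in the middle step. The paper invokes Proposition~\ref{cor:firstandsecond} to obtain the bound $C\,\iota(\gamma,\gamma^*\cup\eta)$ and then asserts $\iota(\gamma,\gamma^*\cup\eta)\leq C'\,\ell_{h_t}(\gamma\cap S_1)$; as literally written this last inequality is problematic, since crossings of $\gamma$ with components of $\gamma^*$ not lying on $\partial S_1$ contribute to the left side but not to the right. Your refinement---returning to the product-rule expansion and observing that the term at the $n$-th crossing carries a factor $dr_n(V)$ which vanishes unless that crossing is with a component of $\eta$---bypasses this by producing the sharper bound $C\,\iota(\gamma,\eta)$, which the collar argument does convert to $\ell_{h_t}(\gamma\cap S_1)$. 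The Hessian refinement works for the reason you indicate: in the double sum of Section~\ref{sec:proof derivative estimates} only pairs $(i,j)$ with both crossings in $\eta$ survive, and since the exponential weight $e^{-\mf d(i,j)\omega'}$ is computed with the cyclic distance among \emph{all} crossings of $\gamma^*\cup\eta$, the remaining sum is still dominated term-by-term by the full geometric series, yielding a bound linear in $\iota(\gamma,\eta)$ rather than quadratic.
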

\begin{proof}
Suppose $(h_t)_{t\in[0,1]}$ is a path of hyperbolic metrics obtained by constant
speed shearing $h_0$ along a multicurve $\eta\subset S_1$.
Shearing is a special kind of grafting or bending, with grafting 
parameter collinear to $d\tau\!\left(\begin{smallmatrix}1&0\\0&-1\end{smallmatrix}\right)$. Note that 
$\beta=\gamma^*\cup \eta$ is a multicurve. 

For a fixed grafting parameter $z\in {\mf a}^N$ and $t\in [0,1]$ let $\rho_{z,t}$ be the representation obtained
by grafting $h_t$ along $\gamma^*$ with grafting parameter $z$. Then $\rho_{0,t}$ is just the 
Fuchsian representation defined by the marked hyperbolic metric $h_t$.
Since $S_1$ is a component of $S\setminus \gamma^*$, it follows from Proposition \ref{cor:firstandsecond} 
that there exists a number $C>0$ only 
depending on an upper bound for the $h_t$-lengths of the components of 
$\beta$, which is independent of $t$, and an upper bound on the shearing speed, 
such that for each grafting parameter $z$ along $\gamma^*$ and every 
$\gamma\in \pi_1(S)$, we have 
%
%
$$
    \left\Vert\frac{d}{dt}\lambda\circ\rho_{h_t,z}(\gamma)\right\Vert,\ \left\Vert\frac{d^2}{dt^2}\lambda\circ\rho_{h_t,z}(\gamma)\right\Vert \leq C\iota(\gamma,\beta).
    $$

It remains to check that $\iota(\gamma,\beta)\leq C' \ell_{h_t}(\gamma\cap S_1)$ for some constant $C'$ 
only depending on the upper bound of the lenghts of the components of $\beta$.
For this one just takes $C'$ equal to half the infimum among all $t$'s of the shortest $h_t$-distance from one component of $\beta$ to another,
which is uniformly bounded from below along the path by the collar lemma. 
\end{proof}

\subsection{Derivatives of lengths of proximal transformations}\label{sec:derivative proximal}

For a $(d,d)$-matrix $A$ and any $i\leq d$ we denote by 
$\lambda_i(A)\in [-\infty,+\infty)$ the logarithm of the absolute value of the $i$-th
eigenvalue where the eigenvalues are ordered in nonincreasing absolute values.
We use the convention $\log0=-\infty$.
Of course the derivatives of $\lambda_i$ at a point $A$ where $\lambda_i(A)=-\infty$ do not make sense.
In the following, every time we need to compute such derivatives, we will always make sure that $\lambda_i(A)>-\infty$.
We also write 
\[\lambda(A)=(\lambda_1(A),\lambda_2(A),\dots)\in [-\infty,+\infty)^d,\]
which generalizes the Jordan projection when $A\in\SL_d(\R)$.

Recall that  $A$ is \emph{proximal} if $\lambda_{1}(A)>\lambda_{2}(A)$, which means $A$ has a real eigenvalue of multiplicity one (called the dominant eigenvalue) whose absolute value is strictly greater than that of any other eigenvalue.
In particular, we have $\lambda_1(A)>-\infty$.
The eigenline associated to the dominant eigenvalue is called the \emph{attracting eigenline}.
If $A$ is of maximal rank, then $A$ acts on $\mathbb{R}P^{d-1}$, and the attracting eigenline 
is an attracting point for the action of $A$ on the projective space.
The complementary $A$-invariant hyperplane (the sum of the remaining generalized eigenspaces)
is called the \emph{repelling hyperplane}.


We will need a quantitative version of proximality.
For this we endow $\R^d$ with its usual inner product coming from its canonical basis.
In the following, all angles come from this fixed inner product.

For any $0<\theta<\pi/2$ and $\omega\in(0,\infty]$ we denote by $P_{\omega,\theta}$ the set of nonzero 
proximal (possibly noninvertible) matrices $A$ with the following two properties.
\begin{enumerate}
\item[(a)]$\lambda_1(A)- \lambda_2(A)\geq\omega$ (allowing $\lambda_2(A)=-\infty$),
\item[(b)]the attracting eigenline of $A$ makes an angle $\geq \theta$ with the repelling hyperplane.
\end{enumerate}
We denote by $DP_{\omega,\theta}\subset P_{\omega,\theta}\times P_{\omega,\theta}$ the set of pairs $(A_0,A_1)$ such that $A_{i}$'s attracting eigenline forms an angle $\geq \theta$ with  $A_{1-i}$'s repelling hyperplane, and such that the product $A_{i}A_{1-i}$ is in $P_{\omega,\theta}$.
Note that if $A\in P_{\omega,\theta}$ then $(A,A)\in DP_{\omega,\theta}$.

In the next lemma, the norm 
$\Vert X\Vert$ of an element in the Lie algebra ${\mf gl}_d(\R)={\mf sl}_d(\mathbb{R})\oplus \mathbb{R}$
is the norm
induced by the Killing form of $\mathfrak{sl}_d(\R)$ and the choice
of a Cartan involution. Furthermore as before, 
$\exp:\mf{gl}_d(\R)\to {\rm GL}_d(\R)$ denotes
the exponential map. By left translation, this exponential map defines 
for any $A\in {\rm GL}_d(\R)$ a map $A\cdot \exp:
X\in {\mf gl}_d(\R)\to A\cdot \exp(X)\in {\rm GL}_d(\R)$.
In the second statement of the lemma below, the Hessian is taken 
of a function defined on the direct sum ${\mf gl}_d(\R)\oplus {\mf gl}_d(\R)$, and 
the norm is the operator norm. 


\begin{lemma}\label{lem:proximal length estimate}
    For all $0<\theta<\pi/2$ and $\omega_0>0$, there exists $C=C_{\omega_0,\theta}>0$ such that for any $\omega>
    \omega_0$ and any $(A,B)\in DP_{\omega,\theta}$, the following is satisfied.
    \begin{enumerate}
    \item $\Vert d\lambda_1(A\cdot \exp)\vert_0\Vert \leq C_{\omega_0,\theta}$ and
      $\Vert d^2\lambda_1(A\cdot \exp)\vert_0\Vert \leq C_{\omega_0,\theta}$;
        \item For $(X,Y)\in {\mf gl}_d(\R)\oplus {\mf gl}_d(\R)$ it holds
        $\Vert d^2\lambda_1(A\cdot \exp \cdot B \cdot \exp)\vert_{(0,0)}(X,Y)\Vert \leq C_{\omega_0,\theta} e^{-\omega}$.
    \end{enumerate}
\end{lemma}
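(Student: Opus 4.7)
The plan is to combine classical first- and second-order perturbation theory for isolated simple eigenvalues with the quantitative proximality and gap assumptions on $A$, $B$ and $C = AB$. Throughout, for a proximal matrix $M$ with simple top eigenvalue $\mu_M = e^{\lambda_1(M)}$, I will write $v_M$ for a unit attracting eigenvector, $\phi_M$ for the left eigenvector normalised by $\phi_M(v_M)=1$, $\pi_M = I - v_M\phi_M$ for the spectral projection onto the repelling hyperplane $\ker\phi_M$, and $\tilde R_M = (M|_{\ker\phi_M} - \mu_M I)^{-1}\pi_M$ for the reduced resolvent. The angle hypothesis in $P_{\omega,\theta}$ gives $\|\phi_M\| \leq 1/\sin\theta$ and $\|\pi_M\| \leq 1 + 1/\sin\theta$, while the gap $\omega \geq \omega_0$ combined with quantitative proximality gives a bound $\|\mu_M \tilde R_M\| \leq C'_{\omega_0,\theta}$.

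For part~(1), standard first-order perturbation theory yields $d\lambda_1(A\cdot\exp)|_0(X) = \phi_A(X v_A)$, which is bounded by $\|X\|/\sin\theta$. For the second derivative I expand $\mu(A\exp(tX)) = \mu_A + t\mu_1 + t^2 \mu_2 + O(t^3)$ with $\mu_2 = \phi_A(\tfrac12 A X^2 v_A) - \phi_A(AX\,\tilde R_A\,AX v_A)$. Using the identity $\tilde R_A A = \pi_A + \mu_A \tilde R_A$ (which comes from the $A$-invariance of $\ker\phi_A$ together with the algebraic relation $(E - \mu I)^{-1} E = I + \mu (E - \mu I)^{-1}$), I rewrite $\mu_2$ as an expression involving $\phi_A(X^2 v_A)$, $\phi_A(X v_A)^2$ and $\mu_A \phi_A(X \tilde R_A X v_A)$. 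Then $\partial^2_{tt}\lambda_1 = 2\mu_2/\mu_A - (\mu_1/\mu_A)^2$ is bounded using the estimates on $\phi_A$, $v_A$ and $\mu_A\tilde R_A$.

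The heart of the argument is part~(2). Set $C = AB \in P_{\omega,\theta}$ and $M(s,t) = A\exp(sX)B\exp(tY)$, so that $\partial_s M|_0 = AXB$, $\partial_t M|_0 = CY$ and $\partial^2_{st} M|_0 = AXBY$. Second-order mixed perturbation theory for the simple eigenvalue $\mu_C$ gives
\begin{equation*}
    \partial^2_{st}\mu(M)|_{0,0} = \phi_C(\partial^2_{st} M\, v_C) - \phi_C(\partial_s M\,\tilde R_C\,\partial_t M\, v_C) - \phi_C(\partial_t M\,\tilde R_C\,\partial_s M\, v_C).
\end{equation*}
Applying $\tilde R_C C = \pi_C + \mu_C \tilde R_C$ to simplify $\tilde R_C(CY v_C) = \pi_C(Y v_C) + \mu_C \tilde R_C(Y v_C)$, together with $\phi_C C = \mu_C \phi_C$ on the third summand, a short manipulation shows that the diagonal contribution $\phi_C(AXBY v_C)/\mu_C$ cancels exactly against the ``square'' correction $\partial_s\mu\cdot\partial_t\mu/\mu_C^2$ that appears in $\partial^2_{st}\lambda_1 = \partial^2_{st}\mu/\mu_C - \partial_s\mu\cdot\partial_t\mu/\mu_C^2$. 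What remains is the clean identity
\begin{equation*}
    \partial^2_{st}\lambda_1|_{0,0}(X,Y) = -\phi_C(AXB\,\tilde R_C\,Y v_C) - \phi_C(Y\,\tilde R_C\,AXB\, v_C).
\end{equation*}

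The final and most delicate step—where I expect the main obstacle to lie—is bounding these two remaining terms by $C_{\omega_0,\theta}\, e^{-\omega}\|X\|\|Y\|$. The mechanism is that $\tilde R_C$ first projects onto $\ker\phi_C$ and then inverts $C|_{\ker\phi_C} - \mu_C I$, so that $\mu_C \tilde R_C$ is uniformly bounded, while the exponential decay is produced by the transversality hypotheses encoded in $DP_{\omega,\theta}$. Concretely, these hypotheses force $\mu_C \asymp \mu_A \mu_B$, they make $v_C$ be $O(e^{-\omega})$-close to $v_A$ and $\phi_C$ be $O(e^{-\omega})$-close to $\phi_B$, and they control the subdominant blocks $A|_{\ker\phi_A}$ and $B|_{\ker\phi_B}$ by the respective spectral radii $e^{\lambda_2(A)}$ and $e^{\lambda_2(B)}$. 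Inserting these proximities into the two remaining terms and tracking the cancellations along the $C$-invariant splitting $\mathbb{R}v_C \oplus \ker\phi_C$ makes the factor $e^{\lambda_2(A)+\lambda_2(B) - \lambda_1(A)-\lambda_1(B)} = e^{-\omega}$ emerge. The difficulty is book-keeping these multiple asymptotic proximities precisely enough that the exponential decay coming from $\tilde R_C$ is not swamped by the potentially large operator norms of $A$ and $B$.
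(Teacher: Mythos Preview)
Your approach via explicit perturbation theory is genuinely different from the paper's. For Part~(1) the two are essentially equivalent: the paper normalises to spectral radius~$1$, observes that $P'_{\omega_0,\theta}$ is then compact with $\lambda_1$ smooth on a neighbourhood, and reads off uniform bounds on the first two derivatives; you instead compute them explicitly via $\phi_A$, $v_A$ and the reduced resolvent. Either is fine.

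For Part~(2) the paper takes a much shorter route that sidesteps the difficulty you flag. After normalising, it first treats the case where $A,B$ are \emph{rank-one} projectors: since then $AEB=\alpha_E\cdot AB$ for any matrix $E$ and some scalar $\alpha_E$ (everything factors through the line $\mathrm{im}\,B$), a short manipulation gives
\[
\lambda_1(A\exp(X)B\exp(Y))=\log|\alpha_{\exp X}|+\log|\beta_{\exp Y}|+\lambda_1(ABA),
\]
so the mixed Hessian block vanishes \emph{identically} in the rank-one case. The general bound then follows from Lipschitz continuity of $(A,B)\mapsto d^2\lambda_1(A\exp\cdot B\exp)|_{(0,0)}$ on the compact set $DP'_{\omega_0,\theta}$, together with the approximation $\|A-v_A\phi_A\|\le K'e^{-\omega}$ of a normalised $A\in P'_{\omega,\theta}$ by its rank-one part.

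Your resolvent identity for $\partial^2_{st}\lambda_1$ is correct, but as you yourself note, the extraction of the factor $e^{-\omega}$ is not carried out, and the mechanism you sketch is not sufficient as stated. The gap and angle hypotheses control only the \emph{spectral radii} of $A|_{\ker\phi_A}$ and $B|_{\ker\phi_B}$, not their operator norms, and it is the latter that enter your expression $\phi_C(AXB\,\tilde R_C\,Yv_C)$; the proximities ``$v_C\approx v_A$, $\phi_C\approx\phi_B$'' alone do not force the decay. Completing your argument ultimately requires the same subdominant-norm estimate $\|A-v_A\phi_A\|\le K'e^{-\omega}$ that the paper invokes, and once that is in hand the paper's rank-one trick finishes in one line (distance of the Hessian at $(A,B)$ to the identically-zero Hessian at the rank-one pair) rather than through resolvent bookkeeping. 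So your formula is more explicit, but the paper's argument is shorter and isolates the single nontrivial analytic ingredient more cleanly.
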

\begin{proof}
The idea of the proof is to use a compactness argument, by restricting without loss of generality to a compact subset of $P_{\omega,\theta}$. 
Namely;
let $P_{\omega,\theta}'\subset P_{\omega,\theta}$ be the set of proximal matrices $A\in P_{\omega,\theta}$ with spectral radius $e^{\lambda_1(A)}$ equal to $1$, and $DP'_{\omega,\theta}=DP_{\omega,\theta}\cap (P'_{\omega,\theta}\times P'_{\omega,\theta})$.

Let us check that restricting to this compact subset does no harm: For all $(A,B)\in DP_{\omega,\theta}$, if $A'=e^{-\lambda_1(A)}A$ and $B'=e^{-\lambda_1(B)}B$ then $ \lambda_1(A \cdot \exp(X)) = \lambda_1(A)+\lambda_1(A' \cdot \exp(X))$ and
$ \lambda_1(A\cdot \exp(X)\cdot B\cdot \exp(Y)) = 
\lambda_1(A)+\lambda_1(B)+\lambda_1(A' \cdot \exp(X) \cdot B' \cdot \exp(Y))$ for all $X,Y$.
Thus the derivatives we need to estimate are the same for $(A,B)$ and for $(A',B')$.

The set $P^\prime_{\omega,\theta}$ is a compact subset of the space 
$\mf{gl}_d(\R)=\mathbb{R}^{d^2}$ of $(d,d)$-matrices, and the 
restriction of the function $\lambda_1$ to an open neighborhood of the compact set $P_{\omega,\theta}^\prime$ 
in $\R^{d^2}$ is smooth. and 
Hence the differential $d\lambda_1$ is smooth section of the restriction of 
the cotangent bundle of $\R^{d^2}$ to 
$P_{\omega,\theta}^\prime$, 
and the Hessian is a smooth section of the bundle of symmetric bilinear forms on $\R^{d^2}$. 
Thus there exists a constant $K>0$ so that
for any $A\in P_{\omega,\theta}^\prime$ we have 
\begin{equation}\label{eq:estone}
       \norm{d\lambda_1(A\cdot \exp)\vert_0},\ \norm{d^2\lambda_1(A\cdot \exp)_0)}  \leq K.
\end{equation}
This shows the first part of the lemma.

To show the second part of the lemma note that 
 since $DP_{\omega,\theta}^\prime$ is compact, up to increasing $K$ we may assume that 
$\norm{d^2\lambda_1(A \cdot \exp \cdot B\cdot \exp )\vert_{(0,0)}}\leq K$. Furthermore, using smoothness of the 
map which associates to a pair of points $(A,B)$ in a neighborhood of the compact set
$DP_{\omega_0,\theta}$ the 
Hessian $d^2\lambda_1(A\cdot \exp \cdot B\cdot \exp )\vert_{(0,0)}$, viewed as a bilinear form on the direct sum
$\mf{gl}_d(\R)\oplus {\mf gl}_d(\R)$ depending on $(A,B)$, 
 up to increasing once more the control constant $K$ we obtain that 
    \begin{equation}\label{eq:est joint deriv}
        \norm{d^2 \lambda_1(A\cdot \exp \cdot B\cdot \exp)_{(0,0)}-
        d^2\lambda_1(C\cdot \exp \cdot D \cdot \exp)_{(0,0)}}  
         \leq K(||A-C||+||B-D||). 
    \end{equation}

This estimate allows to proceed by first showing the second part of the lemma 
    in the case $(A,B)\in DP'_{\infty,\theta}$,  that is, if all 
  nondominant eigenvalues are zero, equivalently if \ $A$ and $B$ are rank-one projectors.
    We will see that in this case 
    $d^2\lambda_1(A\cdot \exp  \cdot B \cdot \exp)\vert_{(0,0)}=0$, and we will be able to extend to the general case using \eqref{eq:est joint deriv}.

By the definition of $DP_{\omega,\theta}^\prime$, if $(A,B)\in DP_{\omega,\theta}^\prime$ and ${\mr rk}(A)={\rm rk}(B)=1$ 
then ${\rm ker}(AB)={\rm ker}(B)$, and 
for any $C$, we have $\ker(AB)=\ker(B)\subset\ker(ACB)$. Thus for any $C$ 
there is a number $\alpha_C\in \mathbb{R}$  
such that $ACB=\alpha_CAB$.
    Then $\lambda_1(ACB)=\log|\alpha_C|+\lambda_1(AB)$ and consequently  $\alpha_C=\pm e^{\lambda_1(ACB)-\lambda_1(AB)}$.
    Similarly, we have $BCA=\beta_{C}BA$ where $\beta_{C}=\pm e^{\lambda_1(BCA)-\lambda_1(BA)}$. 
    
    Recall that $\lambda_1(CD)=\lambda_1(DC)$ for all matrices.
    Using that $A^{2}=A$ and $B^{2}=B$ we note that for all $X,Y\in {\mf gl}_d(\R)$ we have
    \begin{align*}
    \lambda_1(A \cdot \exp(X) \cdot B\cdot \exp(Y)) &=
    \lambda_1(AA \cdot \exp(X) \cdot BB \cdot \exp(Y)) \\
   & =\lambda_1(A\cdot \exp(X) \cdot BB \cdot \exp(Y)\cdot A).
    \end{align*}
    Now $A\exp(X)B=\alpha_{\exp(X)}AB$ and $B\exp(Y)A=\beta_{\exp(Y)}BA$, so their product is 
$$(A\exp (X)B)(B\exp(Y)A)=\alpha_{\exp(X)}\beta_{\exp(Y)}ABBA=\alpha_{\exp(X)}\beta_{\exp(Y)}ABA$$
and hence 
\[\lambda_1(A\cdot \exp(X)\cdot B\cdot \exp(Y))=\log\vert \alpha_{\exp(X)}\vert +\log \vert \beta_{\exp(Y)}\vert +
\lambda_1(ABA).\] 
%
    In particular, for the function $\rho_{A,B}:{\mf gl}_d(\R)\oplus {\mf gl}_d(\R)\to \mathbb{R}$ defined by
    $\rho_{A,B}(X,Y)=
     \lambda_1(A \cdot \exp(X) \cdot B\cdot \exp(Y))$ we have $d^2\rho_{A,B}(X,Y)_{(0,0)}=0$. Equivalently, the 
     splitting ${\mf gl}_d(\R)\oplus {\mf gl}_d(\R)$ is orthogonal for the Hessian of $\rho_{A,B}$.

    Now take arbitrary $(A,B)\in DP'_{\omega,\theta}$.
    Let $C\in P'_{\omega,\theta}$ (resp.\ $D$) be the rank-one projector with kernel the repelling hyperplane of $A$ (resp.\ $B$) and with image the attracting line of $A$ (resp.\ $B$).
By \eqref{eq:est joint deriv}, combined with  $d^2\rho_{C,D} (X,Y)=0$ we have
    \begin{equation*}
    \norm{d^2\lambda_1(A \cdot \exp(X)\cdot B\cdot \exp(Y))} \leq K(||A-C||+||B-D||).
    \end{equation*}
    One can then check that there exists a constant $K'$ that depends on $\theta$ such that
    $$||A-C||\leq  K'e^{-\omega},$$
    and similarly for $B$ and $D$.
    This proves the second part of the lemma.
\end{proof}

\subsection{Derivatives of lengths of loxodromic transformations}\label{sec:derivative loxodromic}

We are going to deduce from the previous section an estimate for the derivatives of lengths of loxodromic transformations.
The argument is classical: it relies on the fact that a matrix $A\in \GL_d\R$ is loxodromic if and only if all its exterior products $\bwedge^kA\in\GL(\bwedge^k\R^d)$ are proximal for $1\leq k\leq d-1$.

Indeed, suppose $A\in\GL_d(\R)$ is loxodromic, i.e.\ $\lambda_{1}(A)>\lambda_{2}(A)>\dots>\lambda_{d}(A)$, which 
means $A$ is diagonalizable such that the eigenvalues have multiplicity $1$ and distinct absolute values.
Let $v_1,\dots,v_d$ be an eigenbasis for $A$ ordered by decreasing absolute values of eigenvalues.
Then for any $k$ the elements of the form $v_{i_1}\wedge\dots\wedge v_{i_k}$, where $1\leq i_1<\dots<i_k\leq d$, form an eigenbasis of $\bwedge^k\R^d$ for $\bwedge^kA$, such that the absolute value of the logarithm of the associated eigenvalue is $\lambda_{i_1}(A)+\dots+\lambda_{i_k}(A)$.

For any $0\leq k\leq d$ let $d_k$ denote the dimension of the exterior product $\bwedge^k\R^d$.
The canonical basis $e_1,\dots,e_d$ of $\R^d$ yields a natural basis of $\bwedge^k\R^d$ with elements $e_{i_1}\wedge\dots\wedge e_{i_k}$, where $1\leq i_1<\dots<i_k\leq d$.
This induces an identification of $\bwedge^k\R^d$ with $\R^{d_k}$ and an inner product on $\bwedge^k\R^d$.

For any transformation $X$ of $\bwedge^k\R^d$, seen as a matrix of size $d_k$, we add an upperscript $d_k$ to all quantities previously defined involving $X$ to specify the size of $X$.
For instance $\lambda^{d_k}_1(X),\dots,\lambda^{d_k}_{d_k}(X)$ are the logarithms of the absolute values of the eigenvalues of $X$.
We also denote by $P^{d_k}_{\omega,\theta}$ the set of proximal transformations of $\bwedge^k\R^d$ that satisfy the quantitative conditions of the previous section.

In particular, coming back to the computations on $A\in\GL_d\R$, we have
\begin{equation}\lambda_1^{d_k}(\bwedge^kA)=\lambda_{1}^d(A)+\dots+\lambda_{k}^d(A)\end{equation}
and
\begin{equation}\lambda_2^{d_k}(\bwedge^kA)=\lambda_{1}^d(A)+\dots+\lambda_{k-1}^d(A)+\lambda_{k+1}^d(A).\end{equation}
It is a well known fact that these formulas work for any matrix $A$ of size $d$, not necessarily invertible.
In particular, $\lambda_{k}^d(A)>-\infty$ if and only if $\lambda_1^{d_k}(\bwedge^kA)>-\infty$, and in this case
\begin{equation}\lambda^d_k(A)-\lambda_{k+1}^d(A)=\lambda_1^{d_k}(\bwedge^kA)-\lambda_2^{d_k}(\bwedge^kA).\end{equation}

For all $0<\theta<\pi$ and  $\omega>0$ we denote by $L^d_{\omega,\theta}$ the set of loxodromic invertible matrices $A$ of size $d$ such that $\bwedge^kA\in P^{d_k}_{\omega,\theta}$ for any $1\leq k\leq d-1$.
We also denote by $DL_{\omega,\theta}^d$ the set of pairs $(A,B)\in L_{\omega,\theta}^d\times L_{\omega,\theta}^d$ such that $(\bwedge^kA,\bwedge^kB)\in DP_{\omega,\theta}^{d_k}$ for any $1\leq k\leq d-1$. 

For a loxodromic matrix $A\in {\rm GL}_d(\R)$ put
\[\lambda(A)=(\lambda_1(A),\dots,\lambda_d(A))\in \R^{d}.\] 
The notations in the following lemma extend the notations in Lemma \ref{lem:proximal length estimate}.

\begin{lemma}\label{lem:lox length estimate}
    For all $0<\theta<\pi$ and $\omega_{0}>0$, there exists $C_{\omega_{0},\theta}>0$ such that for all $\omega>\omega_{0}$ and $(A,B)\in DL_{\omega,\theta}$ we have.
    \begin{enumerate}
        \item\label{item:lox length estimate} The differential and the Hessian at $X=0$ of the map 
        $X\mapsto \lambda(A\exp(X))$ are bounded above in norm by $C_{\omega_{0},\theta}$,
        \item\label{item:lox length estimate joint} 
        $
        \norm{d^2\lambda(A\cdot \exp(X) \cdot B \cdot \exp(Y))\vert_{(0,0)}} \leq C_{\omega_{0},\theta} e^{-\omega}.
        $
    \end{enumerate}
\end{lemma}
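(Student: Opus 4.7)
The plan is to reduce everything to \thref{lem:proximal length estimate} by using the exterior power construction. Recall the identity stated just before the lemma:
\[
\lambda_k(A)=\lambda_1^{d_k}(\bwedge^k A)-\lambda_1^{d_{k-1}}(\bwedge^{k-1} A),
\]
with the convention $\lambda_1^{d_0}(\bwedge^0 A)=0$. Since the Jordan projection $\lambda$ is the vector whose coordinates are the $\lambda_k$, it suffices to estimate, for each $1\leq k\leq d-1$, the first and second derivatives of $X\mapsto \lambda_1^{d_k}(\bwedge^k(A\exp X))$ (and the analogous two-variable expression in part~\ref{item:lox length estimate joint}) and then sum the contributions, absorbing the finite sum into the multiplicative constant.

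To do so, use that $\bwedge^k:\GL_d(\R)\to \GL(\bwedge^k\R^d)$ is a Lie group homomorphism, so that
\[
\bwedge^k(A\cdot \exp X)=\bwedge^k A\cdot \exp\bigl(d\bwedge^k(X)\bigr),
\]
where $d\bwedge^k:\mf{gl}_d(\R)\to \mf{gl}(\bwedge^k\R^d)$ is the induced Lie algebra homomorphism, a fixed linear map whose operator norm depends only on $d$ and $k$. By hypothesis $(A,B)\in DL_{\omega,\theta}^d$, so $\bwedge^k A\in P^{d_k}_{\omega,\theta}$. Applying part~1 of \thref{lem:proximal length estimate} to $\bwedge^k A$ provides uniform bounds on the first and second derivatives at $Y=0$ of $Y\mapsto \lambda_1^{d_k}(\bwedge^k A\cdot \exp Y)$. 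A two-step application of the chain rule (first, for the first derivative, and then for the Hessian, using that $d\bwedge^k$ is linear so its second derivative vanishes) transfers these bounds into bounds on the derivatives of $X\mapsto \lambda_1^{d_k}(\bwedge^k(A\exp X))$ at $X=0$, up to a multiplicative constant depending only on $d$. This proves part~\ref{item:lox length estimate}.

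For part~\ref{item:lox length estimate joint} the same scheme applies. One writes
\[
\bwedge^k\!\bigl(A\exp X\cdot B\exp Y\bigr)=\bwedge^k A\cdot \exp\bigl(d\bwedge^k X\bigr)\cdot \bwedge^k B\cdot \exp\bigl(d\bwedge^k Y\bigr),
\]
and applies part~2 of \thref{lem:proximal length estimate} to the pair $(\bwedge^k A,\bwedge^k B)\in DP^{d_k}_{\omega,\theta}$, which yields the decay $\leq C_{\omega_0,\theta}e^{-\omega}$ for the mixed Hessian in the variables $(d\bwedge^k X,d\bwedge^k Y)$. Chain rule with the fixed bounded operators $d\bwedge^k$ transfers this to the Hessian in $(X,Y)$, and taking the maximum of the resulting constants over $k=1,\dots,d-1$ (and over the differences in the formula for $\lambda_k$) yields the claim.

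The only delicate bookkeeping is verifying that the compositions $\bwedge^k A\cdot \exp(d\bwedge^k X)$ remain in a neighborhood of the compact set $P_{\omega_0,\theta}'^{\,d_k}$ used implicitly in \thref{lem:proximal length estimate}, which is automatic since we only differentiate at $X=0$ (and $Y=0$) and the statement concerns pointwise derivatives. No compactness or openness of the domain beyond the basepoint is needed, so the argument goes through with constants depending only on $\omega_0$, $\theta$ and the dimension $d$.
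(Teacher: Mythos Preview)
Your proposal is correct and follows essentially the same route as the paper: reduce each coordinate $\lambda_k$ to the top eigenvalue of an exterior power via $\lambda_1^{d_k}(\bwedge^k A)$, use that $\bwedge^k(A\exp X)=(\bwedge^k A)\exp(d\bwedge^k X)$ with $d\bwedge^k$ linear, apply \thref{lem:proximal length estimate} to $\bwedge^k A\in P^{d_k}_{\omega,\theta}$ (resp.\ to the pair $(\bwedge^k A,\bwedge^k B)\in DP^{d_k}_{\omega,\theta}$), and transfer the bounds through the chain rule. The paper phrases the linear algebra via the partial sums $\lambda_1+\dots+\lambda_k=\lambda_1^{d_k}(\bwedge^k A)$ rather than your differences, but this is the same computation.
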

\begin{proof}
    By definition $\bwedge^kA\in P^{d_{k}}_{\omega,\theta}$ for any $k$.
    By Lemma~\ref{lem:proximal length estimate} we get a constant $C>0$, only depending on $\omega$, such that for any $k$ the first two derivatives at $X=0_{d_k}$ of $X\mapsto \lambda_1^{d_k}((\bwedge^kA)\cdot \exp(X))$ are bounded above by $C$.

    Since $\bwedge^k(A \cdot\exp(X))=(\bwedge^kA)\cdot \exp(\bwedge^kX)$ 
    and $X\mapsto \bwedge ^kX$ is linear, we deduce that  for any $k$ the first two derivatives at $X=0_{d}$ of 
    $$
    X\mapsto \lambda_1^{d_k}(\bwedge^k(A \cdot\exp(X)))=\lambda_{1}^d(A \cdot \exp(X))+\dots+\lambda_{k}^d(A\cdot \exp(X))
    $$
    are bounded above by some constant $C'$ only depending on $C$.

    This implies that  the first two derivatives at $X=0_{d}$ of 
    $$
    X\mapsto \lambda^d(A \cdot \exp(X))=(\lambda_1^d(A\cdot\exp(X)),\lambda_2^d(A\cdot \exp(X)),\dots,\lambda_d^d(A\cdot\exp(X)))
    $$
    are bounded above by some constant $C''$ only depending on $C'$.

    The second part \ref{item:lox length estimate joint} is obtained in exactly the same way, using the corresponding part of Lemma~\ref{lem:proximal length estimate}.
\end{proof}

\subsection{Totally positive matrices in $a'_\omega G_{\geq 0}$ are quantitatively loxodromic}\label{totpos implies loxo}

This section simply contains the following result that totally positive matrices in $a'_\omega G_{\geq 0}$ are quantitatively loxodromic.
It is probably well-known to experts, and was proved in the companion paper \cite{BHM25}, Proposition~4.12.

\begin{proposition}\label{fact:quantitative Pos are Lox}
    For any $g\in G_{>0}$ there exist $\omega>0$ and $0<\theta<\pi/2$ such that 
    \begin{enumerate}
        \item $g G_{\geq 0}\subset L_{\omega,\theta}$;
        \item for all $h_1,\dots,h_n\in gG_{\geq 0}$, denoting  $h=h_1\cdots h_n$ we have $\lambda_k(h)\geq \lambda_{k+1}(h) + n\omega$ for any $1\leq k \leq d-1$, namely $h\in L_{n\omega,\theta}$;
        \item for all $h,h'\in gG_{\geq 0}$,
        we have $(h,h')\in DL_{\omega,\theta}$.
    \end{enumerate}
\end{proposition}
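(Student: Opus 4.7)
Fix $g\in G_{>0}$. The first observation is that $gG_{\geq 0}$ and its finite products lie in $G_{>0}$: by Cauchy--Binet, the $k\times k$ minor $(gh)[I|J]=\sum_{|L|=k}g[I|L]\,h[L|J]$ is a sum of products of positive and nonnegative numbers, and since $h$ is invertible and totally nonnegative, the $k$ columns of $h$ indexed by $J$ have rank $k$, which forces at least one term in the sum to be strictly positive. By Gantmacher--Krein, every element of $gG_{\geq 0}$ and every finite product of such elements therefore has simple positive real eigenvalues. The task is to upgrade this qualitative statement to uniform quantitative bounds, for which the natural tool is the Birkhoff--Hopf cone contraction theorem applied to each exterior power.

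For each $1\leq k\leq d-1$, work in the Pl\"ucker basis $\{e_{i_1}\wedge\cdots\wedge e_{i_k}\}_{i_1<\cdots<i_k}$ so that the entries of $\bwedge^k m$ are exactly the $k\times k$ minors of $m$. Let $K_k\subset\bwedge^k\R^d$ denote the closed nonnegative orthant. Then $\bwedge^k g$ sends $K_k\setminus\{0\}$ into a fixed compact subcone $K^*_k\subset\mathrm{int}(K_k)$, and every $\bwedge^k h$ with $h\in G_{\geq 0}$ preserves $K_k$. Hence for any $h\in G_{\geq 0}$,
\begin{equation*}
   \bwedge^k(gh)(K_k)\subseteq \bwedge^k g(K_k)=K^*_k\subset\mathrm{int}(K_k).
\end{equation*}
By Birkhoff--Hopf, $\bwedge^k(gh)$ contracts the Hilbert metric on $K_k$ by a factor $\tau_k<1$ depending only on the projective diameter of $K^*_k$ inside $K_k$, hence only on $g$. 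This yields $\lambda_1^{d_k}(\bwedge^k(gh))-\lambda_2^{d_k}(\bwedge^k(gh))\geq -\log\tau_k$, so letting $\omega$ be the minimum of these numbers over $k$ establishes the eigenvalue-gap part of claim (1). For claim (2), write a product as $h_1\cdots h_n=(gk_1)\cdots(gk_n)$ with $k_i\in G_{\geq 0}$; each $\bwedge^k g$ then contracts the Hilbert metric by $\tau_k$ while each $\bwedge^k k_i$ preserves $K_k$ and is Hilbert-nonexpansive. The composition thus contracts by $\tau_k^n$, giving $\lambda_k(h_1\cdots h_n)-\lambda_{k+1}(h_1\cdots h_n)\geq n\omega$ after translating back from exterior powers via $\lambda_k-\lambda_{k+1}=\lambda_1^{d_k}(\bwedge^k\cdot)-\lambda_2^{d_k}(\bwedge^k\cdot)$.

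For the angle bound $\theta$, let $v\in K^*_k$ be the Perron eigenvector of $A:=\bwedge^k(gh)$, and let $\phi\in K_k$ be the Perron eigenvector of $A^T$ (which is also strictly positive, being the transpose of the entry-wise positive matrix $A$). The repelling hyperplane of $A$ is $\ker\phi$ under the canonical identification of $\R^{d_k}$ with its dual, so
\begin{equation*}
   \sin\theta(A)=\frac{\langle\phi,v\rangle}{\|\phi\|\,\|v\|}.
\end{equation*}
Normalize $\|v\|_\infty=1$; since $v\in K^*_k$ lies in a fixed compact subcone of $\mathrm{int}(K_k)$, every coordinate $v_i$ satisfies $v_i\geq c_0>0$ for a constant $c_0$ depending only on $g$. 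Since all coordinates of $\phi$ are nonnegative, $\langle\phi,v\rangle=\sum_i\phi_i v_i\geq c_0\|\phi\|_1\geq c_0\|\phi\|_2$, while $\|v\|_2\leq\sqrt{d_k}$, so $\sin\theta(A)\geq c_0/\sqrt{d_k}$ uniformly in $h$. Claim (3) follows by running the same proximality and transversality analysis on $hh'$ and $h'h$, which by Cauchy--Binet again belong to $G_{>0}$.

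The main technical subtlety is securing the uniform lower bound on $\theta$ without demanding that the Perron functional $\phi$ sit uniformly in the interior of the dual cone --- a demand that would be hard to meet since $h$ ranges over the non-compact semigroup $G_{\geq 0}$. The coordinate-wise estimate above circumvents this: the uniform positivity of the coordinates of $v$, inherited from $\bwedge^k g(K_k)$ being a fixed compact subcone of $\mathrm{int}(K_k)$, forces $\langle\phi,v\rangle$ to dominate $\|\phi\|$, while mere nonnegativity of $\phi$ suffices on the dual side.
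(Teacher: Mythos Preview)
The paper does not actually prove this proposition: it merely records the statement and cites Proposition~4.12 of the companion paper \cite{BHM25}. Your Birkhoff--Hopf argument on each exterior power is correct and self-contained, and is one of the standard ways to obtain such quantitative positivity-implies-loxodromy statements. The key structural points --- that $\bwedge^k h$ preserves the nonnegative orthant $K_k$ for $h\in G_{\geq 0}$, that $\bwedge^k g$ maps $K_k$ into a subcone of finite Hilbert diameter depending only on $g$, and that the resulting uniform contraction bounds $|\lambda_2^{d_k}|/\lambda_1^{d_k}$ --- are all in order, as is the angle estimate via the coordinate-wise lower bound on the Perron vector against the mere nonnegativity of the left Perron functional.

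Two places deserve a sentence more. First, your treatment of claim~(3) is compressed: membership in $DL_{\omega,\theta}$ requires, besides $hh',h'h\in L_{\omega,\theta}$ (which follows from part~(1) since $hh',h'h\in gG_{\geq 0}$), the \emph{cross}-transversality that the Perron line of $\bwedge^k h$ makes angle $\geq\theta$ with the repelling hyperplane of $\bwedge^k h'$ and conversely. This does follow from exactly the same computation you gave in the diagonal case --- the Perron vector of $\bwedge^k h$ lies in $K^*_k$ with all coordinates $\geq c_0$, while the left Perron vector of $\bwedge^k h'$ is coordinate-wise nonnegative --- but you should spell this out rather than fold it into ``running the same analysis on $hh'$ and $h'h$''. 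Second, ``compact subcone'' is a slight abuse; what you use is that the projective image of $K_k$ under $\bwedge^k g$ has finite Hilbert diameter, equivalently is a compact subset of the open positive orthant after projectivisation.
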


\subsection{Proof of Propositions~\ref{cor:firstandsecond}}\label{sec:proof derivative estimates}

We fix $\gamma\in\pi_1(S)$ and decompose $\gamma$ into an admissible path for the hyperbolic structure corresponding to $\rho_0$.
As mentioned before, the admissible path travels in $S-\gamma^*$ for time $t_1>0$, then meet orthogonally some component $\gamma^*_{i_1}$ of $\gamma^*$, then travels along it for some time $s_1$ that can be positive or negative, then departs from it orthogonally and travels for some time $t_2>0$ in $S-\gamma^*$... etc.
This gives us the following formula for the conjugacy class of the holonomy $\rho_0(\gamma)$:
\begin{equation}\label{eq:conjclass before grafting}
    \rho_{0}(\gamma) \sim a'_{t_1}a_{s_1}a'_{t_2}\cdots a'_{t_k}a_{s_k},
\end{equation}
where $k$ is the intersection number of $\gamma$ with $\gamma^*$.

After grafting we obtain an admissible path in the characteristic surface, grafted with the parameter $z$, that first travels in a hyperbolic piece for time $t_1$ until it meets orthogonally the flat cylinder above $\gamma^*_{i_1}$, then it travels in the flat cylinder along a segment given by a vector of the form $s_1d\tau\hsl+r_1(z)\in\mf a$ where $r_1(z)$ is either the $i_1$-th coordinate $z_{i_1}$, or the image $\iota(-z_{i_1})$ under the Cartan involution $\iota$, depending whether we cross the component $\gamma_{i_1}^*\subset\gamma^*$ from left to right or from right to left.
Then we repeat these steps: we travel in a hyperbolic piece of the characteristic surface for time $t_2$, meet orthogonally a flat cylinder, travel along this cylinder...
In the end the grafting operation deforms the formula \eqref{eq:conjclass before grafting} in the following explicit way:
\begin{equation}\label{eq:conjclass}
    \rho_{z}(\gamma) \sim a'_{t_1}a_{s_1} \exp({r_1(z)})a'_{t_2}\cdots a'_{t_k}a_{s_k}\exp({r_k(z)}).
\end{equation}

We wish to estimate in norms the derivatives of its Jordan projection in direction of the grafting parameter $z$.
Let us first compute the general formula for the first and second derivatives of a map of the form
$$
f(v)=\lambda(A_1 \exp({f_1(v)})\cdots A_k \exp({f_k(v)})),
$$
where $v\to (f_1(v),\cdots f_k(v))\in {\mf  a}^k$ is a smooth path with 
$f_i(0)=0$ for any $i$.
To make the notations shorter, put  $A_i^j=A_i\cdots A_j$.
We have
\begin{equation}\label{eq:derive1}
    \frac{d}{dv}_{\vert v=0}f(v)=\sum_{i=1}^k d\lambda(A_1^i  \exp(\cdot )  A_{i+1}^k)_0 ( \frac{d}{dv}_{|v=0}f_i(v)),
\end{equation}
where the notation $d\lambda(A_1^i  \exp(\cdot)  A_{i+1}^k)\vert_{0}$ means that we take the differential of the map
$X\to \lambda(A_1^i  \exp(X)   A_{i+1}^k)$ at $X=0$. 

Similarly, 
\begin{equation}\label{eq:derive2}
    \begin{split}
        \frac{d^2}{dv^2}_{\vert v=0}f(v) 
        &= \sum_{i=1}^k d\lambda(A_1^i  \exp(\cdot) A_{i+1}^k)\vert_{0} (\frac{d^2}{dv^2}_{|v=0}f_i(v))\\
        &+ \sum_{i=1}^k d^2 \lambda(A_1^i \exp(\cdot) A_{i+1}^k)\vert_{0} \left(\frac{d}{dv}_{|v=0}f_i(v),\frac{d}{dv}_{|v=0}f_i(v)\right)\\
        + 2\sum_{1\leq i<j\leq k} d^2 \lambda &(A_1^i \exp(\cdot) A_{i+1}^j  
        \exp(\cdot)A_{j+1}^k)\vert_{0,0} \left(\frac{d}{dv}_{|v=0}f_i(v),\frac{d}{dv}_{|v=0}f_j(v)\right).
    \end{split}    
\end{equation}

Let us now start the computations, starting with the easiest one: According to \eqref{eq:conjclass} and \eqref{eq:derive1}, an upper bound for 
$\norm{d\lambda(\rho_{h_0,z}(\gamma))}$  is given by
\begin{align}\label{eq:I like to deriiiiive1}
 \sum_i & \norm{dr_i(z)\vert_{z=z_0}} \\
 \cdot
 &\norm{d\lambda\left( a'_{t_1(h_0)} \cdots a'_{t_i(h_0)} a_{s_i(h_0)} \exp({r_i(z_0)}) \exp(\cdot) a'_{t_{i+1}(h_0)}\cdots 
 \exp({r_k(z_0)}) \right)\vert_{0}}. \notag
\end{align}
The term $\norm{d r_i(z) \vert_{z=z_0} }$ is less than or equal to $1$ since $r_i:z\in \mf a \to r_i(z)$ 
can be thought of as either the identity map of $\mf a$ or the negative of the Cartan involution. 

Using that $\lambda$ is invariant under conjugacy, we can rewrite the term in the second line of equation (\ref{eq:I like to deriiiiive1}) as
\begin{equation}\label{eq:I like to deriiiiive2}
\norm{d\lambda\left( a'_\omega \underbrace{a'_{t_{i+1}(h_0)-\omega}\cdots \exp({r_k(z_0)})a'_{t_1(h_0)} \cdots a'_{t_i(h_0)} a_{s_i(h_0)} 
\exp({r_i(z_0)}}_{\in G_{\geq 0}} )\exp(\cdot)\vert_{0}  \right)}
\end{equation}
where $\omega>0$ is the collar size associated to $\sigma$.

By Proposition~\ref{fact:quantitative Pos are Lox}, there exists $\omega'>0$ such that $a'_\omega G_{\geq 0}\subset L_{\omega'}$.
By Lemma~\ref{lem:lox length estimate}, there exists $C>0$ only depending on $\omega'$ and hence on $\sigma$ such that for any $A\in L_{\omega'}$, the first two derivatives at $X=0$ of $X\mapsto \lambda(A\cdot \exp(X))$ are bounded above in norm by $C$.

Then the above quantity in \eqref{eq:I like to deriiiiive2} is bounded above by $C$, and the quantity in \eqref{eq:I like to deriiiiive1} is bounded above by $kC=C\iota(\gamma,\gamma^*)$.

Let us now estimate second derivatives.
Since the second derivatives of the $r_i$'s are zero, according to \eqref{eq:conjclass} and \eqref{eq:derive2}, an upper bound for this quantity is given by

\begin{equation}\label{eq:I like to deriiiiive3}
\begin{split}
  &\sum_{i}
 \norm{d^2\lambda\left(a'_{t_1(h_0)} \cdots \exp({r_i(z_0)}) \exp(\cdot) 
 a'_{t_{i+1}(h_0)}
 \cdots   \exp({r_k(z_0)}) \right)\vert_{0}}\\
 &+2\sum_{i< j}  \norm{d^2 \lambda\left( a'_{t_1(h_0)} \cdots \exp({r_i(z_0)})\exp(\cdot) 
 \cdots  
 \exp({r_j(z_0)})\exp(\cdot) 
 \cdots \exp({r_k(z_0)}) \right)\vert_{0,0}}. \notag
 \end{split}
\end{equation}
For all $1\leq i,j\leq k$, let $\mf d(i,j)=\min(\vert j-i\vert,k-\vert j-i\vert)$.
Up to taking $\omega'$ smaller we can assume $(a'_\omega G_{\geq 0})\times(a'_\omega G_{\geq 0})\subset DL_{\omega'}$, and 
$$a'_{t_{i+1}(h_0)}\cdots  \exp({r_j(z_0)})\in P_{(j-i)\omega,\theta}$$
and
$$
a'_{t_{j+1}(h_0)}\cdots \exp({r_k(z_0)})\cdot a'_{t_1(h_0)} \cdots \exp({r_i(z_0)})\in P_{(i-j+k)\omega,\theta}.
$$
Hence we can apply the second estimate of Lemma~\ref{lem:lox length estimate} on the derivatives of 
$(X,Y)\mapsto \lambda(A \cdot\exp(X) B \cdot\exp(Y))$ at $0$, which gives a bound on \eqref{eq:I like to deriiiiive3} of the form 
$$
\sum_i C + \sum_{i\neq j}Ce^{-\mf d(i,j)\omega'}= C\sum_{i=1}^k\sum_{j=1}^ke^{-\mf d(i,j)\omega'}\leq C\sum_{i=1}^k\sum_{m=-\infty}^{+\infty} e^{-\vert m\vert\omega'}\leq \frac{2Ck}{1-e^{-\omega'}}
$$
for some constant $C>0$. This completes the proof.

	\section{Quantitative convergence of currents} \label{sec:convergence-of-currents}

In Section~\ref{sec:hitchinrep} we introduced the measure of maximal entropy 
for Hitchin representations with respect to a Finsler metric. 
In this section we investigate the behavior of these
measures along grafting rays in the Hitchin component. 
Using the geometric control established in Section~\ref{QI-proof}, we 
compare length functions for representations obtained by Hitchin grafting
rays to length functions of the corresponding 
abstract grafted surfaces, viewed as functions on 
the unit tangent bundle of the hyperbolic surface $S$ which is the starting
point for the grafting, and estimate the entropy of the reparameterized
flow. 
This then leads to the proof of Theorem~\ref{main1} from the introduction.

The Finsler metric on $\X$ used for the pressure metric
is normalized in such a way that its restriction to 
a hyperbolic plane stabilized by an irreducible representation of $\PSL_2(\mathbb{R})$ 
coincides with the Riemannian metric of constant curvature $-1$.

We start with a hyperbolic metric on the closed surface $S$ of genus $g\geq 2$ and 
choose a simple geodesic multicurve $\gamma^*$ on $S$ (the grafting locus) with $k\geq 1$ components.
For each grafting parameter $z=(z_e)_{e\subset\gamma^*}\subset \mf a^k$, denote by
$\rho_z$ the Hitchin grafting representation with datum $z$ (see Definition~\ref{def:hitchin grafting rep}).

By Proposition~\ref{hoelderfunction}, for each $z$ there exists a positive H\"older continuous 
function $f_z$ on the unit tangent bundle $T^1S$ of $S$ with the property that 
for every periodic orbit $\gamma$ for the geodesic flow $\Phi^t$ on $T^1S$, we have that 
\[\ell_{f_z}(\gamma)=\int_\gamma f_z\] 
equals the translation length of the conjugacy class
determined by the element $\rho_z(\gamma)\in \PSL_d(\mathbb{R})$
with respect to the Finsler metric.

The H\"older continuous function $f_z$ on $T^1S$  determines a 
reparameterization $\Phi^t_{f_z}$ of the geodesic flow $\Phi^t$ on $T^1S$, whose measure of maximal entropy corresponds to a $\Phi^t$-invariant Gibbs equilibrium state $\nu(z)$ on $T^1S$.
There are several possible normalizations for this equilibrium state.
We assume $\nu(z)$ to be normalized in such a way that
\begin{equation}\label{normalization}
\int f_zd\nu(z)=1\text{ for all }z.\end{equation} 
Note that this normalization
only depends on the cohomology class of $f_z$ and hence it does not depend on 
choices. 
Our main goal is to determine the possible limits of $\nu(z)$ as 
the cylinder height of every component $z_e$ of $z$ (that is, at every component of the multi-curve $\gamma^*$)
tends to infinity, and to show that 
the intersection numbers with $\gamma^*$ of the geodesic currents $\hat \nu(z)$ determined 
by the measures $\nu(z)$ decay exponentially fast.




By Section~\ref{sec:meas-conv}, the equilibrium measure of the function $-f_z$ 
can be described in terms of Patterson--Sullivan measures. Denoting as  before by $\mc F$ the flag variety of 
$\PSL_d(\R)$,  
recall that for $\zeta,\eta\in\mc F$ and $x,y\in\X$, the function $b_\zeta^{\mf F}(x,y)$ denotes the Busemann cocycle  and $\langle \zeta|\eta\rangle_x$ denotes the Gromov product associated to the Finsler metric $\mf F$ (see Equations~\ref{busemann} and~\ref{gromovproduct}).

For any non-trivial grafting datum $z$ with nontrivial cylinder height, 
let $\Xi_z:\partial_\infty\H^2\to\mc F$ be the limit map associated to the Hitchin grafting representation  $\rho_z$. 
Then there exists a family of Patterson Sullivan measures $(\mu_z^x)_{x\in \X}$ on $\partial_\infty \H^2$ 
such that for all $x,y\in \X$ and $\gamma\in\pi_1(S)$ we have $\mu_z^{\rho_z(\gamma)x}=\gamma_*\mu_z^x$ and 
\begin{equation}\label{eq:conf measure}
\frac{d\mu_z^y}{d\mu_z^x}(\xi) = e^{\delta(z) b^{\mf F}_{\Xi_z(\xi)}(x,y)},
\end{equation}
where $\delta(z)$ is the \emph{critical exponent} of 
the group $\rho_z(\pi_1(S))$, or, equivalently, the topological 
entropy of the reparameterized flow $\Phi_{f_z}^t$ on $T^1S$. 
These measure are unique up to a global multiplicative positive constant. 
Note that the equality~\ref{eq:conf measure} is immediate from the fact that the topological entropy of 
the reparameterized flow equals the expansion rate of the conditional measures on strong unstable
manifolds for its unique measure of maximal entropy, which in turn equals the critical exponent by
construction.

Finally there is a choice of normalization for the measures $\mu_z^x$ such that 
$\nu(z)$ is the quotient under $\pi_1(S)$ of the measure 
\begin{equation}\label{eq:sullivan measure}
e^{\delta(z) \langle \Xi_z(\xi)|\Xi_z(\eta)\rangle_x}d\mu_z^x(\xi)d\mu_z^x(\eta)dt
\end{equation}
on $\partial_\infty\H^2\times\partial_\infty\H^2\times\R$.
Note that the measures
$\mu_z^x$ are finite but in general they are not probability measures, 
instead their normalization is determined by the normalization of $\nu(z)$.

Since $\nu(z)$ and hence the geodesic current $\hat \nu(z)$ defined by $\nu(z)$
depends continuously 
(in fact, analytically) on $z$ by Proposition~\ref{orbitequ}, we can 
estimate the intersection $\iota(\hat \nu(z),\gamma^*)$ 
(here $\gamma^*$ is viewed as a Dirac current) 
using continuity of the intersection form on the space of currents. However, 
although the space of projective currents, equipped with the 
weak$^*$-topology, is compact since this is the case for the 
space of $\Phi^t$-invariant Borel probability measures on $T^1S$ where $\Phi^t$ is the 
geodesic flow, the family $\hat \nu(z)$ may not be precompact as the corresponding 
$\Phi^t$-invariant measure $\nu(z)$ on $T^1S$ is determined by the normalization 
(\ref{normalization}) and 
in general is not a probability measure. 
We shall use the Patterson--Sullivan measures 
to control the total volume of $\nu(z)$ and overcome this difficulty.

\subsection{The entropy of the subsurfaces}

The geodesic multicurve $\gamma^*$ decomposes $S$ into (closed) complementary components $S_1,\dots,S_k$. 
For each $i\leq k$ we denote by $K_i\subset T^1S$ the set of all unit tangent
vectors $v\in T^1S_i$ with the property that $\Phi^tv\in T^1S_i$ for all~$t\in\R$.

\begin{lemma}\label{closed}
For each $i$ the set $K_i$ is compact and $\Phi^t$-invariant. 
\end{lemma}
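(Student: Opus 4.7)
The plan is to unpack the definition of $K_i$ as an intersection of preimages of $T^1 S_i$ under the geodesic flow, and then to observe that the two desired properties follow immediately.

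First I would set up the topological side. The multicurve $\gamma^*$ is a disjoint union of simple closed geodesics, so each complementary component $S_i$ is a closed subset of $S$. Consequently $T^1 S_i$ is a closed subset of $T^1 S$, and since $S$ is a closed surface, $T^1 S$ is compact; in particular $T^1 S_i$ is compact. The geodesic flow $\Phi^t : T^1 S \to T^1 S$ is continuous for every $t$, hence each preimage $\Phi^{-t}(T^1 S_i)$ is closed.

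Next I would rewrite $K_i$ in the form
\[
K_i \;=\; \bigcap_{t \in \R} \Phi^{-t}(T^1 S_i),
\]
which is exactly the set of $v \in T^1 S$ all of whose $\Phi$-orbit stays in $T^1 S_i$. As an intersection of closed sets in the compact space $T^1 S$, the set $K_i$ is compact.

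For $\Phi^t$-invariance I would argue directly from the same definition: if $v \in K_i$ and $t \in \R$, then for every $s \in \R$ one has $\Phi^s(\Phi^t v) = \Phi^{s+t} v \in T^1 S_i$, using that $s + t$ ranges over $\R$ as $s$ does. Hence $\Phi^t v \in K_i$, so $\Phi^t(K_i) \subseteq K_i$; applying the same reasoning to $\Phi^{-t}$ gives equality. No serious obstacle arises, as both statements are essentially formal consequences of the definition.
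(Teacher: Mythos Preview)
Your proof is correct and follows exactly the same idea as the paper's: $K_i$ is closed by continuity of $\Phi^t$ (as an intersection of preimages of the closed set $T^1S_i$), hence compact since $T^1S$ is compact, and $\Phi^t$-invariance is immediate from the flow property. The paper compresses this into a single sentence, but the content is identical.
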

\begin{proof} The set $K_i$ is clearly $\Phi^t$-invariant and closed by continuity
of $\Phi^t$, 
hence it is compact.
\end{proof}

Since $S$ is a closed hyperbolic surface, the geodesic flow $\Phi^t$ on $T^1S$ is an 
Anosov flow and hence for each $i$ its restriction to the compact invariant 
set $K_i$ is an Axiom A flow. 

The preimage of the geodesic multicurve $\gamma^*$ in the universal covering
$\mathbb{H}^2$ of $S$ consists of a countable union of pairwise
disjoint geodesic lines. These geodesic lines decompose~$\mathbb{H}^2$ into
countably many connected components which are permuted by
the action of the fundamental group $\pi_1(S)$ of $S$.
If we denote by $\Gamma\subset \pi_1(S)$ the stabilizer of
one of these components $\tilde \Sigma$, which is a
convex subsurface of $\mathbb{H}^2$ with geodesic boundary,
then $\Gamma$ acts properly and cocompactly on $\tilde \Sigma$, with quotient
one of the components $S_i$ of $S-\gamma^*$.
Thus~$\Gamma$ is a non-elementary convex cocompact Fuchsian group.

The \emph{limit set}, that is, the set of accumulation points of
a $\Gamma$-orbit $\Gamma x\subset \mathbb{H}^2$ $(x\in \tilde \Sigma)$ 
in $\mathbb{H}^2\cup \partial_\infty \mathbb{H}^2$, is a $\Gamma$-invariant
Cantor subset $\Lambda$ of $\partial_\infty\mathbb{H}^2$.
The quotient under the action of $\Gamma$ of 
the set of all unit tangent vectors of geodesics with both endpoints 
in $\Lambda$ has a natural identification with the invariant
set $K_i\subset T^1S$. In particular, the restriction of $\Phi^t$ to 
$K_i$ is topologically transitive. Its topological entropy equals
the Hausdorff dimension $\delta_i\in (0,1)$ of $\Lambda$
\cite{Su84}.

Write $K=\cup_i K_i$ and let $\delta >0$ be the topological entropy of $\Phi^t_{\vert K}$.
We have $\delta=\max\{\delta_i\mid i\leq k\}$.
Recall that $\delta(z)$ denotes the topological entropy of the reparameterized 
flow $\Phi^t_{f_z}$ on $T^1S$ and equals the critical exponent of the group
$\rho_z(\pi_1(S))\subset \PSL_d(\mathbb{R})$.

We have bounds on $\delta(z)$.
The upper bound is very general:

\begin{theorem}[{Corollary 1.4 of~\cite{PS17}}]\label{thm:upper bound entropy}
    There is a constant $m>0$ that bounds from above the entropy of any Hitchin representation.
\end{theorem}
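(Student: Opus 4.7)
The plan is to express $h(\rho)$ as the topological entropy of a reparameterization of the hyperbolic geodesic flow $\Phi^t$ on $T^1S$, and then to obtain the uniform upper bound from a uniform lower bound on the reparameterizing potential. By Proposition~\ref{hoelderfunction}, there exists a H\"older continuous $\mf a^+$-valued potential $f_\rho$ on $T^1S$ whose integral over every periodic orbit $\gamma$ equals $\lambda(\rho(\gamma))$. Setting $u_\rho := \alpha_0 \circ f_\rho$, which is a strictly positive H\"older function on $T^1S$, the Finsler translation lengths $\ell^{\mf F}(\rho(\gamma)) = \alpha_0(\lambda(\rho(\gamma)))$ are exactly the periods of the reparameterized flow $\Phi^t_{u_\rho}$. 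Hence $h(\rho)$ coincides with the topological entropy of $\Phi^t_{u_\rho}$.

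The Abramov formula combined with the variational principle and the fact that $\Phi^t$ has topological entropy $1$ then yield
\[
h(\rho) \;=\; \sup_\mu \frac{h_\mu(\Phi^t)}{\int u_\rho\,d\mu} \;\leq\; \frac{1}{\inf_\mu \int u_\rho\,d\mu},
\]
where $\mu$ ranges over $\Phi^t$-invariant Borel probability measures on $T^1S$. It therefore suffices to produce a constant $c > 0$, independent of $\rho \in \mathrm{Hit}(S)$, satisfying $\int u_\rho\,d\mu \geq c$ uniformly in $\mu$; by weak-$*$ density of the periodic orbit measures in the space of invariant probability measures, this is in turn equivalent to the inequality $\alpha_0(\lambda(\rho(\gamma))) \geq c\,\ell_S(\gamma)$ for every primitive $\gamma \in \pi_1(S)$.

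Since $\alpha_0$ is strictly positive on $\overline{\mf a^+} \setminus \{0\}$, it is uniformly comparable on $\overline{\mf a^+}$ to the first simple root $\alpha_1(v) = v_1 - v_2$, so the estimate further reduces to a uniform linear lower bound on the spectral gap $\lambda_1(\rho(\gamma)) - \lambda_2(\rho(\gamma))$ in terms of $\ell_S(\gamma)$. For this I would invoke the quantitative positivity of Hitchin representations recorded in Proposition~\ref{fact:quantitative Pos are Lox}: decomposing $\gamma$ along an admissible path for a fixed hyperbolic structure on $S$ writes $\rho(\gamma)$ as a product of $O(\ell_S(\gamma))$ uniformly positive factors in $a'_\omega G_{\geq 0}$, each contributing a universal spectral gap increment. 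Passing to the first simple root via first exterior powers and applying Lemma~\ref{lem:lox length estimate} then delivers the required linear lower bound.

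The main obstacle is the uniformity of the positivity constants across the entire Hitchin component: a priori both the quantitative Anosov constants and the positivity parameters of the admissible decomposition depend on $\rho$, and one must ensure that a single pair $(\omega,\theta)$ works for every $\rho \in \mathrm{Hit}(S)$. This is precisely the delicate content of the Potrie--Sambarino analysis; an alternative route would be to fix once and for all a marking and triangulation of $S$, express every Hitchin representation in the Fock--Goncharov positive coordinates, and extract a universal lower bound on the positivity data from the compactness properties of these coordinates relative to a fixed reference representation.
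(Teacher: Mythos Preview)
The paper does not prove this statement; it is quoted verbatim as Corollary~1.4 of Potrie--Sambarino~\cite{PS17}. So there is no ``paper's own proof'' to compare against --- the result is used as a black box.

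Your reduction via the Abramov formula to a uniform linear lower bound $\alpha_0(\lambda(\rho(\gamma)))\geq c\,\ell_S(\gamma)$ is correct and is indeed equivalent to the statement. The gap is in how you propose to obtain that bound. The admissible-path machinery and Proposition~\ref{fact:quantitative Pos are Lox} in this paper are \emph{specific to Hitchin grafting representations}: the decomposition in~\eqref{eq:conjclass} expresses $\rho_z(\gamma)$ as a product of factors $a'_{t_i}a_{s_i}\exp(r_i(z))$ only because $\rho_z$ was constructed by bending a Fuchsian representation along the multicurve $\gamma^*$. For a general Hitchin representation there is no such decomposition, and admissible paths are only defined relative to a chosen grafting locus. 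So ``decomposing $\gamma$ along an admissible path writes $\rho(\gamma)$ as a product of $O(\ell_S(\gamma))$ uniformly positive factors'' simply does not hold outside the grafting setting.

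Your fallback --- extracting uniform positivity from Fock--Goncharov coordinates by ``compactness'' --- also does not work as stated: those coordinates parameterize a cell $\R^m$, so there is no compactness to invoke, and the positivity constants genuinely degenerate as one moves to infinity in the Hitchin component. What Potrie--Sambarino actually prove is a uniform \emph{ratio} bound $\lambda_1(\rho(\gamma))-\lambda_d(\rho(\gamma))\leq (d-1)(\lambda_1(\rho(\gamma))-\lambda_2(\rho(\gamma)))$ valid for all Hitchin representations, obtained from the three-flag positivity and a cross-ratio argument; combined with the trivial bound $\lambda_1-\lambda_d\geq \ell_S(\gamma)$ for the Fuchsian locus and a domination estimate, this gives the uniform spectral gap. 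That argument is genuinely different from anything in the present paper and cannot be assembled from Proposition~\ref{fact:quantitative Pos are Lox} alone.
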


The lower bound depends on the choice of the
grafting locus $\gamma^*$ and the hyperbolic metric on $S$, and its proof is classical.

\begin{lemma}[{e.g.\ Theorem 4.1 of~\cite{BZZ}}]\label{entropy}
$\delta(z)\in (\delta,m]$ for all $z$, where $m>\delta$ is the universal constant from the above Theorem~\ref{thm:upper bound entropy}.
\end{lemma}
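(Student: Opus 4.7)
The upper bound $\delta(z)\leq m$ is immediate from Theorem~\ref{thm:upper bound entropy} applied to the Hitchin representation $\rho_z$, so only the strict lower bound needs work. The plan is to first establish $\delta(z)\geq \delta$ by comparing $f_z$ with the constant function $1$ on the invariant subset $K$, and then to upgrade to strict inequality using full support of the measure of maximal entropy.

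For $\delta(z)\geq \delta$, the key observation is that the restriction of $\rho_z$ to each subsurface subgroup $\pi_1(S_i)\hookrightarrow\pi_1(S)$ is, up to conjugation in $\PSL_d(\R)$, equal to $\tau$ composed with the corresponding restriction of the original Fuchsian representation. Indeed, grafting along a multicurve only modifies the vertex representations in the Bass--Serre decomposition by conjugation determined by the edge conjugators along a path in the tree. Consequently, for every $\gamma\in\pi_1(S_i)$ we have $\ell^{\mf F}(\rho_z(\gamma))=\ell_h(\gamma)$, which equals the $\Phi^t$-period of the corresponding closed orbit in $K_i$. Equivalently, $\int_\gamma f_z=\int_\gamma 1$ for every periodic orbit $\gamma\subset K$ of $\Phi^t$, so Liv\v{s}ic's theorem applied on each Axiom A piece $K_i$ yields that $f_z|_{K_i}$ is Hölder cohomologous to the constant function $1$. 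Thus $\Phi^t_{f_z}|_{K_i}$ preserves periods and has the same topological entropy $\delta_i$ as $\Phi^t|_{K_i}$, and taking the maximum over $i$ gives $h_{\mathrm{top}}(\Phi^t_{f_z}|_K)=\delta$. The variational principle then yields $\delta(z)\geq \delta$.

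For strict inequality, the point is that $\Phi^t_{f_z}$ is a Hölder reparameterization of the transitive Anosov flow $\Phi^t$ on the closed manifold $T^1S$. Hence it is expansive, satisfies the specification property, and admits a unique measure of maximal entropy (the Bowen--Margulis measure of the reparameterized flow, equivalently the $\Phi^t$-Gibbs equilibrium state of the Hölder potential $-\delta(z)f_z$). This measure is fully supported on $T^1S$. Since $K$ is a proper closed invariant subset of $T^1S$ (its complement contains every orbit crossing $\gamma^*$), no fully supported measure is concentrated on $K$. If one had $\delta(z)=\delta$, then the variational principle applied to the Axiom~A restriction $\Phi^t_{f_z}|_K$ would produce a measure supported on $K$ with entropy $\delta(z)$; by uniqueness of the measure of maximal entropy of $\Phi^t_{f_z}$ on $T^1S$ it would have to coincide with the fully supported one, a contradiction.

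The main obstacle is confirming the structure theory for Hölder reparameterizations of Anosov flows (existence, uniqueness and full support of the measure of maximal entropy), which is standard and is in any case already invoked implicitly through the BCLS/Sambarino framework recalled in Section~\ref{sec:hitchinrep}. An alternative, more constructive approach would be to build explicitly a horseshoe combining orbits in two different $K_i$'s with arcs crossing a component of $\gamma^*$, producing a subshift of finite type whose topological entropy strictly dominates $\max_i\delta_i$; either route suffices.
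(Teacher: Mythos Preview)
Your proof is correct, but the route differs from the paper's. Both start from the same observation that the restriction of $\rho_z$ to each subsurface group $\pi_1(S_i)$ is conjugate to the Fuchsian restriction, hence has critical exponent $\delta_i$. From there, the paper works directly in the Anosov-subgroup picture: it picks the component with maximal exponent $\delta$, notes that its limit set is a proper closed subset of the limit set of $\rho_z(\pi_1(S))$, and invokes Theorem~4.1 of~\cite{BZZ} (strict monotonicity of the critical exponent under proper inclusion of limit sets for Anosov subgroups) to conclude $\delta<\delta(z)$. Your argument stays on $T^1S$ and uses thermodynamic formalism instead: the equality of periods on $K$ gives $h_{\mathrm{top}}(\Phi^t_{f_z}|_K)=\delta$, and strictness comes from the uniqueness and full support of the equilibrium state of $-\delta(z)f_z$ for the transitive Anosov flow $\Phi^t$. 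The paper's approach is shorter by outsourcing the strict inequality to~\cite{BZZ}; yours is more self-contained relative to the dynamical setup already in place in Sections~\ref{sec:geodescicurrents}--\ref{sec:hitchinrep}, and makes transparent why equality is obstructed (any maximizing measure would have to be simultaneously supported on $K$ and fully supported). Either argument is adequate here.
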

\begin{proof}
By definition of a Hitchin grafting representation,
the image  $\rho_z(\Gamma)$
under $\rho_z$ of the fundamental group $\Gamma$ of any component of $S-\gamma^*$ is conjugate to its image under $\rho$, and hence has the same critical exponent.
Suppose we picked the component with largest critical exponent, namely $\delta$.

Then $\rho_z(\Gamma)$ is also Anosov ($\Gamma$ is quasi-convex in $\pi_1(S)$) and its limit set is a proper subset of that of $\rho_z(\pi_1(S))$ so by Theorem 4.1 of~\cite{BZZ} it has a strictly smaller critical exponent. Thus the critical exponent of 
$\rho_z(\pi_1(S))$ is bigger than $\delta$. 
%
\end{proof}

Let $h_{\rm top}(\Psi^t)$ be the topological entropy of a flow $\Psi^t$ on a compact space;
thus $\delta =h_{\rm top}(\Phi^t_{\vert K})$.
A \emph{measure of maximal entropy} for 
$\Phi^t_{\vert K}$ is an invariant probability measure $\mu$ with $h_\mu=\delta$.

Since  $\Phi^t_{\vert K_i}$ is a topologically transitive Axiom A flow and $K_i$ is compact,
it admits a unique measure $\nu_i$ of maximal entropy.  
The measure $\nu_i$ 
is a Gibbs equilibrium state for~$\Phi^t_{\vert K_i}$ with respect to 
the constant function $1$, and it can be 
obtained from a Patterson Sullivan construction~\cite{Su84}. 
The following well known fact will be useful later on.

\begin{lemma}\label{entropysplit}
A measure of maximal entropy for $\Phi^t_{\vert K}$ exists. 
It is unique if and only if
there exists a number $i\leq k$ such that $h_{\rm top}(\Phi^t_{\vert K_i})>
\max\{h_{\rm top}(\Phi^t_{\vert K_j})\mid j\not=i\}$.
In this case the measure of maximal entropy is supported in $K_i$.
\end{lemma}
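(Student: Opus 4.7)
The plan is to use affinity of metric entropy together with the assumption that each $\Phi^t_{\vert K_i}$, being a topologically transitive Axiom A flow on a compact invariant set, admits a \emph{unique} measure of maximal entropy $\nu_i$ with $h_{\nu_i} = h_{\rm top}(\Phi^t_{\vert K_i}) =: \delta_i$.

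First I would record that although the sets $K_i$ can intersect along the (finitely many) closed orbits carried by $\gamma^*$, this exceptional locus has vanishing topological entropy. Hence, modulo a null set for any $\Phi^t$-invariant probability measure $\mu$ on $K$ with $h_\mu>0$, we may treat $K=K_1\sqcup\dots\sqcup K_k$ as a disjoint union and set $p_i := \mu(K_i)$, with the normalized restrictions $\mu_i := p_i^{-1}\mu\vert_{K_i}$ (defined when $p_i>0$) being $\Phi^t$-invariant probability measures on $K_i$.

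Next I would invoke the fact that Kolmogorov--Sinai entropy is affine in the measure, so that
\[
h_\mu \;=\; \sum_{i:\,p_i>0} p_i\, h_{\mu_i} \;\leq\; \sum_{i:\,p_i>0} p_i\, \delta_i \;\leq\; \max_i\delta_i \;=\; \delta ,
\]
where the first inequality uses the variational principle applied to $\Phi^t_{\vert K_i}$. This already yields existence: taking any $i_0$ with $\delta_{i_0}=\delta$, the measure $\nu_{i_0}$ is $\Phi^t$-invariant, supported in $K$, and has entropy $\delta$, so it is a measure of maximal entropy for $\Phi^t_{\vert K}$.

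For the uniqueness statement I would analyze the equality case. If $h_\mu = \delta$, the chain of inequalities forces $h_{\mu_i} = \delta_i = \delta$ for every $i$ with $p_i > 0$. By uniqueness of the MME on each transitive Axiom A piece $K_i$, this means $\mu_i = \nu_i$ for all such $i$, and moreover $p_i = 0$ whenever $\delta_i < \delta$. Hence the set of measures of maximal entropy of $\Phi^t_{\vert K}$ is exactly the convex hull of $\{\nu_i : \delta_i = \delta\}$. This convex hull is a singleton iff the index set $\{i : \delta_i = \delta\}$ has exactly one element, i.e.\ iff some $\delta_{i_0}$ strictly dominates all other $\delta_j$; in that case the unique measure of maximal entropy is $\nu_{i_0}$, which is supported in $K_{i_0}$. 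The main subtlety is the initial bookkeeping to ensure that the mild overlap of the $K_i$'s does not invalidate the affine decomposition, but this is immediate once one notes that the shared locus consists of finitely many periodic orbits and therefore has zero measure for any measure with positive entropy.
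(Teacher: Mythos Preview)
Your proof follows essentially the same route as the paper's: decompose an invariant probability measure on $K$ into pieces supported on the $K_i$, apply affinity of entropy together with the variational principle on each piece, and invoke uniqueness of the measure of maximal entropy for each transitive Axiom~A flow $\Phi^t_{\vert K_i}$.

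There is one slip in your bookkeeping. Your claim that the overlap $\bigcup_{i\neq j}K_i\cap K_j$ (finitely many periodic orbits) is automatically null for any invariant measure with $h_\mu>0$ is false: the convex combination $\tfrac12\nu_1+\tfrac12\mc D_\gamma$, with $\gamma$ a periodic orbit in the overlap, has positive entropy yet gives mass $\tfrac12$ to the overlap, so your formulas $p_i=\mu(K_i)$ would then satisfy $\sum_i p_i>1$. The paper sidesteps this by writing any invariant $\mu$ as $\sum_i\mu_i$ with $\mu_i$ supported in $K_i$ (always possible via ergodic decomposition, the only ambiguity being how to assign atoms on the overlap), and the affine inequality $h_\mu=\sum_i s_i h_{\mu_i/s_i}\leq\delta$ holds regardless of how that assignment is made. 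Alternatively, your argument is rescued by observing that for a \emph{maximal} entropy measure the overlap must be null: any atom there would, by affinity, force a component of entropy $\delta>0$ supported on a finite union of periodic orbits, which is impossible. Either fix suffices and the rest of your argument goes through unchanged.
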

\begin{proof}
Write again $K=\cup_iK_i$. 
The function which associates to a $\Phi^t_{\vert K}$-invariant probability measure $\mu$ its entropy $h_\mu$ is affine:
for $\mu,\eta$ and $s\in (0,1)$ we have
$h_{s\mu+ (1-s)\eta}=sh_\mu+(1-s)h_\eta$. 

The topologically transitive invariant subsets $K_i\subset K$ intersect at most along 
a finite number of periodic orbits. As a consequence, any $\Phi^t$-invariant 
probability measure $\mu$ on~$K$ can be decomposed as $\mu=\sum_i\mu_i$ where 
$\mu_i$ is supported in $K_i$. The decomposition is unique if the $\mu$-mass of any periodic 
orbit for $\Phi^t$ which projects to a component of $\gamma^*$ vanishes. 

Since $\Phi^t_{\vert K_i}$ is a topologically transitive axiom A flow, it admits a unique measure $\nu_i$ of maximal entropy.
Then we have $h_{\nu_i}=h_{\rm top}(\Phi^t_{\vert K_i})$. 
Let $\mu=\sum_i\mu_i$ be any $\Phi^t$-invariant Borel probability 
measure on $K$.
Let $s_i=\mu_i(K_i)$; then $\sum_is_i=1$ and 
\[h_\mu=\sum_is_ih_{\mu_i}\leq \sum_i s_ih_{\rm top}(\Phi^t_{\vert K_i})\leq \delta\]
with equality if and only if $s_j=0$ for all $j$ such that 
$h_{\rm top}(\Phi^t_{\vert K_j})<\delta$, and $\mu_j=\nu_j$ if $s_j>0$. 
In particular, a measure of maximal entropy exists, and 
if there exists a unique $i\leq k$ such that 
$h_{\rm top}(\Phi^t_{\vert K_i})=\delta$, then such a measure is unique and coincides with $\nu_i$. 
\end{proof}

\subsection{The total mass of the equilibrium state}

For the fixed hyperbolic metric on~$S$ with unit tangent bundle $T^1S$ and geodesic 
flow~$\Phi^t$ denote by $\nu^1(z)$ the $\Phi^t$-invariant probability measure on 
$T^1S$ which is a multiple of~$\nu(z)$. 
It turns out that the two normalisations $\nu(z)$ and $\nu^1(z)$ for the equilibrium states
are comparable independently of $z$, as soon as 
the grafting datum $z$ is taken in $\ker\alpha_0$ where $\alpha_0$ is the linear functional
which determines the Finsler norm of the tangent of a Riemannian geodesic in $X$ 
which is invariant under $\rho(\gamma^*)$ (or a component of $\rho(\gamma^*)$).

\begin{lemma}\label{lem:total mass of nu}
    For any $\sigma>0$ there exists a constant $C>0$ such that if the length of each component of $\gamma^*\subset S$ 
    is at most $\sigma$,
    then for any grafting parameter $z\subset\ker\alpha_0^\perp$,
    $$C^{-1}\leq \Vert\nu(z)\Vert = \nu(z)(T^1S) \leq C.$$
\end{lemma}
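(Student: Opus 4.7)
The plan is to express $\Vert\nu(z)\Vert$ in two equivalent forms, each tailored to one of the inequalities.

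Let $\nu^1(z):=\nu(z)/\Vert\nu(z)\Vert$ be the $\Phi^t$-invariant probability measure obtained by rescaling $\nu(z)$. The normalisation (\ref{normalization}) reads $\int f_z\,d\nu^1(z)=1/\Vert\nu(z)\Vert$. Moreover $\nu^1(z)$ corresponds, via Abramov's formula (used in Section~\ref{sec:geodescicurrents} to relate $\Phi^t$ to the reparametrized flow $\Phi^t_{f_z}$), to the unique measure of maximal entropy of $\Phi^t_{f_z}$; this yields $h_{\nu^1(z)}(\Phi^t)=\delta(z)\int f_z\,d\nu^1(z)$. Combining the two equalities gives
\begin{equation*}
\Vert\nu(z)\Vert \;=\; \frac{\delta(z)}{h_{\nu^1(z)}(\Phi^t)}.
\end{equation*}
On the other hand, applying the intersection formula (\ref{intersectionformula}) with the constant function $u\equiv 1$ (for which $\ell_u(\gamma)=\ell_S(\gamma)$) yields
\begin{equation*}
\Vert\nu(z)\Vert \;=\; \lim_{R\to\infty}\frac{1}{\myhash  N_{f_z}(R)}\sum_{\ell^{\mf F}(\rho_z(\gamma))\leq R}\frac{\ell_S(\gamma)}{\ell^{\mf F}(\rho_z(\gamma))}.
\end{equation*}

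For the upper bound, I would use the second identity together with Theorem~\ref{thm:quasiisom lengths}. Since each component $z_e$ of $z$ lies in $\ker\alpha_0$ and each component of $\gamma^*$ has $S$-length at most $\sigma$, that theorem provides a constant $C'_\sigma$ with
\begin{equation*}
\ell^{\mf F}(\rho_z(\gamma)) \;\geq\; \Bigl(1+\tfrac{C'_\sigma}{L+1}\Bigr)^{-1}\ell_S(\gamma) \;\geq\; \frac{1}{1+C'_\sigma}\ell_S(\gamma)
\end{equation*}
for every $\gamma\in\pi_1(S)$ (using $L\geq 0$). Each summand in the limit is therefore bounded by $1+C'_\sigma$, so $\Vert\nu(z)\Vert \leq 1+C'_\sigma$.

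For the lower bound, I would use the first identity. The measure $\nu^1(z)$ is a $\Phi^t$-invariant probability measure and $\Phi^t$ is the geodesic flow of a closed hyperbolic surface of curvature $-1$, so $h_{\nu^1(z)}(\Phi^t)\leq h_{\mathrm{top}}(\Phi^t)=1$. Combined with Lemma~\ref{entropy}, this gives
\begin{equation*}
\Vert\nu(z)\Vert \;=\; \frac{\delta(z)}{h_{\nu^1(z)}(\Phi^t)} \;\geq\; \delta(z) \;>\; \delta \;>\; 0,
\end{equation*}
where $\delta:=\max_i h_{\mathrm{top}}(\Phi^t_{\vert K_i})>0$ is bounded away from zero uniformly over the (finite) collection of simple multicurves on $S$ whose components have length at most $\sigma$. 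Setting $C:=\max(1+C'_\sigma,\,1/\delta)$ completes the proof.

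The main obstacle is purely the bookkeeping of normalisations: once one correctly identifies $\nu^1(z)$ as the $\Phi^t$-invariant probability measure that corresponds via Abramov to the measure of maximal entropy of $\Phi^t_{f_z}$, and hence derives $\Vert\nu(z)\Vert=\delta(z)/h_{\nu^1(z)}(\Phi^t)$, both inequalities reduce to ingredients already in the excerpt --- Theorem~\ref{thm:quasiisom lengths} together with the limit description of $\nu(z)$ for the upper bound, and $h_{\mathrm{top}}(\Phi^t)=1$ together with Lemma~\ref{entropy} for the lower bound. No fresh geometric estimate is needed.
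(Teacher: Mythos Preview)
Your proof is correct and follows essentially the same approach as the paper: both use the identity $\Vert\nu(z)\Vert^{-1}=\int f_z\,d\nu^1(z)=h_{\nu^1(z)}/\delta(z)$ together with $h_{\nu^1(z)}\leq 1$ and Lemma~\ref{entropy} for the lower bound, and Theorem~\ref{thm:quasiisom lengths} for the upper bound. The only cosmetic difference is that for the upper bound the paper bounds $\int f_z\,d\nu^1(z)$ from below via density of closed-orbit measures, whereas you invoke the limit formula (\ref{intersectionformula}) with $u\equiv 1$; these are equivalent, and your added remark on the finiteness of multicurves with components of length $\leq\sigma$ (to make $\delta$ depend only on $\sigma$) is a useful clarification the paper leaves implicit.
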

\begin{proof}
Put $\nu^1(z)=\frac{\nu(z)}{\Vert\nu(z)\Vert}$ so that $\nu^1(z)$ is a probability measure on $T^1S$.
    Then $\Vert \nu(z)\Vert=(\int f_z d\nu^1(z))^{-1}$ since by equation (\ref{normalization}), $\nu(z)$ was 
    normalized so that $\int f_z d\nu(z)=1$.

    By definition of the equilibrium state of $-f_z$ 
    and the fact that the entropy of the reparameterized flow $\Phi^t_{f_z}$ equals $\delta(z)$, we have
    \begin{equation}\label{entropyeq}
    \int f_z d\nu^1(z)=\frac{h_{\nu^1(z)}}{\delta(z)}.
    \end{equation}
    Since $h_{\nu^1(z)}\leq 1$ (the topological entropy of $\Phi^t$ is $1$, and is greater than or equal to the entropy of any invariant measure) and $\delta(z)> \delta$ by Lemma~\ref{entropy}, it holds $\int f_z d\nu^1(z)\leq \frac1\delta$.
    It remains to get a lower bound.

    By Theorem~\ref{thm:quasiisom lengths},
we have 
$$
\int f_{z}\frac{d\zeta}{\ell(\zeta)} \geq \left( 1+\frac{C}{L+1} \right)^{-1},
$$
for any $\zeta\in\pi_1(S)$, represented by a periodic orbit for $\Phi^t$ of length
$\ell(\gamma)$, and 
where $L\geq 0$ is any lower bound on the heights of the cylinders added 
along the components of $\gamma^*$ to construct $S_z$ (see Definition~\ref{def:abstractgraf}).

Then by density of the convex hull of currents supported 
on closed geodesics in the space of all currents, we get
\begin{equation*}
\int f_{z}d\nu^1(z) \geq \left( 1+{C} \right)^{-1}.\qedhere
\end{equation*}
\end{proof}

\subsection{The total mass of the Patterson--Sullivan measure}

In this section we establish estimates on the total mass of some of the Patterson--Sullivan measures (see Equations~\eqref{eq:conf measure} and \eqref{eq:sullivan measure}). To this end we use the equivariant path isometry
$\tilde Q_z:\tilde S_z\to \X$ to view the family $(\mu_z^x)$ of Patterson Sullivan measures on $\partial \H$
as a family of measures parameterized by points in the universal covering 
$\tilde S_z$ of the abstract grafted surface $S_z$. In the sequel we use this convention without further
mention. 


\begin{proposition}\label{prop:upper bound on PS mass}
    For any $\sigma>0$ there is a constant $C>0$ such that if the length of each component of $\gamma^*$ is at most $\sigma$, then for any grafting parameter $z\subset\ker\alpha_0^\perp$, in any hyperbolic piece of $\wt{S}_z$ there exists a point $x$ such that 
    $$\mu^x_z(\partial_\infty\H^2)\leq C.$$
\end{proposition}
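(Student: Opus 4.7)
The plan is to combine the bound $\|\nu(z)\|\leq C$ from Lemma~\ref{lem:total mass of nu} with the explicit Sullivan description of $\nu(z)$ in~\eqref{eq:sullivan measure}, choosing the base point $x$ in the hyperbolic piece so that a small controlled neighborhood of $\tilde Q_z(x)$ embeds into $\rho_z(\pi_1(S))\backslash\X$.

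First I would pick $x$ in the interior of the hyperbolic piece so that a Finsler ball $B=B^{\mf F}(\tilde Q_z(x),r_0)\subset\X$ of radius $r_0=r_0(\sigma)>0$ injects into $\rho_z(\pi_1(S))\backslash\X$. Such a point exists because any hyperbolic surface whose boundary geodesics have length at most $\sigma$ admits, by the Margulis/Collar Lemma, a point of injectivity radius bounded below by a universal constant. Then Theorem~\ref{thm:quasiisom} transfers this lower bound, with constants depending only on $\sigma$, into an injectivity statement for the Finsler ball around $\tilde Q_z(x)$ in the quotient of $\X$.

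Second, the injectivity together with the Sullivan formula~\eqref{eq:sullivan measure} gives that the mass of the pull-back of $T^1B$ under $\tilde Q_z$ is at most $\|\nu(z)\|\leq C$, so
\[
\int_V e^{\delta(z)\langle\Xi_z(\xi)|\Xi_z(\eta)\rangle_{\tilde Q_z(x)}}\,d\mu_z^x(\xi)\,d\mu_z^x(\eta)\,dt\leq C,
\]
where $V\subset\partial_\infty\H^2\times\partial_\infty\H^2\times\R$ parametrizes the unit tangent vectors sitting over $B$. For $(\xi,\eta,t)\in V$ the point $\tilde Q_z(x)$ lies within Finsler distance $r_0$ of the geodesic from $\Xi_z(\xi)$ to $\Xi_z(\eta)$, so the Gromov product is at most $2r_0$ and the $t$-fiber over each pair in the projection $V_0\subset\partial_\infty\H^2\times\partial_\infty\H^2$ has length of order $r_0$. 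Using that $\delta(z)\leq m$ by Theorem~\ref{thm:upper bound entropy} to turn the exponential into a uniform constant, this yields a uniform bound $(\mu_z^x\otimes\mu_z^x)(V_0)\leq C'$.

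The last and most delicate step, which I expect to be the main obstacle, is to upgrade $(\mu_z^x\otimes\mu_z^x)(V_0)\leq C'$ to the desired bound on $\mu_z^x(\partial_\infty\H^2)$. My plan is to refine the choice of $x$ and the radius $r_0$ so that $V_0$ carries a uniform positive proportion of $\mu_z^x\otimes\mu_z^x$; equivalently, so that the shadow at $\tilde Q_z(x)$ of a Finsler ball of some uniformly controlled radius $R=R(\sigma)$ covers $\partial_\infty\H^2$ with uniformly bounded $\mu_z^x$-measure. This amounts to a uniform shadow lemma for the family $(\rho_z)$, whose constants are controlled by the quasi-isometry estimates of Theorem~\ref{thm:quasiisom}, the uniform lower bound $\delta(z)>\delta>0$ from Lemma~\ref{entropy}, and the transformation rule~\eqref{eq:conf measure}. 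The difficulty is precisely the uniformity in $z$ of the multiplicative constants appearing in the Busemann cocycle comparisons, which is where the companion paper's positivity-based quasi-isometry estimates play a decisive role.
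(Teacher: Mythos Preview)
Your overall framework matches the paper's: combine the Sullivan formula~\eqref{eq:sullivan measure} with the total mass bound from Lemma~\ref{lem:total mass of nu}, and show that a definite portion of the product measure $\mu_z^x\otimes\mu_z^x$ lands on pairs $(\xi,\eta)$ whose geodesic passes near $x$ for a controlled time. You also correctly identify this last step as the crux.

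However, your proposed resolution of that step is not an argument. Appealing to a ``uniform shadow lemma for the family $(\rho_z)$'' is essentially restating the problem: a shadow lemma with constants uniform in $z$ is exactly what you would need to prove, and neither Theorem~\ref{thm:quasiisom} nor the conformality relation~\eqref{eq:conf measure} gives it to you directly. (Your phrasing ``the shadow \dots\ covers $\partial_\infty\H^2$ with uniformly bounded $\mu_z^x$-measure'' is also not what you want: you need two subsets $I,J\subset\partial_\infty\H^2$, each of $\mu_z^x$-mass at least a fixed fraction of the total, with $I\times J\subset V_0$.)

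The paper supplies this missing mechanism by a concrete algebraic trick that avoids any shadow lemma. Inside the given hyperbolic piece one finds (via \thref{thm:short-pant-decomposition}) a pair of pants with boundary lengths bounded in terms of $\sigma$, and takes $x$ in one of its two right-angled hexagons $H_u$. The three axes of the generators $a_u,b_u,c_u$ determine three intervals $A_u,B_u,C_u$ that cover $\partial_\infty\H^2$, so one of them, say $A_u$, carries at least $\tfrac13$ of the mass. Then $a_uA_u$ and $a_u^{-1}A_u$ are \emph{disjoint}, and each carries mass at least $C_\delta\cdot\mu_z^x(A_u)$ by the quasi-invariance~\eqref{eq:conf measure}, where $C_\delta$ is controlled because $d^{\mf F}(x,j_u(a_u^{\pm1})x)$ is bounded in terms of $\sigma$ only (this is the point where staying inside a single hyperbolic piece matters; no appeal to Theorem~\ref{thm:quasiisom} is needed). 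Finally, a compactness argument in the lengths $u\in[0,\sigma]^3$ shows that every geodesic from $a_u^{-1}A_u$ to $a_uA_u$ crosses $H_u$ in an arc of length at least $L_\sigma>0$. This produces the required product set with definite mass, and the bound $\mu_z^x(\partial_\infty\H^2)^2\leq \tfrac{9C}{L_\sigma C_\delta^2}$ follows.
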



    

    


The strategy of the proof is as follows (see Figure~\ref{fig-control-mass}). 
Assume that each component of~$S-\gamma^*$ is a pair of pants. We fix one of them, say the
pair of pants $\Sigma$, and its fundamental group $\Gamma$.
Let $\tilde \Sigma$ be the universal covering of $\Sigma$. Then $\tilde \Sigma\subset \H^2$ is a convex hyperbolic 
surface with geodesic boundary. 
We find two disjoint intervals $I,J\subset\partial_\infty\H^2$, 
numbers $C_1,C_2>0$ and a fundamental domain for the action $\Gamma\acts\tilde \Sigma$,
made of two right-angle hexagons $H\cup H'$ whose diameters are bounded from above by 
a constant only depending on $\sigma$ and 
which depend in a suitable sense continuously on the data, so that the following holds.
Let $x$ be the center of $H$.
First, the masses $\mu^x_z(I)$ and $\mu^x_z(J)$ are bounded from below by~$C_1 \mu^x_z(\partial_\infty\H^2)$. 
Second, each geodesic connecting a point in $I$ to a point in $J$ intersects~$H$ in an arc of length at least $C_2$
and hence passes uniformly near $x$. We then can estimate the Gromov product and bound 
the product measure $\mu^x_z(I)\times\mu^x_z(J)=\mu^x_z\times\mu^x_z(I\times J)$ 
from above by a constant multiple of $\nu(z)(T^1S)$, 
which is uniformly bounded from above by Lemma~\ref{lem:total mass of nu}. 

\begin{figure}[ht]
    \begin{center}
        \begin{picture}(80,80)(0,0)
        \put(0,0){\includegraphics[width=80mm]{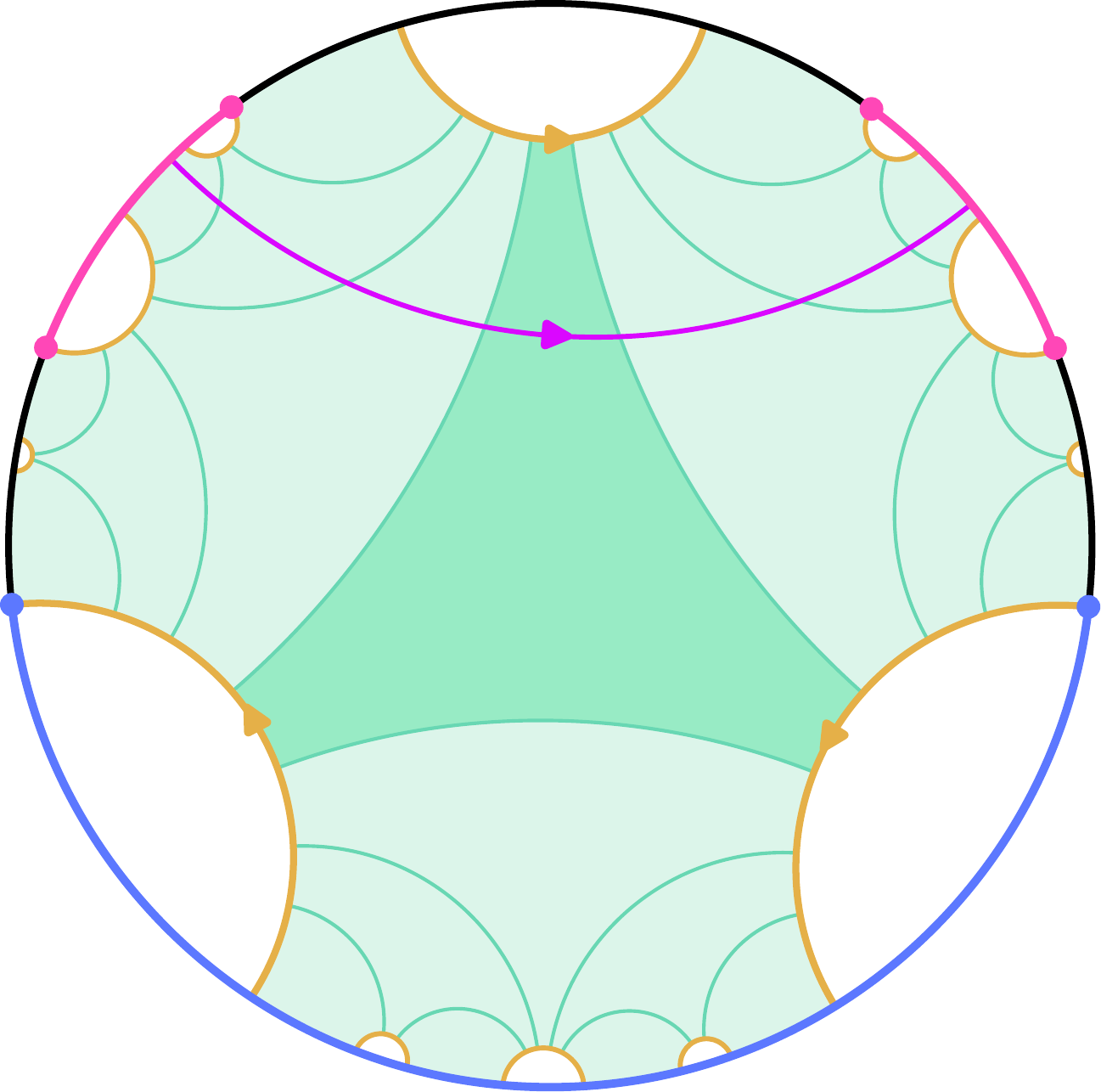}}
        \color[RGB]{228,176,72}
        \put(39,72){$\bar a_u$}
        \put(62.5,24){$\bar b_u$}
        \put(13.5,24){$\bar c_u$}
        \put(51,79){$\bar a_u^+$}
        \put(25,79){$\bar a_u^-$}
        \put(61,3){$\bar b_u^+$}
        \put(80,34){$\bar b_u^-$}
        \put(-5,34){$\bar c_u^+$}
        \put(15,3){$\bar c_u^-$}
        \color[RGB]{92,120,255}
        \put(5,12){$A_u$}
        \color[RGB]{255, 71, 183}
        \put(69,68){$\bar a_u\cdot A_u$}
        \put(-3,68){$\bar a_u^{-1}\cdot A_u$}
        \color[RGB]{29, 168, 105}
        \put(37,35){$H_u$}
        \end{picture}
    \end{center}
    \caption{Control of $\nu(T^1S)$ using the measure for $\mu$ of two intervals $I=\bar a_u\cdot A_u$ and $J=\bar a_u^{-1}\cdot A_u$ in $\de\H^2$. The green hexagon $H_u$ is half of a fundamental domain of a pair of pant $P_u$. The purple geodesic goes from $I$ to $J$ and intersects $H_u$ in an arc whose length is bounded from below.}
    \label{fig-control-mass}
\end{figure}

We begin with establishing a few estimates in a more general setting involving representations 
of the fundamental group of a pair of pants (the free group $F_2$ with two generators)
into $\PSL_2(\R)$.
Let us introduce some notations.
Let $P$ be a topological pair of pants, equipped with a fixed orientation.
We fix a basepoint $p_0$ in $P$ and three generators $a,b,c$ of the fundamental group 
$\pi_1(P,p_0)=F_2$ such that $c\cdot b\cdot a=1$ and each generator corresponds to one of the boundary components of $P$.

For a set of lengths $u=(u_a,u_b,u_c)\in [0,\infty)^3$ there is a unique hyperbolic structure on~$P$ 
whose boundary components have these lengths on $a,b,c$, and up to conjugation, there is a unique representation $j_u:\pi_1(P)\to\PSL_2(\R)$ associated to this hyperbolic structure which is normalized so that the following ordering assumption holds. 

Put $a_u,b_u,c_u$ instead of $j_u(a),j_u(b),j_u(c)$. Then $u_i$ are loxodromic elements of 
$\PSL_2(\mathbb{R})$ with axes
$\bar a_u,\bar b_u,\bar c_u\subset \H^2$, oriented to define the boundary orientation for the oriented
pair of pants $P$,
with endpoints  $\bar a_u^\pm,\bar b_u^\pm,\bar c_u^\pm\subset \partial_\infty\H^2=S^1$. 
We use the abuse of notation that if say $u_a=0$, then $j_u(a)$ has only one fixed point on $\partial\H^2$,
and $\bar a_u=\bar a_u^+=\bar a_u^-$. We require that 
the cycle 
$(\bar{a}_u^-,\bar{a}_u^+,\bar{b}_u^-,\bar{b}_u^+,\bar{c}_u^-,\bar{c}_u^+)$ is oriented clockwise for the circular 
order on $\de_\infty\H^2$. We may also assume that the center $0$ of the unit disk $D=\H^2$ is contained in the 
convex hull of the limit set of $j_u$ and that $j_u$ varies continuously in $u$.

Consider the three intervals of $\de_\infty\H^2$ (that is, the segment in $\partial_\infty \H^2$ determined
by the clockwise orientation of $S^1$ and its endpoints)

\begin{itemize}
    \item $A_u=[\bar{b}_u^-,\bar{c}_u^+]$,
    \item $B_u=[\bar{c}_u^-,\bar{a}_u^+]$,
    \item $C_u=[\bar{a}_u^-,\bar{b}_u^+]$.
\end{itemize}

By construction, we have 
$A_u\cup B_u\cup C_u=\de_\infty\H^2$, so for any finite measure $\mu$ on $\de_\infty\H^2$, one of the intervals has mass at least $\frac{1}{3}\mu(\de_\infty\H^2)$. Put $A_u^+=a_u\cdot A_u$, $A_u^-=a_u^{-1}\cdot A_u$, and similarly for $B_u^+,B_u^-,C_u^+,C_u^-$. 

\begin{lemma}
    The intervals $A_u^+$ and $A_u^-$ are disjoint. Similarly $B_u^+\cap B_u^-=\emptyset$ and $C_u^+\cap C_u^-=\emptyset$.
\end{lemma}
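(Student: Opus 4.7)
By the obvious symmetry on $(a,b,c)$, it suffices to prove $A_u^+\cap A_u^-=\emptyset$. Conjugate $j_u$ so that $\bar a_u$ is the imaginary axis of the upper half-plane with $\bar a_u^-=0$ and $\bar a_u^+=\infty$; then $a_u$ acts on $\partial_\infty\H^2$ as $z\mapsto\lambda z$ for some $\lambda>1$ (the case $u_a=0$ is a degenerate parabolic limit). The clockwise cyclic-order hypothesis places the remaining endpoints on the positive real axis with $\bar b_u^->\bar b_u^+>\bar c_u^->\bar c_u^+>0$. Thus $A_u$ is the real interval $[\bar c_u^+,\bar b_u^-]$ and $A_u^\pm=[\lambda^{\pm1}\bar c_u^+,\lambda^{\pm1}\bar b_u^-]$, so disjointness reduces to the single inequality $\lambda^2\bar c_u^+>\bar b_u^-$, and is implied (since $\lambda>1$) by the sharper bound $\lambda\bar c_u^+>\bar b_u^-$.

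The sharper bound has a clear geometric meaning: the axis $a_u\bar c_u=\overline{a_uc_ua_u^{-1}}$, whose endpoints are $\lambda\bar c_u^\pm$, lies strictly to the right of $\bar b_u$ on the real axis. Now $\bar b_u$ and $a_u\bar c_u$ are lifts in $\tilde P\subset\H^2$ of two distinct disjoint simple closed geodesics bounding $P$ (the $b$- and $c$-boundaries respectively), so by discreteness and faithfulness of $j_u$ they are disjoint geodesics in $\H^2$, and therefore their real-line intervals are either disjoint or properly nested---the latter occurring only on a measure-zero degenerate locus where the two axes would coincide. It thus remains to rule out the ``wrong'' configuration $\lambda\bar c_u^-<\bar b_u^+$ (the axis $a_u\bar c_u$ lying left of $\bar b_u$).

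To exclude this I would combine it with the analogous disjointness $\bar b_u\cap a_u\bar b_u=\emptyset$, which forces $\lambda>\bar b_u^-/\bar b_u^+$ so $a_u\bar b_u$ lies strictly right of $\bar b_u$, and with the disjointness $a_u\bar c_u\cap a_u\bar b_u=\emptyset$, which by the cyclic-order relation between $\bar b_u$ and $\bar c_u$ places $a_u\bar c_u$ left of $a_u\bar b_u$. The cleanest closure uses the relation $cba=1$ in $\pi_1(P)$: it gives $a_u=b_u^{-1}c_u^{-1}$, hence $a_uc_ua_u^{-1}=b_u^{-1}c_ub_u$, so $a_u\bar c_u=b_u^{-1}\bar c_u$ and $\lambda\bar c_u^+=b_u^{-1}(\bar c_u^+)$ (using that $\bar c_u^+$ is fixed by $c_u^{-1}$). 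The three pairwise disjoint lifts $\bar c_u$, $\bar b_u$, $b_u^{-1}\bar c_u$ of boundary geodesics must then arrange themselves along $\partial_\infty\tilde P$ in the cyclic pattern dictated by the hexagonal fundamental domain for $P$; this pattern forces $\bar b_u$ to be squeezed between $\bar c_u$ and $b_u^{-1}\bar c_u=a_u\bar c_u$ on the real line, yielding $\bar b_u^-<\lambda\bar c_u^+$, as required.

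The main technical obstacle is the last cyclic-order assertion, which is the combinatorial statement that along $\partial_\infty\tilde P$ the infinite chain of disjoint lifts $\{a_u^n\bar b_u,a_u^n\bar c_u\}_{n\in\Z}$ appears clockwise in the alternating pattern $\ldots,a_u^n\bar b_u,a_u^n\bar c_u,a_u^{n-1}\bar b_u,a_u^{n-1}\bar c_u,\ldots$, reflecting the adjacency structure of fundamental hexagons translated by powers of $a_u$ along $\bar a_u$. Once this structural fact is established, both $B_u^\pm$ and $C_u^\pm$ disjointness follow by cyclically permuting $(a,b,c)$ in the argument above.
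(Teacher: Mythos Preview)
Your reduction to the inequality $\lambda\,\bar c_u^+>\bar b_u^-$ (in your half-plane coordinates) is correct, and you even isolate the same key identity $a_u=b_u^{-1}c_u^{-1}$, hence $a_u\cdot\bar c_u^+=b_u^{-1}\cdot\bar c_u^+$, that the paper uses. Where you diverge is in the closing step: you try to pin down the position of $a_u\bar c_u$ via the combinatorics of the hexagonal fundamental domain, and you explicitly flag this ``cyclic-order assertion'' as an unresolved technical obstacle. So as written, the proof has a gap.

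That gap closes in one line, without any hexagon combinatorics, and this is what the paper does. You already know $a_u\cdot\bar c_u^+=b_u^{-1}\cdot\bar c_u^+$. Now use the dynamics of \emph{both} sides simultaneously on the given cyclic order $(\bar a_u^-,\bar a_u^+,\bar b_u^-,\bar b_u^+,\bar c_u^-,\bar c_u^+)$. Since $a_u$ attracts towards $\bar a_u^+$, the point $a_u\cdot\bar c_u^+$ lies in the clockwise arc $[\bar a_u^+,\bar c_u^+]$. Since $b_u^{-1}$ attracts towards $\bar b_u^-$, the same point lies in the clockwise arc $[\bar c_u^+,\bar b_u^-]$. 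The intersection of these two arcs is $[\bar a_u^+,\bar b_u^-]$ (together with the isolated endpoint $\bar c_u^+$, which is excluded since $a_u$ moves it). In your coordinates this says precisely $\lambda\,\bar c_u^+\geq\bar b_u^-$, in fact strictly, and you are done. The detour through disjointness of lifts, nesting of axis intervals, and the alternating pattern $\ldots,a_u^n\bar b_u,a_u^n\bar c_u,\ldots$ is unnecessary.
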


\begin{proof}
    First notice that $a_u\cdot \wb{c}_u^+$ belongs to $[\wb{a}_u^+,\wb{c}_u^+]$ since $a_u$ is an hyperbolic element with attractive fixed point $\wb{a}_u^+$. By construction, we have $c_u\cdot b_u\cdot a_u=1$, so that 
    $$a_u\cdot\bar{c}_u^+=({b}_u^{-1}\cdot{c}_u^{-1})\cdot\bar{c}_u^+={b}_u^{-1}\cdot\bar{c}_u^+$$ 

    So $a_u\cdot\bar c^+_u$ is included in both $[\bar a^+_u,\bar c^+_u]$ and $[\bar c^+_u, \bar b^-_u]$, whose intersection is equal to the interval $[\bar a^+_u,\bar b^-_u]$. Similarly, $a_u\cdot\bar b^-_u$ lies inside 
    $[\bar a^+_u,a_u\cdot\bar c^+_u]\subset[\bar a^+_u,\bar b^-_u)$. And since $A_u=[\bar b^-_u,\bar c^+_u]$, $a_u\cdot A_u\subset [\bar a^+_u,\bar b^-_u)$, and similarly $a^{-1}_u\cdot A_u\subset (\bar c^+_u,\bar a^-_u]$, it follows that they are disjoint.    
\end{proof}

Write $\Gamma_0=\{a,a^{-1},b,b^{-1},c,c^{-1}\}\subset\pi_1(P)$

\begin{corollary} \label{cor:mass-interval-bounded-below}
    If $\mu$ is a $j_u(\pi_1(P))$-quasi-invariant finite measure on $\de_\infty\H^2$, then the measure for $\mu\times\mu$ of one of the three products $A_u^-\times A_u^+$, $B_u^-\times B_u^+$, $C_u^-\times C_u^+$  has mass at least $\frac{C^2}{9}\mu(\de_\infty\H^2)^2$, where
    $$C=C_{\mu,u}=\inf\left\{\frac{dj_u(\gamma)_*\mu}{d\mu}(\xi):\ \xi\in\partial_\infty\H^2,\ \gamma\in \Gamma_0\right\}. $$
\end{corollary}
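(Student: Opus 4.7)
The plan is a short three-step argument combining a pigeonhole on the covering, the quasi-invariance hypothesis, and the product structure of $\mu\times\mu$; there is no serious obstacle, since the geometric work (namely the disjointness $A_u^+\cap A_u^-=\emptyset$ and analogues) has already been dispatched in the previous lemma.

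First I would use that $A_u\cup B_u\cup C_u=\partial_\infty\H^2$, so summing $\mu$-measures forces at least one of the three intervals, say $A_u$ up to relabelling, to satisfy $\mu(A_u)\geq \tfrac13\mu(\partial_\infty\H^2)$. Second, I would transfer this lower bound to $A_u^\pm=a_u^{\pm 1}\cdot A_u$ using quasi-invariance. Since $a_u\cdot A_u=(a_u^{-1})^{-1}(A_u)$, the change-of-variables formula gives
\[
\mu(A_u^+)=(a_u^{-1})_*\mu(A_u)=\int_{A_u}\frac{d(a_u^{-1})_*\mu}{d\mu}(\xi)\,d\mu(\xi)\geq C\,\mu(A_u),
\]
where the last inequality uses that $a^{-1}\in\Gamma_0$ and the definition of $C$. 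The symmetric estimate $\mu(A_u^-)\geq C\mu(A_u)$ comes from the same computation with $a\in\Gamma_0$.

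Third, because $\mu\times\mu$ of a rectangle is the product of the factor masses, I would conclude
\[
(\mu\times\mu)(A_u^-\times A_u^+)=\mu(A_u^-)\cdot\mu(A_u^+)\geq C^2\mu(A_u)^2\geq \frac{C^2}{9}\mu(\partial_\infty\H^2)^2,
\]
and the same chain of inequalities applies verbatim with $B$ or $C$ in place of $A$ in case the pigeonhole selected one of the other two intervals. The only bookkeeping point is to make sure the correct generator (and hence the correct element of $\Gamma_0$) is used when transferring the mass via the Radon-Nikodym bound; since $\Gamma_0$ was defined symmetrically to contain $a^{\pm 1},b^{\pm 1},c^{\pm 1}$, this is automatic.
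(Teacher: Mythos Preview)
The proposal is correct and matches the paper's intended argument, which is left implicit since the surrounding text already records the two ingredients (the pigeonhole from $A_u\cup B_u\cup C_u=\partial_\infty\H^2$ and the quasi-invariance bound). One small remark: you cite the disjointness $A_u^+\cap A_u^-=\emptyset$ as prerequisite geometric work, but your proof never uses it---the product formula $(\mu\times\mu)(A_u^-\times A_u^+)=\mu(A_u^-)\mu(A_u^+)$ holds regardless; the disjointness matters only downstream, when this rectangle is fed into the Gromov product estimate off the diagonal.
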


We will also need an estimate on the lengths of the intersection of geodesics from $A_u^-$ to $A_u^+$, with $H_u\subset\H^2$ the (possibly degenerate) right-angled hexagon adjacent to the axes of $j_u(a),j_u(b),j_u(c)$.

\begin{lemma}\label{fact:intersection with hexagon}
    For any $\sigma>0$ there exists $L_\sigma$ such that for any $u\in[0,\sigma]^3$, for all $(x,y)$ in $A_u^-\times A_u^+$, $B_u^-\times B_u^+$ or $C_u^-\times C_u^+$, the length of the intersection of the hexagon $H_u$ with the geodesic from $x$ to $y$ is at least $L_\sigma$.
\end{lemma}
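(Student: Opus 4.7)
The plan is to prove the lemma by a compactness and continuity argument over the parameter cube $[0,\sigma]^3$, reducing to the assertion that for every fixed $u$ the geodesic $[x,y]$ enters the interior of $H_u$.

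First I would set up continuity carefully. The map $u\mapsto j_u$ from $[0,\sigma]^3$ to $\mathrm{Hom}(\pi_1(P),\PSL_2(\R))/\mathrm{conj}$ extends continuously to the boundary of the cube, with the convention that $j_u(i)$ becomes parabolic (and $\bar\imath_u^+=\bar\imath_u^-$) when $u_i=0$. Consequently the six boundary fixed points depend continuously on $u$, and so do the six closed intervals $\overline{A_u^\pm},\overline{B_u^\pm},\overline{C_u^\pm}\subset\partial_\infty\H^2$ in the Hausdorff topology, and the right-angled hexagon $H_u\subset\H^2$ (possibly with some ideal vertices/cusps when one of the $u_i$ vanishes). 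This makes the set
\[
K_A=\bigl\{(u,x,y):u\in[0,\sigma]^3,\ x\in\overline{A_u^-},\ y\in\overline{A_u^+}\bigr\}
\]
compact, and likewise for $K_B$ and $K_C$; and the function
\[
\Phi(u,x,y)=\mathrm{length}\bigl(H_u\cap[x,y]\bigr)
\]
continuous on each of them (including at $x=y$, where we set the length to $0$; but we will see this value is not attained on $K$).

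Second, I would show that $\Phi(u,x,y)>0$ on $K_A\cup K_B\cup K_C$. Focus on $K_A$. By the previous lemma, $\overline{A_u^-}\subset[\bar c_u^+,\bar a_u^-]$ and $\overline{A_u^+}\subset[\bar a_u^+,\bar b_u^-]$, two closed arcs of $\partial_\infty\H^2$ separated by the axis $\bar a_u$ (or by the parabolic point $\bar a_u$ when $u_a=0$). The geodesic $[x,y]$ then crosses $\bar a_u$ transversally at an interior point $p$. A short direct check using the structure of the right-angled hexagon $H_u$ (the side of $H_u$ on $\bar a_u$ has endpoints the feet of the perpendiculars from $\bar b_u$ and $\bar c_u$, and these feet lie strictly between any possible crossing of $[x,y]$ with $\bar a_u$ given the position of $x,y$) shows that $p$ lies in the interior of the side of $H_u$ on $\bar a_u$. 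Hence $[x,y]$ enters the open interior of $H_u$ and exits across another side, so $\Phi(u,x,y)>0$. In the degenerate case $u_a=0$ the same conclusion holds: if $x,y$ stay away from the cusp $\bar a_u$ the transversality argument goes through, while if $x,y$ approach $\bar a_u$, the geodesic $[x,y]$ dives deep into the cusp of $H_u$ and the intersection length remains large. The arguments for $K_B$ and $K_C$ are identical by the symmetry of the setup.

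Third, compactness finishes the proof: a continuous strictly positive function on a compact set attains a positive minimum, so $L_\sigma:=\min\bigl\{\inf_{K_A}\Phi,\inf_{K_B}\Phi,\inf_{K_C}\Phi\bigr\}>0$ is the desired constant.

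The main obstacle I anticipate is the care needed at the boundary of the parameter cube where one or more $u_i$ vanish, since the hexagon $H_u$ degenerates to a polygon with ideal vertices and both the intervals $A_u^\pm$ and the geodesics $[x,y]$ can approach the parabolic fixed points. The verification that $\Phi$ remains continuous and strictly positive in these limits is the only step that requires a genuine geometric argument rather than a soft continuity/compactness invocation; everything else is formal.
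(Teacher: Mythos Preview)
Your proposal is correct and follows essentially the same approach as the paper: continuity of $A_u^\pm$ and of the intersection-length function, positivity of that function, and compactness of $[0,\sigma]^3$. You give considerably more detail than the paper's three-line proof, in particular justifying positivity via the crossing with the side on $\bar a_u$ and handling the parabolic degenerations; the paper simply asserts the three ingredients without further argument.
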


\begin{proof}
    This is a direct consequence of the following three facts. $A_u^\pm$ varies continuously with $u$. The length $length(\gamma\cap H_u)$ for a geodesic $\gamma$ with ends in $A_u^-\times A_u^+$ is positive and continuous in the pair $(u,\gamma)$. And $[0,\sigma]^3$ is compact.
\end{proof}

\begin{proof}[Proof of Proposition~\ref{prop:upper bound on PS mass}]
    According to \thref{thm:short-pant-decomposition}, 
    we can choose a pair of pants in $S\setminus\gamma^*$, whose boundary curves 
    have length bounded from above by a constant only depending on the genus of $S$ and  $\sigma$.
    Let us identify it topologically with $P$, and identify a convex subsurface $\wt P$ of a hyperbolic piece 
    inside the universal covering $\wt{S}_z$ of the abstract grafted surface with the universal cover of $P$.
    We denote by $u=(u_a,u_b,u_c)$ the lengths of the boundary components of this pair of pants. 
    The surface $\tilde P$ contains a right angled hexagon $H_u$ whose double is a fundamental domain for 
    the deck group $\pi_1(P)$. 

    Identify $\PSL_2(\R)$ with a subgroup of $\PSL_d(\R)$ and $\H^2$ with a totally geodesic subspace of $\X$.
    Up to conjugation of the Hitchin grafting representation $\rho_z$, 
    we may assume that its restriction to $\pi_1(P)$ coincides with $j_u:\pi_1(P)\to \PSL_2(\R)$, so that the fixed hyperbolic piece $\wt P$ of the characteristic surface is contained in $\H^2\subset\X$. 
    More precisely, it is the convex hull of the limit set of $j_u(\pi_1(P))$. 
    Note that this makes sense since the boundary of $\H^2$ embeds naturally into 
    the flag variety $\mc F$ as well as the visual boundary $\partial_\infty \X$ of $\X$.
    We choose a point $x$ in the interior of the hexagon $H_u\subset  \wt P$ as a basepoint in $\H^2$. 

    By  Lemma~\ref{lem:total mass of nu}, Corollary~\ref{cor:mass-interval-bounded-below} and Lemma~\ref{fact:intersection with hexagon}, and since the Gromov product is nonnegative, there is a constant $C$ such that
    $$
    C\geq \left(e^{\delta \langle\cdot|\cdot\rangle_x}\mu^x_z\times\mu^x_z\times\Leb\right)(T^1 H_u) \geq \frac{L_\sigma C_\delta^2}{9} \mu^x_z(\partial_\infty\H^2)^2
    $$
    where $T^1H_u$ denotes the set of unit tangent vectors in $T^1\H^2$ with footpoint in the hexagon $H_u$, and
    $C_\delta$ is the infimum of the constants $C_{\mu,u}$ appearing in the corollary, that is
    $$C_\delta=\inf \left\{e^{\delta b^{\mf F}_\xi(x,j_u(\gamma)x)}:\ \xi\in\mc F,\ u\in[0,\sigma]^3,\ \gamma\in\Gamma_0\right\}.$$

    To conclude the proof one can use Theorem~\ref{thm:upper bound entropy}, which implies $C_\delta\geq C_{m}$ where $C_m>0$ is a constant that only depends on the genus of $S$ and the choice of length function on $\PSL_d(\R)$ 
    (i.e.\ the choice of a linear functional $\alpha_0$ on $\mf a$).
\end{proof}

\begin{numremark}\label{remark:upper bound on PS mass}
    In the proposition~\ref{prop:upper bound on PS mass}, we may actually take $x$ to be any point in $\epsilon$-thick part of $S$, for $\epsilon>0$ first fixed. To see that, modify the proof as follow. Fix $\epsilon>0$, and instead of taking a unique representation $j_u$ for a fixed data of $u\in[0,\sigma]^3$, take a larger compact set of representations $J$ so that each representation in $J$ is conjugated to one $j_u$ as above. Furthermore, for each $j_u$ and each point $p$ in the $\epsilon$-thick part of the hyperbolic pair of pants obtained as the quotient by $j_u$ of the convex core of $j_u$, there exists a representation~$j_{u,p}$ in $J$ and an isometry $f$ of $\H^2$ which conjugates $j_{u,p}$ to $j_u$ and which sends the origin of $\H^2$ to a preimage of $p$ in the convex core of $j_{u,p}$. The same compactness argument holds except that the constant $C$ my be larger. Additionally, for $\epsilon>0$ small enough, 
    the $\epsilon$-thick part of a surface $S$ is equal to the union of the $\epsilon$-thick parts of the pairs of pants 
    which are contained in some pants decomposition of $S$.
\end{numremark}

\subsection{Estimating the intersection number of the equilibrium state with the grafting locus}

Recall that $\gamma^*$ is the grafting locus and $\hat \nu(z)$ is the geodesic current defined by the equilibrium measure $\nu(z)$ for a grafting parameter $z$. 
The goal of this section is to show that if the heights of the flat cylinders in the grafted surface go to infinity, 
then $\nu(z)$ concentrates more and more on the components of $S-\gamma^{*}$ and avoids crossing the grafting locus, and this with exponential speed.
A more precise consequence will be that the intersection number $\iota(\hat \nu(z),\gamma^*)$ goes to zero with speed $Ce^{-\delta L/2}$ where $L$ is the minimal height of the flat cylinders, $\delta$ is the entropy of $S-\gamma^{*}$, and $C$ is a constant that depends on the hyperbolic length of $\gamma^{*}$.




In practice, we will prove a stronger result, which in vague terms states that if we see the equilibrium state $\nu(z)$ as a measure on the unit tangent bundle of the \emph{grafted} surface (instead of the hyperbolic surface), then as $L\to\infty$ the mass given by $\nu(z)$ to the flat cylinders (which have length at least $L$) goes to zero exponentially fast.
Let us now explain more rigorously what this means.

Let $\gamma_0^*$ be a component of $\gamma^*$.
Let $\wt \gamma^*\subset \wt S\simeq\H^2$ be the preimage of~$\gamma^*$  and choose a component $\hat \gamma^*_0\subset \wt \gamma^* $ of the preimage of $\gamma_0^*$. 
Denote by $I^-$ and $I^+$ the two connected components of $\de_\infty \H^2-\hat\gamma^*_0(\pm \infty)$. 
Recall that an oriented geodesic in $\H^2$ can be thought of as an ordered pair of distinct points $(\xi^-,\xi^+)\in \partial_\infty \H^2\times\partial_\infty\H^2-\Delta$.
Then a geodesic $(\xi^-,\xi^+)$ intersects $\hat \gamma^*_0$ transversely if and only if $(\xi^-,\xi^+)\in I^-\times I^+ \cup I^+\times I^-$.
Let $g_u\in\pi_1(S)$ be a generator of the infinite cyclic subgroup $\langle g_u \rangle$ of $\pi_1(S)$ which preserves $\hat \gamma_0^*$ and acts on it as a group of translations.
Choose a fundamental domain $\Omega^\pm$ for the action of $\langle g_u\rangle$ on $I^\pm $ of the form $\Omega^\pm=[\xi_0^\pm,g_u\xi_0^\pm)\subset I^\pm$, where $\xi_0^\pm\in I^\pm$ are taken so that the geodesic $(\xi_0^-,\xi_0^+)$ crosses $\hat\gamma_0^*$ orthogonally at a point $x$.

To every pair $(\xi^{-},\xi^{+})\in I^{-}\times I^{+}$ is associated an infinite admissible path $a$ in the grafted surface $\wt S_{z}$ that projects onto the admissible path in $\wt S\simeq \H^{2}$ from $\xi^{-}$ to $\xi^{+}$, and hence that crosses the flat band $B\subset \wt S_{z}$ sitting above $\gamma_{0}^{*}\subset\wt S$.
We define $l^{\flat}_{\gamma_{0}^{*},z}(\xi^{-},\xi^{+})$ to be the Finsler length of $a\cap B$, and use it to define the length in the flat part of any current $\hat\lambda$:
\begin{equation}
\ell^{\flat}_{\gamma_{0}^{*},z}(\hat\lambda) :=  \int_{\Omega_{-}\times I_{+}\cup\Omega_{+}\times I_{-}}l^{\flat}_{\gamma_{0}^{*},z}d\hat\lambda \quad \text{and} \quad \ell^{\flat}_{z}(\hat\lambda)=\sum_{\gamma_{0}^{*}\subset\gamma^{*}}\ell^{\flat}_{\gamma_{0}^{*},z}(\hat\lambda) .
\end{equation}

\begin{remark}
\begin{enumerate}
\item If $\hat\lambda$ is the current associated to a closed curve $\eta\subset S$ then we retrieve $\ell^{\flat}_{z}(\hat\lambda)=\ell^{\flat}_{z}(\eta)$ from Theorem~\ref{cor:firstandsecond}.
\item One has $\iota(\hat\lambda,\gamma^{*})=\hat\lambda(\Omega_{-}\times I_{+}\cup\Omega_{+}\times I_{-})$, whence
$\iota(\hat\lambda,\gamma^{*})\leq L\ell^{\flat}_{z}(\hat\lambda)$ (with $L$ the minimal height of the flat cylinders).
    For a reminder on currents that implies this, see the appendix of~\cite{Bo88} and  Chapter 8.2.11 of~\cite{Mar16}).
\item Let $\hat\gamma_{1},\hat\gamma_{2}\subset\H^{2}$ be two other components of $\tilde\gamma^{*}$ such that $\hat\gamma_{1},\hat\gamma_{0}^{*},\hat\gamma_{2}$ lie in this order in $\H^{2}$ with no other components of $\tilde\gamma^{*}$ in between, and $\hat\gamma_{1},\hat\gamma_{2}$ bound intervals $J_{1},J_{2}\subset\partial\H^{2}$ such that $J_{1}\subset I_{-}$ and $J_{2}\subset I_{+}$.
Then $l^{\flat}_{\gamma_{0}^{*},z}$ is constant on $J_{1}\times J_{2}$.
The reason is that the admissibles paths associated to two different geodesics from $J_{1}$ to $J_{2}$ coincide on their three pieces from $\hat\gamma_{1}$ to $\hat\gamma_{2}$: the first piece follow the orthogeodesic from $\hat\gamma_{1}$ to $\hat\gamma_{0}^{*}$, the last piece is the orthogeodesic from $\hat\gamma_{0}^{*}$ to $\hat\gamma_{2}$, and the middle piece is the geodesic in the flat band $B$ that connects the first and third pieces, and whose length is computed by $l^{\flat}_{\gamma_{0}^{*},z}$.
\item As a function on the space of currents, $\ell^{\flat}_{z}$ is linear but not continuous. 
However, it is continuous at currents that give zero measure to the set of geodesics that are asymptotic to a components of $\tilde\gamma^{*}$ in $\wt S$,
because of the previous point of this remark.
Since $\hat \nu(z)$ satisfies this condition (it is of the form $f\mu\otimes\mu$ where $\mu$ is a \emph{nonatomic} measure on $\partial\H^{2}$, see \eqref{eq:sullivan measure}), by \eqref{eq:gibbs as limit} we have
\begin{equation}
\ell^{\flat}_{z}(\hat\nu(z)) = \lim_{R\to\infty}\frac{1}{\myhash  N_{\rho_{z}}(R)} \sum_{\ell^{\mf F}(\rho_{z}(\alpha)) \leq R}\frac{\ell^{\flat}_{z}(\alpha)}{\ell^{\mf F}(\rho_{z}(\alpha))}
\end{equation}
\end{enumerate}
\end{remark}

The main result of this section is the following.

\begin{proposition}\label{boundaryestimate2}
  For any $\sigma>0$ there are $C,C',\delta_\sigma>0$ such that
  if every component of~$\gamma^*$ has hyperbolic length at most $\sigma$, then for any grafting parameter $z$ we have
  $$\ell^{\flat}_{z}(\hat \nu(z)) \leq  C(L+1)^{2}e^{-\delta(z) L}\leq C'e^{-\delta_\sigma L},$$
  where $L$ is the minimum of the heights of the flat cylinders in the abstract grafting (see Equation~\eqref{eq:cylinder height}).
\end{proposition}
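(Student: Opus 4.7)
The plan is to express $\ell^\flat_{\gamma_0^*,z}(\hat\nu(z))$ using the Patterson--Sullivan formula \eqref{eq:sullivan measure} and to extract exponential decay in $L$ via a shadow-lemma argument for the Finsler PS measures, exploiting the quasi-isometric embedding of the characteristic surface into $\X$ from Theorem~\ref{thm:quasiisom}. Identify $\wt S_z$ with $\wt S_z^\iota\subset\X$ via $\wt Q_z$ (Proposition~\ref{piecewise-isometry}); then $\mu_z^x$ can be viewed as measures on $\partial_\infty\H^2$ transforming via the Finsler Busemann cocycle. Fix a component $\gamma_0^*\subset\gamma^*$ of length $\ell=\ell_S(\gamma_0^*)\leq\sigma$, a lift $\hat\gamma_0^*\subset\wt S$, and a generator $g_u\in\pi_1(S)$ of its stabilizer; let $B$ be the flat band above $\hat\gamma_0^*$, of height at least $L$, with grafting vector $z_0$. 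By Proposition~\ref{prop:upper bound on PS mass} together with Remark~\ref{remark:upper bound on PS mass}, pick basepoints $x_\pm$ in hexagons of bounded-geometry pairs of pants adjacent to $B$ on the two sides, with $\mu_z^{x_\pm}(\partial_\infty\H^2)\leq C_1$. Substituting \eqref{eq:sullivan measure} into the definition of $\ell^\flat$ and focusing on the $\Omega_-\times I_+$ part (the other being symmetric), since $x_-$ lies within bounded Finsler distance of every admissible path from $\xi\in\Omega_-$ to $\eta\in I_+$ (they all enter $B$ near the foot of the orthogeodesic $(\xi_0^-,\xi_0^+)$), Proposition~\ref{prop:triangle equality for admissible paths} combined with Theorem~\ref{thm:quasiisom} yields $\langle\Xi_z(\xi)|\Xi_z(\eta)\rangle_{x_-}\leq D_\sigma$, and Theorem~\ref{thm:upper bound entropy} then bounds $e^{\delta(z)\langle\cdot|\cdot\rangle_{x_-}}\leq C_2$.

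Decompose $I_+=\bigsqcup_{n\in\Z}g_u^n\Omega_+$. For $\eta\in g_u^n\Omega_+$, the flat segment of the admissible path from $\xi\in\Omega_-$ to $\eta$ is a straight line in $B$ with $\mf a$-coordinate vector $(t+n\ell)u+z_0$ for some $t\in[0,\ell)$, so $l^\flat_{\gamma_0^*,z}(\xi,\eta)\leq C_3(L+|n|\ell+1)$ by the triangle inequality (using $\mf F(u)=1$ and the definition of cylinder height). Setting $y_n=\rho_z(g_u^n)x_+$, equivariance gives $\mu_z^{y_n}(g_u^n\Omega_+)=\mu_z^{x_+}(\Omega_+)\leq C_1$, and by conformality \eqref{eq:conf measure},
\[
\mu_z^{x_-}(g_u^n\Omega_+)=\int_{g_u^n\Omega_+}e^{\delta(z)b^{\mf F}_{\Xi_z(\eta)}(y_n,x_-)}\,d\mu_z^{y_n}(\eta).
\]
The admissible path from $x_-$ to any $\eta\in g_u^n\Omega_+$ must traverse $B$ and reach the $n$-th axis-translate of the hexagon, so that combining Theorem~\ref{thm:quasiisom} (to compare intrinsic Finsler distance in $\wt S_z$ with ambient distance in $\X$) with Proposition~\ref{prop:triangle equality for admissible paths} (quasi-rulement of admissible paths) yields $b^{\mf F}_{\Xi_z(\eta)}(y_n,x_-)\leq -L-|n|\ell+C_4$, whence $\mu_z^{x_-}(g_u^n\Omega_+)\leq C_5 e^{-\delta(z)(L+|n|\ell)}$.

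Putting everything together, using Lemma~\ref{entropy} to bound $\delta(z)\geq\delta>0$ uniformly, and summing over the finitely many components of $\gamma^*$,
\[
\ell^\flat_z(\hat\nu(z))\leq C_6\, e^{-\delta(z)L}\sum_{n\in\Z}(L+|n|\ell+1)e^{-\delta(z)|n|\ell}\leq C_7(L+1)^2 e^{-\delta(z)L}
\]
after a geometric-series estimate. The second inequality of the proposition follows by absorbing the polynomial factor into the exponential with the smaller rate $\delta_\sigma<\delta$, both depending only on $\sigma$. The main technical hurdle is the Busemann-function estimate: it requires a careful synthesis of Theorem~\ref{thm:quasiisom} (quasi-isometric embedding of $\wt S_z$ into $\X$ with quasi-convex image) and Proposition~\ref{prop:triangle equality for admissible paths} (quasi-rulement of admissible paths) to reduce the Finsler Busemann computation to the explicit geometry of admissible paths crossing the flat cylinder $B$ under the translation $\rho_z(g_u^n)$, extracting the geometric lower bound $L+|n|\ell$ from the combination of the cylinder crossing and the axis translation.
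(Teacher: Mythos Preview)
Your approach follows the paper's very closely: Patterson--Sullivan formula, decomposition $I^+=\bigsqcup_n g_u^n\Omega^+$, Gromov-product bound via quasi-ruled admissible paths, and a Busemann/shadow estimate for each $n$. However, there are two genuine problems.

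\textbf{The Busemann bound is too strong.} You claim $b^{\mf F}_{\Xi_z(\eta)}(y_n,x_-)\leq -L-|n|\ell+C_4$, which requires $d^{\mf F}(x_-,y_n)\geq L+|n|\ell-C$. But the admissible path from $x_-$ to $y_n$ crosses the flat band $B$ in a \emph{single} straight segment with $\mf a$-vector $n\ell\,v_0+z_0$, whose Finsler length is $\mf F(n\ell\,v_0+z_0)$. A norm satisfies $\mf F(a+b)\leq\mf F(a)+\mf F(b)$, not the reverse; all you can extract is $\mf F(n\ell\,v_0+z_0)\geq\max\bigl(L,\,|n-n_0|\ell\bigr)$ for a suitable shift $n_0$ (from the height condition and from $|\alpha_0(\cdot)|\leq\mf F(\cdot)$). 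The paper obtains exactly this $\max$, not the sum, and the ensuing series estimate is correspondingly different: the range $|n-n_0|\ell\leq L$ contributes $\sim(L/\ell)e^{-\delta(z)L}$ terms, not a geometric tail.

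\textbf{The constants depend on $\ell$, not just on $\sigma$.} Once the correct $\max$ is used, the sum over $n$ produces a factor $1/\ell$ (and similarly the $l^\flat$ weight contributes another). Since the statement only assumes $\ell\leq\sigma$, this blows up as $\ell\to 0$. The paper cancels this by tracking the two collar crossings of length $\omega=\sinh^{-1}(1/\sinh(\ell/2))$ on either side of $B$: they contribute an extra $e^{-2\delta(z)\omega}\leq(\ell\cosh\sigma)^{2\delta(z)}$ to the Busemann estimate, turning the $\ell$-dependence into $\ell^{2\delta(z)-1}$, which is then shown to be bounded using Proposition~\ref{prop:entropy of hypsurf} (short $\ell$ forces $\delta(z)>1/2$). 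Your choice of $x_\pm$ at the edge of the collar is correct, but you never feed the collar length $\omega$ into the distance estimate, so this cancellation is missing from your argument.
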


\begin{remark}\label{rk:boundaryestimate}
\begin{enumerate}
\item Recall that $\delta(z)$ is bounded from below by the topological entropy $\delta$ of the geodesic flow on $S-\gamma^*$, which,  by Proposition~\ref{prop:entropy of hypsurf},  is itself  bounded from below by a positive constant that only depends on $\sigma$.
So in the above proposition, one can take $\delta_\sigma$ to be \emph{half} of the smallest possible entropy of hyperbolic surfaces homeomorphic to $S-\gamma^*$, with boundary lengths at most $\sigma$.
Then $C'$ could be $C$ times the maximum of the function $x\in[0,\infty)\mapsto (x+1)^2e^{-\delta_\sigma x}$.
\item One can check that the proof of Proposition~\ref{boundaryestimate2} gives the following estimate on the intersection number of $\hat\nu(z)$ with $\gamma^{*}$:
$$\iota(\hat \nu(z),\gamma^{*}) \leq  C(L+1)e^{-\delta(z) L},$$
where $C$ is a constant that depends on $\sigma$.
\end{enumerate}
\end{remark}

We will need the following result about Hitchin representations. 
In its formulation, $\partial \pi_1(S)$ denotes the Gromov boundary of the surface group $\pi_1(S)$.

\begin{lemma}\label{fact:hitchin cv to int of weyl}
    Let $\rho':\pi_1(S)\to G$ be a Hitchin representation with limit map $\Xi':\partial\pi_1(S)\to\mc F$, and let $(\gamma_n)_n\subset \pi_1(S)$ be a sequence converging to $\xi\in\partial\pi_1(S)$.
    Then for any compact set $K\subset\X$, the accumulation points of $\rho'(\gamma_n)K$ 
    in the visual boundary $\partial_\infty \X$ of $\X$ 
    are contained in the interior of the Weyl Chamber $\Xi'(\xi)$.
\end{lemma}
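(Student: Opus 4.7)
The plan is to combine two standard consequences of the fact that Hitchin representations are Borel Anosov (Theorem \ref{hitanosov}), namely uniform regularity of the Cartan projection and convergence of the Cartan attractor to the limit flag.

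First, Borel Anosovness coincides with $P_{\min}$-Anosovness, which admits the following quantitative characterization: for every simple root $\alpha_i$ on $\mathfrak a$ one has $\alpha_i(\kappa(\rho'(\gamma_n))) \to +\infty$ as soon as $\gamma_n$ leaves every compact subset of $\pi_1(S)$ (see e.g. Theorem~1.3 of \cite{KLP17} or Theorem~5.3 of \cite{BPS19}). Consequently, after extraction, $\|\kappa(\rho'(\gamma_n))\|\to\infty$ and the normalized Cartan projections
\[
\hat\kappa_n := \kappa(\rho'(\gamma_n))/\|\kappa(\rho'(\gamma_n))\|
\]
converge to a unit vector $v$ lying in the \emph{open} Weyl cone $\mathfrak a^+$.

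Second, the same Anosov property implies convergence of the Cartan attractor to the limit map: if $\rho'(\gamma_n)=k_n a_n k_n'$ is a Cartan decomposition with $a_n=\exp(\kappa(\rho'(\gamma_n)))$, then the flag $k_n P\in G/P\cong\mc F$ converges to $\Xi'(\xi)$ (see e.g. \cite{GGKW} or Lemma~2.8 of \cite{BPS19}). Geometrically, $k_n\cdot\partial_\infty(\overline{A^+}\basepoint)$ converges to the Weyl chamber $\Xi'(\xi)$ in $\partial_\infty\X$.

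Finally, for any $x\in K$ write $x=g_x\basepoint$ with $g_x$ ranging in a fixed compact subset of $G$, which is possible since $G$ acts transitively on $\X$ and $K$ is compact. Subadditivity of $\kappa$ under right multiplication by bounded elements yields $\kappa(\rho'(\gamma_n)g_x)=\kappa(\rho'(\gamma_n))+O(1)$ uniformly in $x\in K$, hence the normalized Cartan projection of $\rho'(\gamma_n)g_x$ still converges to $v\in\mathfrak a^+$, and the corresponding $K$-factor $\tilde k_n^x$ in the Cartan decomposition of $\rho'(\gamma_n)g_x$ satisfies $\tilde k_n^xP\to\Xi'(\xi)$ (this is where the regularity of $v$ is used: away from the walls, the KAK decomposition depends continuously on the element and a compact perturbation of a regular element yields only a small perturbation of the $K$-factor). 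Any accumulation point of $\rho'(\gamma_n)x=\tilde k_n^x\exp(\kappa(\rho'(\gamma_n)g_x))\basepoint$ in $\partial_\infty\X$ is then the endpoint of a ray from $\basepoint$ of the form $t\mapsto k_\infty\exp(tv)\basepoint$ with $k_\infty P=\Xi'(\xi)$; regularity $v\in\mathfrak a^+$ ensures that this endpoint lies in the \emph{interior} of the Weyl chamber $\Xi'(\xi)$, which is exactly what is claimed. The main technical obstacle is to run the two convergences uniformly over $K$; this is handled by the continuity of the Cartan decomposition at quantitatively regular elements, which combined with the uniform $O(1)$-control on $\kappa(\rho'(\gamma_n)g_x)-\kappa(\rho'(\gamma_n))$ does the job.
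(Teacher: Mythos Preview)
Your argument follows essentially the same line as the paper's: Cartan decomposition, uniform regularity of the Cartan projection coming from the Anosov property, and identification of the limiting Weyl chamber with $\Xi'(\xi)$. Two remarks.

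First, there is a small logical gap. From the characterization you quote, $\alpha_i(\kappa(\rho'(\gamma_n)))\to+\infty$ for every simple root, it does \emph{not} follow that any subsequential limit of $\hat\kappa_n$ lies in the open cone: in $\mathfrak{sl}_3$ one may have $\alpha_1(a_n)=n$, $\alpha_2(a_n)\sim 3n^2$, so that both diverge while $a_n/\|a_n\|$ limits to a wall point. What you need (and what the paper invokes, via Theorem~4.37 of \cite{SurveyFanny}) is the \emph{uniform} regularity $\alpha_i(\kappa(\rho'(\gamma)))\geq c\,\|\kappa(\rho'(\gamma))\|$, equivalently the linear growth $\alpha_i(\kappa(\rho'(\gamma)))\geq c\,|\gamma|-C$; this is indeed a consequence of the Anosov property (and is what the references you cite actually prove), so the fix is just to state the stronger conclusion.

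Second, your treatment of the compact set $K$ via re-running the $KAK$ decomposition of $\rho'(\gamma_n)g_x$ and appealing to continuity of the Cartan factors at regular elements is correct but heavier than necessary. The paper simply observes that $\rho'(\gamma_n)K$ stays at bounded distance from $\rho'(\gamma_n)\basepoint$, and in the CAT(0) space $\X$ two sequences at bounded distance share the same accumulation set in $\partial_\infty\X$; this dispenses with the uniformity-over-$K$ discussion entirely.
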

\begin{proof}
    This is a consequence of the Anosov property discussed in Section~\ref{sec:hitchinrep}, 
    which is satisfied by Hitchin representations, and a characterisation of this property in terms of Cartan decompositions of the images $\rho'(\gamma)$ with $\gamma\in \pi_1(S)$.

    Let $\rho'(\gamma_n)=k_n\exp({a_n})\ell_n$ be a Cartan decomposition, so that $k_n,\ell_n\in K$ (the maximal compact subgroup) and $a_n\in\mf a^+$.
    By a characterisation of the Anosov property (see Theorem 4.37 of~\cite{SurveyFanny} for more details and a history of this result), the angle formed by $a_n$ with each wall of the Weyl Cone $\mf a^+$ is bounded 
    from below independently of $n$. In other words, denoting by $\Vert \cdot \Vert$ the Euclidean norm 
    on $\mf a$, we have 
    $\alpha(a_n)\geq \mathrm{Cst}||a_n||$ for any positive root $\alpha$, which means precisely that 
    $(\exp({a_n}))_n$ accumulates in the interior of the Weyl Chamber 
    $\partial_\infty \exp({\mf a^+})\subset \partial_\infty \X$ in the ideal 
    boundary of the flat cone $\exp({\mf a^+})\subset  \X$.

    Up to passing to a subsequence 
    we may assume that $k_n\to k$ and $\ell_n\to \ell$. Let $\mf a^-$ be the Weyl chamber opposite to 
    $\mf a^+$, with boundary $\partial_\infty \exp({{\mf a}^-})$,
    viewed as a point in $\mc F$. Then 
    for any $\eta\in\mc F$ transverse 
    to $\ell^{-1}\partial_\infty \exp({\mf a^-})$ we have $\rho'(\gamma_n)\eta\to k\partial_\infty \exp({\mf a^+})$.
    By the definition of the limit map $\Xi^\prime$, 
    this implies that $\Xi'(\xi)=k\partial_\infty \exp({\mf a^+})$ (see Definition~\ref{def:anosov}).

    Then $\rho'(\gamma_n)\basepoint=k_ne^{a_n}\basepoint$ only accumulates in the interior of the Weyl Chamber $\Xi'(\xi)$, and the same holds for $\rho'(\gamma_n)K$ which lies at bounded distance from $\rho'(\gamma_n)\basepoint$.
\end{proof}

\begin{proof}[Proof of Proposition~\ref{boundaryestimate2}]
    Let $\gamma_0^*$ be a component of $\gamma^*$.
    Let $\wt \gamma^*\subset \H^2$ be the preimage of~$\gamma^*$  and choose a component
    $\hat \gamma^*_0\subset \wt \gamma^* $ of the preimage of $\gamma_0^*$. 
    Denote by $I^-$ and $I^+$ the two connected components of 
    $\de_\infty \H^2-\hat\gamma^*_0(\pm \infty)$. Recall that an oriented geodesic in 
    $\H^2$ can be thought of as an ordered sets of distinct points $(\xi^-,\xi^+)\in \partial_\infty \H^2\times\partial_\infty
    \H^2-\Delta$. Then  
    a geodesic $(\xi^-,\xi^+)$ intersects $\hat \gamma^*_0$ transversely if and only if 
    $(\xi^-,\xi^+)\in I^-\times I^+ \cup I^+\times I^-$.
    
    Let $g_u\in\pi_1(S)$ be a generator of the infinite cyclic subgroup $\langle g_u \rangle$ of $\pi_1(S)$ 
    which preserves $\hat \gamma_0^*$ and acts on it as a group of translations.
    Choose a fundamental domain $\Omega^\pm$ for the action of 
    $\langle g_u\rangle$ on $I^\pm $ of the form $\Omega^\pm=[\xi_0^\pm,g_u\xi_0^\pm)\subset I^\pm$, where $\xi_0^\pm\in I^\pm$ are taken so that 
    the geodesic $(\xi_0^-,\xi_0^+)$ crosses $\hat\gamma_0^*$ orthogonally at a point $x$.
     
    Using these notations, it follows from the definitions of the intersection number between
    two geodesic currents of $S$ (see the appendix of~\cite{Bo88} and
    Chapter 8.2.11 of~\cite{Mar16})  
    that
    \begin{equation}\label{intersect}\iota(\hat\nu(z),\gamma^*_0)=\hat\nu(z)(\Omega^-\times I^+)+\hat\nu(z)(I^+\times \Omega^-).
    \end{equation}
    Namely, the intersection number of $\hat \nu(z)$ with $\gamma_0$ is just the total 
    $\hat \eta(z)$-mass of all geodesics crossing transversely through a fundamental domain for the 
    action of $g_u$ on $\hat \gamma_0^*$. This set in turn is a fundamental domain for the action of 
    $\langle g_u\rangle$ on the space of all geodesics crossing through $\hat \gamma_0^*$.
    As $(\Omega^-\cup I^+)\cup (I^+\cup \Omega^-)$ is another such fundamental domain, and~
    $\hat \eta(z)$ is $\langle g_u\rangle$- invariant, this yields the formula (\ref{intersect}).

    By symmetry, it suffices to bound $\hat\nu(z)(\Omega^-\times I^+)$ from above.
    Recall that the map $\Xi_z:\partial_\infty\H^2\to\mc F$ is the limit map
    induced by the Hitchin grafting representation $\rho_z$. 
    Our computations rely on the characterisation of $\hat\nu(z)$ as the product 
    $$\hat\nu(z)=e^{\delta(z) \langle\Xi_z(\xi)|\Xi_z(\eta)\rangle_p}d\mu^p_z(\xi)d\mu^p_z(\eta),$$
    where $\langle\cdot|\cdot\rangle_p$ is the Gromov product based at $p$ and $p$ is any point in $\X$. 
    The measure $\hat\nu(z)(\Omega^-\times I^+)$ can then be bounded as follows, using that $I^+=\bigcup_ng_u^n\Omega^+$:
 \begin{align*} 
\hat\nu(z)(\Omega^-\times I^+) 
&= \sum_n \int_{\Omega^-\times g_{u}^{n}\Omega^+}e^{\delta(z) \langle\Xi_z(\xi)|\Xi_z(\eta)\rangle_p}d\mu^p_z(\xi)d\mu^p_z(\eta)\\
&\leq \mu_z^p(\Omega^-)\cdot 
\max_{(\xi,\eta)\in\Omega^-\times I^+}
(e^{\delta(z) \langle\Xi_z(\xi)|\Xi_z(\eta)\rangle_p})\cdot
 \sum_n\mu^p_z(g_u^n\Omega^+).
\end{align*}

 The strategy for     estimating these quantities and completing the proof  is the following.
    \begin{enumerate}[(i)]
        \item\label{itembdryesti} Make a suitable choice of basepoint $p$.
        \item \label{itembdryestii} Use Proposition~\ref{prop:upper bound on PS mass} to find a constant $C_1$ 
        only depending on $\sigma$ such that
        $\mu_z^p(\Omega^-)\leq C_1$.
        \item \label{itembdryestiii} Use admissible paths and Proposition~\ref{prop:triangle equality for admissible paths} to find a constant $C_2 $ only depending on~$\sigma$ such that $\langle \Xi_z(\xi),\Xi_z(\eta)\rangle_p\leq C_2$ for all $\xi\in\Omega^-$ and $\eta\in I^+$.
        \item \label{itembdryestiv} Use admissible paths and Propositions~\ref{prop:upper bound on PS mass} and~\ref{prop:triangle equality for admissible paths} to bound $\mu^p_z(g_u^n\Omega^+)$ and conclude the proof.
    \end{enumerate}
    The most involved part will be the last step \eqref{itembdryestiv} of the above list.

    {\bf First step  \eqref{itembdryesti}.}
    Put 
    $\ell := \ell_S(\gamma^*_0) = \ell^{\mf F}(\rho_z(g_u))$, which is bounded from above by $\sigma$ by assumption,
    and let $\omega=\sinh^{-1}\left(\tfrac1{\sinh(\ell/2)}\right)$ be the size of the collar in $S$ around $\gamma^*_0$, so that the two boundaries of the collar are in the $\epsilon_0$-thick part of $S$ for some universal constant $\epsilon_0$.

    The geodesic line $\hat \gamma^*_0$ is adjacent to two connected components $\wt S^-,\wt S^+$ of $\H^2-\wt \gamma^*$.
    Denote by $H^+,H^-$ the two closed half-planes of $\H^2$ with boundary $\hat \gamma_0^*$ and assume that
    $\tilde S^{\pm}\subset H^{\pm}$ and that $I^{\pm}\subset \partial_\infty H^{\pm}$.
    Let $x^-,x^+$ be the points lying in this order on the geodesic $(\xi_0^-,\xi_0^+)$, both at distance exactly $\omega$ from the intersection point $x$ of $(\xi_0^-,\xi_0^+)$ with $\hat\gamma^*_0$.
    In particular, $x^\pm$ projects into the $\epsilon_0$-thick part of $S$.

    Recall that for the abstract grafting surface $S_z$ there exists a natural projection map 
    $\pi_z:S_z\to S$ which is injective outside of the flat cylinders (see Definition~\ref{def:abstractgraf}).
    Lift $\pi_z$ to a $\pi_1(S)$-equivariant map $\wt\pi_z:\wt S_z\to\wt S=\H^2$, which is injective on the preimages $\wt S_z^\pm\subset \wt S_z$ of the components $\wt S^\pm$ of $\H^2-\wt\gamma^*$.
    Then $x^\pm\in \H^2$ (but not $x$) have unique preimages $\wt x^\pm\in \wt S_z^\pm$.
    The basepoint $p$ we were looking for is $p=\wt Q_z(\tilde x^-)$.

    {\bf Second step  \eqref{itembdryestii}.}
    Pulling the Patterson Sullivan measure $\mu_p$ based at $p$ for the action of $\rho_z(\pi_1(S))$
    back to a measure $\mu_z^{\tilde x^-}$ on $\partial_\infty \H^2$, 
    this is an immediate application of Proposition~\ref{prop:upper bound on PS mass} (and~\ref{remark:upper bound on PS mass}), which says that $\mu^{\wt x^-}_z(\de_\infty\H^2)$ is bounded from above by a constant depending only on $\sigma$.

    {\bf Third step  \eqref{itembdryestiii}.}
    Let $\xi\in\Omega^-$ and $\eta\in I^+$.
    There is a unique bi-infinite admissible path $a\colon\R\to\H^2$ from $\xi$ to $\eta$, which 
    is a lift of an admissible path in the hyperbolic surface $S$, defined with respect to the multicurve $\gamma^*$.
    Recall that $a$ is a concatenation of geodesic pieces, alternating between arcs contained in $\wt\gamma^*$, called flat-type, and geodesic arcs with endpoints on $\wt \gamma^*$ and orthogonal to $\wt\gamma^*$, called hyperbolic-type.

    Up to parameterization of the flat pieces,
    $a$ is the image under $\pi_z$ of a unique admissible path $\wt a\colon\R\to \wt S_z\subset \X$ (see \cite{BHM25}) for details). 
    By Lemma~\ref{fact:hitchin cv to int of weyl}, $\tilde Q_z(\tilde a(t))$ accumulates as $t\to\infty$ (\resp $t\to-\infty$) in the interior of the Weyl Chamber $\Xi_z(\eta)$ (\resp $\Xi_z(\xi)$).
    
    Using the map $\tilde Q_z$, which is a $\pi_1(S)$-equivariant embedding of 
    $\tilde S_z$ onto the universal covering of the characteristic surface of $\rho_z(\pi_1(S))$,
    pull the Finsler distance $d^{\mf F}$ back to $\tilde S_z$ and denote this distance by the same 
    symbol. With this notation and 
    by definition of the Gromov product (see \eqref{gromovproduct}), we have 
    $$\langle \Xi_z(\eta),\Xi_z(\xi)\rangle_{x^-} = 
    \lim_{T\to\infty}\frac{1}{2}\left(d^{\mf F}(\wt a(-T),\tilde x^-)+
    d^{\mf F}(\tilde x^-,\wt a(T))-d^{\mf F}(\wt a(-T),\wt a(T))\right).$$
    
    By Proposition~\ref{prop:triangle equality for admissible paths}, the path 
    $\tilde Q_z(\wt a)$ is $C_2$-quasi-ruled for some constant $C_2$ depending only on $\sigma$, so $\df(a(-T), \wt a(t))+\df(\wt a(t),\wt a(T))\leq \df(\wt a(-T),\wt a(T))+C_2$ for all $-T\leq t\leq T$.
    Thus, to find an upper bound on $\langle \Xi_z(\eta),\Xi_z(\xi)\rangle_{\wt x^-}$,
    it suffices to prove that there exists a number $R>0$ only depending on $\sigma$ such that 
    $\tilde a$ intersects the ball of radius $R$ about $\tilde x^-$. 
    
    As $a$ intersects $\hat \gamma_0^*$, it contains a (possibly degenerate) piece of flat type which 
    is a subarc of $\hat \gamma_0*$. Choose a parameterization of $a$ so that 
    $a(0)\in \hat \gamma_0^*$ is the starting point of this segment. Then the piecewise geodesic ray 
    $a\vert_{(-\infty,0]}:(-\infty,0]\to H^-$ ends on $\hat\gamma^*_0$ 
    with a hyperbolic-type geodesic piece of length at least $\omega$ (the collar size).
    By the definition of $\Omega^-$, 
    the shortest distance projection of $\xi$ to $\hat\gamma^*_0$ is at distance at most $\ell$ from 
    the shortest distance projection $x$ of $x^-$. 
    The constant $R$ we are looking for is provided by the following lemma, whose proof (which relies on hyperbolic trigonometry) is postponed until after this proof.

\begin{lemma}\label{obs:bdryesttech}
    For any $\sigma>0$ there is $R>0$ such that the following holds.
    Let $0<\ell\leq \sigma$ and let $\omega=\sinh^{-1}\left(\tfrac1{\sinh(\ell/2)}\right)>0$ be the collar size associated to $\ell$ by the hyperbolic collar lemma.

    Let $\mc L\subset\H^2$ be a line and $a:[0,\infty)\to\H^2$ an admissible path starting on $\mc L$ orthogonally to it, and with a hyperbolic-type piece of length at least $\omega$.
    Suppose $a(t)$ tends as $t\to\infty$ to  $\xi\in\partial\H^2$ whose orthogonal projection is $\ell$-close to the starting point of a ray $r:[0,\infty)\to\H^2$ orthogonal to $\mc L$ and in the same half-plane as $a$.
    Then
    $d_{\H^2}(a(\omega),r(\omega))\leq R$.
\end{lemma}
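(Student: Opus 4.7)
The plan is to pass to Fermi coordinates along $\mathcal{L}$ and reduce the claimed bound to control on the separation on $\mathcal{L}$ between $p_0:=a(0)$ and $q_0:=r(0)$. Introduce Fermi coordinates $(s,t)$ so that $\mathcal{L}=\{t=0\}$, $p_0=(0,0)$, and the half-plane containing both $a$ and $r$ is $\{t\geq 0\}$. Since $a|_{[0,\omega]}$ is a hyperbolic-type piece perpendicular to $\mathcal{L}$ at $p_0$, we have $a(\omega)=(0,\omega)$; writing $D:=d_\mathcal{L}(p_0,q_0)$ we have $r(\omega)=(D,\omega)$. The standard Fermi-coordinate distance formula reads
\[\cosh d_{\H^2}(a(\omega),r(\omega))=\cosh^2(\omega)\cosh(D)-\sinh^2(\omega),\]
so the desired conclusion is equivalent to bounding $D$ by a constant depending only on $\sigma$. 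By the triangle inequality on $\mathcal{L}$ and the hypothesis $d_\mathcal{L}(p_\xi,q_0)\leq\ell\leq\sigma$, it therefore suffices to bound $d_\mathcal{L}(p_0,p_\xi)$ in terms of $\sigma$.

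For this last bound I would combine Proposition~\ref{prop:triangle equality for admissible paths}, which places $a$ at Hausdorff distance at most $C'_\omega$ from some hyperbolic geodesic, with the fact that since $a$ limits to $\xi$, this geodesic stays at bounded distance from the geodesic $\gamma_\xi$ from $p_0$ to $\xi$ (asymptotic geodesic rays in $\H^2$ converge exponentially, so the bounded starting discrepancy produces a uniformly bounded Hausdorff distance between $a$ and $\gamma_\xi$). Applying the hyperbolic law of sines to the right triangle with vertices $p_0$, $a(\omega)$, and the foot of the perpendicular from $a(\omega)$ onto $\gamma_\xi$ yields
\[\sinh(h)=\cos(\theta)\sinh(\omega),\]
where $\theta$ is the angle $\gamma_\xi$ makes with $\mathcal{L}$ at $p_0$ and $h$ is the bounded distance from $a(\omega)$ to $\gamma_\xi$. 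From this identity one extracts a positive lower bound on $\theta$ depending only on $\omega$, and the Poincar\'e-disk identity $\tanh(d_\mathcal{L}(p_0,p_\xi)/2)=\cos\theta/(1+\sin\theta)$ converts that lower bound on $\theta$ to the desired upper bound on $d_\mathcal{L}(p_0,p_\xi)$; plugging back into the Fermi-coordinate formula gives $R=R(\sigma)$ as claimed. The hard step is calibrating the trigonometric comparison so that the angular estimate is actually nontrivial, which requires $\sinh(C'_\omega)<\sinh(\omega)$. In the regime of small $\omega$ one would exploit the disjoint-collar lemma, which ensures every hyperbolic-type piece of $a$ has length at least $2\omega$, and iterate the right-triangle comparison of $a$ against $\gamma_\xi$ along longer initial segments of $a$ so as to recover a useful angular bound from the quasi-ruling.
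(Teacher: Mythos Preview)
Your reduction via Fermi coordinates to bounding $D=d_{\mathcal L}(p_0,q_0)$, and then to bounding $d_{\mathcal L}(p_0,p_\xi)$, is correct and matches the paper's first step. The gap is in how you bound $d_{\mathcal L}(p_0,p_\xi)$. Your right-triangle identity $\sinh(h)=\cos(\theta)\sinh(\omega)$ only yields a nontrivial lower bound on $\theta$ when $h<\omega$, and you have no mechanism forcing the Hausdorff constant $C'_\omega$ from Proposition~\ref{prop:triangle equality for admissible paths} below $\omega$. Your proposed remedy targets the wrong regime: since $\ell\leq\sigma$ one has $\omega\geq\sinh^{-1}(1/\sinh(\sigma/2))>0$, so $\omega$ is never small; the obstruction is that $C'_\omega$ is an unspecified (possibly large) constant coming from the companion paper. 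The claim that every hyperbolic-type piece of $a$ has length at least $2\omega$ is also unjustified, since $\omega$ is the collar width attached to the particular line $\mathcal L$ of length $\ell$, whereas other components of $\tilde\gamma^*$ may be longer and have smaller collars. Finally, invoking Proposition~\ref{prop:triangle equality for admissible paths} --- a Finsler statement about the symmetric space $\X$ --- to prove a purely planar hyperbolic estimate is heavier machinery than the problem warrants and would need extra justification to transfer back to $\H^2$.

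The paper bypasses all of this with a single elementary observation you are missing: for $t\geq\omega$ the path $a$ lies entirely in the closed half-plane $H$ bounded by the line $\mathcal L'$ obtained by parallel-transporting $\mathcal L$ a distance $\omega$ along the first segment of $a$. (This follows by induction on the pieces from the orthogonality structure of admissible paths: each subsequent component of $\tilde\gamma^*$ that $a$ meets is ultraparallel to $\mathcal L'$ on the far side, and $a$ then crosses to the half-plane even further from $\mathcal L$.) Hence $\xi\in\partial_\infty H$, and a standard formula (Beardon, Theorem~7.17.1) shows that every point of $H$ projects onto $\mathcal L$ within $\sinh^{-1}(1/\sinh(\omega))=\ell/2$ of $a(0)$. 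This gives $d_{\mathcal L}(p_0,p_\xi)\leq\ell/2$ and $D\leq 3\ell/2\leq 3\sigma/2$ directly, with no quasi-geodesic constants to calibrate; one more application of the same trigonometric formula bounds $d_{\H^2}(a(\omega),r(\omega))$ in terms of $\sigma$ alone.
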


    Lemma~\ref{obs:bdryesttech} exactly tells us that the point $a(-\omega)$ is contained in the 
    ball of radius $R$ about $x^-$.
    It remains to check that the distance between $\wt a(-\omega)$ and $\tilde x^-$ is at most $R$ as well.
    This holds true because $a(-\omega)$ and $x^-$ are contained in 
    $\wt S^-$, so their preimages $\wt a(-\omega)$ and $\wt x^-$ are 
    contained in the same hyperbolic piece $\wt S^-_z\subset \wt S_z$. As this piece 
    is isometrically embedded in $\tilde S_z$ and the Finsler distance 
    $d^{\mf F}$ is not larger than the path distance on the grafted surface, this
    completes the distance estimate.

    {\bf Fourth step  \eqref{itembdryestiv}.}
    This part of the proof is the longest and most involved.
    By equivariance, we have 
    $$\mu^{\wt x^-}_z(\xi)=e^{\delta(z) b^{\mf F}_{\Xi_z(\xi)}(\wt x^+,\wt x^-)}d\mu^{\wt x^+}_z(\xi)$$
    where $\delta(z)$ is the critical exponent of $\rho_z$ and $b^{\mf F}_\eta(q,q')$ is the Busemann function of $(q,q')$ based at $\eta\in\mc F$ (for the Finsler metric), see Section~\ref{sec:lietheoryI}.
    
       Since $\mu_z^{\wt x^+}(\partial_\infty \H^2)\leq C_1$ for a constant $C_1>0$ only depending on $\sigma$ by Proposition~\ref{prop:upper bound on PS mass}, to get the desired upper bound on $\mu_z^{\wt x^-}(g_{u}^{n}\Omega^+)$ it suffices for $\xi^{+}\in g_{u}^{n}\Omega^+$  to bound from  above the Busemann function $b_{\Xi_z}^{\mf F}(g_{u}^{n}\wt x^+,\wt x^-)$.
    For this we use the admissible path from $\wt x^-$ to $\Xi_z(\xi^{+})$, which is quasi-ruled and passes near $g_{u}^{n}\wt x^+$, and we use our knowledge of the lengths of the pieces of admissible paths.
    We will see that the Busemann function is roughly $-\max(L,\vert n-n_{0}\vert\ell)-2\omega$ for some $n_{0}$ independent of $n$.
    We will then be able to conclude our estimate of $\mu_z^{\wt x^-}(I^+)$ by computing
     \begin{align*} 
\hat\nu(z)(\Omega^-\times I^+) 
&\leq \mu_z^p(\Omega^-)\cdot 
\max_{(\xi,\eta)\in\Omega^-\times I^+}
(e^{\delta(z) \langle\Xi_z(\xi)|\Xi_z(\eta)\rangle_p})\cdot
 \sum_n\mu^p_z(g_u^n\Omega^+)\\
&\leq \mr{Cst}e^{-\delta(z)\max(L,\vert n-n_{0}\vert\ell)-2\delta(z)\omega}
\end{align*}
    
    Fix $\xi\in g_u^n\Omega^+\subset I^+$.
    There exists a unique admissible path 
    $a_\xi:[0,+\infty)\to\wt S=\H^2$ from $x^-$ to $\xi$ (lifting an admissible path of $S$), and it is the image under $\wt\pi_z$ of a unique admissible path $\wt a_\xi:[0,\infty)\to \wt S_z\subset\X$ that starts at $\wt x^-$ and accumulates in the interior of the simplex $\Xi_z(\xi)$ by Lemma~\ref{fact:hitchin cv to int of weyl}.
    By the definition of Finsler Busemann cocycles (see Section~\ref{sec:lietheoryI}), this means that  we have
    \begin{equation}\label{eq:bdryest bus1}b^{\mf F}_{\Xi_z(\xi)}(\rho_z(g_u^n)\wt x^+,\wt x^-)=
    \lim_{T\to\infty}d^{\mf F}(\rho_z(g_u^n)\wt x^+,\wt a_\xi(T))-d^{\mf F}(\wt x^-,\wt a_\xi(T)).\end{equation}
    
    Notice that the third geodesic piece of $\wt a_\xi$ (the one that leaves the flat strip $\hat \gamma^*_0$) is the isometric image by $g_u^n$ of an admissible path going from $\hat\gamma^*_0$ to $g_u^{-n}\xi\in\Omega^+$. And therefore $\wt a_\xi$ passes within distance $R$ of $\rho_z(g_u^n)\wt x^+$ at some time $t$. 

    By Proposition~\ref{prop:triangle equality for admissible paths}, $\wt a_\xi$ is $C_2$-quasi-ruled 
    (and starts at $\wt x^-$) so \[d^{\mf F}(\wt a_\xi(t),\wt a_\xi(T))-d^{\mf F}(\wt x^-,\wt a_\xi(T))\leq - 
    d^{\mf F}(\wt x^-,\wt a_\xi(t))+C_2\text{ for any }T\geq t.\]
    This, combined with $d^{\mf F}(\rho_z(g_u^n)\wt x^+,\wt a_\xi(t))\leq R$ and \eqref{eq:bdryest bus1} yields:
    \begin{equation}\label{eq:bdryest bus2}b^{\mf F}_{\Xi_z(\xi)}(\rho_z(g_u^n)\wt x^+,\wt x^-)\leq - d^{\mf F}(\rho_z(g_u^n)\wt x^+,\wt x^-)+C_2+2R.\end{equation}

    We now need to estimate $d^{\mf F}(\rho_z(g_u^n)\wt x^+,\wt x^-)$, and we also do this using that the admissible path from $\wt x^-$ to $\rho_z(g_u^n)\wt x^+$ is quasi-ruled, except that this time this path is completely explicit.
    The unique admissible path $c$ in $\H^2$ from $x^-$ to $g_u^nx^+$ has three geodesic pieces: first the geodesic from $x^-$ to $x$, of length $\omega$, then the geodesic from $x$ to $g_u^nx$, of length $|n|\ell$, and finally the geodesic from $g_u^nx$ to  $g_u^nx^+$, of length $\omega$.
    It's image under $\wt\pi_z$ is the unique admissible path $\wt c$ from $\wt x^-$ to $\rho_z(g_u^n)\wt x^+$, which is also made of three explicit geodesic pieces.
    The first and last pieces are just translates of the corresponding pieces of~$c$, and hence have length $\omega$.
    
    The middle piece, however, is more complicated because instead of sliding along $\hat\gamma_0^*$ like~$c$, we are navigating in a flat strip that lifts the flat cylinder above $\gamma_0^*\subset\gamma^*$, and we must move diagonally in this flat strip to realise at the same time the horizontal translation prescribed by the middle piece of $c$ and the vertical translation prescribed by the grafting parameter $z$.
    Let $z_0\in\mf a$ be the coordinate of $z$ associated to the component $\gamma_0^*\subset\gamma^*$.
    Then the above mentioned flat strip is conjugate to the strip $\{tv_0\pm sz_0:t\in \R,\ s\in[0,1]\}$ where $v_0=d\tau\hsl$ is the special direction of $\mf a^+$ and the sign $\pm$ depends on the choice of orientation on $\gamma_0^*$ (see Section~\ref{sec:Abstract grafting}).
    Since we are moving horizontally a distance $|n|\ell$, the middle piece of $\wt c$ is conjugate in this strip to the geodesic segment from $0$ to $n\ell v_0\pm z_0$.
    As a consequence, the length of this middle piece is exactly $\mf F(n\ell v_0\pm z_0)$ where $\mf F$ is the norm on $\mf a$ defined in Equation~\eqref{eq:mfF}.
    Finally, using again that $\wt c$ is $C_2$-quasi-ruled (Proposition~\ref{prop:triangle equality for admissible paths}), we get
    \begin{equation}\label{eq:bdryest bus3} d^{\mf F}(\wt x^-,\rho_z(g_u^n)\wt x^+) \geq  2\omega + \mf F(z_0 \pm n\ell v_0) -2C_2.\end{equation}

    Now we must estimate $\mf F(z_0 \pm n\ell v_0)$.
    By the assumption,  the height of the cylinder at $\wt\gamma_0^*$ is $\min_{t\in\R}\mf F(z_0+tv_0)\geq L$ (see \eqref{eq:cylinder height}).
    Let $t_0$ be the unique point of $\R$ such that $z_0+t_0v_0\in \ker\alpha_0$ ($\alpha_0$ is the linear form which is equal to the Finsler norm in the Weyl cone that contains $w_0$).
    Then 
    $$\mf F(z_0+tv_0) \geq|\alpha_0(z_0+tv_0)| = |\alpha_0((t-t_0)v_0)|=|t-t_0|$$
    for any $t\in\R$.
    Let $n_0$ be the integer closest to $t_0/\ell$, so that
    $\mf F(z_0+n\ell v_0)\geq |n-n_0|\ell - \ell$, which is bounded below by $|n-n_0|\ell - \sigma$.
    Combining this with \eqref{eq:bdryest bus2} and \eqref{eq:bdryest bus3} we get
    $$b^{\mf F}_{\Xi_z(\xi)}(\rho_z(g_u^n)\wt x^+,\wt x^-)\leq -  2\omega-\max(|n_0\pm n|\ell, L) +C_2+2R+\sigma +2C_2.$$
    (Recall that $\pm$ is just some fixed sign depending on the choice of orientation of $\gamma_0^*$.)
    
    Recall the quasi-invariance of Patterson--Sullivan measures: 
    $$\mu_z^{\wt x}(g_u^n\Omega^+)=\int_{\xi\in g_u^n\Omega^+}e^{\delta(z)b^{\mf F}_{\Xi_z(\xi)}(\rho_z(g_u^n)\wt y,\wt x)}d\mu_z^{\rho_z(g_u^n)\wt y}(\xi).$$
    Since $\mu_z^{\rho_z(g_u^n)\wt y}(\partial\H^2)=\mu_z^{\wt y}(\partial\H^2)$ is bounded from
    above by some constant $C_1$ that only depends on $\sigma$ by Proposition~\ref{prop:upper bound on PS mass}, and 
    $\delta(z)\leq m$ for some constant $m$ depending only on $\alpha_0$  by Lemma~\ref{entropy},
    we get 
    \begin{align*}\mu^{\wt x}_z(\rho_z(g_u^n)\Omega^+)&\leq e^{m(2R+3C_2+\sigma)}C_1e^{-\delta(z)\max(|n_0\pm n|\ell,L)}e^{-2\delta(z)\omega}\\&=C_3e^{-\delta(z)\max(|n_0\pm n|\ell,L)}e^{-2\delta(z)\omega},\end{align*}
    where $C_3$ only depends on $\sigma$.
    After some computations, and using that (for $\alpha=\delta(z)\ell$ and $\beta=L/\ell$)
\[e^{-\omega}\leq \ell\cosh(\sigma)\quad\text{and}\quad\sum_{n\geq \beta}e^{-\alpha n}\leq\frac{e^{-\alpha\beta}}{\alpha},\]
we get
    \begin{align*}
\mu_z^{\wt x}(I^+)
&\leq C_3e^{-2\delta(z)\omega} \sum_ne^{-\delta(z)\max(|n_0\pm n|\ell,L)} \\
&\leq 2C_3e^{-2\delta(z)\omega}\left( \sum_{0\leq n< L/\ell}e^{-\delta(z)L}e^{-2\delta(z)\omega}+\sum_{n\geq L/\ell}e^{-\delta(z)n\ell}\right)\\
& \leq 2C_3\left(\frac{L}{\ell}+\frac{1}{\delta(z) \ell}\right)e^{-\delta(z)L}(e^{-\omega})^{2\delta(z)}\\
    &\leq 2C_3\left(\frac1\ell+\frac{1}{\delta(z)\ell}\right)(L+1)e^{-{\delta(z)}L}\ell^{2\delta(z)}\cosh(\sigma)^{2m}\\
    &\leq C_4\max(\ell^{2\delta(z)-1},1)(L+1)e^{-{\delta(z)}L}.
    \end{align*}
    To obtain $C_4$ only depending on $\sigma$, we use that $\delta(z)\geq\delta$ and that $\delta$ is bounded 
    from below by a constant that only depends on $\sigma$, by Proposition~\ref{prop:entropy of hypsurf}.

    By Proposition~\ref{prop:entropy of hypsurf}, there exists $\epsilon_\sigma\leq 1$ such that if $\ell\leq \epsilon_\sigma$ then $\delta>\tfrac12$.
    Thus, if on one hand $2\delta(z)-1\geq0$ then $\ell^{2\delta(z)-1}\leq \sigma^{2\delta(z)-1}\leq \sigma^{2m-1}$.
    On the other hand, if $2\delta(z)-1<0$ then we must have $\ell>\epsilon_\sigma$ so $\ell^{2\delta(z)-1}\leq \epsilon_\sigma^{2\delta(z)-1}\leq \epsilon_\sigma^{2m-1}$.
    In any case $\ell^{2\delta(z)-1}$ is bounded above by a constant that only depends on $\sigma$, which concludes the proof.
\end{proof}

We now prove the technical estimate we used in the proof.
\begin{proof}[Proof of Lemma~\ref{obs:bdryesttech}]
Parallel transport $\mc L$ along the first geodesic piece of $a$ until time~$\omega$, to obtain $\mc L'$ at distance $\omega$ from $\mc L$.
Let $H$ be the half-plane delimited by $\mc L'$ that does not contain $\mc L$.
Then by definition of admissible path one can check that $a(t)\in H$ for any~$t\geq \omega$.

By a classical formula of hyperbolic trigonometry, see Theorem 7.17.1 of~\cite{Bea83}, the orthogonal projection of any $x\in H$ is at distance at most $\sinh^{-1}(\tfrac1{\sinh(\omega)})$ from $a(0)$.

In particular the orthogonal projection of $\xi$ is at distance at most $\ell':=\sinh^{-1}\left(\tfrac1{\sinh(\omega)}\right)$ from $a(0)$, and by triangle inequality $a$ and $r$ start at distance at most $\ell+\ell'$.
Using for instance again Theorem 7.17.1 of~\cite{Bea83}, one can check that $a(\omega)$ and $r(\omega)$ are at distance at most twice the following:
$$\sinh^{-1}(\sinh(\ell/2)\cosh(\omega))+\sinh^{-1}(\sinh(\ell'/2)\cosh(\omega))\leq 2\sinh^{-1}\left(\frac{\cosh\omega}{\sinh\omega}\right),$$
which can be bounded above in terms of $\sigma$ because $\omega$ can be bounded below in terms of~$\sigma$ (since $\ell\leq \sigma$).
\end{proof}

\subsection{Convergence of currents}

Recall $\delta >0$ is the topological entropy of $\Phi^t_{\vert K}$.
The following is the main result of this section.

\begin{proposition}\label{escapeofmass2}
Let $L_i\to \infty$ and let $\rho_i=\rho_{z_i}$ be a sequence of Hitchin  representations 
obtained by Hitchin grafting of a Fuchsian representation 
at the simple geodesic multicurve $\gamma^*$ with cylinder heights bounded from below by $L_i$. 
Then 
$\delta(z_i)\to \delta$, and up to passing to a subsequence, 
the equilibrium measures $\nu_i=\nu(z_i)$ converge weakly to a measure of maximal entropy for 
$\Phi^t\vert K$.
\end{proposition}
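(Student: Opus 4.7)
The plan is to combine three ingredients that have been set up carefully in the preceding subsections: the total mass bound of Lemma~\ref{lem:total mass of nu}, the vanishing intersection estimate of Proposition~\ref{boundaryestimate2}, and upper semicontinuity of the metric entropy for the Anosov geodesic flow $\Phi^t$ on $T^1S$.

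First I would extract a subsequence along which $\nu_i$ converges weakly to some finite $\Phi^t$-invariant measure $\nu_\infty$ on the compact space $T^1S$. The key quantitative input is a sharper reading of Lemma~\ref{lem:total mass of nu} that retains the explicit $L_i$-dependence from Theorem~\ref{thm:quasiisom lengths}: the bound $\int f_{z_i}\,d\zeta/\ell(\zeta) \geq (1+C/(L_i+1))^{-1}$ on closed orbits, combined with density of atomic closed-orbit measures in the invariant probability measures and continuity of $\mu\mapsto\int f_{z_i}\,d\mu$, yields $\int f_{z_i}\,d\mu_i^1 \geq (1+C/(L_i+1))^{-1}\to 1$ for the normalized equilibrium states $\mu_i^1 = \nu_i/\|\nu_i\|$. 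Via $\|\nu_i\|=(\int f_{z_i}\,d\mu_i^1)^{-1}$ this gives $\|\nu_i\|\leq 1 + C/(L_i+1)$, hence $\|\nu_\infty\|\leq 1$ (using that $T^1S$ is compact so $\|\nu_i\|\to\|\nu_\infty\|$ by testing weak convergence against the constant $1$); the reverse bound $\|\nu_i\|=\delta(z_i)/h_{\mu_i^1}\geq \delta$, which follows from $h_{\mu_i^1}\leq h_{\mr{top}}(\Phi^t)=1$ and Lemma~\ref{entropy}, keeps the limit nontrivial.

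Next I would identify the support of $\nu_\infty$. By Remark~\ref{rk:boundaryestimate}(2) we have $\iota(\hat\nu_i,\gamma^*)\leq C(L_i+1)e^{-\delta(z_i)L_i}\to 0$ since $\delta(z_i)\geq \delta>0$. Continuity of the intersection form on the space of geodesic currents then gives $\iota(\hat\nu_\infty,\gamma^*)=0$, so $\nu_\infty$ gives zero weight to orbits that cross $\gamma^*$ transversely, and is therefore supported on the compact invariant set $K$. In particular the probability measure $\mu_\infty^1 := \nu_\infty/\|\nu_\infty\|$ is $\Phi^t\vert_K$-invariant, so $h_{\mu_\infty^1}\leq h_{\mr{top}}(\Phi^t\vert_K)=\delta$.

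The final step ties things together via Abramov's formula and upper semicontinuity of entropy. The Gibbs equilibrium property of $\mu_i^1$ gives $h_{\mu_i^1} = \delta(z_i)\int f_{z_i}\,d\mu_i^1 = \delta(z_i)/\|\nu_i\|$, while upper semicontinuity of $\mu\mapsto h_\mu$ for the Anosov flow $\Phi^t$ yields $\limsup h_{\mu_i^1}\leq h_{\mu_\infty^1}\leq \delta$. Writing $\delta(z_i) = \|\nu_i\|h_{\mu_i^1}$ and using $\limsup \|\nu_i\|\leq 1$, I get $\limsup \delta(z_i)\leq \delta$, and with Lemma~\ref{entropy} this proves $\delta(z_i)\to \delta$. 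Rereading the identity as $h_{\mu_i^1} = \delta(z_i)/\|\nu_i\|\to \delta/\|\nu_\infty\|$ and using $\limsup h_{\mu_i^1}\leq \delta$ forces $\|\nu_\infty\|\geq 1$, hence $\|\nu_\infty\|=1$, $\nu_\infty=\mu_\infty^1$ is a probability measure, and $h_{\nu_\infty}=\delta$; so $\nu_\infty$ is a measure of maximal entropy for $\Phi^t\vert_K$. The main delicate point is juggling the two normalizations $\int f_{z_i}\,d\nu_i=1$ and $\|\mu_i^1\|=1$ simultaneously, so that the weak-$*$ limit of the unnormalized $\nu_i$ is still a \emph{probability} measure; this is precisely where retaining the $L_i$-dependent bound $\|\nu_i\|\leq 1+C/(L_i+1)$ rather than merely $\|\nu_i\|\leq C$ is essential.
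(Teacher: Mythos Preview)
Your proof is correct and uses the same three ingredients as the paper's proof: support on $K$ via the intersection estimate (Proposition~\ref{boundaryestimate2}), the length comparison $\int f_{z_i}\,d\mu_i^1\geq (1+C/(L_i+1))^{-1}$ from Theorem~\ref{thm:quasiisom lengths}, and upper semicontinuity of entropy for the Anosov flow $\Phi^t$. The only difference is organizational: the paper works directly with the probability measures $\nu_i^1=\nu_i/\|\nu_i\|$, obtains the lower bound $\liminf h_{\nu_i^1}\geq\liminf\delta(z_i)\geq\delta$ from the length estimate, and combines it with upper semicontinuity and $h_\nu\leq\delta$ (support on $K$) to get $h_\nu=\delta$ in one stroke, leaving $\delta(z_i)\to\delta$ and $\|\nu_i\|\to 1$ implicit; you instead first establish $\|\nu_i\|\to\|\nu_\infty\|\leq 1$, then deduce $\delta(z_i)\to\delta$, and only then close up to get $\|\nu_\infty\|=1$ and $h_{\nu_\infty}=\delta$. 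Your extra care in verifying that the limit of the \emph{unnormalized} $\nu_i$ is actually a probability measure is a genuine clarification of a point the paper glosses over.
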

\begin{proof}
Recall that $f_i=f_{z_i}$ denotes a positive 
H\"older continuous potential on $T^1S$ whose periods are the Finsler translation lengths of 
the elements of $\rho_i(\pi_1(S))$.

Up to passing to a subsequence, we may assume
that the $\Phi^t$-invariant probability measures 
  $\nu_i^1=\nu_i/||\nu_i||$ converges weakly to a $\Phi^t$-invariant probability measure $\nu$
  on $T^1S$. By Lemma~\ref{lem:total mass of nu}, we may also assume that the 
  geodesic currents $\hat \nu(z)$ converge weakly to a current $\hat \nu$ which is a positive
  multiple of the current defined by $\nu$. 
  
  By Proposition~\ref{boundaryestimate2}, we have $\iota(\hat \nu, \gamma^*)=0$ and hence the 
  limit measure $\nu$ must be supported on $K$.
  By Lemma~\ref{entropysplit}, we are thus left with showing that $h_\nu\geq \delta$.

From Lemma~\ref{lem:total mass of nu} we have $\delta(z_i)\in (\delta,m]$ for any $i$.
Recall from \eqref{entropyeq}  that 
\begin{equation}\label{integralentropy}
h_{\nu_i^1}=\delta(z_i)\int f_id\nu_i^1.\end{equation}

By Theorem~\ref{thm:quasiisom lengths},
it holds 
$$
\int f_i\frac{d\eta}{\ell_S(\eta)} \geq \left( 1+\frac{C}{L_i+1} \right)^{-1}
$$
for any $\eta\in\pi_1(S)$, and hence since the $\Phi^t$-invariant Borel probability measures
supported on closed geodesics are weak$^*$-dense in the space of all 
$\Phi^t$-invariant Borel probability measures, we get
$$
\int f_id\nu_i^1 \geq \left( 1+\frac{C}{L_i+1} \right)^{-1},
$$
and hence 
$$
\lim\inf_{i\to \infty} h_{\nu_i}\geq \lim\inf_{i\to \infty}\delta(z_i)\geq \delta.
$$

Since the entropy function is lower semi-continuous, 
we conclude that $h_\nu\geq \delta$.
As $\nu$ is supported in $K$, this 
implies that indeed, $\nu$ is a measure of maximal entropy for the
restriction of $\Phi^t$ to $K$ by Lemma~\ref{entropysplit}.
\end{proof}

Using the above results we are now ready to complete the proof of Theorem~\ref{main1}
from the introduction.

\begin{proof}[Proof of Theorem~\ref{main1}]
Part (3) of Theorem~\ref{main1} was shown in Section~\ref{QI-proof}, so we are left with showing part (1) and (2). 
Let $\gamma^*\subset S$ be a pair of pants decomposition of $S_1=S-S_0$ that contains $\partial S_0=\partial S_1$.
The metric $h$ on $S_0$ prescribes lengths for the components of~$\gamma^*$ in $\partial S_0$.

Since no component of $S_1$ is a pair of pants, every pair of pants in $S_1-\gamma^*$ has a boundary component in $\gamma^*-\partial S_1$.
By Proposition~\ref{lem:entropy-pants-convergence}, one can choose lengths large enough 
for  each component of $\gamma^*-\partial S_1$ such that each pair 
of pants of $S_1-\gamma^*$ has entropy very close to zero, and in particular strictly smaller than the entropy of $S_0$.

Then by Hitchin grafting along $\gamma^*$ flat cylinders with bigger and bigger heights,
we get a sequence $\rho_i=\rho_{z_i}$ of Hitchin representations satisfying the first two statement of  Theorem~\ref{main1}, according to Proposition~\ref{escapeofmass2}.
\end{proof}

\section{Pressure length control}\label{sec:pressurelength}

Define the \emph{entropy gap} of the pair consisting of a hyperbolic surface
and a separating 
simple closed geodesic to be the absolute value of the difference 
between the entropies of the two components of $S-\gamma^*$.
If $\gamma^*$ is non-separating
then we define the entropy gap to be one.

Consider a path $t\to \rho_{tz}$ of Hitchin representations obtained by 
Hitchin grafting along a single geodesic $\gamma$ and grafting parameters
a ray in $\mathfrak{a}$ with direction in the kernel of the linear functional
defining the Finsler length of $\gamma^*$.
The first goal of this section is to show

\begin{theorem}\label{pressurelength}
The pressure metric length of the path $t\to \rho_{tz}$ is finite and bounded from 
above by a constant 
only depending on $z$,
and an upper bound for the length of the grafting geodesic $\gamma^*$. 
\end{theorem}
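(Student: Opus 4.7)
The plan is to bound the pressure norm $\|\dot\rho_{tz}\|$ pointwise along the ray and show it decays exponentially in $t$, so that its integral over $[0,\infty)$ is finite. Writing $h(s)=\delta(sz)$, $\mu_t=\nu(tz)$, and using formula~(\ref{eq:def pressure metric}) at the base point $\rho_{tz}$, the squared norm expands as
\[\|\dot\rho_{tz}\|^2=\frac{h''(t)}{h(t)}+\frac{2h'(t)}{h(t)}\int\!\alpha_0(\dot f_{\rho_{tz}})\,d\mu_t+\int\!\alpha_0(\ddot f_{\rho_{tz}})\,d\mu_t,\]
where dots denote $s$-derivatives at $s=t$ and we used the normalization $\int\alpha_0(f_{\rho_{tz}})d\mu_t=1$. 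The goal is to bound each summand by a multiple of the intersection number $\iota(\hat\mu_t,\gamma^*)$ of the Gibbs current with the grafting locus, and then invoke Proposition~\ref{boundaryestimate2} for its exponential decay in $t$.

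Proposition~\ref{cor:firstandsecond} at base parameter $z_0=tz$ (applicable for every $t$ since the hypotheses depend only on the hyperbolic structure of $S$ and the upper bound $\sigma$ on the length of $\gamma^*$), evaluated along the one-dimensional subfamily in direction $z$, bounds in absolute value the $k$-th $s$-derivative of $\alpha_0(\lambda(\rho_{sz}(\gamma)))$ at $s=t$ by $C_\sigma\|z\|^k\iota(\gamma,\gamma^*)$ for every $\gamma\in\pi_1(S)$ and $k=1,2$. Because the periods of $\alpha_0(f_{\rho_{sz}})$ are the Finsler translation lengths and $\mu_t$ is the Gibbs state of $\alpha_0(f_{\rho_{tz}})$, the intersection formula~(\ref{intersectionformula}) (with differentiation commuting with the limit thanks to the analyticity of $s\mapsto f_{\rho_{sz}}$ from Proposition~\ref{hoelderfunction}) upgrades this to
\[\left|\int\alpha_0\bigl(\tfrac{d^k}{ds^k}|_{s=t}f_{\rho_{sz}}\bigr)\,d\mu_t\right|\leq C_\sigma\|z\|^k\iota(\hat\mu_t,\gamma^*),\qquad k=1,2.\]
For the entropy derivatives I implicitly differentiate Bowen's equation $P(-h(s)\alpha_0(f_{\rho_{sz}}))=0$: one derivative yields $h'(t)=-h(t)\int\alpha_0(\dot f_{\rho_{tz}})d\mu_t$, already bounded; the second introduces an asymptotic variance whose argument on periodic orbits is dominated by $C\|z\|\iota(\cdot,\gamma^*)$, and via a Liv\v{s}ic substitution this pointwise bound transfers to an $L^2$-bound against the equilibrium probability $\mu_t^1=\mu_t/\|\mu_t\|$ (whose total mass is controlled by Lemma~\ref{lem:total mass of nu}), giving $|h''(t)|\leq C(\|z\|,\sigma)\iota(\hat\mu_t,\gamma^*)$.

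Assembling these estimates yields $\|\dot\rho_{tz}\|^2\leq C(\|z\|,\sigma)\,\iota(\hat\mu_t,\gamma^*)$. Proposition~\ref{boundaryestimate2} and Remark~\ref{rk:boundaryestimate} then furnish
\[\iota(\hat\mu_t,\gamma^*)\leq C_\sigma\,(L(t)+1)\,e^{-\delta(tz)L(t)},\]
where $L(t)=\min_{r\in\R}\mf F(tz+rv_0)$ is the cylinder height at parameter $tz$ and $v_0=d\tau\hsl$ spans the tangent to the Fuchsian axis. Because $z\in\ker\alpha_0$ is transverse to $\R v_0$ in $\mf a$, one has $L(t)=t\,\mf F(z)+O_\sigma(1)$, growing linearly, while $\delta(tz)\geq\delta>0$ uniformly in $t$ by Lemma~\ref{entropy} with $\delta$ depending only on $\sigma$. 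Consequently $\|\dot\rho_{tz}\|$ is integrable on $[0,\infty)$ and its integral is bounded by a constant depending only on $\|z\|$ and $\sigma$, proving Theorem~\ref{pressurelength}. The main obstacle will be the variance estimate for $h''(t)$: Proposition~\ref{cor:firstandsecond} supplies only an $L^\infty$-type bound on periodic orbits while the asymptotic variance is an $L^2$-type quantity, and bridging this gap without altering periods requires the Liv\v{s}ic substitution together with a careful inspection of the coboundary contribution, which enters the pressure norm only through a vanishing first-order term and is therefore harmless.
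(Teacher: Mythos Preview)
Your overall architecture is correct and matches the paper's: bound the derivatives of the length function by $C_\sigma\iota(\gamma,\gamma^*)$ via Proposition~\ref{cor:firstandsecond}, pass to the Gibbs current, and then invoke the exponential decay of $\iota(\hat\nu(tz),\gamma^*)$ from Proposition~\ref{boundaryestimate2}. The gap is exactly the point you flag as the ``main obstacle'': your control of $h''(t)$ does not go through.

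Differentiating $P(-h(s)f_s)=0$ twice yields
\[
h''(t)\int f_t\,d\nu^1(t)= -2h'(t)\int \dot f_t\,d\nu^1(t)-h(t)\int \ddot f_t\,d\nu^1(t)+\mathrm{Var}_{\nu^1(t)}\bigl(-h'(t)f_t-h(t)\dot f_t\bigr),
\]
and the asymptotic variance is nonnegative, so it obstructs an \emph{upper} bound on $h''(t)$. Your proposed fix, a Liv\v{s}ic substitution, does not help: Liv\v{s}ic says two H\"older functions with equal periods differ by a coboundary, but it does \emph{not} say that a period inequality $|\int_\gamma \dot f_t|\leq C\iota(\gamma,\gamma^*)$ produces a cohomologous representative with a pointwise or $L^2$ bound of the same shape. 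The variance is a cohomology invariant, so changing representatives gains nothing; and the periods alone do not bound it. Moreover the variance also contains the term $(h')^2\mathrm{Var}(f_t)$, and $f_t$ itself has periods growing linearly in $t$ on curves crossing $\gamma^*$, so there is no uniform control there either.

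The paper sidesteps this entirely by not estimating $\|\dot\rho_{tz}\|^2$ pointwise. Using $\tfrac{h''}{h}=(\log h)''+(\tfrac{h'}{h})^2$ together with $\tfrac{h'}{h}=-\tfrac{d}{ds}{\bf I}|_{s=0}$ one gets
\[
\|\dot\rho_{tz}\|^2=(\log h)''(t)-\Bigl(\tfrac{h'(t)}{h(t)}\Bigr)^2+\tfrac{d^2}{ds^2}{\bf I}|_{s=0}\leq(\log h)''(t)+\tfrac{d^2}{ds^2}{\bf I}|_{s=0}.
\]
Now apply Cauchy--Schwarz on each unit interval $[m,m+1]$ and integrate: the term $(\log h)''$ integrates to the boundary values $(\log h)'(m+1)-(\log h)'(m)=-\tfrac{d}{ds}{\bf I}|_{s=0}$ evaluated at $t=m+1$ and $t=m$, which are controlled by the \emph{first} derivative bound and hence by $\iota(\hat\nu(t),\gamma^*)$. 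This is Proposition~\ref{prop:general bound on pressure lengths}. The troublesome $h''$ never has to be bounded on its own; only $h'$ and $\tfrac{d^2}{ds^2}{\bf I}$ enter, both of which your argument already handles correctly.
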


We also show

\begin{theorem}\label{pressurelength2}
Consider a subsurface $S_1\subset S$ with $\partial S_1\subset\gamma^*$.
    Let $(\rho^{t})_{t\in[0,T]}$ be a path of hyperbolic structures on $S$ obtained by concatenating shearing paths along multicurves contained in $S_1$, such that for any $t\in[0,T]$ the entropy of 
    the geodesic flow on $T^1S_1$  and the restriction of the metric 
    $\rho_t$ is strictly smaller than the entropy of the geodesic flow on $T^1S_0$ for $S_0=S-S_1$ 
    (which does not depend on $t$).
    Denote by $\rho_{z}^{t}$ the Hitchin grafting of $\rho^{t}$ alogn $\gamma^*$ with parameter $z$.

    Then the pressure length of the smooth path $(\rho_{z}^{t})_{t\in[0,T]}$ tends to zero as the cylinder height associated to $z$ tends to infinity.
\end{theorem}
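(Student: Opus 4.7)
The plan is to bound the pressure norm $\Vert\dot\rho_z^t\Vert$ uniformly in $t\in[0,T]$ by a quantity tending to zero as the minimal cylinder height $L$ of $z$ tends to infinity; integration in $t$ then yields the conclusion. By the pressure metric formula \eqref{eq:def pressure metric} together with the standard differentiation of the pressure equation $\mathrm{pr}(-h(s)F_s)\equiv 0$, where $F_s := \alpha_0\circ f_{\rho_z^s}$, one obtains
\[
  \Vert\dot\rho_z^t\Vert^2 \;=\; \frac{\mathrm{Var}\!\big(h(t) F_t' + h'(t) F_t,\;\mu_{z,t}^1\big)}{h(t)\,\int F_t\,d\mu_{z,t}^1},
\]
where primes denote derivatives in $s$ at $s=t$ and $\mu_{z,t}^1$ is the probability equilibrium state. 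Since $h(t)$ is uniformly bounded above (Theorem~\ref{thm:upper bound entropy}) and below (Lemma~\ref{entropy}), and $h'(t)=-h(t)\int F_t'\,d\mu_{z,t}$ by the first-order variational identity, it suffices to show that the asymptotic variance $\mathrm{Var}(F_t', \mu_{z,t}^1)$ and the mean $|\int F_t'\,d\mu_{z,t}^1|$ both tend to zero uniformly in $t$ as $L\to\infty$.

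The first ingredient is the derivative bound of Corollary~\ref{cor:estimate derivative length on shearings}: since $(\rho_z^t)_{t\in[0,T]}$ is a concatenation of shearings along multicurves contained in $S_1$, for every closed orbit $\gamma$ we have $\big|\int_\gamma F_t'\big|\leq C_1\,\ell_{h_t}(\gamma\cap S_1)$, with $C_1$ depending only on the shearing speeds and on upper bounds for the lengths of the multicurves involved. Applying the orbit-integral representation \eqref{intersectionformula} to $F_t$ and using that $\ell_{h_t}(\gamma\cap S_1)$ is comparable to the flow-time $\gamma$ spends in $T^1 S_1$, this translates into
\[
  \left|\int F_t'\,d\mu_{z,t}^1\right|\ \leq\ C_2\,M_{L,t}, \qquad M_{L,t}\ :=\ \mu_{z,t}^1(T^1 S_1).
\]

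The second ingredient is the concentration of $\mu_{z,t}^1$ on $K_0$ as $L\to\infty$. By Proposition~\ref{escapeofmass2}, every weak limit of $\mu_{z,t}^1$ is a measure of maximal entropy for the restricted flow $\Phi^t|_K$, where $K=\bigcup_i K_i$ is the union of the maximal invariant sets in the complementary subsurfaces. Under the hypothesis that the entropy of $\Phi^t|_{T^1 S_0}$ strictly exceeds the entropies of the components of $T^1 S_1$ for every $t\in[0,T]$, Lemma~\ref{entropysplit} uniquely identifies this limit as the Bowen--Margulis measure on $K_0\subset T^1 S_0$, which is disjoint from $T^1S_1$. Because the boundary of $T^1 S_1$ lies in $T^1\gamma^*$, a negligible set for this limit, the portmanteau theorem gives $M_{L,t}\to 0$ pointwise in $t$; uniformity in $t\in[0,T]$ follows from compactness of $[0,T]$ together with the continuity of $t\mapsto \mu_{z,t}^1$ (for fixed $z$) via a diagonal extraction.

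The main obstacle is upgrading the mean bound to a variance bound, since $\mathrm{Var}(F_t',\mu_{z,t}^1)$ involves flow-correlations rather than just the integral of $F_t'$. The approach is to use the orbital inequality $|\int_\gamma F_t'|\leq C_1\ell_{h_t}(\gamma\cap S_1)$ to construct, via a Livšic-type argument, a Hölder cohomological representative of $F_t'$ whose pointwise size is controlled by a Hölder regularization of $\chi_{T^1 S_1}$; its $L^2(\mu_{z,t}^1)$-norm is then bounded by a constant multiple of $M_{L,t}$. Combined with the exponential decay of correlations for equilibrium states of dominated Anosov flows (see~\cite{BPS19}), this yields $\mathrm{Var}(F_t',\mu_{z,t}^1)=O(M_{L,t})\to 0$. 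Assembling the three estimates shows $\Vert\dot\rho_z^t\Vert\to 0$ uniformly in $t\in[0,T]$, and integration in $t$ completes the proof.
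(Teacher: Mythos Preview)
Your reduction to controlling the mean $\int F_t'\,d\mu_{z,t}^1$ via Corollary~\ref{cor:estimate derivative length on shearings} and Proposition~\ref{escapeofmass2} is correct and matches the paper's strategy. The gap is in the variance step. The ``Liv\v{s}ic-type argument'' you sketch does not work: Liv\v{s}ic characterises \emph{coboundaries} (functions with vanishing orbit integrals), and the inequality $\big|\int_\gamma F_t'\big|\leq C_1\,\ell_{h_t}(\gamma\cap S_1)$ does \emph{not} yield a H\"older representative of $F_t'$ that is pointwise bounded by a regularisation of $\chi_{T^1S_1}$. Even granting such a representative, bounding $\mathrm{Var}(F_t',\mu_{z,t}^1)$ by a multiple of $M_{L,t}$ would require a decay-of-correlations rate for $\mu_{z,t}^1$ that is uniform in $z$; nothing in the paper or in~\cite{BPS19} (which concerns dominated splittings, not mixing rates) provides this, and as $L\to\infty$ the reparametrising potential $f_z$ degenerates, so uniform mixing is genuinely in doubt.

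The paper avoids the variance entirely. Proposition~\ref{prop:general bound on pressure lengths} expands $\tfrac{d^2}{ds^2}{\bf J}\big|_{s=0}$ and bounds it by $(\log\delta)''(t)+\tfrac{d^2}{ds^2}{\bf I}\big|_{s=0}$; after integrating in $t$, the $(\log\delta)''$ term becomes boundary values of $\tfrac{d}{ds}{\bf I}\big|_{s=0}$. The point is that $\tfrac{d^2}{ds^2}{\bf I}\big|_{s=0}=\int F_t''\,d\mu_{z,t}$ is the \emph{mean} of the \emph{second} $s$-derivative of the potential, not the variance of the first. Corollary~\ref{cor:estimate derivative length on shearings} bounds both $\big|\int_\gamma F_t'\big|$ and $\big|\int_\gamma F_t''\big|$ by $C\,\ell_{h_t}(\gamma\cap S_1)$, so all three terms reduce to integrals of the form $\int \chi_{T^1S_1}\,d\mu_{z,t}$, which go to zero by Proposition~\ref{escapeofmass2} and dominated convergence. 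You should replace your variance argument by this route; note in particular that you never used the second-derivative bound in Corollary~\ref{cor:estimate derivative length on shearings}, which is exactly what makes the paper's proof work.
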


This section is subdivided into three subsections. 
In the first subsection we recall the definition of the pressure length of a path in the Hitchin component, and we give an upper bound purely in terms of the nonnormalized intersection form ${\bf I}(\rho_{1},\rho_{2})$ (see Section~\ref{sec:geodescicurrents}).
In the second subsection, we give upper bounds for the derivatives of this intersection form along the two paths in the above theorems, and use this to conclude.

\subsection{A general upper bound for pressure lengths}

Let $[\rho_t]_{a\leq t\leq b}$ be a smooth path in the Hitchin component.
By definition, its length for the pressure metric is
\[\int_a^b \left(\frac{d^2}{ds^2}{\bf J}(\rho_t,\rho_{t+s})\vert_{s=0}\right)^{\tfrac12}dt,\]
where, denoting by $\delta(t)$ the entropy of $\rho_t$, we have ${\bf J}(\rho_t,\rho_{t+s})=\tfrac{\delta(t+s)}{\delta(t)}{\bf I}(\rho_t,\rho_{t+s})$ and ${\bf I}(\rho_t,\rho_{t+s})$ is the nonnormalized intersection form.
We want an upper bound for this length in terms of ${\bf I}(\rho_t,\rho_{t+s})$ and its derivatives.
This is possible thanks to the following classical lemma.

\begin{lemma}\label{lem:entropy derivative}
$\delta'(t)=-\delta(t) \frac{d}{ds}{\bf I}(\rho_t,\rho_{t+s})\vert_{s=0}$.
\end{lemma}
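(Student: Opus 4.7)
The plan is to exploit the fact that the normalised intersection number ${\bf J}$ attains its global minimum value $1$ along the diagonal, so that for each fixed $t$ the function $s \mapsto {\bf J}(\rho_t, \rho_{t+s})$ has a critical point at $s=0$; expanding this condition via the product rule will yield the identity directly.

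First, I recall from the discussion preceding Proposition~\ref{hoelderfunction} and the variational principle that for any positive H\"older potentials $f,u$ on~$T^1S$ one has ${\bf J}(f,u)\geq 1$, with equality if and only if $f$ and $u$ are Liv\v{s}ic cohomologous up to a positive scalar. Applying this to $f=f_{\rho_t}$ and $u=f_{\rho_{t+s}}$ gives ${\bf J}(\rho_t,\rho_{t+s})\geq 1$ with equality at $s=0$. Since $s\mapsto {\bf J}(\rho_t,\rho_{t+s})$ depends smoothly on $s$ (by Proposition~\ref{hoelderfunction} together with the standard real-analytic regularity of thermodynamic formalism for H\"older potentials over the transitive Anosov flow $\Phi^t$), this forces
\[
\frac{d}{ds}\bigg|_{s=0}{\bf J}(\rho_t,\rho_{t+s}) \,=\, 0.
\]

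Next, I use the factorisation ${\bf J}(\rho_t,\rho_{t+s}) = \tfrac{\delta(t+s)}{\delta(t)}\,{\bf I}(\rho_t,\rho_{t+s})$ stated at the beginning of Section~\ref{sec:pressurelength}, combined with the normalisation ${\bf I}(\rho_t,\rho_t) = \int f_{\rho_t}\, d\mu_{f_{\rho_t}} = 1$ from Section~\ref{sec:geodescicurrents}. Differentiating in $s$ at $s=0$ by the product rule gives
\[
0 \,=\, \frac{\delta'(t)}{\delta(t)}\cdot {\bf I}(\rho_t,\rho_t) \,+\, \frac{d}{ds}\bigg|_{s=0}{\bf I}(\rho_t,\rho_{t+s}) \,=\, \frac{\delta'(t)}{\delta(t)} \,+\, \frac{d}{ds}\bigg|_{s=0}{\bf I}(\rho_t,\rho_{t+s}),
\]
which rearranges to the claimed identity $\delta'(t) = -\delta(t)\,\tfrac{d}{ds}|_{s=0}{\bf I}(\rho_t,\rho_{t+s})$.

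No substantial obstacle is expected. The two ingredients---smoothness of ${\bf J}$ along the analytic family and the minimisation property ${\bf J}\geq 1$ with equality on the diagonal---are both standard; the latter is a direct consequence of the variational principle and the Liv\v{s}ic theorem as formulated by Bridgeman--Canary--Labourie--Sambarino~\cite{BCLS15}. Alternatively, one may give an equivalent proof by differentiating the defining identity ${\rm pr}(-\delta(t) f_{\rho_t})\equiv 0$ and invoking the classical formula $d{\rm pr}_\phi(\psi) = \int \psi\, d\mu_\phi^{\rm prob}$ for the differential of pressure, which leads to the same computation after the proportionality constant between $\mu_{f_{\rho_t}}$ and the equilibrium probability measure cancels.
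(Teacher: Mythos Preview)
Your proof is correct, and in fact you explicitly mention the paper's own approach in your final paragraph as an alternative. The paper takes precisely that alternative route: it differentiates the defining relation ${\rm pr}(-\delta(t)f_t)\equiv 0$ directly, using Parry--Pollicott's formula $\tfrac{d}{dt}{\rm pr}(g_t)=\int (\tfrac{d}{dt}g_t)\,d\mu_t$, and then rewrites the resulting identity $0=\int(\delta'(t)f_t+\delta(t)\tfrac{d}{dt}f_t)\,d\nu_t$ in terms of ${\bf I}$.

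Your main argument instead invokes the global inequality ${\bf J}\geq 1$ with equality on the diagonal (from~\cite{BCLS15}), so that $s\mapsto {\bf J}(\rho_t,\rho_{t+s})$ has a critical point at $s=0$, and then differentiates the factorisation ${\bf J}=\tfrac{\delta(t+s)}{\delta(t)}{\bf I}$. This is more conceptual and avoids quoting the pressure-derivative formula explicitly; on the other hand, the inequality ${\bf J}\geq 1$ is itself a consequence of the variational principle, so the two arguments ultimately rest on the same thermodynamic input. Either proof is acceptable here.
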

\begin{proof}
    We fix a hyperbolic structure on $S$ and use its unit tangent bundle $T^1S$, equipped with the geodesic flow, as the underlying
    phase space for all computations.
    Let $f_t$ be a reparametrisation function associated with $\rho_t$.
    Recall that $P(-\delta(t)f_t) = 0$, where $P$ is the pressure function (see Section~\ref{sec:geodescicurrents}).
    We are going to differentiate this equality, using the fact due to Parry--Pollicott, see Propositions 4.10-11 of~\cite{ParryPollicott}, and Ruelle~\cite{Ruelle}, that for any $\mc C^1$ one-parameter family of H\"older functions $(g_t)_t$ we have $\frac d{dt}P(g_t)=\int (\frac d{dt}g_t)d\mu_t$ where $\mu_t$ is the equilibrium state associated to $g_t$.

    Let $\nu_t$ be the equilibrium state associated with $-\delta(t)f_t$.
Then
    $$
    0=\int \left(\delta'(t)f_t+\delta(t)\left( \frac d{dt}f_t \right)\right)d\nu_t
    $$

Now recall that
    \[{\bf I}(\rho_t,\rho_{t+s})=\frac{\int f_{t+s}d\nu_t}{\int f_td\nu_t} \quad \text{and} \quad \frac{d}{ds}{\bf I}(\rho_t,\rho_{t+s})\vert_{s=0} = \frac{ \int \left(\frac d{dt}f_t\right)d\nu_t}{\int f_td\nu_t},\]
which concludes the proof.
\end{proof}

We can now prove the following upper bound for the pressure length of $[\rho_t]_{a\leq t\leq b}$.

\begin{proposition}\label{prop:general bound on pressure lengths}
An upper bound for $\int_a^b \left(\frac{d^2}{ds^2}{\bf J}(\rho_t,\rho_{t+s})\vert_{s=0}\right)^{\tfrac12}dt$ is
\[ \sqrt {b-a} \left(  - \frac{d}{ds}{\bf I}(\rho_b,\rho_{b+s})\vert_{s=0} +\frac{d}{ds}{\bf I}(\rho_a,\rho_{a+s})\vert_{s=0} + \int_a^b \frac{d^2}{ds^2}{\bf I}(\rho_t,\rho_{t+s})\vert_{s=0}dt\right)^{\frac12}.\]
\end{proposition}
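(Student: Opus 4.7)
The plan is to express the second derivative of ${\bf J}$ in terms of the second derivative of ${\bf I}$ modulo a total derivative in $t$ that integrates to boundary terms, and then to pull out the square root using Cauchy--Schwarz. Let $F(t) = \tfrac{d^2}{ds^2}{\bf J}(\rho_t,\rho_{t+s})|_{s=0}$ (which is $\geq 0$ since ${\bf J}(\rho_t,\cdot)$ attains a minimum at $\rho_t$) and introduce the shorthand $\phi(t) = \tfrac{d}{ds}{\bf I}(\rho_t,\rho_{t+s})|_{s=0}$.

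The first step is purely algebraic. Using ${\bf J}(\rho_t,\rho_{t+s})=\tfrac{\delta(t+s)}{\delta(t)}{\bf I}(\rho_t,\rho_{t+s})$ together with ${\bf I}(\rho_t,\rho_t)=1$, differentiating twice in $s$ and setting $s=0$ gives
\[F(t)=\frac{\delta''(t)}{\delta(t)}+2\frac{\delta'(t)}{\delta(t)}\phi(t)+\frac{d^2}{ds^2}{\bf I}(\rho_t,\rho_{t+s})\Big|_{s=0}.\]
I then invoke Lemma~\ref{lem:entropy derivative} to replace $\delta'(t)/\delta(t)$ by $-\phi(t)$, and differentiate this same identity in $t$ once more to obtain $\delta''(t)/\delta(t)=\phi(t)^2-\phi'(t)$. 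Substituting both relations collapses the display to
\[F(t)=-\phi(t)^2-\phi'(t)+\frac{d^2}{ds^2}{\bf I}(\rho_t,\rho_{t+s})\Big|_{s=0}\leq \frac{d^2}{ds^2}{\bf I}(\rho_t,\rho_{t+s})\Big|_{s=0}-\phi'(t).\]

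The second step is to integrate this pointwise inequality over $[a,b]$ and apply the fundamental theorem of calculus: $\int_a^b\phi'(t)\,dt=\phi(b)-\phi(a)$. This identifies $\int_a^b F(t)\,dt$ with the expression that should appear under the square root on the right-hand side of the proposition. The final step is Cauchy--Schwarz,
\[\int_a^b F(t)^{1/2}\,dt\leq \sqrt{b-a}\left(\int_a^b F(t)\,dt\right)^{1/2},\]
which is where the $\sqrt{b-a}$ factor comes from.

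I do not expect any genuine obstacle here: the sign $F\geq 0$ ensures the square roots are real, and $\phi$, $\delta$ are smooth (in fact real analytic) functions of $t$ by Proposition~\ref{orbitequ} together with the regularity used already in Lemma~\ref{lem:entropy derivative}, so differentiating $\delta'=-\delta\phi$ once more to derive $\delta''/\delta=\phi^2-\phi'$ is legitimate. The cleanest way to structure the write-up is probably to state the pointwise estimate $F(t)\leq \tfrac{d^2}{ds^2}{\bf I}(\rho_t,\rho_{t+s})|_{s=0}-\phi'(t)$ as a short preliminary computation, then assemble the integration and Cauchy--Schwarz in two lines.
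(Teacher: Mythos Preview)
Your proof is correct and follows essentially the same route as the paper's. The only cosmetic difference is that the paper packages your identity $\delta''/\delta=\phi^2-\phi'$ as $\delta''/\delta=(\log\delta)''+(\delta'/\delta)^2$ and then integrates $(\log\delta)''$ to the boundary terms, whereas you integrate $-\phi'$ directly; since $(\log\delta)'=-\phi$ these are literally the same computation.
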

\begin{proof}
Let us differentiate twice the formula ${\bf J}(\rho_t,\rho_{t+s})=\tfrac{\delta(t+s)}{\delta(t)}{\bf I}(\rho_t,\rho_{t+s})$.
We get
\[\frac{d^2}{ds^2} {\bf J}(\rho_t,\rho_{t+s})\vert_{s=0} =\frac{\delta''(t)}{\delta(t)} 
    + 2\frac{\delta'(t)}{\delta(t)}\frac{d}{ds}{\bf I}(\rho_t,\rho_{t+s})\vert_{s=0} + 
    \frac{d^2}{ds^2}{\bf I}(\rho_t,\rho_{t+s})\vert_{s=0}\]
Using
$\tfrac{\delta^{\prime\prime}}{\delta}=(\log \delta)'' +(\tfrac{\delta^\prime}{\delta})^2$ and $\tfrac{\delta^\prime(t)}{\delta(t)}=-\frac{d}{ds}{\bf I}(\rho_t,\rho_{t+s})\vert_{s=0}$ from Lemma~\ref{lem:entropy derivative}, 
we get 
\begin{align*}
\frac{d^2}{ds^2} {\bf J}(\rho_t,\rho_{t+s})\vert_{s=0} 
&=(\log \delta)''(t) - \left(\tfrac{\delta^\prime(t)}{\delta(t)}\right)^2 + \frac{d^2}{ds^2}{\bf I}(\rho_t,\rho_{t+s})\vert_{s=0}\\
&\leq (\log \delta)''(t) + \frac{d^2}{ds^2}{\bf I}(\rho_t,\rho_{t+s})\vert_{s=0}
\end{align*}
We now conclude:
\begin{align*}
\int_a^b &\left(\frac{d^2}{ds^2}{\bf J}(\rho_t,\rho_{t+s})\vert_{s=0}\right)^{\tfrac12}dt
 \leq \sqrt{b-a} \left(\int_a^b\frac{d^2}{ds^2}{\bf J}(\rho_t,\rho_{t+s})\vert_{s=0}dt\right)^{\tfrac12}\\
&\leq \sqrt{b-a} \left(\int_a^b (\log \delta)''(t) dt + \int_a^b\frac{d^2}{ds^2}{\bf I}(\rho_t,\rho_{t+s})\vert_{s=0}dt\right)^{\tfrac12}\\
& \leq \sqrt{b-a} \left((\log \delta)'(b)-(\log \delta)'(a) + \int_a^b\frac{d^2}{ds^2}{\bf I}(\rho_t,\rho_{t+s})\vert_{s=0} dt\right)^{\tfrac12}\\
&\leq  \sqrt {b-a} \left(  - \frac{d}{ds}{\bf I}(\rho_b,\rho_{b+s})\vert_{s=0} +\frac{d}{ds}{\bf I}(\rho_a,\rho_{a+s})\vert_{s=0} + \int_a^b \frac{d^2}{ds^2}{\bf I}(\rho_t,\rho_{t+s})\vert_{s=0}dt\right)^{\frac12}.
\end{align*}
\end{proof}

\subsection{Proofs of Theorems~\ref{pressurelength} and \ref{pressurelength2}}\label{sec:secondderivativelength}

First we give an upper bound of the derivatives of the intersection form in the case of the grafting path from Theorem~\ref{pressurelength}.

\begin{proposition}\label{firstderivative}
Let $(\rho_{t}=\rho_{tz})_{t\geq 0}$ be a grafting path as in Theorem~\ref{pressurelength}.
Then there exist numbers $\kappa >0$ and $C>0$ only depending on $z$ and an upper bound $\sigma$ for the length of $\gamma$ such that
\[ \left\vert \frac{d}{ds}{\bf I}(\rho_{t},\rho_{t+s})\vert_{s=0}\right \vert \leq Ce^{-\kappa t} \, \text{ and } \,
\frac{d^{2}}{ds^{2}}{\bf I}(\rho_{t},\rho_{t+s})\vert_{s=0} \leq Ce^{-\kappa t}.\]
\end{proposition}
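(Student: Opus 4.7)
The plan is to reduce both derivatives of $\mathbf I(\rho_t,\rho_{t+s})$ at $s=0$ to the exponential decay of the intersection number $\iota(\hat\nu(z_t),\gamma^*)$ provided by Remark~\ref{rk:boundaryestimate}. Starting from $\mathbf I(\rho_t,\rho_{t+s})=\int f_{t+s}\,d\nu(z_t)$ (with $\nu(z_t)$ normalised by $\int f_t\,d\nu(z_t)=1$), I would use the real-analytic dependence $\rho\mapsto f_\rho$ from Proposition~\ref{hoelderfunction} to differentiate under the integral and apply formula~\eqref{intersectionformula} to $u=\partial_s^n|_{s=0}f_{t+s}$, whose period on a closed orbit $\gamma$ is $\partial_s^n|_{s=0}\ell_{t+s}(\gamma)=\alpha_0(\partial_s^n|_{s=0}\lambda(\rho_{(t+s)z}(\gamma)))$. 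This yields, for $n=1,2$,
\[
\frac{d^n}{ds^n}\bigg|_{s=0}\mathbf I(\rho_t,\rho_{t+s})=\lim_{R\to\infty}\frac{1}{\myhash N_t(R)}\sum_{\ell_t(\gamma)\le R}\frac{\partial_s^n|_{s=0}\ell_{t+s}(\gamma)}{\ell_t(\gamma)}.
\]

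Next I would invoke Proposition~\ref{cor:firstandsecond} at the grafting parameter $tz$, evaluating its first and second differentials in the fixed direction $z$: since $\alpha_0$ is a fixed linear form, this yields
\[
\bigl|\partial_s^n|_{s=0}\ell_{t+s}(\gamma)\bigr|\le C(\sigma,z)\,\iota(\gamma,\gamma^*),\qquad n=1,2,
\]
uniformly in $t$ and $\gamma$. Substituting this into the previous display and identifying the resulting weighted average with the pairing $\iota(\hat\nu(z_t),\gamma^*)$ gives
\[
\bigl|\partial_s^n|_{s=0}\mathbf I(\rho_t,\rho_{t+s})\bigr|\le C(\sigma,z)\,\iota(\hat\nu(z_t),\gamma^*).
\]

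The final step is to apply Remark~\ref{rk:boundaryestimate}(2), which gives $\iota(\hat\nu(z_t),\gamma^*)\le C_1(L(t)+1)e^{-\delta(z_t)L(t)}$, where $L(t)$ is the height of the unique flat cylinder in the abstract grafted surface $S_{tz}$. Since $z\in\ker\alpha_0\setminus\{0\}$ while $u=d\tau\hsl$ satisfies $\alpha_0(u)>0$, the vectors $u,z$ are linearly independent so $su+tz\ne 0$ for every $(s,t)\in\R\times(0,\infty)$; positive homogeneity of $\mathfrak F$ then gives $L(t)=t\,L(1)$ with $L(1)=\min_{s\in\R}\mathfrak F(su+z)>0$ depending only on $z$. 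Combined with the uniform lower bound $\delta(z_t)\ge\delta_\sigma>0$ from Lemma~\ref{entropy} and Proposition~\ref{prop:entropy of hypsurf}, one obtains $\iota(\hat\nu(z_t),\gamma^*)=O(e^{-\kappa t})$ for any $\kappa<\delta_\sigma L(1)$, which produces the asserted bound.

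The main subtlety lies in the third step, where I identify the weighted average of $\iota(\gamma,\gamma^*)/\ell_t(\gamma)$ with $\iota(\hat\nu(z_t),\gamma^*)$, since the geometric intersection with $\gamma^*$ is not itself a continuous function on $T^1S$. This can be handled either by approximating the Dirac current $\gamma^*$ by smooth positive currents supported in shrinking tubular neighbourhoods and invoking formula~\eqref{intersectionformula} together with weak-$*$ continuity of $\iota$, or directly by noting that $\iota(\cdot,\gamma^*)$ is weak-$*$ continuous at currents assigning zero mass to geodesics asymptotic to $\gamma^*$, a property enjoyed by $\hat\nu(z_t)$ because of its product structure~\eqref{eq:sullivan measure}; cf.\ Chapter~8 of~\cite{Mar16}.
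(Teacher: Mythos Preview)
Your proposal is correct and follows essentially the same route as the paper: write $\mathbf I(\rho_t,\rho_{t+s})=\int f_{t+s}\,d\nu(z_t)$, differentiate under the integral, pass to periodic orbits, bound the periodwise derivatives by $C\iota(\gamma,\gamma^*)$ via Proposition~\ref{cor:firstandsecond}, and conclude by the exponential decay of $\iota(\hat\nu(z_t),\gamma^*)$. You are in fact slightly more careful than the paper in two places: you cite Remark~\ref{rk:boundaryestimate}(2) (the intersection-number bound) rather than Proposition~\ref{boundaryestimate2} (the $\ell^{\flat}$ bound), and you explicitly address the continuity of $\iota(\cdot,\gamma^*)$ at $\hat\nu(z_t)$ and the linear growth $L(t)=tL(1)$, both of which the paper leaves implicit.
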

\begin{proof}
Resuming the notations from Section~\ref{sec:hitchinrep}, 
Proposition~\ref{hoelderfunction} shows that 
a Hitchin grafting path $t\to \rho_{tz}$ in the Hitchin component gives
rise to a real analytic family $f_t:T^1S\to (0,\infty)$ of H\"older functions
defining a reparameterization of the geodesic flow on $T^1S$ corresponding to 
the Finsler length of $\rho_{tz}(\pi_1(S))$. 

For each $t$ let  $\nu(t)$ be the Gibbs equilibrium state of $-\delta(t)f_t$
where $\delta(t)>0$ is such that the pressure of $-\delta(t)f_t$ vanishes. By our convention,
$\nu(t)$ is a probability measure on $T^1S$ which is invariant under the geodesic flow $\Phi^t$ on $S$.
For $\eta\in \pi_1(S)$ we also put $f_t(\eta)=\int_\eta f_t$, the Finsler translation length of the 
element $\rho_{tz}(\eta)$.

To bound the derivatives of the intersection form ${\bf I}$ we will use the following formula (see Section~\ref{sec:geodescicurrents})
\[{\bf I}(\rho_t,\rho_{t+s})=\frac{\int f_{t+s}d\nu_t}{\int f_td\nu_t}=\int f_{t+s}d\nu_t,\]
which is easier to differentiate with respect to $s$.

We know 
that
\[\nu(t) = \lim_{T\to \infty}\frac{1}{\myhash  N_{f_{t}}(T)} \sum_{\eta\in N_{f_{t}}(T)} \frac{{\mc D}_{\eta}}{f_t(\eta)},\]
where ${\mc D}_{\eta}$ is the flow-invariant measure supported on the periodic orbit $\eta$ (in our fixed hyperbolic structure).
Exchanging derivatives and integrals, we get
\begin{align*}
\frac{d}{ds}\int f_{t+s} d\nu(t)\vert_{s=0} 
&=\int \frac{d}{ds}f_{t+s} \vert_{s=0} d\nu(t) \\
&=\lim_{T\to \infty} \frac{1}{\myhash  N_{f_{t}}(T)} \sum_{\eta\in N_{f_{t}}(T)} \frac{\int \frac{d}{ds} f_{t+s}\vert_{s=0}  d\mc D_{\eta}}{f_{t}(\eta)}\\
&=\lim_{T\to \infty} \frac{1}{\myhash  N_{f_{t}}(T)} \sum_{\eta\in N_{f_{t}}(T)} \frac{\frac{d}{ds}f_{t+s}(\eta)\vert_{s=0} }{f_{t}(\eta)}
\end{align*}

By Theorem~\ref{cor:firstandsecond} we know 
\[\left\vert \frac{d}{ds}f_{t+s}(\eta)\vert_{s=0}\right\vert\leq A_{\epsilon,\sigma} \iota(\eta,\gamma^*)\]
and hence by the continuity of $\iota$ and 
H\"older continuity of the function $\frac{d}{ds}f_{t+s}$,
we conclude that 
\[\left\vert \frac{d}{ds}\int f_{t+s}d\nu(t)\vert_{s=0}\right\vert \leq A_{\epsilon,\sigma} \iota(\nu(t),\gamma^*).\]
By Proposition~\ref{boundaryestimate2} we have
\[ \left\vert \frac{d}{ds}\int f_{t+s} d\nu(t)\vert_{s=0}\right \vert \leq Ce^{-\kappa t}.\]
With a similar argument, which is omitted here, one proves
\[ \frac{d^2}{ds^2}\int f_{t+s} d\nu(t)\vert_{s=0}  \leq Ce^{-\kappa t}.\qedhere\]
\end{proof}

We can now conclude the proof of Theorem~\ref{pressurelength}.
\begin{proof}[Proof of Theorem~\ref{pressurelength}]
Using Propositions~\ref{prop:general bound on pressure lengths} and \ref{firstderivative} we have the following estimates on the pressure length of $(\rho_{t}=\rho_{tz})_{t\geq 0}$.
\begin{align*}
&\int_0^{\infty} \left(\frac{d^2}{ds^2}{\bf J}(\rho_t,\rho_{t+s})\vert_{s=0}\right)^{\tfrac12}dt
=\sum_{m\geq0}\int_m^{m+1} \left(\frac{d^2}{ds^2}{\bf J}(\rho_t,\rho_{t+s})\vert_{s=0}\right)^{\tfrac12}dt\\
&\leq\sum_{m\geq0} \left(  \frac{d}{ds}{\bf I}(\rho_{m+1},\rho_{{m+1}+s})\vert_{s=0} - \frac{d}{ds}{\bf I}(\rho_m,\rho_{m+s})\vert_{s=0} + \int_m^{m+1} \frac{d^2}{ds^2}{\bf I}(\rho_t,\rho_{t+s})\vert_{s=0}dt\right)^{\frac12}\\
&\leq\sum_{m\geq0} \left(   3Ce^{-\kappa m}\right)^{\frac12}\\
&\leq \frac{\sqrt {3C}}{1-e^{-\kappa/2}}. \qedhere
\end{align*}
\end{proof}

We now turn to the proof of Theorem~\ref{pressurelength2}, which is very similar, except that it uses Corollary~\ref{cor:estimate derivative length on shearings} instead of Theorem~\ref{cor:firstandsecond}. 
\begin{proof}[Proof of Theorem~\ref{pressurelength2}]
Recall that in the present setting, the geodesic $\gamma$ divides $S$ into  subsurfaces $S_0,S_1$,
the smooth path $t\to \rho^t$ $(t\in [0,T])$ in the Teichm\"uller space of marked Riemann surfaces is such that:
\begin{enumerate}
\item The restriction of the marked hyperbolic metric $\rho^t$ to the subsurface $S_0$ does not depend on $t$.
\item The entropy of the geodesic flow of $\rho^t$ restricted to the subspace of all geodesics entirely contained in $S_0$ is strictly larger than the entropy of the restriction of the flow to the subspace of all geodesics entirely contained in $S_1$.
\end{enumerate}
We denote by $\rho_{z}^{t}$ the representation obtained by grafting $\rho^{t}$ at $\gamma$ with parameter $z$.
Let $f_z^t$ be a corresponding  positive H\"older function and let $\nu_z^t$ be the corresponding Gibbs state, such that $\int f_{z}^{t}d\nu_{z}^{t}=1$ (on the hyperbolic surface associated to $\rho^{t}$).

As in the proof of Proposition~\ref{firstderivative}, denoting $f_{z}^{t}(\eta)=\int_{\eta} f_{z}^{t}$ the Finsler length of $\rho_{z}^{t}(\eta)$,  by Corollary~\ref{cor:estimate derivative length on shearings} we have
\begin{align*}
\left\vert\frac{d}{ds}{\bf I}(\rho_{z}^t,\rho_{z}^{t+s})\vert_{s=0}\right\vert
&=\left\vert\frac{d}{ds}\int f_{z}^{t+s} d\nu_{z}^t\vert_{s=0}\right\vert\\ 
&\leq\lim_{T\to \infty} \frac{1}{\myhash  N_{f_{z}^{t}}(T)} \sum_{\eta\in N_{f_{z}^{t}}(T)} \frac{\left\vert\frac{d}{ds}f_{z}^{t+s}(\eta)\vert_{s=0}\right\vert }{f_{z}^{t}(\eta)}\\
&\leq\lim_{T\to \infty} \frac{1}{\myhash  N_{f_{z}^{t}}(T)} \sum_{\eta\in N_{f_{z}^{t}}(T)} C\frac{\ell^{S_{0}}_{\rho^{t}}(\eta) }{f_{z}^{t}(\eta)}\\
&=C\lim_{T\to \infty} \ell^{S_{0}}_{\rho^{t}} \left(\frac{1}{\myhash  N_{f_{z}^{t}}(T)} \sum_{\eta\in N_{f_{z}^{t}}(T)} \frac{\mc D_{\eta}}{f_{z}^{t}(\eta)}\right)\\
&=C\ell^{S_{0}}_{\rho^{t}}(\nu_z^t),
\end{align*}
where $C$ is a constant that depends on $(\rho^{t})_{0\leq t\leq T}$,  and $\ell^{S_{0}}_{\rho^{t}}(\nu)$ is simply the mass given by $\nu$ to the set of unit tangent vectors of $S$ that are footed on $S_{0}$.
This is a linear function of $\nu$ that is continuous at $\nu$ when it gives zero measure to the set of geodesics asymptotic to $\gamma$; in particular it is continuous at $\nu_{z}^{t}$.

Similarly one can check that 
\[\frac{d^{2}}{ds^{2}}{\bf I}(\rho_{z}^t,\rho_{z}^{t+s})\vert_{s=0}\leq C\ell^{S_{0}}_{\rho^{t}}(\nu^{t}_{z})\]

We deduce from this, Proposition~\ref{boundaryestimate2} and Proposition~\ref{escapeofmass2} that for fixed $t$, the derivatives $\left\vert\frac{d}{ds}{\bf I}(\rho_{z}^t,\rho_{z}^{t+s})\vert_{s=0}\right\vert$ and $\frac{d^{2}}{ds^{2}}{\bf I}(\rho_{z}^t,\rho_{z}^{t+s})\vert_{s=0}$ converge to zero when the cylinder height associated to $z$ goes to infinity.
Moreover these quantities are bounded above by a constant which depends on the whole path $(\rho^{s})_{0\leq s\leq T}$ but not on $t$ (notice that $\ell^{S_{0}}_{\rho^{t}} (\nu^{t}_{z})$ is bounded above by the total mass of $\nu^{t}_{z}$, which is bounded above by Lemma~\ref{lem:total mass of nu}).
By the dominated convergence theorem, we conclude that 
\[\int_{0}^{T}\frac{d^{2}}{ds^{2}}{\bf I}(\rho_{z}^t,\rho_{z}^{t+s})\vert_{s=0}dt\underset{L\to\infty}{\longrightarrow}0,\]
where $L$ denotes the cylinder height associated to $z$.

We now conclude using Proposition~\ref{prop:general bound on pressure lengths}, as in the proof of Theorem~\ref{pressurelength}.
\end{proof}

\section{Distortion}\label{sec:distortion}

The restriction of the pressure metric to the Fuchsian locus is 
a multiple of the Weil Petersson metric on Teichm\"uller space 
\cite{BCLS15} and hence its intrinsic large scale 
geometric properties are well understood. Moreover, 
by~\cite{PS17}, the Fuchsian locus can be characterized as the set of Hitchin representations
whose critical exponent for the symmetric metric as well as for the Hilbert metric 
(and other sufficiently well behaved Finsler metrics) 
assumes a maximum. This intrinsic geometric characterization of the Fuchsian locus
does however not reveal information on its significance for the large scale geometry
of the Hitchin component. 

In fact, the pressure metric for the \emph{Hilbert length}, 
which by definition is induced from the Hilbert metric for convex domains in projective space, 
is degenerate and hence \emph{not} a Finsler metric for the 
Hitchin component. Namely,
the contragredient involution of $\PSL_d(\mathbb{R})$ acts isometrically on the 
character variety equipped with the pressure metric. If $d=2m$ is even, then 
this involution is just conjugation with the standard symplectic form, with fixed 
point set the symplectic group 
${\rm PSp}_{2m}(\mathbb{R})$. It turns out that 
the pressure metric for the Hilbert length is 
degenerate on the normal bundle of the space of representations  with image in 
${\rm PSp}_{2m}(\mathbb{R})$. Note that since 
the involution is an isometry for the pressure metric, the locus of representations into
${\rm PSp}_{2m}(\mathbb{R})$ is totally geodesic.

In the case $d=3$, the fixed point set of the involution equals the 
image ${\rm PSO}(2,1)$ of $\PSL_2(\mathbb{R})$ under the irreducible representation and hence
the Fuchsian locus is totally geodesic for the pressure metric
(see e.g.~\cite{Da19}). 
However, in spite of recent refined information on the 
restriction of the pressure metric to the Fuchsian locus~\cite{LW18},
the following seems to be an open question.

\begin{question}\label{totgeo}
For $d\geq 4$, is the Fuchsian locus totally geodesic for the pressure metric for representations into 
$\PSL_d(\mathbb{R})$?
\end{question}

On purpose, we leave the specification of the length function defining the pressure metric open.

The main goal of this section is to show that 
from a global geometric perspective, 
the Fuchsian locus is distorted for
the pressure distance on the Hitchin component for $n\geq 3$ and genus 
$g\geq 3$, where the pressure distance is taken with respect to the Finsler length 
considered in the previous sections. We believe that similar arguments should lead
to corresponding results for all variants of the pressure metric.

\subsection{Regions of  finite diameter}

Let $S$ be a closed surface of genus $g\geq 3$ and a simple closed curve $\gamma^*\subset S$ that splits 
$S$ into two subsurfaces $S_0,S_1$ of genus $g_0=2,g_1=g-2\geq 1$.
Let $\sigma>0$ and $0<\ell\leq \sigma$.

Let ${\mathcal T}(S_i,\ell)$ (with $i=0,1$) and ${\mathcal T}(S,\ell)$ be the Teichm\"uller spaces  of marked hyperbolic metrics on 
$S_i$ and $S$ such that $\gamma^*$ has length $\ell$.
The restriction map 
$$(r_0,r_1):\mc T(S,\ell)\to \mc T(S_0,\ell)\times\mc T(S_1,\ell)$$
is a fiber bundle with fiber $\R$, on which the twist flow along $\gamma^*$ acts by translation.
Using for instance Fenchel--Nielsen coordinates, one can find a section and have a parametrisation of $\mc T(S,\ell)$ of the form
$$\mc T(S_0,\ell)\times\mc T(S_1,\ell)\times\R\simeq \mc T(S,\ell)\subset \mc T(S).$$
Given a different section 
we will obtain a different parametrisation but with the same image, and the resulting change of coordinate will be of the form
\begin{align*}\mc T(S_0,\ell)\times\mc T(S_1,\ell)\times\R&\rightarrow 
\mc T(S_0,\ell)\times\mc T(S_1,\ell)\times\R\\(X_0,X_1,t)&\mapsto (X_0,X_1,t+b)
\end{align*}
where $b>0$ is a fixed constant.


The following is the main result of this section. 

\begin{theorem}\thlabel{shortcut}
Let $X_0\in {\mathcal T}(S_0,\ell)$.
Consider the subset $r_0^{-1}\{X_0\}\subset\mc T(S)$: the points whose restriction to $S_0$ is isometric to $X_0$.
Equivalently, it is the image of the embedding
$$\{X_0\}\times{\mathcal T}(S_1,\ell)\times\R\hookrightarrow {\mathcal T}(S)\hookrightarrow{\rm Hit}(S).$$
Then for the pressure metric on the Hitchin component, this set has diameter 
 bounded by a number $C$ that only depends on 
 an upper bound $\sigma$ for $\ell$.
\end{theorem}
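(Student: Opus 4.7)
The strategy is to show that every point $P\in r_0^{-1}\{X_0\}$ lies within uniformly bounded pressure distance of a common reference point in the pressure-metric completion $\overline{\mathrm{Hit}(S)}$, so that the diameter bound follows from the triangle inequality. The two essential inputs are Theorem~\ref{pressurelength} (every Hitchin grafting ray along $\gamma^*$ has finite pressure length bounded in terms of $\sigma$) and Theorem~\ref{pressurelength2} (for sufficiently large cylinder height, a smooth shearing path in $S_1$ has arbitrarily small pressure length, provided the $S_1$-entropy stays strictly below the $S_0$-entropy throughout).

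Fix a unit vector $z_0\in\ker\alpha_0\subset\mf a$ transverse to the Fuchsian direction. For each $P\in r_0^{-1}\{X_0\}$, Theorem~\ref{pressurelength} bounds the pressure length of the grafting ray $L\mapsto\rho_{Lz_0}(P)$ by a constant $C_0=C_0(z_0,\sigma)$; hence the ray converges in $\overline{\mathrm{Hit}(S)}$ to a point $P^\infty$ with $d^{\mathrm{Pres}}(P,P^\infty)\leq C_0$. The theorem reduces to showing that $P^\infty$ is independent of the choice of $P\in r_0^{-1}\{X_0\}$: granted this, any $P_0,P_1\in r_0^{-1}\{X_0\}$ satisfy $d^{\mathrm{Pres}}(P_0,P_1)\leq 2C_0$, a bound depending only on $\sigma$.

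To prove independence, given $P_0,P_1\in r_0^{-1}\{X_0\}$ I would connect them by a path $P(s)\subset r_0^{-1}\{X_0\}$ made of earthquakes along multicurves in $S_1$ (allowing $\gamma^*\subset\partial S_1$, which absorbs the twist coordinate) and apply Theorem~\ref{pressurelength2} to the grafted family $\rho_{Lz_0}(P(s))$: its pressure length tends to zero as $L\to\infty$, yielding $d^{\mathrm{Pres}}(\rho_{Lz_0}(P_0),\rho_{Lz_0}(P_1))\to0$, hence $P_0^\infty=P_1^\infty$.

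The main obstacle is the entropy hypothesis in Theorem~\ref{pressurelength2}: each intermediate metric along $P(s)$ must have $S_1$-entropy strictly less than $\delta_0:=h_{\mathrm{top}}(X_0)$, and when the initial or final $Y_1\in\mc T(S_1,\ell)$ has $S_1$-entropy $\geq\delta_0$ the hypothesis already fails at the endpoints. I would circumvent this by introducing a reference point $P_*=(X_0,Y_1^*,0)$ with $Y_1^*$ of arbitrarily small $S_1$-entropy, obtained from Proposition~\ref{prop:entropy of hypsurf} by choosing $Y_1^*$ with a pants decomposition of $S_1$ whose interior curves are very long (the hypothesis $g_1=g-2\geq 1$ ensures $S_1$ is not a pair of pants and such a decomposition exists). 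For $Y_1$ with $\delta(Y_1)<\delta_0$ one connects $P$ to $P_*$ within the open low-entropy region of $\mc T(S_1,\ell)$, which is shearing-path-connected. The high-entropy case is the most delicate step: one decomposes the trajectory into alternating grafting and shearing stages and exploits Propositions~\ref{boundaryestimate2} and~\ref{escapeofmass2} on the escape of mass, which give exponential decay of the $T^1S_1$-mass of the equilibrium measures in the cylinder height, to reduce to the low-entropy situation while keeping each stage's pressure contribution controlled by Theorem~\ref{pressurelength}. Combining these steps yields a uniform bound $d^{\mathrm{Pres}}(P,P_*)\leq C'(\sigma)$, and the theorem follows with $C=2C'$.
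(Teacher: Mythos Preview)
Your overall architecture is close to the paper's, but your handling of the ``high-entropy case'' contains a genuine error, and the paper's actual fix is different from what you propose.

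The problem is your appeal to Propositions~\ref{boundaryestimate2} and~\ref{escapeofmass2}. Those results say that as the cylinder height $L\to\infty$, the equilibrium measure $\nu(z)$ concentrates on the invariant set $K=K_0\cup K_1$ and, more precisely, on a measure of maximal entropy for $\Phi^t\vert_K$. When $\delta(Y_1)>\delta_0=h_{\mathrm{top}}(X_0)$, the component of maximal entropy is $K_1\subset T^1S_1$, so the $T^1S_1$-mass of $\nu(z)$ tends to the full mass, not to zero. In that regime Theorem~\ref{pressurelength2} gives no control whatsoever on the pressure length of a shearing path in $S_1$: the derivatives you want to bound are governed by $\ell^{S_1}_{\rho^t}(\nu_z^t)$ (see the proof of Theorem~\ref{pressurelength2}), which stays bounded away from zero. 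So your ``escape of mass'' mechanism points in exactly the wrong direction, and the decomposition into alternating grafting/shearing stages cannot repair this.

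The paper's idea is to \emph{modify $S_0$ rather than $S_1$}: before grafting, one first pinches a pants decomposition $\alpha_1,\dots,\alpha_4$ of $S_0$ (which exists with lengths $\leq\max(\ell,4\pi)$ by Theorem~\ref{thm:short-pant-decomposition}). Since $S_0$ has genus $2$, one of these pants is disjoint from $\gamma^*$, and Proposition~\ref{surfacewithboundary}(4) forces the entropy of the pinched $p_{\lambda_0}(X_0)$ arbitrarily close to~$1$, hence strictly above the supremum $h<1$ of $\delta(X_1(t))$ along the chosen twist path. This guarantees the entropy-gap hypothesis of Theorem~\ref{pressurelength2} for \emph{every} $Y_1$, with no case distinction. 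Crucially, the pinching path has Weil--Petersson (hence pressure, on the Fuchsian locus) length bounded by a constant depending only on~$\sigma$, by Wolpert's incompleteness estimate (Fact~\ref{wolpert}). The final path is: pinch $S_0$, graft, move along the twist path in $S_1$, ungraft, unpinch; each of the five stages has length bounded purely in terms of~$\sigma$. Note in particular that this path leaves $r_0^{-1}\{X_0\}$, so your reduction to a single common limit point $P^\infty$ is not how the argument runs.
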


\begin{question}\label{finitediam}
Is the diameter for the pressure metric of the Fuchsian locus finite?
Is the diameter of the Hitchin component for the pressure metric finite?
\end{question}

The proof of Theorem~\ref{shortcut} has three main ingredients: Theorem~\ref{pressurelength} (upper bounds for pressure lengths of Hitchin grafting paths), Theorem~\ref{pressurelength2} (under an entropy gap assumption,  certain paths $\mc T(S)$ pushed in ${\rm Hit}(S)$ via grafting see their pressure length decrease to zero), and a celebrated 
result of Wolpert that the Weil--Petersson length of a path in $\mc T(S)$ obtained by pinching a simple closed curve 
is bounded above by a constant only depending on the length of the curve.
The idea will be, starting from a well chosen path between two arbitrary points of $\{X_0\}\times{\mathcal T}(S_1,\ell)\times\R$, to deform this path by first pinching curves in $S_0$ (to create an entropy gap) and then grafting along $\gamma^*$ to shrink the pressure length of this path to zero.

We now recall Wolpert's result more precisely.
Given essential disjoint simple closed curves
$\alpha_1,\dots,\alpha_{4}\subset S_0$ and $\beta_1,\dots,\beta_{3g-8}\subset S_1$ that, 
together with $\gamma^*$, decompose $S$ into pairs of pants, the Fenchel--Nielsen coordinates give a diffeomorphism 
from $\mc T(S)$ to $\R^{3g-3}_{>0}\times\R^{3g-3}$.
To a hyperbolic metric is associated the lengths of $\alpha_i,\beta_j,\gamma^*$ and twist parameters along these curves.
Pinching $\alpha_1,\dots,\alpha_4$ by multiplying their length by $\lambda<1$ in Fenchel--Nielsen coordinates (while keeping all other coordinates constant) is a transformation of $\mc T(S)$ that does not depend on the choice of $\beta_1,\dots,\beta_{3g-8}$.

\begin{fact}[\cite{wolpert}]\label{wolpert}
    For any $\sigma'>0$ there is a constant $C>0$ such that the following holds.
    Let $\gamma_1,\dots,\gamma_{4}\subset S_0$ be a multicurve splitting $S_0$ into pairs of pants.
    Then for any $S\in\mc T(S)$ giving length at most $\sigma'$ to 
    each $\gamma_1,\dots,\gamma_4,\gamma^*$, pinching the length of $\gamma_1,\dots,\gamma_4$ to zero, while keeping all other Fenchel--Nielsen coordinates constant, yields a path in $\mc T(S)$ with Weil--Petersson length at most $C$.
\end{fact}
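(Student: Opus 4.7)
The plan is to estimate the Weil--Petersson length of the pinching path directly, using Wolpert's asymptotic expansion of the WP metric in Fenchel--Nielsen coordinates, following his original approach to non-completeness in \cite{wolpert}. First, I would parametrise the pinching path by $t\in[0,1)$ so that $\ell_i(S_t)=(1-t)\ell_i(S)$ for $i=1,\dots,4$, with all other Fenchel--Nielsen coordinates held constant. The tangent vector of this path is $v_t=-\sum_{i=1}^4 \ell_i(S)\,\partial/\partial \ell_i$, and the quantity to control is $\int_0^1 \|v_t\|_{WP}\,dt$.

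The key input will be Wolpert's asymptotic formula in Fenchel--Nielsen coordinates: for a simple closed geodesic $\gamma$ of length $\ell_\gamma$,
$$\|\partial/\partial \ell_\gamma\|_{WP}^2=\frac{1}{2\pi \ell_\gamma}+O(1),\qquad \langle \partial/\partial\ell_\gamma,\partial/\partial \ell_{\gamma'}\rangle_{WP}=O(1)$$
for disjoint $\gamma\ne\gamma'$, as $\ell_\gamma\to 0$. Applying these with $\gamma=\gamma_i$ and expanding $\|v_t\|^2_{WP}$ will yield
$$\|v_t\|_{WP}^2 \leq \frac{1}{2\pi(1-t)}\sum_{i=1}^4 \ell_i(S)+C_1(\sigma')\leq\frac{2\sigma'}{\pi(1-t)}+C_1(\sigma'),$$
which will integrate to a finite bound $\int_0^1 \|v_t\|_{WP}\,dt \leq C_2(\sigma')$, as required.

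The main technical point will be to justify that the implicit constants in Wolpert's asymptotics are uniform in the ``remote'' Fenchel--Nielsen parameters, namely the lengths of the $\beta_j\subset S_1$ and all the twist parameters, which are unconstrained in the statement. For this I would invoke the fundamental localisation property established by Wolpert: the gradient $\nabla \ell_\gamma$ is essentially concentrated in a collar of $\gamma$, so the norm $\|\partial/\partial\ell_\gamma\|_{WP}$ is controlled by the geometry of the pair of pants adjacent to $\gamma$ alone. Since each $\gamma_i\subset S_0$ bounds pairs of pants whose other boundary curves are among $\gamma_1,\dots,\gamma_4,\gamma^*$, all of length at most $\sigma'$ by hypothesis, this local geometry is uniformly bounded in terms of $\sigma'$, and Wolpert's estimates apply with uniform error terms along the entire pinching path. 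This is precisely the content of the pinching estimates in \cite{wolpert}, which is why the resulting constant $C$ depends only on $\sigma'$ and not on the other Fenchel--Nielsen data of the starting surface $S$.
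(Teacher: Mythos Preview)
The paper does not give its own proof of this statement: it is recorded as a \emph{Fact} with a citation to \cite{wolpert}, and is used as a black box in the proof of \thref{shortcut}. So there is no ``paper's proof'' to compare against; your sketch is a reconstruction of Wolpert's argument rather than an alternative to something in the paper.

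Your outline is essentially the standard one and the integration is correct: the leading behaviour $\|\partial/\partial\ell_\gamma\|_{WP}^2\asymp c/\ell_\gamma$ gives $\|v_t\|_{WP}\lesssim (1-t)^{-1/2}$, which is integrable on $[0,1)$, yielding a bound of order $\sqrt{\sigma'}$. One point deserves more care: the vector field $\partial/\partial\ell_{\gamma_i}$ in Fenchel--Nielsen coordinates is \emph{not} simply the WP-dual of $d\ell_{\gamma_i}$ (equivalently, not a multiple of the twist field $\nabla\ell_{\gamma_i}$), so its norm is not read off directly from the Riera/Wolpert formula for $\langle\nabla\ell_\alpha,\nabla\ell_\beta\rangle$. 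What you actually need is Wolpert's comparison of the WP metric to the model metric $\sum (d\ell_i^{1/2})^2+\ell_i^3\,d\tau_i^2$ near the pinching locus, together with the fact that the coupling between the $\gamma_i$-coordinates and the remaining Fenchel--Nielsen coordinates (those for $S_1$) is lower order. Your localisation heuristic (that $\nabla\ell_{\gamma_i}$ is concentrated in the collar of $\gamma_i$, whose geometry is controlled by $\sigma'$) is the right mechanism behind this uniformity, but turning it into a rigorous bound on $\|\partial/\partial\ell_{\gamma_i}\|_{WP}$, uniform in the unconstrained $S_1$-parameters, is exactly the content of Wolpert's expansion theorems rather than an immediate consequence of the localisation of a single gradient. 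With that caveat, your plan is sound.
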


\begin{proof}[Proof of \thref{shortcut}]
Let $X_0\in {\mathcal T}(S_0,\ell)$ be a marked 
hyperbolic metric.
Let $X,Y\in \{X_0\}\times{\mathcal T}(S_1,\ell)\times\R$.
We want to show that $X$ can be connected to $Y$ by a path of uniformly bounded pressure metric length.
This path will be constructed by concatenating five paths of Hitchin representations, illustrated in Figure~\ref{fig-bounded-path}. 

Let $X_1,Y_1\in\mc T(S_1,\ell)$ be the restrictions of $X,Y$ to $S_1$.
A result of Wolpert (Corollary 3.5 of \cite{Wolpert82}) says that the tangent space of $\mc T(S_1,\ell)$ at each point is spanned by the vector fields of twist flows along $6g-16$ well chosen simple closed curves (Wolpert's result is stated for closed surfaces, but it also applies to compact surfaces with boundary).
By a classical argument from differential geometry, one can hence connect $X_1$ and $Y_1$ via a path $(X_1(t))_{0\leq t\leq 1}$ which is a  finite concatenation of twisting paths along these well chosen simple closed curves.

As a consequence,
one can connect $X$ and $Y$ via a path 
$$(X(t))_{0\leq t\leq 1}\subset \{X_0\}\times \mc T(S_1,\ell)\times\R$$
which is a  finite concatenation of twisting paths along simple closed curves of $S_1$ (adding to $(\{X_0\}\times\{X_1(t)\}\times\{0\})_t$ a final twist along $\gamma^*$, if necessary).

By Proposition~\ref{surfacewithboundary}, for every $t$ the entropy of $X_1(t)$ is strictly less than $1$, and it varies continuously with $t$.
By compactness, this entropy is bounded from above by $h<1$ independent of $t$.

Now we want to pinch along curves in $S_0$ to create an entropy gap.
By Theorem~\ref{thm:short-pant-decomposition} there is a pair of pants decomponsition $\alpha_1,\dots,\alpha_4\subset X_0$ with length at most $\max(\ell,4\pi)$.
For any $Z\in \{X_0\}\times{\mathcal T}(S_1,\ell)\times\R$ and $\lambda<1$, denote by 
$p_\lambda(Z)\in \mc T(S_0,\ell)\times{\mathcal T}(S_1,\ell)\times\R$ the metric obtained by pinching $\alpha_1,\dots,\alpha_4$ with factor $\lambda$ (multipliying lengths by $\lambda$ in Fenchel--Nielsen coordinates).
By Fact~\ref{wolpert}, the Weil--Petersson length of $(p_\lambda(Z))_{0<\lambda<1}$ is 
bounded from above by some constant $C_3>0$ that depends on $\sigma$.

By Proposition~\ref{surfacewithboundary}, and since $\alpha_1,\dots,\alpha_4$ split $S_0$ into two pairs of pants one of which is not adjacent to $\gamma^*$, for $\lambda_0$ small enough the entropy of $p_{\lambda_0}(X_0)$ is strictly greater than $h$.
In other words, for each $t$ there is an entropy gap in $p_{\lambda_0}(X(t))$ between the $S_0$, that has entropy greater than $h$, and the $S_1$, whose entropy 
is bounded by $h$.

Fix a grafting parameter $z$ transverse to the twist direction, and for $s\geq0$ and $Z\in \mc T(S_0,\ell)\times{\mathcal T}(S_1,\ell)\times\R$ denote by $g_s(Z)\in {\rm Hit}(S)$ 
the Hitchin grafting representation obtained by grafting $Z$ along $\gamma^*$ with parameter $sz$.
By Theorem~\ref{pressurelength} the pressure length of $(g_s(Z))_{s\geq 0}$ is bounded 
above by a constant $C_4>0$ that only depends on $\sigma$ and $z$.

Notice that $(p_{\lambda_0}(X(t)))_{0\leq t\leq 1}$ is, like $(X(t))_{0\leq t\leq 1}$, a concatenation of paths obtained by twisting along closed curves in $S_1$.
Hence we can apply Theorem~\ref{pressurelength2}, which says that the pressure length of $(g_s\circ p_{\lambda_0}(X(t)))_{0\leq t\leq 1}$ goes to zero as $s$ goes to infinity.
Let $s_0$ be such that this length is less than $1$.

Then we consider the concatenation of five paths where we first pinch $(p_\lambda(X(0)))_{1\geq \lambda\geq \lambda_0}$, then graft $(g_s\circ p_{\lambda_0}(X(0)))_{0\leq s\leq s_0}$, then let $t$ vary $(g_{s_0}\circ p_{\lambda_0}(X(t)))_{0\leq t\leq 1}$, then ungraft $(g_s\circ p_{\lambda_0}(X(1)))_{s_0\geq s\geq 0}$, and finally we unpinch $(p_\lambda(X(1)))_{\lambda_0\leq \lambda\leq 1}$.
This connects $X(0)$ to $X(1)$ in ${\rm Hit}(\Sigma)$ and has pressure length at most $2C+2C'+1$, which only depends on $\sigma$.

\end{proof}

\begin{figure}[ht]
    \begin{center}
        \begin{picture}(145,70)(0,0)
        \put(0,0){\includegraphics[width=145mm]{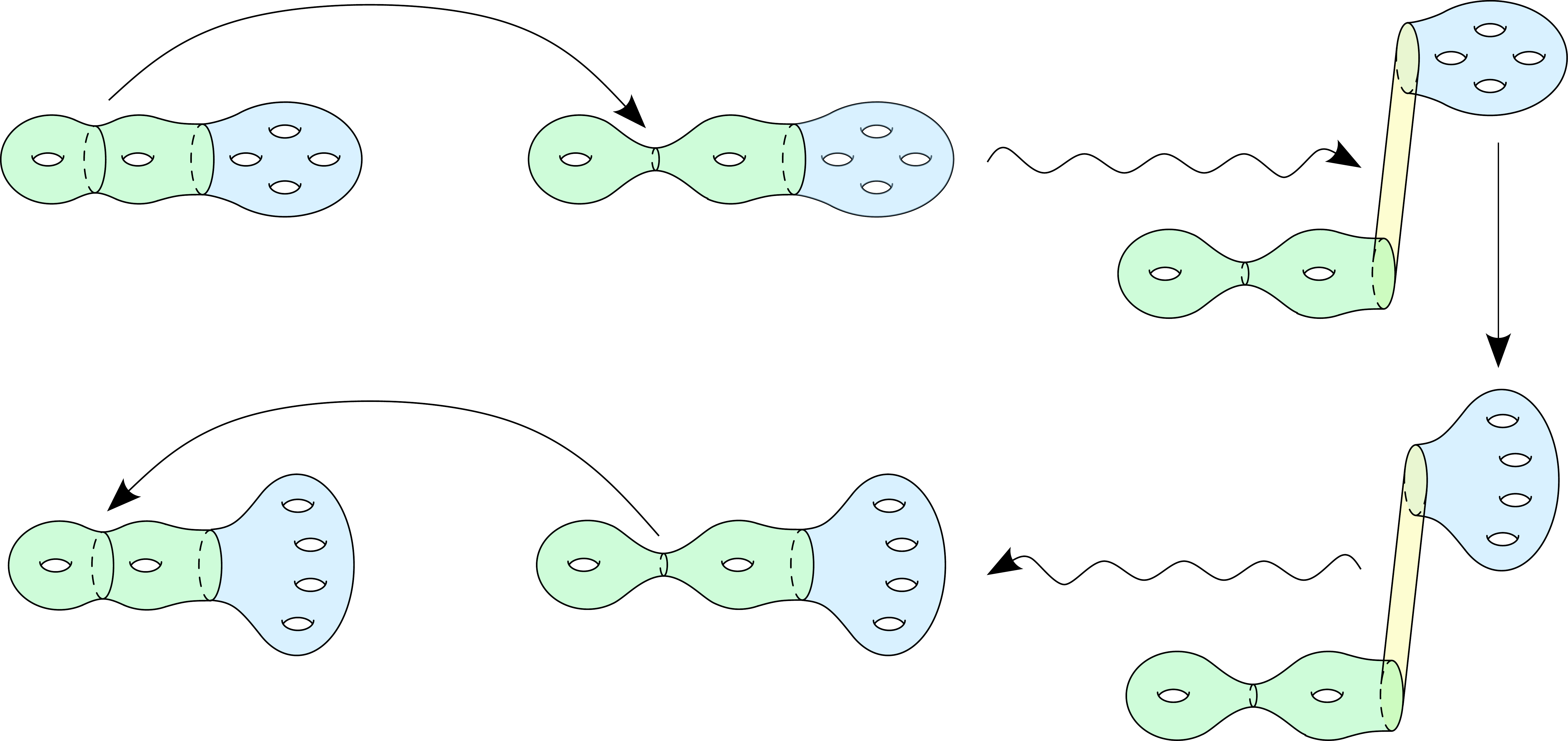}}
        \put(8, 47){$e$}
        \put(17, 46.8){$\gamma^*$}
        \put(2, 59.5){$\Sigma_0$}
        \put(30, 59.5){$\Sigma_1$}
        \put(94, 57){Hitchin grafting}
        \end{picture}
    \end{center}
    \caption{Bounded path of Hitchin representations for the pressure metric. Each path is bounded by a constant that depends only on the length of $\gamma^*$ and on the systole of $\Sigma_0$.}
    \label{fig-bounded-path}
\end{figure}

\subsection{Length comparison with a separating curve graph when $g\geq 5$}

\emph{In this section we assume $g\geq 5$.} 
Let ${\mathcal  S}{\mathcal C}{\mathcal G}(S)$ be 
the graph whose vertices are 
separating simple closed curves which decompose $S$ into a surface of genus $2$ and 
a surface of genus $g-2$ and where two such curves are connected by an edge if they can 
be realized disjointly. We have

\begin{lemma}\label{connected}
The graph ${\mathcal S}{\mathcal C}{\mathcal G}(S)$ is connected.
\end{lemma}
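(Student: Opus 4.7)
My plan is to combine the transitivity of the $\Mod(S)$-action on the vertex set of $\mathcal{SCG}(S)$ with a standard covering trick due to Putman. Transitivity follows from the change-of-coordinates principle: any two separating simple closed curves of topological type $(2, g-2)$ are equivalent under some element of $\Mod(S)$. Fix a base vertex $\alpha_0 \in \mathcal{SCG}(S)$ and a finite generating set $\{T_{c_1}, \dots, T_{c_N}\}$ of $\Mod(S)$ by Dehn twists along simple closed curves. Since $\Mod(S)$ acts on $\mathcal{SCG}(S)$ by graph automorphisms with a single orbit on vertices, it suffices, by Putman's trick, to exhibit for each $j$ a path in $\mathcal{SCG}(S)$ from $\alpha_0$ to $T_{c_j}(\alpha_0)$: the connected component of $\alpha_0$ will then be invariant under every generator and hence under $\Mod(S)$, so transitivity forces it to equal the whole vertex set.

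I would produce these paths via the following key topological claim: \emph{for every $\alpha \in \mathcal{SCG}(S)$ and every essential simple closed curve $c$ on $S$, there is $\gamma \in \mathcal{SCG}(S)$ disjoint from both $\alpha$ and $c$.} Applied to $(\alpha, c) = (\alpha_0, c_j)$, the resulting $\gamma_j$ is fixed by $T_{c_j}$ (which acts trivially on anything disjoint from $c_j$) and is therefore also disjoint from $T_{c_j}(\alpha_0)$; consequently $\alpha_0 - \gamma_j - T_{c_j}(\alpha_0)$ is a length-two path in $\mathcal{SCG}(S)$.

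The main obstacle is proving this key claim, and the hypothesis $g \geq 5$ is essential here. Write $S_0, S_1 \subset S$ for the two sides of $\alpha$, of genera $2$ and $g-2 \geq 3$ respectively. Any candidate $\gamma$ disjoint from $\alpha$ lies in one of $S_0, S_1$ and must bound a genus $2$ subsurface $\Sigma$ disjoint from $\alpha$; a genus count excludes $\Sigma \subset S_0$, since $S_0$ itself has genus $2$, which would force $\gamma$ to be isotopic to $\partial S_0 = \alpha$. Hence $\Sigma \subset S_1$, and the task reduces to finding a genus $2$ subsurface of $S_1$ with single separating boundary in the interior of $S_1$, disjoint from the proper $1$-submanifold $c \cap S_1$.

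I would construct this $\Sigma$ by case analysis on $c$. If $c$ is disjoint from $\alpha$, then either $c \subset S_0$, in which case any genus $2$ subsurface of $S_1$ with single boundary curve works; or $c \subset S_1$, in which case one side of $c$ in $S_1$ carries at least $\lceil (g-2)/2 \rceil \geq 2$ of the genus and accommodates $\Sigma$. If $c$ crosses $\alpha$ essentially, then $c \cap S_1$ is a disjoint union of essential arcs in $S_1$; the structural constraint that these arcs arise as the restriction of a single simple closed curve of $S$, together with the genus surplus $g - 2 \geq 3$, ensures that some component of $S_1 \setminus (c \cap S_1)$ still carries a genus $2$ subsurface with separating boundary in $S_1$, giving the desired $\gamma$ and completing the proof.
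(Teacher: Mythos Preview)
Your overall strategy via Putman's trick and transitivity is exactly the paper's approach, but your key topological claim is stated too broadly and is actually false, so the proof has a genuine gap in Case~3.

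The claim \emph{``for every $\alpha\in\mathcal{SCG}(S)$ and every essential simple closed curve $c$ there is $\gamma\in\mathcal{SCG}(S)$ disjoint from both $\alpha$ and $c$''} fails whenever $c$ and $\alpha$ fill $S$. Such filling pairs always exist. In that situation the arcs $c\cap S_1$ cut $S_1$ into disks, so every simple closed curve in $S_1$ disjoint from $c\cap S_1$ is inessential; in particular no $\gamma$ of the required type exists. Your sentence ``the structural constraint that these arcs arise as the restriction of a single simple closed curve \ldots\ ensures that some component of $S_1\setminus(c\cap S_1)$ still carries a genus~2 subsurface'' is the step that breaks: a single simple closed curve can intersect $\alpha$ arbitrarily many times and its restriction to $S_1$ can be an arc system of arbitrarily high complexity.

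The point is that Putman's trick does \emph{not} require the claim for all $c$, only for the finitely many twist curves $c_j$ in a chosen generating set, relative to a chosen base vertex $\alpha_0$. You have to make these choices with care. The paper does precisely this: it takes the Humphries generators $a_1,\dots,a_g,c_1,\dots,c_{g-1},m_1,m_2$ and picks $\alpha_0$ to be the standard $(2,g-2)$-separating curve meeting only $c_2$ (in two points). Then all generators except $T_{c_2}$ fix $\alpha_0$, and for $T_{c_2}$ one writes down by hand a common disjoint neighbour $b$ (the analogous separating curve meeting only $c_{g-2}$), which is where $g\ge 5$ is used. Your Cases~1 and~2 are fine and already cover almost all Humphries generators once $\alpha_0$ is chosen this way; you just need to replace the general Case~3 by this explicit construction for the single remaining generator.
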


\begin{proof}
The mapping class group ${\rm Mod}(S)$ of $S$ clearly acts transitively on the vertices
of ${\mathcal S}{\mathcal C}{\mathcal G}(S)$. Thus to check connectedness, 
we can apply a trick due to Putman~\cite{Put08}: Choose a vertex $c $ of 
${\mathcal S}{\mathcal C}{\mathcal G}(S)$ and a generating set 
$\psi_1,\dots,\psi_k$ of ${\rm Mod}(S)$. If for each $j$ the vertex $c$ can be
connected to $\psi_j(c)$ by an edge path in 
${\mathcal S}{\mathcal C}{\mathcal G}(S)$, then the graph is connected.

To see that this condition is satisfied we choose the Humphries generating set 
$\psi_1,\dots,\psi_{2g+1}$ of ${\rm Mod}(S)$ 
consisting of Dehn twists about the non-separating
simple closed curves $a_1,\dots,a_g,c_1,\dots,c_{g-1},m_1,m_2$ 
in $S$ as shown in Figure 4.5 of 
\cite{FM12}. That these elements generate ${\rm Mod}(S)$ is explained in Theorem 4.14 
of~\cite{FM12}. 
Let furthermore~$c$ be the separating simple closed curve 
which intersects the simple closed curve $c_2$ in precisely two points and 
is disjoint from any of the curves $a_i,c_j,m_u$ for $j\not=2$.
Then $\psi_s(c)=c$ for $s\not=g+2$, moreover both $c,\psi_{g+2}(c)$ are disjoint 
from the vertex $b$ of ${\mathcal S}{\mathcal C}{\mathcal G}(S)$ which 
intersects $c_{g-2}$ in precisely two points (this is where we need $g\geq 5$) and is disjoint from the remaining curves. 
Thus $c,b,\psi_{g+2}(c)$ is an edge path connecting $c$ to $\psi_{g+2}(c)$, which suffices
for the proof of the lemma.
\end{proof}

Let $\Upsilon:{\mathcal T}(S)\to {\mathcal S}{\mathcal C}{\mathcal G}(S)$ 
be a map which associates to $X\in {\mathcal T}(S)$ 
a point 
in ${\mathcal S}{\mathcal C}{\mathcal G}(S)$ whose length is minimal among
the lengths of all separating geodesics which cut $S$ into a surface of genus $2$ and a surface
of genus $S_2$. 
The length of $\Upsilon(X)$ in $X$ is bounded above by a constant $\sigma$ that only depends on $g$ by Theorem~\ref{thm:short-pant-decomposition}.
We use Lemma~\ref{connected} to show

\begin{theorem}\label{lengthcontrol}
For any $d\geq 3$ there exists a number $C(d,g)>0$ with the following property. 
Let $X,Y\in {\mathcal T}(S)$ be any two points in the Fuchsian locus of the Hitchin 
component of representations $\pi_1(S)\to \PSL_d(\mathbb{R})$.
Then the pressure metric distance
between $X,Y$ is at most 
$C(d,g)d(\Upsilon(X),\Upsilon(Y))+C(d,g).$
\end{theorem}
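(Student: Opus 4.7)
\textbf{Proof plan for Theorem~\ref{lengthcontrol}.}

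The plan is to reduce the theorem to a key lemma and then chain it along an edge path in $\mathcal{SCG}(S)$. The key lemma is the following: for any $\gamma \in \mathcal{SCG}(S)$ and any $X, Y \in \mathcal{T}(S)$ with $\ell_X(\gamma), \ell_Y(\gamma) \leq \sigma$, the pressure distance $d_P(X, Y)$ is bounded above by a constant $C_1 = C_1(\sigma, d, g)$. Granting the key lemma, I would chain as follows. Given $X, Y \in \mathcal{T}(S)$, set $n := d(\Upsilon(X), \Upsilon(Y))$ and choose an edge path $\gamma_0 = \Upsilon(X), \gamma_1, \ldots, \gamma_n = \Upsilon(Y)$ in $\mathcal{SCG}(S)$, which exists by Lemma~\ref{connected}. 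For each $i = 1, \ldots, n$, the consecutive curves $\gamma_{i-1}, \gamma_i$ are disjoint, so I would extend $\{\gamma_{i-1}, \gamma_i\}$ to a pants decomposition of $S$ and build $W_i \in \mathcal{T}(S)$ via Fenchel--Nielsen coordinates by setting all lengths to $1$ and all twists to $0$, ensuring $\ell_{W_i}(\gamma_{i-1}) = \ell_{W_i}(\gamma_i) = 1$. Setting $W_0 := X$ and $W_{n+1} := Y$, each consecutive pair $W_i, W_{i+1}$ shares a common curve $\gamma_i$ of length at most $\sigma_1 := \max(1, \sigma_0)$, where $\sigma_0 = \sigma_0(g)$ bounds $\ell_X(\Upsilon(X))$ via Theorem~\ref{thm:short-pant-decomposition}. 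Applying the key lemma to each pair gives
\[d_P(X, Y) \leq \sum_{i=0}^{n} d_P(W_i, W_{i+1}) \leq (n+1)\, C_1(\sigma_1, d, g),\]
which is of the required form $C(d,g)\, d(\Upsilon(X),\Upsilon(Y)) + C(d,g)$.

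For the key lemma, I would argue in two steps. First, the pressure metric restricted to the Fuchsian locus is a positive multiple of the Weil--Petersson metric, so Wolpert's result on pinching paths (the single-curve analogue of Fact~\ref{wolpert}) shows that the path in $\mathcal{T}(S)$ obtained by decreasing only the Fenchel--Nielsen length coordinate of $\gamma$ from $\ell_X(\gamma)$ to some fixed value $\ell_0 \in (0, \sigma]$ has bounded Weil--Petersson length and hence bounded pressure length, with bound depending only on $\sigma$. The endpoint $X^{(\ell_0)}$ lies in the slice $\mathcal{T}(S, \ell_0)$, and similarly I would produce $Y^{(\ell_0)} \in \mathcal{T}(S, \ell_0)$ close to $Y$. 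Second, I would bound the pressure diameter of $\mathcal{T}(S, \ell_0) \simeq \mathcal{T}(S_0, \ell_0) \times \mathcal{T}(S_1, \ell_0) \times \mathbb{R}$ by changing the $S_1$-part first and then the $S_0$-part: writing $X^{(\ell_0)} = (X_0, X_1, t_X)$ and $Y^{(\ell_0)} = (Y_0, Y_1, t_Y)$, Theorem~\ref{shortcut} bounds the pressure distance from $(X_0, X_1, t_X)$ to some $(X_0, Y_1, t)$, and the symmetric version of Theorem~\ref{shortcut} (obtained by interchanging $S_0$ and $S_1$, valid since $g \geq 5$ forces $g_1 = g - 2 \geq 3$) bounds the pressure distance from $(X_0, Y_1, t)$ to $Y^{(\ell_0)}$. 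Concatenating these four paths gives the key lemma.

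The main obstacle is establishing the symmetric version of Theorem~\ref{shortcut}. Its statement is not explicitly given in the paper, but its proof is identical to that of Theorem~\ref{shortcut} with the roles of $S_0$ and $S_1$ swapped: one connects two points by twists along curves in $S_0$, pinches a pants decomposition of the \emph{fixed} side $S_1$ to raise its entropy above that of the varying side, and uses Theorem~\ref{pressurelength2} to shrink the grafted path. This argument requires the fixed side $S_1$ to admit a pants decomposition containing a pair of pants not adjacent to $\gamma$, which demands $g_1 \geq 2$ and is guaranteed by the hypothesis $g \geq 5$.
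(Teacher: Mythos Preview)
Your proof is correct but takes a somewhat longer route than the paper. The paper's argument for the case $d(\Upsilon(X),\Upsilon(Y))=1$ exploits the fact that the two \emph{disjoint} curves $\Upsilon(X),\Upsilon(Y)$ cut $S$ into \emph{three} pieces $S_1,S_2,S_3$, with the two outer pieces $S_1,S_3$ each of genus $2$. One simply chooses an intermediate $Z\in\mathcal T(S)$ that agrees with $X$ on $S_1$ and with $Y$ on $S_3$ (filling in $S_2$ arbitrarily with the prescribed boundary lengths), and applies Theorem~\ref{shortcut} twice: once with $\gamma^*=\Upsilon(X)$ and fixed genus-$2$ side $S_1$ to bound $d_P(X,Z)$, and once with $\gamma^*=\Upsilon(Y)$ and fixed genus-$2$ side $S_3$ to bound $d_P(Z,Y)$. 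The general case then follows by choosing for each intermediate vertex $\alpha_i$ any $X_i\in\mathcal T(S)$ with $\Upsilon(X_i)=\alpha_i$. This three-piece trick means that in each application of Theorem~\ref{shortcut} the fixed side really is a genus-$2$ piece, so no symmetric version is needed, and since $Z$ automatically matches boundary lengths there is no need for a preliminary Wolpert length-adjustment. Your single-curve key lemma, by contrast, forces you to move both the $S_0$- and $S_1$-coordinates across one fixed curve, which is why you must invoke the symmetric version of Theorem~\ref{shortcut} (legitimate for $g\geq 5$, as you observe) and insert the Wolpert pinching to equalise lengths. Both approaches give the same conclusion; the paper's is just more economical in that it uses only Theorem~\ref{shortcut} as stated.
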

\begin{proof} In this proof, distances between Hitchin representations are always
taken with respect to the path metric defined by the pressure metric.

Let $X,Y\in {\mathcal T}(S)$. 
Suppose first that $d(\Upsilon(X),\Upsilon(Y))=1$ (the case $d(\Upsilon(X),\Upsilon(Y))=0$ is similar)
The curves $\Upsilon(X)$ and $\Upsilon(Y)$ split $S$ into three subsurfaces $S_1,S_2,S_3$ such that $S_1$ have genus $2$ and one boundary component $\partial S_1=\Upsilon(X)$, $S_2$ has genus $g-4$ and two boundary components, and $S_3$ have genus $2$ and one boundary component $\partial S_3=\Upsilon(Y)$.

Let $Z\in \mc T(S)$ that coincides with $X$ on $S_1$ and coincides with $Y$ on $S_3$.
By Theorem~\ref{shortcut} the distance from $X$ to $Z$ is bounded by some constant that depends on $\sigma$ (upper bound on the length $X$ and $Z$ give to $\Upsilon(X)=\partial S_1$).
Similarly the distance from $Z$ to $Y$ is bounded by some constant that depends on $\sigma$.

Now if $m=d(\Upsilon(X),\Upsilon(Y))\geq 2$ then let $\alpha_0=\Upsilon(X),\alpha_1,\dots,\alpha_m=\Upsilon(Y)$ be a minimizing path in $\mc S\mc C\mc G(S)$.
For each $1\leq i\leq m-1$ let $X_i\in\mc T(S)$ such that $\Upsilon(X_i)=\alpha_i$, and let $X_0=X$ and $X_m=Y$.
For every $i$ we have $d(\Upsilon(X_i),\Upsilon(X_{i+1}))=1$ so we can apply the above: the distance from $X_i$ to $X_{i+1}$ is bounded above by some constant $C$ that only depends on $\sigma$, and hence on $g$.
Thus the distance from $X$ to $Y$ is at most $mC$.
\end{proof}

\subsection{Fixed point for a subgroup of the mapping class group}

The mapping class group ${\rm Mod}(S)$ of $S$ acts by precomposition of markings
on the Hitchin component ${\rm Hit}(S)$ preserving the Fuchsian locus $\mc T(S)$ and the pressure metric (whose restriction to $\mc T(S)$ is the Weil--Petersson metric).
Thus ${\rm Mod}(S)$ also acts on the Weil--Petersson metric completion $\overline{\mc T(S)}$ of $\mc T(S)$ and on the pressure metric completion $\overline{\mr{Hit}(S)}$ of ${\rm Hit}(S)$.

Note that the embedding of $\mc T(S)\hookrightarrow{\rm Hit}(S)$ extends to a continuous but noninjective map $\overline{\mc T(S)}\rightarrow\overline{{\rm Hit}(S)}$ which is equivariant under the actions of the mapping class group.

Recall that $\overline{\mc T(S)}$ is stratified.
A stratum is defined by a simple geodesic multicurve
$c\subset S$, and it consists of the Teichm\"uller space of 
all marked complete finite volume hyperbolic metrics on 
$S-c$. By this we mean that each component of $S-c$ is an essential subsurface of 
$S$ of negative Euler characteristic, and hence it determines a Teichm\"uller space
of marked complete finite volume hyperbolic metrics on the component. 
The stratum of $S-c$ is then the product of these Teichm\"uller spaces.

The action of the mapping class group ${\rm Mod}(S)$ of $S$ 
on boundary points for the metric completion of ${\mathcal T}(S)$ 
projects to the action of the
mapping class group on the curve complex, thought of as remembering the 
nodes (or cusps) of the completion points. Dehn multitwists have global 
fixed points acting on this boundary: if $T_c$ 
is a Dehn twist about~$c$, then any surface with node at $c$ is 
fixed by $T_c$. However, there is no subgroup of the 
mapping class group containing a free group with two generators which 
acts with a global fixed point.

In contrast, the action of the outer automorphism group of the free group 
$F_k$ with $k\geq 3$ generators on the metric completion of Outer space 
of marked graphs with fundamental group $F_k$, equipped with an analog of 
the pressure metric, has a global fixed point (see~\cite{ACR23}). 

Our final result shows that a weaker but related
statement holds true for the action of the mapping class
group ${\rm Mod}(S)$ 
on the metric completion of the Hitchin component, equipped with the 
pressure metric, provided that the genus of $S$ is at least $3$. 
For the formulation of our result, 
recall that for every essential subsurface $S_1$ of the surface $S$
with connected boundary, the mapping 
class group ${\rm Mod}(S_1)$ of $S_1$ embeds into the mapping class
group ${\rm Mod}(S)$ of $S$ as a group of isotopy classes of homeomorphisms of 
$S$ which fix~$S-S_1$ pointwise.

We will prove that if $S_1$ has genus $g-2$ and one boundary component, then $\mr{Mod}(S_1)$ fixes a point of the metric completion of $\Hit(S)$ for the Pressure metric, and this point is explicit: let us describe it now.

Let $\phi_1,\dots,\phi_k$ be a generating set of $\mr{Mod}(S_1)$ consisting of Dehn twists.
It suffices to find a point fixed by these generators.
Denote by $\phi_i^t:\mc T(S_1)\to\mc T(S_1)$ the twist flow whose time $1$ map is $\phi_i$.
Fix a hyperbolic metric $X_1$ on $S_1$.
Let $a<1$ be the maximum of the entropies of all points $\phi_i^t(X_1)\in\mc T(S_1)$, for $1\leq i\leq k$ and $0\leq t\leq 1$.
Let $X_0$ be a hyperbolic metric on $S-S_1$ with entropy greater than $a$ and with same boundary length as $X_1$ (using that $S-S_1$ has genus $2$).
Let $X\in\mc T(S)$ be a gluing of $X_0$ and $X_1$, and for $L>0$ let $X(L)\in\Hit(S)$ be a grafting of $X$ along $\partial S_1$ with cylinder height $L$ (and fixed grafting direction).
By Theorem~\ref{pressurelength}, the path $(X(L))_{L\geq0}$ has finite length and hence converges to a point of the completion $X(\infty)\in\overline{\Hit(S)}$

\begin{theorem}\label{metriccompletion}
The subgroup ${\rm Mod}(S_1)\subset {\rm Mod}(S)$ 
fixes $X(\infty)$.
\end{theorem}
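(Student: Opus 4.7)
The plan is to reduce the statement to showing that each Dehn twist generator $\phi_i$ of $\mathrm{Mod}(S_1)$ fixes $X(\infty)$, and then to prove this by combining Theorem~\ref{pressurelength2} with the isometry property of the $\mathrm{Mod}(S)$-action.

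First I would observe that, since $\phi_i$ acts trivially on $S - S_1$ and fixes the grafting curve $\gamma^* = \partial S_1$ setwise (and even pointwise in the model), the Hitchin grafting operation commutes with $\phi_i$ in the sense that $\phi_i(X(L))$ equals the Hitchin grafting of $\phi_i(X)$ along $\gamma^*$ with the same grafting data. Next, because $\phi_i$ is a Dehn twist about some simple closed curve $c_i \subset S_1$, the associated twist flow $t \mapsto \phi_i^t(X)$ for $t \in [0,1]$ is precisely a shearing path in $\mathcal{T}(S)$ along the curve $c_i \subset S_1$, joining $X$ to $\phi_i(X)$. By construction, the restriction of $\phi_i^t(X)$ to $S_1$ is $\phi_i^t(X_1)$, whose entropy is at most $a$, while the restriction to $S_0$ is the fixed metric $X_0$ whose entropy exceeds $a$. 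Hence the entropy-gap hypothesis of Theorem~\ref{pressurelength2} holds uniformly along $t \in [0,1]$.

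I would then graft this shearing path: for each $L > 0$, set $X^t(L)$ equal to the Hitchin grafting of $\phi_i^t(X)$ along $\gamma^*$ with cylinder height $L$. This is a smooth path in $\mathrm{Hit}(S)$ from $X(L)$ to $\phi_i(X(L))$. By Theorem~\ref{pressurelength2} applied to the path $(\phi_i^t(X))_{t \in [0,1]}$, the pressure length of $(X^t(L))_{t \in [0,1]}$ tends to zero as $L \to \infty$. In particular, the pressure distance between $X(L)$ and $\phi_i(X(L))$ tends to zero.

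Finally, I would conclude using the triangle inequality. Since $\mathrm{Mod}(S)$ acts isometrically on $(\mathrm{Hit}(S), d_{\mathrm{pressure}})$, the action extends to isometries of the metric completion, and $\phi_i(X(L))$ converges to $\phi_i(X(\infty))$ as $L \to \infty$. Writing
\[
d(X(\infty), \phi_i(X(\infty))) \leq d(X(\infty), X(L)) + d(X(L), \phi_i(X(L))) + d(\phi_i(X(L)), \phi_i(X(\infty))),
\]
the first and third terms go to zero because $X(L) \to X(\infty)$ along a finite-length path (Theorem~\ref{pressurelength}), and the middle term goes to zero by the argument above. Hence $\phi_i(X(\infty)) = X(\infty)$ for every generator $\phi_i$, so $X(\infty)$ is fixed by all of $\mathrm{Mod}(S_1)$. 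The only potentially delicate step is verifying that $\phi_i^t(X)$ really is a shearing path on all of $S$ in the sense required by Theorem~\ref{pressurelength2} (including the fact that the entropy of the restriction to $S_1$ depends continuously on $t$ and is bounded by $a$ uniformly), but this is ensured by the choice of $a$ in the definition of $X(\infty)$.
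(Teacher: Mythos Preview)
Your proposal is correct and follows essentially the same approach as the paper: reduce to each Dehn twist generator, use that grafting commutes with $\phi_i$, apply Theorem~\ref{pressurelength2} to the shearing path $(\phi_i^t(X))_{t\in[0,1]}$ to show that the pressure distance between $X(L)$ and $\phi_i(X(L))$ tends to zero, and conclude via the triangle inequality. The paper's proof is slightly more terse (it refers back to the proof of Theorem~\ref{shortcut} rather than invoking Theorem~\ref{pressurelength2} directly and leaves the triangle inequality implicit), but the content is the same.
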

\begin{proof}
As mentioned it suffices to fix $1\leq i\leq k$ and prove $\phi_i(X(\infty))=X(\infty)$.

The point $\phi_i(X(\infty))$ is the limit as $L\to\infty$ of $\phi_i(X(L))$, which is obtained by gluing $X_0$ to $\phi_i(X_1)$ and grafting along $\partial S_1$ with cylinder height $L$.
 
By the discussion in the proof of Theorem~\ref{shortcut},
as $L\to \infty$, the length for the pressure metric of the path 
$t\to \phi_i^t(X(L))$ tends to zero. 
Hence the Pressure distance between $X(L)$ and $\phi_i(X(L))$ tends to zero as $L\to\infty$, which proves that $\phi_i(X(\infty))=X(\infty)$.
\end{proof}

\subsection{Proof of Theorem~\ref{loftin}}

As mentioned in the introduction, in \cite{Loftin04,LoftinNeck}, Loftin constructed
a natural bordification of the space of Hitchin representations $\Hit_3(S)$ of a
closed surface $S$, called the augmented Hitchin space $\Hit_3^{\mr{aug}}(S)$, which
extends the augmented Teichm\"uller space.
This construction applies more generally to noncompact finite type surfaces and
their moduli spaces of convex projective structures.
Our goal in this section is to relate Loftin's bordification with our grafting
procedure.
More precisely we want to show that, starting with a Fuchsian representation and
grafting it with grafting parameter going to infinity in a specific direction, the
resulting family of Hitchin representations will converge to a point in Loftin's
bordification.

Let $S$ be a connected surface of finite type, seen as a closed surface with punctures.
Recall that a projective structure is an atlas of charts on $S$ into the projective
plane such that the change of charts are projective transformations.
To such a structure can be associated a holonomy representation of the fundamental
group into the group of projective transformations $\PSL_3(\R)$, and a
holonomy-equivariant developing map from the universal cover $\tilde S$ into the
projective plane.
A projective structure is called convex if the developing map is injective and its
image is properly convex (convex and bounded in some affine chart), which implies
the holonomy representation is faithful with discrete image.
In this case, the projective structure is completely determined by the data of the
holonomy representation and the image of the developing map by Proposition 2.5 of
\cite{LZ21}.

The moduli space of convex projective structures $\mc C(S)$ can be described as the
quotient under the action of $\PSL_3(\R)$ of the set of pairs $(\Omega,\rho)$, where
$\Omega\subset\R\PP^2$ is open and properly convex and $\rho$ is a discrete and
faithful representation of $\pi_1(S)$ into $\PSL_3(\R)$ that preserves $\Omega$.
It is topologized so that $(\Omega_n,\rho_n)\to(\Omega,\rho)$ if $\Omega_n\to\Omega$
for the Hausdorff topology and $\rho_n\to\rho$ on a set of generators (up to the
action of $\PSL_3(\R)$).

The projective structures around punctures can be classified, and in particular the
conjugacy class of the holonomy of a curve enclosing a puncture can be of three
types: parabolic
$\left(\begin{smallmatrix}1&1&0\\0&1&1\\0&0&1\end{smallmatrix}\right)$,
quasi-hyperbolic
$\left(\begin{smallmatrix}\lambda&0&0\\0&\mu&1\\0&0&\mu\end{smallmatrix}\right)$ or
hyperbolic
$\left(\begin{smallmatrix}\lambda&0&0\\0&\mu&0\\0&0&\nu\end{smallmatrix}\right)$
(where $\lambda,\mu,\nu$ are distinct).
As explained in the Appendix A of \cite{LZ21}, the projective structure around the
puncture is determined by this holonomy in the parabolic and quasi-hyperbolic cases.
However in the hyperbolic case there are many structures with the same holonomy.
In particular any such structure can be deformed locally with out changing the
holonomy by a bulging procedure (one can ``inflate'' or ``deflate'' the structure
near the puncture).
The two special degenerate structures obtained by inflating or deflating to infinity
any other structure are called respectively bulge $+\infty$ and bulge $-\infty$.
See e.g.\ Figure 4 of \cite{LZ21}.
To conclude, for any pair $(\Omega,\rho)$, the convex set $\Omega$ is determined by
$\rho$ and the projective structure around punctures of hyperbolic type.

In particular, if $S$ is closed then every point of $\mc C(S)$ is determined by the
holonomy representation.
By work of Choi and Goldman \cite{Goldman90,CG93}, $\mc C(S)$ is connected, open and
closed as a subset of the set of representations of $\pi_1(S)$, and it contains the
representations coming from hyperbolic structures, so $\mc C(S)=\Hit_3(S)$.

To define the augmented Hitchin space,
Loftin first defines admissible convex projective structures by allowing only bulge
$\pm \infty$ structures near the punctures of hyperbolic type.
Then $\Hit_3^{\mr{aug}}(S)$ is defined as the set, over all multicurves $\mc
D\subset S$, of admissible convex projective structures
$(\Omega_1,\rho_1),\dots,(\Omega_k,\rho_k)$ on the connected components
$S_1,\dots,S_k$ of $S-\mc D$ that satisfy some compatibility conditions between the
pairs of ends corresponding to the same curve $\gamma\subset\mc D$: they have the
same holonomy and a bulge $+\infty$ end must face a bulge $-\infty$ end.
It is further topologised so that $(\Omega^{(n)},\rho^{(n)})\in \Hit(S)$ converge to
$((\Omega_1,\rho_1),\dots,(\Omega_k,\rho_k))$ in the boundary if
$(\Omega^{(n)},\rho^{(n)}_{|\pi_1S_i})\to (\Omega_i,\rho_i)$ for every $i$ (up to
the action of $\PSL_3(\R)$).

Let us now relate the above construction with the algebraic bending deformation of a
Fuchsian representation $\rho$ along a multicurve $\mc D\subset S$, as recalled in
Section~\ref{sec:Abstract grafting}: it was defined by partially conjugating the image
by $\rho$ of the fundamental groups of the connected components $S_{1},\dots,S_{k}$
of $S-\mc D$.
We gave in \cite{BHM25} and \ref{sec-HG} a
geometric interpretation of this deformation, inside the symmetric space of
$\PSL_3(\R)$, in terms of grafting a flat cylinder along the multicurve $\mc D$.
Suppose now that all the grafting parameters (which are vectors of the Cartan
subspace) are parallel to the special direction
$\left(\begin{smallmatrix}1&0&0\\0&-2&0\\0&0&1\end{smallmatrix}\right)$.
Then there is another geometric interpretation of bending due to Goldman
\cite[\S5.5]{Goldman90} using convex projective geometry: bending induces a
deformation of the underlying convex projective structure called \emph{bulging},
which is the same procedure as the local surgery around punctures mentioned
previously.
The idea is the same as before (when
$k=2$ and $\mc D$ has only one curve): suppose
$\rho_{z}(\pi_{1}(S_{1}))=\rho(\pi_{1}(S_{1}))$ is unchanged and
$\rho_{z}(\pi_{1}(S_{2}))=e^{z}\rho(\pi_{1}(S_{2}))e^{-z}$.
The $\rho$-invariant convex domain $\Omega\subset\R\PP^{2}$ is made of a tree of
infinitely many copies of universal covers of $S_{1}$ and $S_{2}$, each copy being
invariant under a conjugate of $\rho(\pi_{1}(S_{1}))$ or $\rho(\pi_{1}(S_{2}))$.
The $\rho_{z}$-invariant convex domain $\Omega_{z}$ is then produced by deforming
each of these copies using $e^{z}$ and $e^{-z}$ and conjugates of them: e.g.\ if
$\Omega_{2}$ is a $\rho(\pi_{1}(S_{2}))$-invariant copy of $\wt S_{2}$ then
$e^{z}\Omega_{2}$ is  $\rho_{z}(\pi_{1}(S_{2}))$-invariant.
One can see that $e^{z}$ acts by inflating $\Omega_{2}$, without disconnecting it
from the adjacent copies of $\wt S_{1}$ (so there is no need to graft a flat
cylinder as in the symmetric space).
The following fact is an immediate consequence of Goldman's work and the above
definition of Loftin's bordification.
Fix a grafting parameter $z$ parallel to
$\left(\begin{smallmatrix}1&0&0\\0&-2&0\\0&0&1\end{smallmatrix}\right)$.

\begin{fact}
For any $t>0$ let $[\rho_{t}]\in\Hit_{3}(S)$ be obtained by grafting $\rho$ along
$\mc D$ with parameter $tz$.
Then as $t$ goes to infinity, $[\rho_{t}]$ converges to
$[(\Omega_1,\eta_1),\dots,(\Omega_k,\eta_k)]\in\Hit_{3}^{\mr{aug}}(S)$ (projective
structures on $S_{1},\dots,S_{k}$) such that the projective structures near the two
ends associated to a $\gamma\subset\mc D$ are of hyperbolic type with bulge
$+\infty$ and $-\infty$ respectively, and the holonomies $\eta_{i}$ are the
restrictions of $\rho$ to $\pi_{1}(S_{i})$.
\end{fact}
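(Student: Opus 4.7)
The plan is to combine Goldman's bulging interpretation of bending with the definition of the topology on $\Hit_3^{\mr{aug}}(S)$ recalled just before the statement. The argument splits into two parts: identifying the holonomy on each component and identifying the convex domains up to their behaviour at the punctures.

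First, I would record the purely algebraic information. By construction of the Hitchin grafting $\rho_{t}=\mathrm{Gr}_{tz}^{\mc D}\rho$, on each component $S_i$ the restriction $\rho_{t|\pi_1(S_i)}$ is conjugate inside $\PSL_3(\R)$ to the Fuchsian restriction $\rho_{|\pi_1(S_i)}$ by an element $g_i(t)$ which is a product of exponentials of $tz$ (see Section~\ref{sec:Abstract grafting}). Thus, up to the action of $\PSL_3(\R)$ that is quotiented out in $\Hit_3^{\mr{aug}}(S)$, the holonomy on $S_i$ is exactly $\eta_i=\rho_{|\pi_1(S_i)}$ for every $t$, and in particular in the limit. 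Moreover, since the grafting direction $z$ is parallel to $\begin{smallmatrix}\mathrm{diag}(1,-2,1)\end{smallmatrix}$, which commutes with the one-parameter subgroup $\tau(a_s)$ through the holonomy of each $\gamma\subset\mc D$, the two boundary holonomies of $\gamma$ viewed in $S_i$ and $S_{i'}$ (on the two sides of $\gamma$) agree with $\rho(\gamma)$, which is hyperbolic since $\rho$ is Fuchsian and $\tau$ is the irreducible representation.

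Next I would use Goldman's identification of bending with bulging to control the convex domains. Fix a lift of a connected component $S_i$ with a $\rho_{|\pi_1(S_i)}$-invariant convex domain $\Omega_i\subset\R P^2$ (the universal cover of $S_i$ endowed with the hyperbolic projective structure). After conjugating $\rho_t$ by $g_i(t)$ as above, the resulting invariant convex domain $\Omega_t^{(i)}$ contains $\Omega_i$ and is obtained by attaching to each boundary arc of $\Omega_i$ over a lift of $\gamma\subset\mc D$ the $e^{\pm tz}$-image of the adjacent copy of the tree of convex sets. As $t\to\infty$, the factors $e^{\pm tz}$ push these adjacent pieces further and further from $\Omega_i$, so $\Omega_t^{(i)}$ converges in the Hausdorff topology on $\R P^2$ to a convex open set $\Omega_i^\infty$ whose intersection with a neighbourhood of each end of $S_i$ is the degenerate hyperbolic-type end with bulge $+\infty$ on one side of $\gamma$ and bulge $-\infty$ on the other; the choice of sign is prescribed by the sign convention for grafting with $\pm tz$ and is consistent for the two sides of each $\gamma$ by the cocycle property of the grafting construction. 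Combined with the convergence of representations on generators, this gives $(\Omega_t^{(i)},\rho_t\circ g_i(t))\to(\Omega_i^\infty,\eta_i)$ in the topology of pointed convex sets, so $[\rho_t]\to[(\Omega_1^\infty,\eta_1),\dots,(\Omega_k^\infty,\eta_k)]$ in $\Hit_3^{\mr{aug}}(S)$.

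The main obstacle is the Hausdorff convergence of the domains $\Omega_t^{(i)}$ at the hyperbolic ends. Away from the punctures the convergence is immediate since $\Omega_i\subset\Omega_t^{(i)}$ for all $t$, but near a lift of $\gamma$ one must show that no additional limit points accumulate except those producing the bulge $\pm\infty$ structure. This is precisely the content of Goldman's analysis in \S5.5 of~\cite{Goldman90}, applied iteratively over the tree of adjacent copies of $\wt S_{i'}$ that are pushed off to the boundary of $\Omega_i^\infty$. I would therefore reduce to a single bending along one curve, invoke Goldman's description of the bulge deformation together with Appendix A of~\cite{LZ21} for the classification of hyperbolic ends with bulge $\pm\infty$, and then extend to the multicurve case by equivariance.
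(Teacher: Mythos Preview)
Your proposal is correct and follows exactly the route the paper indicates: the paper does not give a detailed proof but simply states that the fact ``is an immediate consequence of Goldman's work and the above definition of Loftin's bordification,'' and your argument is a reasonable fleshing-out of precisely that sentence. One minor imprecision: the phrase ``push these adjacent pieces further and further from $\Omega_i$'' is not quite the right geometric picture---the action of $e^{\pm tz}$ inflates or deflates the adjacent copies rather than translating them away---but you correctly flag this step as the delicate one and defer to Goldman's \S5.5 and the end classification in~\cite{LZ21}, which is exactly what is needed.
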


To prove Theorem~\ref{loftin}, we consider the case where $S$ is cut into two
subsurfaces $S_{1},S_{2}$ such that the entropy of $\eta_{1}$ is strictly greater
than that of $\eta_{2}$.
We slightly perturb $\rho$ into $(\rho^{s})_{-\epsilon\leq s\leq \epsilon}$ so that
$\eta_{1}^{s}=\eta_{1}$ for any $s$ with entropy still greater than that of
$\eta^{s}_{2}$, and $\eta^{s}_{2}$ and $\eta_{2}^{\sigma}$ are not conjugate for
$s\neq \sigma$.
Now we graft, and by Theorem~\ref{pressurelength2} the pressure length of
$(\rho_{t}^{s})_{-\epsilon\leq s\leq \epsilon}$ goes to zero as $t$ diverges, which
implies all $(\rho_{t}^{s})_{t\to\infty}$ converge to the same point of the pressure
metric completion of $\Hit_{3}(S)$, independent of $s$.
However by the above fact they converge to different points of Loftin's augmented
Hitchin space.
Heuristically, the pressure metric is not fine enough to distinguish points in
$\Hit^{\mr{aug}}_{3}(S)$, because it focuses too much on the component with bigger
entropy and can only see changes there.

Another interesting remark can be made about another description of the augmented
Hitchin space (which is in fact Loftin's original definition), in terms of cubic
differentials.
Recall that by independent work of Labourie \cite{Labouriecubic} and Loftin
\cite{Loftincubic}, there is a vector bundle structure $\pi:\Hit_{3}(S)\to\mc T(S)$
such that the fiber above a point of $\mc T(S)$, seen as a (marked) complex
structure on $S$, is the vector space of holomorphic cubic differentials on $S$.
It turns out this vector bundle structure extends to
$\pi:\Hit_{3}^{\mr{aug}}(S)\to\mc T^{\mr{aug}}(S)$.
Moreover, using the notations from the above fact and denoting the limit of
$[\rho_{t}]$ as $t\to\infty$ by
$[\rho_{\infty}]=[(\Omega_1,\eta_1),\dots,(\Omega_k,\eta_k)]$, it follows from
Theorem~12 of \cite{LoftinNeck} that the projection $\pi[\rho_{\infty}]\in\mc
T^{\mr{aug}}(S)$ is the noded hyperbolic surface obtained by pinching to zero the
multicurve $\mc D\subset S$.

Hence for $t$ large the Hitchin grafting representation $\rho_{t}$, which we think
of in this paper as the hyperbolic structure $\rho$ where we grafted long flat
cylinder along $\mc D$, naturally stands above another hyperbolic structure
$\pi(\rho_{t})$ on $S$ with long and narrow hyperbolic collars around $\mc D$.
Since pinching a curve in $\mc T(S)$ is a finite length surgery for the
Weil--Petersson metric, it seems likely that $(\pi[\rho_{t}])_{t>0}$ has finite
length.
As $([\rho_{t}])_{t>0}$ also has finite length, for any $t$ the pressure distance
from $\pi[\rho_{t}]$ to $[\rho_{t}]$ is bounded independently of $t$.
Moreover, there is a natural straight-line path between these two points, since
$[\rho_{t}]$ lies in the fiber above $\pi[\rho_{t}]$, which is a vector space.
A natural question is then: is the pressure length of this path bounded above
independently of $t$?

\appendix

	\section{Entropy of hyperbolic surfaces with boundary}
	
The goal of this appendix is to collect some basic
results on the entropy of hyperbolic surfaces with boundary. We give proofs for the ones 
we did not find in the literature, although they should be well known by the experts. Some of the following 
statements are consequences of more general theorems. 

    Consider a compact surface $\Sigma$, of genus $g$, with at least one boundary component. Let~$S$ be a hyperbolic surface obtained by equipping $\Sigma$ with a hyperbolic metric, so that its boundary is geodesic, that is, $S$ belongs to 
    the Teichm\"uller space ${\mathcal T}(\Sigma)$ for $\Sigma$. Denote by~$h(S)$ the topological entropy of the geodesic flow on $T^1S$. 
    We also denote by $\delta(S)$ the critical exponent of any representation $\pi_1(\Sigma)\to\PSL_2(\R)$ representing the metric $S$.

	\begin{proposition}\thlabel{surfacewithboundary} 
        The following holds true:
		\begin{enumerate}
            \item $h(S)=\delta(S)$ (see {\rm~\cite{sullivan79}}).
            \item The function $\delta(S)$ is real analytic in $S$ and invariant under the action of ${\rm Mod}(\Sigma)$ 
            (see {\rm~\cite{Ruelle}}).
			\item $h(S)<1$. \label{surfacewithboundary-less-than-one}
			\item 
            Take a pants decomposition of $\Sigma$. When sending to zero the lengths of all boundary curves of a fixed pair of pants,
            the entropy goes to one. \label{surfacewithboundary-to-one}
		\end{enumerate}
	\end{proposition}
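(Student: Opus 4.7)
The plan reduces parts (3) and (4) to statements about the critical exponent $\delta(S)$ of the Fuchsian representation uniformizing $S$, invoking part (1) to identify $h(S) = \delta(S)$.

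For (3), I would quote the classical Patterson--Sullivan characterization: for a non-elementary discrete subgroup $\Gamma \subset \PSL_2(\R)$ one has $\delta(\Gamma) \leq 1$, with equality if and only if $\Gamma$ is a lattice. Since $\Sigma$ has non-empty boundary and $S$ has geodesic boundary, the Fuchsian group uniformizing $S$ is convex cocompact with compact convex core (namely $S$ itself) and its quotient carries infinite-area funnels; in particular it is not a lattice, so $h(S) = \delta(S) < 1$.

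For (4), fix a pants decomposition of $\Sigma$, a pair of pants $P$ in it, and a sequence of metrics $S_n \in \mc T(\Sigma)$ along which the three boundary curves of $P$ shrink to zero (all other Fenchel--Nielsen coordinates remaining bounded). Let $\Gamma_P^{(n)}$ be the image of $\pi_1(P)$ inside the Fuchsian uniformization of $S_n$; since $P$ is a compact subsurface with geodesic boundary, $\Gamma_P^{(n)}$ is convex cocompact. The vectors of $T^1 S_n$ whose full orbit stays in $T^1 P$ form a compact invariant subset of the non-wandering set of the geodesic flow whose restricted topological entropy equals $\delta(\Gamma_P^{(n)})$ by Sullivan, so $h(S_n) \geq \delta(\Gamma_P^{(n)})$. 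Combined with $h(S_n) \leq 1$ from (3), it will suffice to show $\delta(\Gamma_P^{(n)}) \to 1$.

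To establish this last convergence, I would observe that the representations $\rho_n \colon \pi_1(P) \to \PSL_2(\R)$, after suitable conjugation, converge algebraically to a representation $\rho_\infty$ uniformizing the thrice-punctured sphere, since the translation lengths of the three peripheral loops tend to zero. The limit group is a lattice, so classical hyperbolic lattice-point counting yields
\[
\#\{ g \in \rho_\infty(\pi_1(P)) : d(x, gx) \leq R \} \geq c\, e^R
\]
for some $c>0$ and all $R$ large. Choosing base points $x_n \to x$ coherently, algebraic convergence transfers each of these (finitely many) orbit points to a nearby $\rho_n$-orbit point for $n$ large, giving a comparable lower bound at radius $R+1$. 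This yields $\liminf_n \delta(\rho_n) \geq 1 - O(1/R)$; letting $R \to \infty$ concludes $\delta(\rho_n) \to 1$. The only step expected to require care is this final transfer, where one must ensure that algebraic convergence produces \emph{simultaneous} control of all orbit points in a ball of fixed radius rather than just individual ones; this is a routine Chabauty-style uniformity statement given the finiteness of the relevant set of group elements.
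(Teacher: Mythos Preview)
Your treatment of part (3) is correct and in fact cleaner than the paper's. The paper does not invoke the direct Patterson--Sullivan characterisation but instead argues indirectly: it first uses that the Poincar\'e series of the convex cocompact group diverges at its critical exponent, then doubles $S$ along its boundary to a closed surface $S_d$ and applies a strict inequality of Dal'bo--Otal--Peign\'e for subgroups of divergence type to get $\delta(S)<\delta(S_d)=1$. Your one-line ``not a lattice, hence $\delta<1$'' is the more economical route for finitely generated Fuchsian groups.

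For part (4) your reduction is exactly the paper's: restrict to the fixed pair of pants $P$, observe $h(S_n)\geq\delta(\Gamma_P^{(n)})$, and use that the pair-of-pants representations converge algebraically to a lattice (the thrice-punctured sphere). The paper then simply cites lower semicontinuity of the critical exponent under algebraic convergence (Bishop--Jones, Theorem~2.4) to conclude $\liminf_n\delta(\Gamma_P^{(n)})\geq 1$.

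Your attempt to \emph{reprove} this lower semicontinuity by transferring orbit counts has a genuine gap, and it is not the one you flag. Uniform control of the finitely many orbit points in a ball of fixed radius is indeed routine; the problem is the subsequent inference. From $N_n(R+1)\geq c\,e^{R}$ at a \emph{single} radius you cannot deduce $\delta(\rho_n)\geq 1-O(1/R)$: the critical exponent is $\limsup_{R'\to\infty}\tfrac{\log N_n(R')}{R'}$, and a lower bound at one $R'$ says nothing about the asymptotic rate without some super-multiplicativity of $R'\mapsto N_n(R')$, which you have not established and which is not uniform in $n$ as the convex cores degenerate. The Fatou-type argument (finite partial sums of the Poincar\'e series become large) shows only that $\sum_g e^{-s\,d(x,\rho_n(g)x)}$ is \emph{large} for $n$ big, not that it \emph{diverges}. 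The fix is simply to quote the Bishop--Jones lower semicontinuity result, as the paper does; if you want to give an argument, a workable route is via the variational characterisation of the bottom of the Laplace spectrum $\lambda_0=\delta(1-\delta)$ and transplantation of test functions, or via Schottky subgroups whose critical exponent depends continuously on the generators.
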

	
	\begin{proof}

        \emph{Statement 3.}
		It follows from Proposition 5 of~\cite{PollicottSharp} that the Poincaré series $P(\delta(S))$ is diverging at the critical exponent
  $\delta(S)$.
		Consider a closed hyperbolic surface $\Sigma_d$ obtained by doubling $\Sigma$ along its boundary, 
  equipped with the double $S_d$ of the given hyperbolic metric $S$. It follows from Proposition 2 of~\cite{DOP} that we have $\delta(S)<\delta(S_d)$ (it uses as hypothesis that $P(\delta(S))$ is diverging). The latter is known to be equal to one. 
        Namely, for a hyperbolic metric $S_d$ without boundary and with finite volume, the limit set of $\pi_1(\Sigma)$ in $\partial_\infty\H^2$, that is, the accumulation points of the orbit $\pi_1(\Sigma)\cdot x$ for any~$x\in\H^2$, is equal to all of $\partial_\infty\H^2$. 
        It follows from Theorem 1.1 of~\cite{BishopJones} that the critical exponent of $S_d$ is one.
        
        \emph{Statement 4.}
		Take a compact hyperbolic surface with boundary 
  and pinch all boundary components. The critical exponent of Kleinian groups is lower semi-continuous for the so called algebraic convergence, see Theorem 2.4 of~\cite{BishopJones}. 
  It implies that when decreasing the lengths of the boundary curves to zero, the limit inferior of the critical exponents is at least the critical exponent of the surface obtained by pinching the boundary curves. That is one according to the proof of statement (3). 
	\end{proof}

    We mention a result of Hugo Parlier, which is a neat improvement of results already known previously.

    \begin{theorem}[\cite{parlier2023shorter}]\thlabel{thm:short-pant-decomposition}
        Let $S$ be a hyperbolic surface, possibly with boundary, and with finite volume. Then $S$ admits a pant decomposition for which the length of each curve is at most $\max(length(\de S), area(S))$.
    \end{theorem}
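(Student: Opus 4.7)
My plan is to prove the statement by induction on the complexity $|\chi(S)|$ of the surface, with the inductive claim being that $S$ admits a pants decomposition in which every curve has length at most $M(S) := \max(\ell(\partial S), \mathrm{area}(S))$. The base case is when $S$ is a pair of pants, where the ``decomposition'' is just $\partial S$ itself, and the bound is immediate.

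For the inductive step, the key lemma to establish is that any hyperbolic surface $S$ with geodesic boundary and $|\chi(S)| \geq 2$ contains a non-peripheral simple closed geodesic of length at most $M(S)$. The argument I would use is a standard packing/embedded-disk analysis: let $r$ be the supremum of radii of embedded open metric disks $D(p,r) \subset S$ with $p$ in the interior of $S$, and consider what obstructs enlarging such a maximal disk. If self-intersection occurs first, the center $p$ is the basepoint of a nontrivial geodesic loop of length $2r$, which straightens to a simple closed geodesic $\gamma$ of length $\leq 2r$; since the disk is embedded we have $2\pi(\cosh r - 1) \leq \mathrm{area}(S)$, giving $\ell(\gamma) \lesssim \mathrm{area}(S)$. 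If on the other hand the maximal disk meets $\partial S$, then every interior point lies within distance $r$ of the boundary, so $\mathrm{area}(S) \leq \ell(\partial S) \sinh(r)$, and one extracts from a short orthogeodesic between boundary components a simple closed geodesic of length controlled by $\ell(\partial S)$. In either case, a careful book-keeping recovers the sharp bound $\ell(\gamma) \leq M(S)$ rather than merely a bound up to universal constants.

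Once such a $\gamma$ is found, cut $S$ along $\gamma$ to obtain one or two subsurfaces $S_i$ of strictly smaller complexity, each with $\mathrm{area}(S_i) \leq \mathrm{area}(S)$ and $\ell(\partial S_i) \leq \ell(\partial S) + 2\ell(\gamma)$. Apply the induction hypothesis to each $S_i$ and unite the resulting pants decompositions with $\gamma$ itself to form a pants decomposition of $S$. For this to close, one needs $M(S_i) \leq M(S)$ for each $i$, so that every curve produced by the induction is still bounded by the original $M(S)$.

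The main obstacle is precisely this last step: the naive bound $\ell(\partial S_i) \leq \ell(\partial S) + 2\ell(\gamma)$ can exceed $M(S)$, so the induction does not close on its own. To overcome this I would need to exploit the freedom in choosing $\gamma$: either arrange for $\gamma$ to bound a subsurface whose boundary contribution is mostly new (forcing $\ell(\partial S_i \cap \partial S)$ to be small on one side), or, alternatively, strengthen the inductive statement to track a finer invariant, such as the sum of $\mathrm{area}$ and a controlled fraction of the boundary, that provably decreases under cutting along a curve of length $\leq M(S)$. Identifying the correct invariant, and verifying that the disk-packing lemma in fact delivers a geodesic $\gamma$ compatible with this invariant, is the technical heart of Parlier's argument and the step I expect to be the most delicate.
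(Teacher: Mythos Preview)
The paper does not give its own proof of this theorem: it is quoted as a result of Parlier (the citation \cite{parlier2023shorter}), introduced with ``We mention a result of Hugo Parlier,'' and used as a black box in the appendix and in Proposition~\ref{prop:upper bound on PS mass}. So there is nothing in the paper to compare your argument against.

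On the substance of what you wrote: you have correctly identified both the natural strategy (induction on $|\chi(S)|$ via a short non-peripheral geodesic found by a disk-packing/collar argument) and the genuine obstacle, namely that cutting along $\gamma$ can increase $\ell(\partial S_i)$ by up to $2\ell(\gamma)$, so the quantity $M(S)=\max(\ell(\partial S),\mathrm{area}(S))$ is not obviously monotone under cutting and the induction does not close as stated. Your proposal is therefore not a proof but a sketch with an explicitly flagged gap. What you would need to supply is exactly what you say at the end: either a stronger inductive invariant that genuinely decreases, or a more refined existence lemma guaranteeing a short curve $\gamma$ for which the pieces $S_i$ satisfy $M(S_i)\leq M(S)$. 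Without that, the argument is incomplete. If you want the actual mechanism, you should consult Parlier's paper directly, since the present article does not reproduce it.
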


    \begin{proposition}\label{prop:entropy of hypsurf}
    There exists a function $f_1$ depending on $\Sigma$ ($\de\Sigma\neq\emptyset$) such that the following holds.
               If every boundary component has length at most $\sigma$ and at least one of them has length at most $\epsilon\leq \sigma$ then $\delta(S)\geq f_1(\sigma,\epsilon)>0$ with $\liminf_{\epsilon\to 0} f_1(\sigma,\epsilon)>\tfrac12$ for fixed $\sigma$.
    \end{proposition}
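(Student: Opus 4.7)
The plan is to reduce to a pair of pants with a short boundary, apply monotonicity of the critical exponent, and invoke Beardon's inequality for Fuchsian groups with parabolics.

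First, by \thref{thm:short-pant-decomposition} applied to $S$, there is a pants decomposition of $S$ in which every curve has length at most $B := \max(n\sigma,\, 2\pi|\chi(\Sigma)|)$, where $n$ is the number of boundary components of $\Sigma$; the constant $B$ depends only on $\sigma$ and the topological type. The short boundary component $\alpha$ of $S$ (of length at most $\epsilon$) is one of the curves of this decomposition, so there is a unique pair of pants $P$ in the decomposition having $\alpha$ as one of its boundaries, and the other two boundary circles of $P$ have length at most $B$. The subgroup of the Fuchsian group of $S$ corresponding to $\pi_1(P)$ is convex-cocompact and uniformises $P$ with its restricted hyperbolic structure, so by monotonicity of Poincar\'e series under inclusion of subgroups, $\delta(S) \geq \delta(P) = \delta(a, b, c)$, where $\delta(a, b, c)$ denotes the critical exponent of the unique hyperbolic pair of pants with ordered boundary lengths $(a, b, c) \in (0, B]^2 \times (0, \epsilon]$.

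It then remains to bound $\delta(a, b, c)$ uniformly from below. This function is real-analytic on the open Teichm\"uller space $(0, B]^3$ by Statement 2 of \thref{surfacewithboundary}, and extends continuously to the compact cube $[0, B]^3$: a vanishing coordinate corresponds to replacing a geodesic boundary by a cusp, and continuity under this degeneration follows from the standard geometric convergence of Fuchsian groups together with the lower semi-continuity of $\delta$ (Theorem 2.4 of Bishop--Jones) and the trivial upper bound $\delta \leq 1$. On the face $\{c = 0\}$, the corresponding pair of pants has at least one cusp, so its Fuchsian group is non-elementary and geometrically finite with a parabolic element; Beardon's inequality then gives $\delta(a, b, 0) > \tfrac12$ strictly for every $(a, b) \in [0, B]^2$, and by continuity and compactness, the minimum $\delta_\ast := \min_{(a,b) \in [0,B]^2}\delta(a, b, 0)$ strictly exceeds $\tfrac12$.

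Setting $f_1(\sigma, \epsilon) := \inf\{\delta(a, b, c) : a, b \in (0, B],\, c \in (0, \epsilon]\}$ then yields a positive lower bound, since $\delta$ is continuous and strictly positive on the compact cube $[0, B]^3$ (every non-elementary Fuchsian group has positive critical exponent), and uniform continuity of $\delta$ on $[0, B]^3$ gives $\lim_{\epsilon \to 0} f_1(\sigma, \epsilon) = \delta_\ast > \tfrac12$, as required. The main technical point is the continuity of $\delta$ at the cusp locus of parameter space; if this is uncomfortable to invoke directly, it can be replaced with a Patterson--Sullivan argument tracking the weak convergence of Patterson densities under the pinching of a single boundary geodesic, as in the proof of Statement 4 of \thref{surfacewithboundary}.
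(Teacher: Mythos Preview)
Your approach is essentially the same as the paper's: reduce to a pair of pants containing the short boundary via \thref{thm:short-pant-decomposition}, use monotonicity of the critical exponent under inclusion, and then a compactness argument on the space of pairs of pants with boundary lengths in $[0,B]$, invoking $\delta>\tfrac12$ for groups with parabolics at the cusp face. The paper cites Proposition~2 of \cite{DOP} where you cite Beardon; these are the same fact.

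There is, however, one genuine gap. You claim that $\delta$ extends \emph{continuously} to the compact cube $[0,B]^3$, justifying this by ``lower semicontinuity of $\delta$ together with the trivial upper bound $\delta\le 1$''. That deduction is invalid: lower semicontinuity plus a uniform upper bound does not yield continuity (it would only force continuity at points where the limiting value equals the bound $1$, i.e.\ at the thrice-punctured sphere). Fortunately you do not need continuity. Since $f_1(\sigma,\epsilon)$ is nondecreasing as $\epsilon\downarrow 0$, the $\liminf$ is a genuine limit, and if it were $\le\tfrac12$ you could extract $(a_n,b_n,c_n)\to(a,b,0)\in[0,B]^3$ with $\delta(a_n,b_n,c_n)\le\tfrac12$; lower semicontinuity alone then gives $\delta(a,b,0)\le\tfrac12$, contradicting Beardon. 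The positivity of $f_1$ follows by the same contradiction argument. This is exactly how the paper proceeds: it never claims or uses upper semicontinuity, only the Bishop--Jones lower semicontinuity, and runs both parts as proofs by contradiction. So your proof is correct once you drop the continuity claim and rephrase the last paragraph accordingly.
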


 \begin{proof}
     Denote by $S_n$ a sequence of metrics as in the proposition, so that all boundary components 
     of $\Sigma$ have length at most $\sigma$. 
     Using \thref{thm:short-pant-decomposition}, $S_n$ admits a decomposition into hyperbolic pairs of pants 
     $P_1^{(n)},\cdots, P_r^{(n)}$ so that the decomposing curves have a length bounded from above by some constant $C(\Sigma)$, and so that the shortest boundary component of $S_n$ is in~$P_1^{(n)}$. Our goal is to show that $\delta(S_n)$ is bounded from
     below by a universal constant.

     Suppose by contradiction that $\delta(S_n)\to 0$.
     Then $\delta(P_1^{(n)})\to 0$ since it is bounded from above by $\delta(S_n)$.
     Up to extraction we may assume that the boundary lengths of $P_1^{(n)}$ converge, 
     which implies that $P_1^{(n)}$ converge to some hyperbolic pair of pants $P$, possibly with cusps.
     By lower semicontinuity of $\delta$ (see Theorem 2.4 of~\cite{BishopJones}) 
     we get that $0=\lim_n\delta(P_1^{(n)})$, which is absurd.
     Thus the critical exponents are bounded away from zero.

     Let us now prove the second part of the statement.
     Suppose by contradiction that the shortest boundary curve of $S_n$ has length tending to zero, but $\liminf_n\delta(S_n)\leq 1/2$.
     Then  $\liminf_n\delta(P_1^{(n)})\leq 1/2$.
     Once again, up to extracting we may assume $P_1^{(n)}\to P$, with $P$ having a cusp (since the shortest boundary of $P_1^{(n)}$, which is that of $S_n$, is pinched to zero).
     By lower semicontinuity of $\delta$ (see Theorem 2.4 of~\cite{BishopJones}) we get that $\liminf_n\delta(P_1^{(n)})\geq \delta(P)$.
     This is absurd  as $\delta(P)>1/2$ by Proposition 2 of~\cite{DOP}, since the critical exponent of a neighbourhood of a cusp is $1/2$, with a diverging Poincaré series at the critical exponent.
 \end{proof}

    \paragraph{Hyperbolic pairs of pants.}
    Here suppose that $\Sigma$ is a sphere with three boundary components, and $S_{a,b,c}$ is the metric of a hyperbolic pair of pants with boundary length $a$, $b$ and~$c$. 

    \begin{proposition}\thlabel{lem:entropy-pants-convergence}
    There exists a function $f_2$ depending on $\Sigma$ ($\partial \Sigma \not=\emptyset)$ with the following property. 
    If $\Sigma$ is a pair of pants, two boundary components of $S$ have length at least $\sigma>0$ and the third at least $\ell\geq \sigma$, then $\delta(S)\leq f_2(\sigma,\ell)$ with $f_2(\sigma,\ell)\to 0$ 
            for fixed $\sigma >0$ as~$\ell\to \infty$.
    \end{proposition}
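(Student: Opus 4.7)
The plan is to bound the critical exponent $\delta(S)=h(S)$ of the Fuchsian Schottky group $\Gamma=\rho(\pi_1(\Sigma))$ via a Poincar\'e-series estimate that becomes effective once the axis distance between two generator axes becomes large. First I would choose generators $X,Y$ of $\pi_1(\Sigma)=F_2$ so that the three boundary components correspond to $X$, $Y$, and $(XY)^{-1}$, with lengths $a=\ell(\rho(X))$, $b=\ell(\rho(Y))$, $c=\ell(\rho(XY))$ all $\geq\sigma$ and, after relabeling if necessary, $c\geq\ell$. The right-angled hexagon identity for hyperbolic pairs of pants,
\[
\cosh D \;=\; \frac{\cosh(c/2)+\cosh(a/2)\cosh(b/2)}{\sinh(a/2)\sinh(b/2)},
\]
expresses the hyperbolic distance $D$ between the axes of $\rho(X)$ and $\rho(Y)$ and, combined with $a,b\geq\sigma$, yields $D\geq c/2-C_0(\sigma)$, so $D\to\infty$ as $\ell\to\infty$.

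The technical heart would be a translation length lower bound. Writing each non-trivial reduced word $w\in F_2$ in alternating block form $w=X^{n_1}Y^{m_1}X^{n_2}Y^{m_2}\cdots$ with $n_i,m_j\neq 0$ and letting $k(w)$ denote the number of block boundaries, I aim to establish
\[
\ell(\rho(w))\;\geq\;k(w)\cdot D+\sigma\cdot |w|-C_1(\sigma),
\]
where $|w|$ is the reduced word length. The second term follows from $a,b\geq\sigma$. The first, which is the crucial input, would come from quantitative Fuchsian ping-pong: choose four pairwise disjoint Schottky intervals $U_X^\pm,U_Y^\pm\subset\partial_\infty\H^2$ around the fixed points of $\rho(X)^{\pm 1}$ and $\rho(Y)^{\pm 1}$, and exploit the geometric fact that as $D\to\infty$ the pairs $\{\rho(X)^{\pm}\}$ and $\{\rho(Y)^{\pm}\}$ cluster antipodally on $\partial_\infty\H^2$ while remaining at angular separation bounded below in terms of $\sigma$ alone. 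Each generator's M\"obius contraction on the three ``outside'' intervals is then of magnitude $e^{-D+O(\sigma)}$, and multiplying these contractions along the syllable pattern of $w$ gives the displayed bound via the identity $\ell(\rho(w))=-\log|(\rho(w))'(\xi^+_w)|$ at the attracting fixed point.

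Once the translation length estimate is in hand, counting concludes the argument. The number of reduced words of length $n$ in $F_2$ with exactly $k$ alternations equals $\binom{n-1}{k}\cdot 2^{k+2}$, so the Poincar\'e series is dominated by
\[
\sum_{w\neq e}e^{-s\ell(\rho(w))}\;\leq\;C\sum_k(2e^{-sD})^k\left(\frac{e^{-s\sigma}}{1-e^{-s\sigma}}\right)^{k+1}.
\]
The geometric series in $k$ converges as soon as $e^{s\sigma}>1+2e^{-sD}$; for $sD$ large this is asymptotically $s\,e^{sD}\gtrsim 1/\sigma$, whose smallest solution is of order $\log(D/\sigma)/D$ by the Lambert $W$ function. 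Hence
\[
\delta(S)\;\leq\;\frac{C_2\log(D/\sigma)}{D}\;\leq\;\frac{C_2\log(\ell/\sigma)}{\ell/2-C_0(\sigma)},
\]
which provides an admissible $f_2(\sigma,\ell)$ tending to zero as $\ell\to\infty$ with $\sigma$ fixed.

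I expect the main obstacle to be the uniformity of the quantitative ping-pong step across the family of pairs of pants with $c\geq\ell$: verifying that the angular sizes and mutual separations of the four Schottky intervals $U_X^\pm,U_Y^\pm$ are controlled purely in terms of $\sigma$, with M\"obius derivative estimates that yield the linear-in-$D$ contribution per syllable transition. The underlying geometric fact is that as $D\to\infty$ with $\sigma$ fixed, the contractions inherent in the Schottky action concentrate on intervals whose ``interior'' angular size shrinks at a rate controlled uniformly by $\sigma$ alone.
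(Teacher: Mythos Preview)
Your approach has a genuine gap at the hexagon step. From
\[
\cosh D=\frac{\cosh(c/2)+\cosh(a/2)\cosh(b/2)}{\sinh(a/2)\sinh(b/2)}
\]
you conclude $D\geq c/2-C_0(\sigma)$ using only $a,b\geq\sigma$. This is false: a lower bound on $a,b$ gives no upper bound on $\sinh(a/2)\sinh(b/2)$, and the proposition imposes no upper bound on $a,b$. For the admissible family $a=b=c=\ell$ (all three boundaries long) one has $\cosh D\to 1$, hence $D\to 0$, while your claimed bound would force $D\to\infty$. In this regime your Poincar\'e-series estimate reduces to $e^{s\sigma}>1+2$, i.e.\ $\delta(S)\leq\log 3/\sigma$, which does not tend to zero. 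More generally, whenever $a+b$ is comparable to $c$ the seam length $D$ stays bounded and your mechanism for driving the entropy to zero disappears.

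A secondary issue: you phrase the key inequality as a \emph{translation length} bound $\ell(\rho(w))\geq k(w)D+\sigma|w|-C_1$ and justify it via the derivative at the attracting fixed point, but then feed it into a Poincar\'e series summed over all reduced words. That series, with translation lengths, diverges for every $s$ (e.g.\ all conjugates $X^nYX^{-n}$ have $\ell=b$). What you need is a displacement bound $d(o,\rho(w)o)\geq\cdots$, and for this the ping-pong argument should be set up accordingly.

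The paper proceeds quite differently. It first proves the purely geometric lemma that every closed geodesic in $S\setminus\partial S$ has length at least $\max(a,b,c)$; this holds for \emph{all} triples $(a,b,c)$ and is what makes the argument uniform in $a,b$. It then invokes a ready-made inequality of Martone bounding $\delta(S)K(S)$ in terms of $K(S)/L(S)$, where $K(S)$ is that shortest interior geodesic and $L(S)=\min(a,b,c)\geq\sigma$ is the systole. Since $K(S)\geq\ell\to\infty$ while $L(S)\geq\sigma$, the bound forces $\delta(S)\to 0$. If you want to salvage a direct Poincar\'e-series argument, you would need an ingredient that plays the role of this lemma---for instance, replacing $\sigma|w|$ by $\min(a,b)|w|$ in the displacement estimate and then treating separately the cases where $\min(a,b)$ is large and where it is bounded (only in the latter does $D\to\infty$).
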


    We use the notations from~\cite{Martone2019}, where the authors give some control on the entropy of a hyperbolic surface 
    using the lengths of the small curves on the surface. Denote by $L(S)$ the systole of $S$, 
    that is, the length of the shortest closed geodesic in $S$. Denote by $K(S)$ the length of the shortest 
    closed geodesic in $S\setminus\partial S$ ($K(S)$ is more complicated to define when $S$ is not a pair of pants). 
    Also let $\delta(S)$ be the critical exponent of $S$.

    \begin{theorem}[Particular case of {Theorem 1.4 of~\cite{Martone2019}}]
        There exists a constant $C>0$ for which we have
        $$\frac{1}{4}\log(2)\leq\delta(S)K(S)\leq C\left(\log(4)+1+\log\left(1+\frac{1}{x_0}\right)\right)$$
        where $x_0$ is the unique positive solution of the equation $(1+x)^{\left\lceil \frac{K(S)}{L(S)}-1\right\rceil}x=1$.
    \end{theorem}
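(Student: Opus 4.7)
The plan is to use the Martone--Nie upper bound stated at the end of this appendix,
\[\delta(S) K(S) \leq C\bigl(\log 4 + 1 + \log(1 + 1/x_0)\bigr),\]
where $K(S)$ is the length of the shortest closed geodesic in $S \setminus \partial S$ and $x_0 \in (0,1)$ is the unique positive solution of $(1+x)^N x = 1$ with $N = \lceil K(S)/L(S) - 1 \rceil$. The strategy is to show that $K(S) \geq \ell$, while $\log(1 + 1/x_0)$ grows at most like $\log K(S)$; combined with $h(S)=\delta(S)$ from Proposition~\ref{surfacewithboundary}(1), this yields the desired conclusion.

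For the lower bound on $K(S)$, we use the classical Fricke-type fact that in a hyperbolic pair of pants the shortest non-peripheral closed geodesic is one of three \emph{figure-eight} curves, each winding around a pair of boundaries with opposite orientations. Writing $\pi_1(S) = \langle A, B \rangle$ with $A, B, (AB)^{-1}$ representing the three boundary curves of lengths $a, b, c$, and using iteratively the trace identity $\operatorname{tr}(X)\operatorname{tr}(Y) = \operatorname{tr}(XY) + \operatorname{tr}(XY^{-1})$ together with the Fuchsian sign convention $\operatorname{tr}(A) = -2\cosh(a/2)$ (and similarly for $B$, $AB$), one obtains that each figure-eight has length of the form
\[\cosh(\ell_{\{x,y\}}/2) = 2\cosh(x/2)\cosh(y/2) + \cosh(z/2),\]
where $\{x, y, z\}$ is a permutation of $\{a, b, c\}$. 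In each of the three cases the right-hand side is at least $\cosh(\ell/2)$: either it contains $\cosh(c/2) \geq \cosh(\ell/2)$ as a summand (when $z = c$), or the product term $2\cosh(x/2)\cosh(y/2) \geq 2\cosh(\ell/2)$ contains the factor $\cosh(c/2)$ (when $c \in \{x, y\}$). Hence $K(S) \geq \ell$.

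For the bound on $\log(1 + 1/x_0)$, note that $L(S) = \min(a, b, c) \geq \sigma$, so $N \leq K(S)/\sigma$. An elementary analysis of the equation $(1+x)^N x = 1$ shows that $x_0$ is bounded below by an absolute constant when $N$ stays bounded, and that $x_0 \sim (\log N)/N$ as $N \to \infty$; in either regime one obtains $\log(1 + 1/x_0) \leq C_\sigma(1 + \log K(S))$. Substituting into the Martone--Nie inequality and using that $(1 + \log x)/x$ is decreasing for $x$ large yields
\[\delta(S) \leq \frac{C'_\sigma(1 + \log K(S))}{K(S)} \leq \frac{C'_\sigma(1 + \log \ell)}{\ell},\]
which tends to $0$ as $\ell \to \infty$ for fixed $\sigma$. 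One can then take $f_2(\sigma, \ell) = \min\bigl(1,\, C'_\sigma(1 + \log \ell)/\ell\bigr)$, which also handles the easy regime where $\ell$ is small (using $\delta(S) \leq h_{\mathrm{top}}(\Phi^t) \leq 1$).

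The main point requiring care is the uniform analysis of $x_0$: the estimate must simultaneously handle the ``balanced'' regime ($a, b, c$ comparable, $N = O(1)$, $x_0$ bounded below by a constant) and the ``unbalanced'' regime ($L(S) = \sigma$ but $K(S) \sim \ell$, so $N$ is large and $x_0$ decays like $(\log N)/N$), and show that in both cases one obtains the same $O(\log N) = O(\log K(S))$ control on $\log(1+1/x_0)$. A secondary point is the appeal to the classical Fricke result identifying $K(S)$ with a figure-eight length, which is standard but worth stating precisely in our setting.
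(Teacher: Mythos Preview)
There is a mismatch between the statement you were asked to prove and what you actually wrote. The displayed theorem is a \emph{cited} result (a particular case of Theorem~1.4 of Martone--Zhang), and the paper does not supply a proof of it; it is quoted verbatim as an input. Your proposal does not attempt to prove this inequality either: you take it as a black box (``the Martone--Nie upper bound'') and instead give an argument for the \emph{subsequent} statement in the appendix, \thref{lem:entropy-pants-convergence}, which says that for a pair of pants with two boundary lengths $\geq\sigma$ and a third $\geq\ell$, the entropy is bounded by some $f_2(\sigma,\ell)\to 0$ as $\ell\to\infty$.

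Comparing your argument for \thref{lem:entropy-pants-convergence} with the paper's: the overall architecture is the same (bound $K(S)$ from below by $\ell$, control $x_0$ via $N\leq K(S)/\sigma$, conclude $\delta(S)\lesssim (\log K(S))/K(S)$), but the justification of $K(S)\geq\ell$ differs. The paper proves the stronger statement $K(S)\geq\max(a,b,c)$ (Lemma~\ref{lem:lower bound K(S)}) by a direct hexagon argument: cut the pair of pants into two right-angled hexagons, project any non-peripheral geodesic to one hexagon, observe it must touch all three seam sides, and bound the length of the two resulting subarcs by twice the longest half-boundary. This works uniformly for \emph{every} non-peripheral closed geodesic. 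Your route instead invokes a ``classical Fricke-type fact'' that the shortest non-peripheral geodesic is one of three figure-eights, then computes those via trace identities. That fact is believable but not entirely trivial (you would need to argue that no more complicated word in $\langle A,B\rangle$ can be shorter), so you are trading a short self-contained geometric lemma for an appeal to a result that itself requires justification. The trace computation you give is correct and the conclusion $K(S)\geq\ell$ follows either way; the paper's route is simply more self-contained.
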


    \begin{lemma}\label{lem:lower bound K(S)}
        Let $S$ be a pair of pants with boundary lengths $a,b,c$.
        Then $K(S)\geq \max(a,b,c)$.
    \end{lemma}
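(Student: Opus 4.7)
The plan is to lift the holonomy of $S$ to $\SL_2(\R)$ with a suitable sign convention and exploit Fricke trace identities. Write $\pi_1(S) = F_2 = \langle A, B\rangle$ so that $[A], [B], [AB]$ represent the three boundary components with translation lengths $a, b, c$ respectively. I would choose the lift to $\SL_2(\R)$ in which the three boundary generators all have negative trace (the consistent sign convention imposed by the pants relation $ABC = 1$), and set $x = 2\cosh(a/2)$, $y = 2\cosh(b/2)$, $z = 2\cosh(c/2)$, all $\geq 2$, so that $\mathrm{tr}(A) = -x$, $\mathrm{tr}(B) = -y$, $\mathrm{tr}(AB) = -z$. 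The closed geodesics in the interior of $S$ correspond exactly to primitive conjugacy classes $[W]$ in $F_2$ not conjugate to any power of $A, B,$ or $AB$, and their lengths are recovered from $|\mathrm{tr}(W)| = 2\cosh(\ell(W)/2)$.

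The main explicit computation is the base case $W = AB^{-1}$. The Fricke identity $\mathrm{tr}(A)\mathrm{tr}(B) = \mathrm{tr}(AB) + \mathrm{tr}(AB^{-1})$ yields $\mathrm{tr}(AB^{-1}) = xy + z$, and since $x, y \geq 2$ one has $xy + z \geq x(y-1) + z \geq x$, $xy+z \geq y$, and trivially $xy+z \geq z$, so $|\mathrm{tr}(AB^{-1})| \geq \max(x,y,z)$ and hence $\ell(AB^{-1}) \geq \max(a,b,c)$. This is essentially the only calculation required if one invokes the classical fact that $AB^{-1}$ (or one of its obvious companions obtained by permuting the roles of the three boundaries) realizes the minimum of $|\mathrm{tr}(\cdot)|$ over all non-boundary primitive conjugacy classes.

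For a self-contained argument, I would proceed by induction along the Farey tree enumerating primitive conjugacy classes of $F_2$: the three boundary slopes form the base, and every other primitive slope $p/q \in \mathbb{Q}\cup\{\infty\}$ is the Farey mediant of two simpler slopes $p_1/q_1, p_2/q_2$, so that a representative $W_{p/q}$ is (up to conjugation) the product $W_{p_1/q_1} W_{p_2/q_2}$. The Fricke identity then gives the recursion
\[\mathrm{tr}(W_{p/q}) = \mathrm{tr}(W_{p_1/q_1})\,\mathrm{tr}(W_{p_2/q_2}) - \mathrm{tr}(W_{(p_1-p_2)/(q_1-q_2)}),\]
and the invariant to be maintained is that $\mathrm{tr}(W)$ is a polynomial in $x, y, z$ which, when evaluated at $x, y, z \geq 2$, is bounded below in absolute value by $xy + z$, hence by $\max(x,y,z)$.

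The main obstacle will be carrying out the Farey induction cleanly — in particular tracking the signs of traces consistently through the recursion and verifying that each inductive step preserves the polynomial lower bound — which is a standard but technical exercise in the theory of two-generator Fuchsian groups and Markov/Fricke coordinates. A slicker alternative, which I would prefer if the classical reference is accessible, is to directly invoke the well-known fact that a figure-eight around two of the boundary components is the shortest non-boundary closed geodesic on a hyperbolic pair of pants; this reduces the entire lemma to the one-line base-case computation $\mathrm{tr}(AB^{-1}) = xy + z$.
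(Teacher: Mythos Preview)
Your trace computation for the figure-eight $AB^{-1}$ is correct, and if you are willing to invoke the classical fact that the figure-eight realises $K(S)$, the argument is complete. However, your proposed self-contained route has a genuine gap: the Farey tree enumerates the \emph{basis-primitive} elements of $F_2$ (those extendable to a free basis, equivalently simple closed curves on a once-punctured torus), whereas the closed geodesics on the pair of pants correspond to \emph{all} conjugacy classes of $F_2$ that are not proper powers. These are different notions. Words such as $A^2B^2$ or the commutator $[A,B]$ are not proper powers, hence represent honest closed geodesics in $S\setminus\partial S$, yet they are not basis-primitive (their abelianisations have $\gcd>1$) and so never appear in the Farey recursion. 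Your induction would therefore miss infinitely many candidates for the systole. Extending the trace-polynomial argument to all of $F_2$ is possible in principle but is no longer a Farey induction and becomes genuinely involved.

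By contrast, the paper's proof is a short and entirely self-contained geometric argument: decompose $S$ into two right-angled hexagons glued along three seams $\bar A,\bar B,\bar C$, project any interior closed geodesic $\gamma$ to one hexagon (where it becomes a billiard path among the seams), observe that $\gamma$ must meet all three seams (else it would be boundary-parallel), and conclude that the projected path traverses the distance $c/2$ between seams $\bar A$ and $\bar B$ at least twice. This handles every closed geodesic at once, with no case analysis on words in $F_2$. Your trace approach, even when patched, gives sharper information (the exact value $\mathrm{tr}(AB^{-1})=xy+z$), but the hexagon argument is what is needed here and is considerably lighter.
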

    \begin{proof}
        Up to reordering we may assume $\max(a,b,c)=c$.
        The surface $S$ is obtained by gluing two isometric right-angled 
        hyperbolic hexagons $H_1=H,H_2$ along three nonadjacent sides, such that the three other sides have lengths $\tfrac a2,\tfrac b2,\tfrac c2$.
        In particular, there is a natural projection $\pi:S\to H$.
        Let $\bar A,\bar B,\bar C$ be the sides of $H$ which are glued, so that the hyperbolic distance from $\bar B$ to $\bar C$ is $a/2$, the distance from $\bar C$ to $\bar A$ is $b/2$, and the distance from $\bar A$ to $\bar B$ is $c/2$.
        
        Let $\gamma$ be a closed geodesic in $S\setminus \partial S$, and let us check it has length at least $c$.
        Note that $\pi(\gamma)\subset H$ is a concatenation of geodesics between the sides $\bar A,\bar B,\bar C$.
        This path has to intersect all three sides, for if it was alternating between only two sides,
        then $\gamma$ is freely homotopic to a multiple of the boundary curve 
        of $S$ between these two sides.

        Say $\gamma$ starts on the side $\bar A$ at some point $x$, then travels until it hits $\bar B$ at some point~$y$ (maybe bouncing off $\bar C$ and $\bar A$ in between), and then comes back to $x$.
        The first part of the path from $x$ to $y$ must have length at least the distance from $\bar A$ to $\bar B$, which is~$c/2$, and similarly the second part has length at least $c/2$ too, so in total $\gamma $ has length at least~$c$.
    \end{proof}

    \begin{proof}[Proof of \thref{lem:entropy-pants-convergence}]
        Let $(a_n)_n,(b_n)_n,(c_n)_n$ be three sequences in $\R^+$ so that $a_n$ and~$b_n$ are bounded away from zero, and $c_n$ tends to to infinity with $n$. Let 
        $S_n=S_{a_n,b_n,c_n}$ be the pair of pants with boundary lengths $a_n,b_n,c_n$. 
        By Lemma~\ref{lem:lower bound K(S)}, $K(S_n)$ tends to infinity with $n$. 
        
        By assumption, $L(S_n)$ is bounded away from zero. So up to passing to a subsequence, we can assume 
        that $\frac{K(S_n)}{L(S_n)}$ converges to $y\in(0,+\infty]$.
        If $y<+\infty$, then the solutions $x_n$ of $(1+x)^{\left\lceil \frac{K(S_n)}{L(S_n)}-1\right\rceil}x=1$ remain bounded away from zero. So $C\left(\log(4)+1+\log\left(1+\frac{1}{x_n}\right)\right)$ is bounded, and $\delta(S_n)\leq \frac{cste}{K(S_n)}$ goes to zero.

        If $y=+\infty$, then $x_n$ goes to zero, and a simple analysis yields that $-\frac{\log(x_n)}{x_n}$ is equivalent to $\frac{K(S_n)}{L(S_n)}$. 
        It follows that
        \begin{align}
            \delta(S_n)K(S_n)
                &\leq C\left(\log(4)+1+\log\left(1+\frac{1}{x_n}\right)\right) \\
                &\leq \mathrm{Cst}\cdot x_n\frac{K(S_n)}{L(S_n)} \\
        \text{ and hence } \quad    \delta(S_n) &\leq \mathrm{Cst}\cdot \frac{x_n}{L(S_n)}\xrightarrow[n\to 0]{}0
        \end{align}
    \end{proof}

\paragraph{Surfaces with one boundary component.}

Assume now that the surface $\Sigma$ is of genus genus $g=g(\Sigma)$, with exactly one boundary component. Let ${\mathcal T}(\Sigma,\ell)$ be the Teichm\"uller space  of marked hyperbolic structures on $\Sigma$ with geodesic connected boundary of length~$\ell$. 
Denote also by ${\mc T}_\epsilon(\Sigma,\ell)\subset {\mc T}(\Sigma,\ell)$ the subset of structures whose systole is at least 
$\epsilon$. 

\begin{lemma}\label{entropyfunction}
The following holds true:
    \begin{enumerate}
        \item If $g=1$ and $S\in\mathcal T(\Sigma,\ell)$, then $\delta(S)$ is bounded from above by some $b(\ell)<1$.
        \item If $g\geq 2$ then for all $\nu,\ell >0$ there exists a surface $S\in \mathcal T(\Sigma,\ell)$ with $\delta(S)>1-\nu$.
        \item If $S\in\mathcal T_\epsilon(\Sigma,\ell)$, then $\delta(S)$ is bounded from above by some $b(\epsilon,\ell)<1$
    \end{enumerate}  
\end{lemma}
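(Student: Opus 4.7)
My plan is to handle the three claims in the order (3), (2), (1), since each builds technically on the previous. Throughout I use that $\delta(S)$ is real analytic in $S$ and $\mathrm{Mod}(\Sigma)$-invariant (Proposition \ref{surfacewithboundary}(2)), and that $\delta(S) < 1$ pointwise (Proposition \ref{surfacewithboundary}(3)).

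For (3), I would apply Mumford's compactness theorem to compact hyperbolic surfaces with fixed boundary length and systole bounded below: the quotient $\mathcal{T}_\epsilon(\Sigma,\ell)/\mathrm{Mod}(\Sigma)$ is compact. Since $\delta$ descends to a continuous function on this compact quotient which is everywhere strictly less than $1$, its maximum $b(\epsilon,\ell)$ is strictly less than $1$.

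For (2), the key observation is that when $g\geq 2$, one can choose a pants decomposition of $\Sigma$ (into $2g-1\geq 3$ pairs of pants) containing at least one pair of pants $P_0$ whose three boundary curves are all interior to $\Sigma$ (none of them being $\partial\Sigma$). For each $\epsilon>0$, I pick Fenchel--Nielsen coordinates assigning length $\epsilon$ to the three boundary curves of $P_0$, length $\ell$ to $\partial\Sigma$, and arbitrary positive values to the remaining pants curves and twist parameters; this defines some $S_\epsilon\in\mathcal{T}(\Sigma,\ell)$. The pants subgroup $\pi_1(P_0)\hookrightarrow \pi_1(\Sigma)$ is convex-cocompact, so its Poincaré series is dominated by that of $\pi_1(\Sigma)$, giving $\delta(S_\epsilon)\geq \delta(P_0)$. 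By Proposition~\ref{surfacewithboundary}(4), $\delta(P_0)\to 1$ as $\epsilon\to 0$, so $\delta(S_\epsilon)>1-\nu$ for $\epsilon$ small enough.

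For (1), the strategy is compactness plus continuity on a bordification of $\mathcal{M}(\Sigma,\ell):=\mathcal{T}(\Sigma,\ell)/\mathrm{Mod}(\Sigma)$. For the one-holed torus, $\mathrm{Mod}(\Sigma)\cong\mathrm{SL}(2,\mathbb{Z})$, and $\mathcal{M}(\Sigma,\ell)$ is $2$-dimensional with a single topological end corresponding (by the collar lemma) to pinching the unique short non-boundary simple closed curve. I would compactify by adding the noded surface obtained in this pinching limit: a pair of pants $P^*$ with one geodesic boundary of length $\ell$ and two cusps. This $P^*$ is a geometrically finite Fuchsian group with a funnel; exactly as in the proof of Proposition~\ref{surfacewithboundary}(3), Pollicott--Sharp gives divergence of its Poincaré series at $\delta(P^*)$ and then DOP (Proposition 2 of \cite{DOP}) applied to the inclusion of $P^*$ into the doubled surface (a closed surface of finite volume, with $\delta=1$) yields $\delta^*_\ell:=\delta(P^*)<1$. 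It remains to show that $\delta$ extends continuously to the noded point with value $\delta^*_\ell$: lower semi-continuity is Theorem~2.4 of \cite{BishopJones}, and upper semi-continuity under pinching follows from a direct Poincaré-series comparison using convergence of Cartan/translation lengths on compact generating sets. Continuity of $\delta$ with values in $[0,1)$ on the compact space $\overline{\mathcal{M}(\Sigma,\ell)}$ then yields $b(\ell):=\max\delta<1$.

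The main obstacle will be the upper semi-continuity of $\delta$ at the noded surface in the proof of (1), since cusps are created in the limit, and critical exponents of Fuchsian groups can in general behave discontinuously when parabolic elements appear. A cleaner alternative would be to bypass continuity by a quantitative thick-thin argument: in the thin part, cut along the (unique) short curve of length $a<\epsilon$, obtaining a pair of pants $P_a$ with boundaries $\ell,a,a$, and combine an explicit bound $\delta(P_a)\leq \delta^*_\ell+o_{a\to 0}(1)<1$ with a quantitative version of DOP applied to $\pi_1(P_a)\subset\pi_1(S)$ to bound $\delta(S)$ above by $1-c$ for some $c=c(\ell)>0$ independent of $a\leq\epsilon$; combined with (3), this gives the uniform bound $b(\ell)<1$.
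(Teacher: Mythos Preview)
Your arguments for (2) and (3) are correct and coincide with the paper's: (3) via Mumford compactness of $\mathcal T_\epsilon(\Sigma,\ell)/\mathrm{Mod}(\Sigma)$ and continuity of $\delta$, and (2) by embedding a pair of pants disjoint from $\partial\Sigma$ and shrinking its three boundary curves, exactly as in Proposition~\ref{surfacewithboundary}(4).

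For (1) the two approaches diverge. You compactify $\mathcal M(\Sigma,\ell)$ by the noded point and try to show that $\delta$ extends continuously there with value $\delta(P^*)<1$. The paper also passes to the Deligne--Mumford boundary, but for the limiting value it invokes a different mechanism: it asserts that along the degeneration $\delta(S_i)$ converges to the \emph{metric} (Liouville) entropy of the geodesic flow on the limit surface, which is strictly less than $1$. So the paper does not rely on the continuity of the \emph{critical exponent} across the pinching that you are aiming for.

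There is a genuine problem in your route that goes beyond what you already flag. The groups $\pi_1(S_n)\cong F_2=\langle a,b\rangle$ do \emph{not} converge algebraically to $\pi_1(P^*)$: with any normalisation keeping $\rho_n(a)$ bounded (so that $\rho_n(a)\to$ parabolic), the generator $b$ crosses the pinched curve and $\rho_n(b)\to\infty$ in $\PSL_2(\R)$. What does converge algebraically is only the pants subgroup $\pi_1(P_n)=\langle a,\,bab^{-1}\rangle\to\pi_1(P^*)$. This still yields the lower bound $\liminf\delta(S_n)\ge\liminf\delta(P_n)\ge\delta(P^*)$ via Bishop--Jones, but it leaves the upper bound $\limsup\delta(S_n)\le\delta(P^*)$ open. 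Your proposed ``Poincar\'e-series comparison via convergence of translation lengths on generating sets'' cannot work here: $d(o,\rho_n(b^{\pm1})o)\to\infty$ while $d(o,\rho_n(a^k)o)\to 0$ for each fixed $k$, with no uniformity in $k$. Your fallback (a quantitative DOP applied to $\pi_1(P_a)\subset\pi_1(S)$) would need a uniform estimate of the form $\delta(S_n)\le\delta(P_n)+\eta(\epsilon_n)$ with $\eta\to 0$, which is essentially the upper semi-continuity statement you are trying to avoid; DOP gives only strict inequality, not a gap that is uniform as the cut curves shrink.
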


\begin{proof}
    \emph{Statement 1.} 
    Note that the critical exponent is invariant under the action of the mapping class group. Let $S_i\subset \mathcal T(\Sigma,\ell)$ be a sequence so that 
    $$\delta(S_i)\xrightarrow[i\to+\infty]{}\sup\{\delta(Z)\mid Z\in {\mathcal T}(\Sigma,\ell)\}$$
    Up to passing to a subsequence, we may assume that the projections of the marked surfaces $S_i$ to the moduli space ${\rm Mod}(S)\backslash \mathcal T(\Sigma,\ell)$ converge in the Deligne--Mumford compactification of the moduli space to a surface $Z$ with connected geodesic boundary of length $\ell$, of genus $g' \leq 1$, possibly with one node. Either $Z$ is smooth and $\delta(Z)<1$ (see point~\ref{surfacewithboundary-less-than-one} of Proposition~\ref{surfacewithboundary}). Or the surface obtained by removing the node is a sphere with 3 punctures. In this case the entropies of the surfaces $S_i$ converge to the \emph{metric} entropy $\delta(Z)$ of the geodesic flow on the surface $Z$, equipped with the normalized Liouville measure, which is also less than 1.

    \emph{Statement 2.} It follows from the statement~\ref{surfacewithboundary-to-one} of Proposition~\ref{surfacewithboundary}. Find a pair of pants decomposition of $S$, take one pair of pants disjoint from $\partial S$ and shrink all its boundary components. The critical exponent of the resulting metrics goes to one.

    \emph{Statement 3.} This part of the lemma follows from invariance under the mapping class group and compactness. Namely, let us assume that $S_i\subset \mathcal T_\epsilon(\Sigma,\ell)$ is a sequence of marked metrics so that the entropy 
    \[\delta(S_i)\to \sup\{\delta(S)\mid S_i\in {\mathcal T}_\epsilon(\Sigma,\ell)\}.\] By adjusting with elements of the mapping class group, we may assume that $S_i\to S$ in ${\mathcal T}_\epsilon(\Sigma,\ell)$. Then $\delta(S_i)\to \delta(S)$, on the other hand we have $\delta(S)<1$. This completes the proof of the lemma.
\end{proof}

\bigskip\bigskip

\bigskip\bigskip


	\printbibliography[heading=bibintoc]{}


\noindent
Pierre-Louis Blayac \\
Université de Strasbourg, IRMA, 
7 rue René-Descartes
67084 Strasbourg Cedex, France\\
e-mail: blayac@unistra.fr
\medskip

\noindent
Ursula Hamenst\"adt\\
Math. Institut der Univ. Bonn, Endenicher Allee 60, 53115 Bonn, Germany\\
e-mail: ursula@math.uni-bonn.de

\medskip

\noindent
Théo Marty \\
Institut de mathématiques de Bourgogne, 
9 avenue Alain Savary,
21078 Dijon, France\\
e-mail: theo.marty@u-bourgogne.fr 

\medskip 

\noindent 
Andrea Egidio Monti\\
Max Planck Institut für Mathematik, Vivatsgasse 7, 53111 Bonn, Germany\\
e-mail: amonti@math.uni-bonn.de

\end{document}